\numberwithin{equation}{section}
\newtheorem{theorem}{Theorem}[section]
\newtheorem{lemma}[theorem]{Lemma}
\newtheorem{proposition}[theorem]{Proposition}
\newtheorem{definition}[theorem]{Definition}
\newtheorem{corollary}[theorem]{Corollary}
\newtheorem{remark}[theorem]{Remark}
\newcommand{\clr}{rgb:black,2;blue,2;red,0}
\tikzset{anchorbase/.style={baseline={([yshift=-0.5ex]current bounding box.center)}}}
\tikzset{wipe/.style={white,line width=4pt}}
\DeclareMathOperator{\End}{End}
\DeclareMathOperator{\Hom}{Hom}
\DeclareMathOperator{\id}{id}
\def\g{\mathfrak{g}}
\def\CC{\mathbb{C}}
\def\ZZ{\mathbb{Z}}
\def\cA{\mathcal{A}}
\def\R{\mathscr{R}}
\def\U{{\rm U}_q(\mathfrak{g})}
\def\UG{{{\rm U}_q}}
\def\AA{{\mathbb A}}
\def\UD{{{\rm U}_q^0}}
\newcommand{\ids}
{\begin{tikzpicture}[baseline=4pt,scale=1,color=\clr]
\draw[-,line width=1pt](0,0)--(0,0.5);
\end{tikzpicture}}
\newcommand{\cups}
{\begin{tikzpicture}[baseline =3pt,scale=1,color=\clr]
	\draw[-,line width=1pt](0,0.5)parabola bend (0.3,0) (0.6,0.5);
\end{tikzpicture}}
\newcommand{\caps}
{\begin{tikzpicture}[baseline=3pt,scale=1,color=\clr]
\draw[-,line width=1pt,color=\clr](0,0)parabola bend (0.3,0.5) (0.6,0);
\end{tikzpicture}}
\newcommand{\rectangle}
{\begin{tikzpicture}[baseline=4pt,scale=0.5,color=\clr]
		\draw[-,line width=1pt](0,0)rectangle(2,1);	
		\node at(1,0.5){$R_{n}$};
\end{tikzpicture}}
\newcommand{\horiz}
{\begin{tikzpicture}[baseline=2pt,scale=0.5,color=\clr]
		\draw[-,line width=1pt](8.3,1)--(8.8,0.5);
		\draw[-,line width=1pt](8.3,0)--(8.8,0.5);
		\draw[-,line width=1pt](8.8,0.5)--(9.2,0.5);
		\draw[-,line width=1pt](9.2,0.5)--(9.7,1);
		\draw[-,line width=1pt](9.2,0.5)--(9.7,0);
\end{tikzpicture}}
\newcommand{\cross}
{\begin{tikzpicture}[baseline=18pt,scale=0.5,color=\clr]
	\draw[-,line width=1pt] (0.5,2)--(1.5,1);
	\draw [-,line width=1pt](0.5,1)--(0.9,1.4);
	\draw [-,line width=1pt](1.1,1.6)--(1.5,2);
\end{tikzpicture}}
\newcommand{\Icross}
{\begin{tikzpicture}[baseline=18pt,scale=0.5,color=\clr]
	\draw [-,line width=1pt](5.5,1)--(6.5,2);
	\draw[-,line width=1pt] (6.5,1)--(6.1,1.4);
	\draw [-,line width=1pt](5.9,1.6)--(5.5,2);
\end{tikzpicture}}
\newcommand{\merges}
{\begin{tikzpicture}[baseline = -.5mm,scale=0.8,color=\clr]
		\draw[-,line width=1pt] (0.28,-.3) to (0.08,0.04);
		\draw[-,line width=1pt] (-0.12,-.3) to (0.08,0.04);
		\draw[-,line width=1pt] (0.08,.4) to (0.08,0);
\end{tikzpicture} }
\newcommand{\splits}
{\begin{tikzpicture}[baseline = -.5mm,scale=0.8,color=\clr]
		\draw[-,line width=1pt] (0.08,-.3) to (0.08,0.04);
		\draw[-,line width=1pt] (0.28,.4) to (0.08,0);
		\draw[-,line width=1pt] (-0.12,.4) to (0.08,0);
\end{tikzpicture}}
\newcommand{\upgaprimeform}{\begin{tikzpicture}[baseline=-10pt,scale=1.1,color=\clr]
		\draw[-,line width=1pt](0,0)parabola bend (0.3,-0.4) (0.6,0);\draw[-,line width=1pt](0.3,-0.4)--(0.3,-0.2);
		\draw[-,line width=1pt](0.3,-0.2)--(0.1,0);\draw[-,line width=1pt](0.3,-0.2)--(0.5,0);
\end{tikzpicture}}
\newcommand{\ele}
{\begin{tikzpicture}[baseline=48pt,scale=0.6,color=\clr]
\draw[-,line width=1pt](1,2.6)rectangle(3,3.4);\node at (2,3){$inv$}; %{$\X$}
\end{tikzpicture} }
\newcommand{\beq}{\begin{eqnarray}}
	\newcommand{\eeq}{\end{eqnarray}}
\newcommand{\baln}{\begin{aligned}}
	\newcommand{\ealn}{\end{aligned}}
\newcommand{\ot}{\otimes}
\newcommand{\lra}{\longrightarrow}
\newcommand{\ol}{\overline}
\newcommand{\CT}{{\mathcal T}}
\newcommand{\CH}{{\mathcal H}}
\newcommand{\XX}{{\mathbb X}}
\newcommand{\Acycl}{{\rm Acycl}}
\begin{document}

\title[Invariant theory of quantum $G_2$]
{Braided symmetric algebras and a 
first fundamental theorem of invariant theory for ${\rm U}_q(G_2)$} 
\author{Hongmei Hu}
\address[H.M. Hu]{Department of Mathematics, Shanghai Maritime University, Shanghai, China.}
\email{hmhu@shmtu.edu.cn}
\author{Ruibin Zhang}
\address[R.B. Zhang]{School of Mathematics and Statistics, University of Sydney, Sydney, Australia.}
\email{ruibin.zhang@sydney.edu.au}
%\date{\today}

\subjclass[2020]{Primary 17B37, 16T20, 17B10}

\keywords{quantum groups, braided symmetric algebras, invariant theory, tangle diagrams.}
	
	\begin{abstract} 
	We develop invariant theory for the quantum group ${\rm U}_q$ of $G_2$ at generic $q$ 
	by studying its actions on a class of non-commutative module algebras.  
	Let ${\mathcal A}_m$ be the braided symmetric algebra over $m$-copies of the $7$-dimensional 
	simple ${\rm U}_q$-module.  
	A set of ${\rm U}_q$-invariants in ${\mathcal A}_m$ attached to certain acyclic trivalent graphs 
	is obtained, which spans the subalgebra ${\mathcal A}_m^{{\rm U}_q}$ of invariants 
	as vector space.   
	A finite set of homogeneous elements is constructed explicitly, 
	which generates ${\mathcal A}_m^{{\rm U}_q}$ as algebra. 
	Commutation relations among the algebraic generators are determined. 
	These results amount to a non-commutative first fundamental theorem of 
	invariant theory for ${\rm U}_q$. 	
	The algebra ${\mathcal A}_m$ is a non-flat quantisation of the coordinate ring of 
	${\mathbb C}^7\otimes{\mathbb C}^m$.  As ${\rm U}_q$-module, 
	${\mathcal A}_m={\mathcal A}_1^{\otimes m}$ and we decompose ${\mathcal A}_1$ 
	into simple submodules. The affine scheme associated to the
	classical limit of ${\mathcal A}_m$ is described. 
	This is a rare case where the structure of a non-flat quantisation is understood. 

%%%%%%%%%%%%%%%%%%%%%%%%%%%%%%%%%%%%%%%%%%%%%%%
	\end{abstract}
	\maketitle
	\setcounter{tocdepth}{2}
	\tableofcontents

\section{Introduction}{\ }

\noindent{1.1.}
We develop invariant theory for the quantum group ${\rm U}_q(G_2)$ by studying its actions on a class of 
non-commutative module algebras known as braided symmetric algebras. 
A first fundamental theorem (FFT) of invariant theory will be established in this context, 
and an explicit description of the non-commutative algebra of invariants is given.  

Significant new characteristics of the present case render the invariant theory of ${\rm U}_q(G_2)$ 
materially different from those of the quantum (super)groups studied so far \cite{CW-queer, lzz:ft, LZZ-2, Zy} (also see \cite{LSS}). 
%which are all associated with the classical series of Lie (super)algebras.
%
The braided symmetric algebras for ${\rm U}_q(G_2)$ investigated here are 
non-flat in the sense of \cite{bz:bs} (in fact there exist no flat braided symmetric algebras with non-trivial ${\rm U}_q(G_2)$-actions), and it is a very interesting and highly non-trivial matter to understand their structures.
%This leads to material difference between the invariant theories of ${\rm U}_q(G_2)$
%and the the quantum (super)groups associated with the classical series of Lie (super)algebras. 
%
The algebra of invariants relevant to the FFT of ${\rm U}_q(G_2)$ is 
generated by less generators than for classical $G_2$ \cite{HZ, S} (also see  \cite{ZS}), where
the additional generators of the classical invariants,  morally, 
come from commutators of quantum generators in the semi-classical limit. 
These are indications of the intricacy of the ${\rm U}_q(G_2)$ case, and indeed 
the invariant theory in this case is much more challenging to study.

%
%We develop invariant theory for the quantum group ${\rm U}_q(G_2)$ 
%associated with the exceptional Lie algebra $G_2$. 
%

\medskip
\noindent{1.2.}
Invariant theory lies at the foundation of symmetry in physics.   
The elemental situation is that the Hamiltonian of a quantum system 
is invariant under the action of some algebraic structure $G$, such as a Lie group,  
quantum group, or a super analogue of them.  In this case, 
the quantum system is said to possess a $G$-symmetry,  
the representation theory of which controls key properties of the system. 
An example is the ${\rm SU}(3)\times {\rm SU}(2)\times {\rm U}(1)$ 
symmetry of the standard model of particle physics. Its importance 
to our understanding of nature can not be overstated.

One can develop invariant theory in several generic settings. 
A traditional setting for the classical invariant theory \cite{W-cl} of Lie groups is 
that of commutative algebras.
Let $M$ be a module for a Lie group $G$, 
and let $S(M)$ be the symmetric algebra over $M$. 
One describes the generators of the subalgebra of $G$-invariants of $S(M)$, 
and determines the relations obeyed by the generators. 
Such results for $M$ being multiple copies of the natural module for  the orthogonal or symplectic group, 
and of $V\oplus V^*$ for the general linear group ${\rm GL}(V)$, are pillars of invariant theory, 
which were coined the first and second fundamental theorems (FFT \& SFT)
of invariant theory for the classical groups by Weyl  loc. cit.. 

\medskip
\noindent{1.3.}
It is highly desirable to develop an analogue of this commutative algebra version of classical 
invariant theory for quantum (super)groups, from the point of view of applications of quantum (super)groups to physics and to other areas of mathematics, e.g., representation theory and low dimensional topology.  
This was done in \cite{lzz:ft} by extending the entire conceptual framework, where 
$S(M)$ was replaced by a certain 
$\U$-module (super)algebra $\cA(M)$, 
called a braided symmetric algebra over the $\U$-module $M$. 
This algebra is necessarily non-commutative unless the action on $M$ is trivial,
due to the non-co-commutative nature of $\U$ as Hopf (super)algebra.  
Thus the arsenal of commutative algebra techniques 
for classical invariant theory no longer apply to the quantum case. 
Furthermore,  the subalgebra $\cA(M)^{\U}$
of invariants also becomes non-commutative in the quantum case. 
This, on one hand, makes it much harder to describe all relations among the generators 
of $\cA(M)^{\U}$ to establish a quantum analogue of SFT, on the other, makes the study of 
the structure of $\cA(M)^{\U}$ a much more interesting problem.   

Quantum FFTs in the non-commutative algebra setting described above 
were established, first for the quantum groups $\U$ associated with the classical Lie algebras 
$\mathfrak{gl}_m$, $\mathfrak{so}_m$, and
$\mathfrak{sp}_{2n}$ \cite{lzz:ft}, and then generalised to the quantum supergroups 
associated with the general linear superalgebra $\mathfrak{gl}_{m|n}$ \cite{Zy}
and queer superalgebra $\mathfrak{q}_n$ \cite{CW-queer}.  
Commutation relations among generators of the subalgebra of invariants 
were also obtained. 

We point out that the non-commutative invariant theories for $\U$ associated with 
$\mathfrak{gl}_{m}$, $\mathfrak{gl}_{m|n}$ and $\mathfrak{q}_n$ 
relied on another aspect of invariant theory, namely, 
quantum Howe dualities developed in \cite{Lai-Z, WZ-hd, z:hd}. 
Very recently, the authors of \cite{LSS} generalised the work on 
${\rm U}_q(\mathfrak{gl}_m)$ in \cite{lzz:ft} 
 to develop a theory of equivariant difference operators, and applied it to refine the 
$({\rm U}_q(\mathfrak{gl}_m), {\rm U}_q(\mathfrak{gl}_n))$ Howe duality of \cite{z:hd}.

We remark that the construction and analysis of braided symmetric algebras $\cA(M)$ 
over $\U$-modules are an interesting problem \cite{bz:bs, z:pa} in its own right.
Such algebras are a natural class of objects to study for non-commutative geometries 
admitting quantum group actions.   
However, when $M$ is non-flat in the sense of \cite{bz:bs}, 
hardly anything is known about the structure of such $\cA(M)$. 
%beyond the case of the $4$-dimensional simple module for ${\rm U}_q(\mathfrak{sl}_2)$ \cite{r:ar}.

\medskip
\noindent{1.4.}
The main objective of the present paper is to develop a non-commutative algebraic version of the invariant theory for the quantum group ${\rm U}_q(G_2)$.
% associated with the Lie algebra $G_2$. 
%
Another aim is to understand examples of $\cA(M)$ which are not flat. 

%We remark that the FFT of invariant theory for $G_2$ at the classical level was first established 
%by Schwarz in \cite{S},  further developed in \cite{HZ}, and in \cite{ZS} at positive characteristic. 
%

We denote  by $\UG$ the quantum group ${\rm U}_q(G_2)$ for simplicity.  
Let $V$ be the $7$-dimensional simple $\UG$-module. There is a $\UG$-submodule $\wedge_q^2 V$ in $V\ot V$, which is the analogue of the skew symmetric tensor.  It generates a two-sided ideal $\langle\wedge_q^2 V\rangle$ in the tensor algebra  $T(V)$ over $V$, 
and the quotient algebra $S_q(V)=T(V)/\langle\wedge_q^2 V\rangle$ is a $\UG$-module algebra. 
The $m$-th tensor power $\cA_m(V)$ of $S_q(V)$, for any $m\ge 0$,  
has a natural $\UG$-module algebra structure
with a braided multiplication defined using the universal $R$-matrix. 
It will be referred to as a braided symmetric algebra. 

Our investigation on the invariants of $\cA_m(V)$ yields the following results. 
\begin{enumerate}[(i)]
\item \label{2} 	We obtain a set of invariants labelled by certain acyclic trivalent graphs, 
		which is shown in Theorem \ref{thm:span} to span the underlying vector space 
		of the subalgebra $\cA_m(V)^{\UG}$ of $\UG$-invariants. 
		We construct explicitly a finite set of elements  
		$\Phi^{(i, j)}$ and	$\Psi^{(r,s,t)}$, 
		%of degrees $2$ and $3$ respectively, 
		where $1\le i\leq j\le m$, $1\le r<s<t\le m$, 
		and show  in Theorem \ref{FFT} that they generate 
		$\cA_m(V)^{\UG}$ as an algebra.

\item \label{3}  We determine the commutation relations among the generators 
		$\Phi^{(p,q)}$ and	$\Psi^{(r,s,t)}$ of $\cA_m(V)^{\UG}$ in Section \ref{sect:struct}. 
		An intriguing fact, which is very different from the classical case \cite{HZ, S},
		is that $q$-commutators of 
		the generators give rise to higher degree invariants associated 
		with certain acyclic trivalent graphs, e.g.,  
		 $q$-commutators of $\Phi^{(p,q)}$'s lead to the degree $4$ invariants
		$\Upsilon^{(i, j, k, \ell)}$ introduced in Section \ref{sect:elmt}. 
		Ordered monomials of $\Phi^{(p,q)}$, $\Psi^{(r,s,t)}$, $\Upsilon^{(i, j, k, \ell)}$ 
		and such higher degree invariants span $\cA_m(V)^{\UG}$
		(see Section \ref{comments} for further discussion).  
		
\end{enumerate}

These results in particular enable one to describe the subspace $\left(V^{\ot m}\right)^{\UG}$ 
 of invariant tensors in the $m$-th tensor power of $V$ for any $m$ 
(see Corollary \ref{corr:tensor-FFT}).  
They amount to a first fundamental theorem of invariant theory for $\UG$. 	
Results (ii) represent a crucial step toward the complete
understanding of the structure of the non-commutative algebras $\cA_m(V)^{\UG}$. 

A new feature, which is not present in the study of the invariant theory for quantum groups 
associated with classical Lie algebras \cite{lzz:ft},  is that 
$S_q(V)$ is ``smaller" than the symmetric algebra over the classical limit 
$\ol{V}=\CC^7$ of $V$ (i.e., specialisation of $V$ at $q\to 1$). 
More precisely, the degree $n$ homogeneous component $S_q(V)_n$ 
satisfies $\dim_{\CC(q)} S_q(V)_n < \dim_\CC S^n(\ol{V})$ for all $n\ge 3$.   
Thus the algebra $\cA_m(V)$ is a ``non-flat deformation" of the coordinate ring of $\ol{V}^{\oplus m}$ in the terminology of \cite{bz:bs}. 
In fact, $S_q(V)$ and hence also $\cA_m(V)$ have nilpotent elements. If 
the algebra $\cA_m(V)$ is regarded as defining some non-commutative space, 
a natural question is the corresponding ``space" in the classical limit. 

Another result of the paper is the following. 

\begin{enumerate}[(i)]
\setcounter{enumi}{2}
\item \label{1} We decompose $S_q(V)$ into a direct sum of simple $\UG$-submodules 
	and determine their multiplicities in Theorem \ref{homogeneous}. It is shown that 
	$S_q(V)_n$ is the quantum analogue of harmonic space at degree $n$ for all 
	$n\ne 2$;  and $S_q(q)_2$ is the direct sum of the degree $2$ quantum 
	of harmonic space with a $1$-dimensional subspace.  
 	Using Theorem \ref{homogeneous}, we construct the affine scheme associated 
	with the classical limit of $\cA_m(V)$.   
\end{enumerate}
This is the only example beside the case of the $4$-dimensional simple module for ${\rm U}_q(\mathfrak{sl}_2)$ \cite{r:ar}, 
where the decomposition of $S_q(V)$ and 
the classical limit of $\cA_m(V)$ are completely understood.

\medskip
\noindent{1.5.}  
Now we briefly explain the key ideas involved in the proofs of the main results. 

%\subsubsection{ }

The result (\ref{1}) is proved by developing an alternative construction of $S_q(V)$ 
with a geometric flavour, which can be described as follows.
As a $\ZZ_+$-graded $\UG$-algebra,  $S_q(V)\simeq \Gamma\oplus \CC(q)\theta$, 
where $\Gamma$ is a $\ZZ_+$-graded subalgebra, 
and $\CC(q)\theta$ is a two-sided ideal which is homogeneous of degree $2$. 
The homogeneous components of $\Gamma$ are spaces of global sections of 
certain quantum line bundles on the quantum flag manifold  \cite{GZ} of $\UG$.
They are constructed using the quantum Borel-Weil theorem \cite{APW, GZ}, 
and can also be interpreted as quantisations of the subspaces of harmonics 
in $S(\ol{V})$ (see \cite[\S 4]{Lai-Z} for the general case). 

Results (\ref{2}) and (\ref{3}) are proved by investigating certain natural surjective linear maps 
from invariant tensors $\left(V^{\ot r}\right)^{\UG}$ 
to homogeneous subspaces of $\cA_m(V)^{\UG}$ with respect to a 
natural $\ZZ_+^m$-grading of $\cA_m(V)$.  
This makes essential use of a diagrammatic 
description of the invariant tensors given in Section \ref{diagdep}. 
We consider a diagram category 
${\mathscr T}(\alpha, \beta)$ of unoriented framed tangles with a coupon, 
which depends on two parameters $\alpha$ and $\beta$. 
It follows from results of  \cite{kuper:s, lz:sm, Ms} that  for 
$\alpha = -q^{-6}$ and $\beta=[7]_q-1$, 
there is an essentially surjective and full tensor functor 
from the category to the full subcategory ${\mathscr V}$ of $\UG$-modules with 
objects $V^{\ot r}$ for $r\in\ZZ_+$ (see Theorem \ref{thm:functor}).  
Furthermore, $\Hom_{\UG}(V^{\ot r}, V^{\ot s})$ %for all $r, s$ 
can be described in terms 
of the special class of diagrams given in Theorem \ref{lem:basis}.

The vector space isomorphism 
$\left(V^{\ot r}\right)^{\UG} \simeq \Hom_{\UG}(\CC(q), V^{\ot r})$
leads to a diagrammatic description of $\left(V^{\ot r}\right)^{\UG}$, 
which enables us to prove the results (\ref{2}) and (\ref{3}). 

This diagrammatic method was introduced  in \cite{lzz:ft} for the quantum groups 
associated with classical Lie algebras, but its adaption to the ${\rm U}_q(G_2)$ case is 
technically  challenging. The diagrammatics in the present case is much more 
complicated due to the appearance of trivalent graphs, and this makes the structure of 
the subalgebra of invariants much more difficult to describe.  
An array of new techniques are needed to treat the FFT 
and commutation relations of the subalgebra of invariants.  
See the proof of Theorem \ref{FFT}  and Section \ref{sect:struct} for details. 

%%%%%%%%%%%%%

\smallskip
\noindent{1.6.}
We now comment on the relationship between Section \ref{sect:diagrams}
and Kuperberg's  theory of spiders for ${\rm U}_q(G_2)$  \cite{kuper:s}
and a simplified version of it in \cite{Ms}. 
Theorem 5.1 loc. cit.  in the case of ${\rm U}_q(G_2)$ gives a diagrammatic description of the morphisms 
$\Hom_{\UG}(V_{\bf i}, V_{\bf j})$ with ${\bf i}=(i_1, \dots, i_r)\in\{1, 2\}^r$ and ${\bf j}=(j_1, \dots, j_s)\in\{1, 2\}^s$ for all $r, s$, 
where $V_{\bf i}=V_{\lambda_{i_1}}\ot V_{\lambda_{i_2}}\ot \dots \ot V_{\lambda_{i_r}}$, and  
$V_{\bf j}=V_{\lambda_{j_1}}\ot V_{\lambda_{j_2}}\ot \dots \ot V_{\lambda_{j_s}}$. Here  
$V_{\lambda_1}$ and $V_{\lambda_2}$ are the simple $\UG$-modules with the fundamental weights
$\lambda_1$ and $\lambda_2$ as 
the respective highest weights  (where $V_{\lambda_1}=V$). 
The morphisms were described as images under some monoidal functor of linear combinations of  diagrams, which were generated, in our convention,  by two copies of each of the diagrams in Figure \ref{fig:generators},  
where the additional diagrams involve new arcs drawn in double-lines \cite{kuper:s}. 
In particular, there is a generator
\begin{tikzpicture}[baseline=-3pt,scale=0.65,color=\clr]
		\draw [-,line width=1pt](0,0)--(-0.5,-0.5);
		\draw [-,line width=1pt](0,0)--(0.5,-0.5);		
		\draw[-,line width=1pt] (-0.04,-0.05)--(-0.04,0.6);
		\draw [-,line width=1pt](0.04,-0.05)--(0.04,0.6);
	\end{tikzpicture}
called a web, which leads to a morphism $V_{\lambda_1}\ot V_{\lambda_1}\lra V_{\lambda_2}$.  
When composing diagrams, one is only allowed to join end points of lines of the same type. 

The presence of the additional diagrams makes the description of morphisms 
in the $\UG$ case of  \cite[Theorem 5.1]{kuper:s} a lot
more complex than that in our Theorem \ref{thm:functor}. 
However, note that
we can extract from our Theorem \ref{thm:functor} 
another diagrammatic description for $\Hom_{\UG}(V_{\bf i}, V_{\bf j})$ with   
${\bf i}\in\{1, 2\}^r$ and ${\bf j}\in\{1, 2\}^s$ for all $r, s$, 
since $V_{\lambda_2}$ is a summand of $V\ot V$ with multiplicity $1$.
Also, Morrison \cite{Ms} extracted a diagram category of trivalent graphs (no crossings) from \cite{kuper:s}, 
which was apparently equivalent to the category of $\UG$-modules with objects $V^{\ot r}$ for all $r$ \cite[Theorem 1.2]{Ms}.  

We wish to emphasise that results of 
\cite{kuper:q, kuper:s, Ms} can not be immediately applied to  prove Theorem \ref{FFT}, the FFT of invariant theory for $\UG$. The diagrammatics developed in Section \ref{diagdep} is what required for our problem. 

\smallskip
\noindent{1.7.}
Now it is appropriate to mention that a modern version of classical invariant theory 
\cite{LZZ-2,LZ-bc, LZ-rev, SW, Wh} seeks  
to describe representation categories of particular interest 
as images of diagram categories under monoidal functors.
This was partly inspired by the theory of quantum invariants of knots developed 
by Jones \cite{J}, Reshetikhin-Turaev \cite{rt:rg, rt:in}, Witten \cite{MW, Witten} and others.  
There is extensive work on such categorical invariant theory for
Lie groups, 
Lie supergroups, and their quantum analogues (see, e.g., 
\cite{BSW,LZZ-2,LZ-bc, LZ-rev, SW, Wh} and references therein), 
 leading to much progress, 
particularly in understanding Schur-Weyl dualities between diagram categories 
and categories of representations of Lie (super)groups and quantum  (super)groups.
The material of Section \ref{diagdep} in this paper lies within this categorical framework. 

In fact the construction of link invariants using braided tensor categories and quantum (super)groups \cite{rt:rg} may also be regarded as part of the invariant theory of quantum (super)groups. 
In this connection, we mention the work \cite{Zr}, which constructed topological invarants of $3$-manifolds using quantum groups associated with the exceptional Lie algebras, 
including $\UG$, at roots of unity. 
A modular category was involved in the work loc. cit., 
which is closely related to the material in Section \ref{diagdep}, 
and Theorem \ref{thm:functor} in particular. 

The categorical invariant theory and the (non) commutative algebra version of invariant theory 
investigated here are complementary to each other. We intend to make use of the categorical invariant theory for 
the quantum orthosymplectic supergroups developed in \cite{LZZ-2} to  
study a non-commutative algebraic version. \\

We work over $\CC(q)$  throughout. 
Denote $[n]_q=\frac{q^n-q^{-n}}{q-q^{-1}}$ for any non-zero $n\in \ZZ_+$.

\section{Background material}\label{sect:background}%\label{sect:st-mod}
We discuss some basic facts about the the $7$-dimensional simple module $V$
for the quantum group $\UG$, 
and introduce the quantum symmetric algebra and braided symmetric algebras over $V$.

\subsection{The standard ${\rm U}_q(G_2)$-module}\label{sect:st-mod}

We adopt the following convention for $G_2$. The simple roots will be denoted by
$\alpha_{1}$ and $\alpha_{2}$, where $\alpha_{1}$ is short with
length $\sqrt{2}$. The fundamental weights will be denoted by 
$\lambda_1$ and $\lambda_2$. The set of positive roots of $G_2$ is 
\begin{align*}
	\Phi^+&=\{\alpha_{1},\alpha_{2},\alpha_{3}=\alpha_{1}+\alpha_{2},
	\alpha_{4}=2\alpha_{1}+\alpha_{2},
	\alpha_{5}=3\alpha_{1}+\alpha_{2},
	\alpha_{6}=3\alpha_{1}+2\alpha_{2}
	\}.
\end{align*}

We present the quantum group $\UG={\rm U}_q(G_2)$ of $G_2$ in the standard way 
with the generators $E_i, F_i, K_i^{\pm}$, for $i=1, 2$, 
and the usual relations (see \cite{jan:qg}).
It is well known that $\UG$ is a Hopf algebra.
	Write $\Delta, \epsilon$ and $S$ for the co-multiplication,
	co-unit and the antipode respectively, 
 and take the following co-multiplication:
	\[\Delta(E_i)=E_i\otimes K_i+1\otimes E_i,\quad
	\Delta(F_i)=F_i\otimes 1+K_i^{-1}\otimes F_i,\quad
	\Delta(K_i)=K_i\otimes K_i.
	\]
The antipode satisfies $S^2(x)= K_{2\rho} x K_{2\rho}^{-1}$ for all $x\in \UG$, where  
$K_{2\rho}=\left(K_1^5 K_2^3\right)^2$ which, in particular,  satisfies $K_{2\rho}E_i K_{2\rho}^{-1}=q^{(\alpha_i, 2\rho)}E_i$ for $i=1, 2$, 
with $\rho=\frac{1}{2}\sum_{\alpha\in\Phi^+}\alpha$ being the Weyl vector  of $G_2$.

Let $P^+$ be the set of integral dominant $G_2$-weights. 
For any $\lambda\in P^+$, we denote by $V_\lambda$ the type-${\bf 1}$ simple $\UG$-module with the highest weight $\lambda$. 
We shall  consider 
$\UG$-modules of type-${\bf 1}$ only throughout the paper.

Let $V=V_{\lambda_1}$, which is $7$-dimensional, 
and is strongly multiplicity free in the sense of \cite{lz:sm}. 
Call $V$ the standard $\UG$-module for easy reference.  
The corresponding representation will be denoted by $\pi: {\UG} \longrightarrow \End_{\CC(q)}(V)$.
Note that $\lambda_1=\alpha_4$, and the complete set of weights of $V$ is  
\beq\label{eq:wts}
\alpha_4, \alpha_3, \alpha_1, 0,  -\alpha_1, -\alpha_3, - \alpha_4.
\eeq
We choose for $V$ an ordered weight basis 
$\{ v_1, v_2, v_3, v_0, v_{-3}, v_{-2}, v_{-1} \}$, the weights of which are \eqref{eq:wts} in the given order.  
Let $E_{ab}$ be the matrix units in $\End_\CC(V)$ relative to this basis, i.e., 
$E_{ab}(v_c)=\delta_{bc}v_a$ for all $a, b, c$. 
The standard representation of $\UG$ is given by 
	\begin{align*}
		\pi(E_1)&=E_{12}+(q+q^{-1})E_{30}-(q+q^{-1})E_{0,-3}-E_{-2,-1},\\
		\pi(E_2)&=E_{23}-E_{-3,-2};\\
		\pi(F_1)&=E_{21}+E_{03}-E_{-3,0}-E_{-1,-2},\\
		\pi(F_2)&=E_{32}-E_{-2,-3};\\
		\pi(K_1)&=qE_{11}+q^{-1}E_{22}+q^2E_{33}+E_{00}+q^{-2}E_{-3,-3}
		+qE_{-2,-2}+q^{-1}E_{-1,-1},\\
		\pi(K_2)&=E_{11}+q^3E_{22}+q^{-3}E_{33}+E_{00}+q^3E_{-3,-3}+q^{-3}E_{-2,-2}
		+E_{-1,-1}.	
	\end{align*}

It is clear from \eqref{eq:wts} that the standard $\UG$-module $V$ is self dual. 
Thus there exists a $\UG$-invariant non-degenerate 
bilinear form  $( \ , \ ): V \times V\lra \CC(q)$, which 
 is unique up to scalar multiples. We take
%%%%%%%%%%%%%
\begin{equation}\label{bilinear}
\begin{array}{l l l l}
(v_{1},v_{-1})=q^6, 		&(v_2,v_{-2})=q^5, 			&(v_{3},v_{-3})=q^2	, 
											&(v_0,v_{0})=1, \\
(v_{-1},v_{1})=q^{-4}, &(v_{-2},v_{2})=q^{-3}, 	&(v_{-3},v_{3})=1, 
											&\text{rest}=0.
\end{array}
\end{equation}

The quantum dimension of any weight module $M$ is defined by $\dim_q M := \textit{tr}_M\left(K_{2\rho}\right)$.  For the standard module, 
\beq
\dim_q V =  q^{10} + q^{8} + q^{2} +1  + q^{-2}  + q^{-8}+ q^{-10}. 
\eeq

The tensor product $V\ot V$ decomposes into
\beq \label{eq:VtV}
V\otimes V=V_{2\lambda_{1}}\oplus V_{\lambda_2}\oplus V_{\lambda_{1}}\oplus V_{0},
\eeq
where $V_{\lambda_{1}}\cong V$ and  $V_0=\CC(q)$,  
and  the other simple submodules respectively have dimensions 
$\dim(V_{\lambda_2})=14$,  and
$\dim(V_{2\lambda_{1}})= 27$. 

\begin{remark}\label{lem:all-sim}
The fundamental modules $V_{\lambda_1}=V$ and $V_{\lambda_2}$ are submodules of $V\ot V$,  
thus all simple modules $V_\lambda$ with $\lambda\in P^+$  can be obtained as summands of tensor powers of $V$.
\end{remark}

We will need explicit bases of the simple submodules of $V\ot V$, both for developing presentations of, 
and constructing $\UG$-invariants in, braided symmetric algebras over $V$.

We have constructed the following weight bases for the simple submodules.

\begin{itemize}

\item Basis $B(V_0)$ for $V_0$: 
	\beq\label{basis0}
	c_0&=&	q^4v_1\otimes v_{-1}{+}q^{-6}v_{-1}\otimes v_{1}{+}q^3v_2\otimes v_{-2}\\
&&{+}q^{-5}v_{-2}\otimes v_{2}{+}v_3\otimes v_{-3}{+}q^{-2}v_{-3}\otimes v_{3}{+}v_0\otimes v_0. \nonumber
	\eeq	

\item Basis $B(V_{2\lambda_1})$ for $V_{2\lambda_1}$:
\beq	\label{basis20}
\begin{aligned}
		&	v_i\otimes v_i,\quad v_{-i}\otimes v_{-i}, \quad i=1,2,3,\\
		&	v_{i}{\otimes} v_j{+}b_{(i j)} v_j{\otimes} v_i,\ 
		v_{{-}j}{\otimes}v_{{-}i}{+}b_{(i j)} v_{{-}i}{\otimes} v_{{-}j}, \ \text{for $(i j)\in \widetilde{\mathcal P}$}, \\ 
		&v_0\otimes v_1+q^{-2}v_1\otimes v_0+q^{-1}v_2\otimes v_3+q^{-2}v_3\otimes v_2,\\
		&q^2v_{-1}\otimes v_0+v_{-3}\otimes v_{-2}+qv_{-2}\otimes v_{-3}+v_0\otimes v_{-1},\\
		&(q^{-2}+1)\left(v_0\otimes v_2+v_2\otimes v_0\right)-v_{-3}\otimes v_1-q^{-3}v_1\otimes v_{-3},\\
		&v_0\otimes v_{-2}+v_{-2}\otimes v_0-q^{-2}v_3\otimes v_{-1}-qv_{-1}\otimes v_3,\\
		&v_{-2}\otimes v_1+q^{-3}v_1\otimes v_{-2}+(q^{-2}+1)\left(v_3\otimes v_0+v_0\otimes v_3\right),\\
		&v_{-1}\otimes v_2+q^{-3}v_2\otimes v_{-1}+q^{-1}v_0\otimes v_{-3}+q^{-1}v_{-3}\otimes v_0,\\
		&v_{-2}\otimes v_2+q^{-4}v_2\otimes v_{-2}-q^{-1}v_3\otimes v_{-3}-q^{-3}v_{-3}\otimes v_3,\\
		&\left(q^{-2}+1\right)v_0\otimes v_0-v_{-3}\otimes v_3-q^{-4}v_3\otimes v_{-3},\\
		&2(q^{-2}{+}1)v_0{\otimes }v_0{+}q^{-3}v_2{\otimes} v_{-2}{+}q^{-1}v_{{-}2}{\otimes} v_2-v_{{-}1}{\otimes} v_1-q^{-4}v_1{\otimes} v_{{-1}}\\
		&-(q^{-2}{+}1)\left(v_{{-}3}{\otimes} v_3{+}q^{{-}2}v_3{\otimes} v_{{-}3}\right),
\end{aligned}
\eeq
where $\widetilde{\mathcal P}=\widetilde{\mathcal P}_1\bigcup \widetilde{\mathcal P}_2\bigcup \widetilde{\mathcal P}_3$,  and $b_{(i j)}=q^\ell \ \mbox{ for }(i,j)\in\widetilde{\mathcal P}_\ell$, with
\[
\baln
&\widetilde{\mathcal P}_1=\{(1,2),(1,3),(2,{-}3)\}, \quad 
\widetilde{\mathcal P}_2=  \{(2,0),(3,0)\}, \quad \widetilde{\mathcal P}_3=(2, 3).
\ealn
\]
	
\item Basis $B(V_{\lambda_2})$ for $V_{\lambda_{2}}$:
\beq\label{basis01}
	\begin{aligned}
		&	v_i{\otimes} v_j-q^{-1}v_j{\otimes} v_i,\  v_{-j}{\otimes} v_{-i}-q^{-1}v_{-i}{\otimes} v_{-j}, \\
		&\qquad\qquad  \text{ for } 1\leq i<j\leq 3; \mbox{~and~} (i, j)=(2, -3), \\
		&v_2\otimes v_3-q^{-3}v_3\otimes v_2
		+q^{-1}\left(v_1\otimes v_0-v_0\otimes v_1\right), \\
		&v_{-3}\otimes v_{-2}-q^{-3}v_{-2}\otimes v_{-3}
		+(q^{-2}+1)\left(v_0\otimes v_{-1}-v_{-1}\otimes v_0\right),\\	
		&v_1\otimes v_{-3}-qv_{-3}\otimes v_1
		+(q^{-1}+q)\left(v_0\otimes v_2-q^2v_2\otimes v_0\right), \\
		&v_3\otimes v_{-1}-qv_{-1}\otimes v_3+v_{-2}\otimes v_0-q^2v_0\otimes v_{-2}, \\
		&v_1\otimes v_{-2}-qv_{-2}\otimes v_1+(q^{-1}
		+q)\left(q^2v_3\otimes v_0-v_0\otimes v_3\right), 	\\
		&v_2\otimes v_{-1}-qv_{-1}\otimes v_2+q^2v_0\otimes v_{-3}-v_{-3}\otimes v_0,\\
		&v_2\otimes v_{-2}-q^2v_{-2}\otimes v_2-q^3v_3\otimes v_{-3}
		+q^{-1}v_{-3}\otimes v_3, \\
		&(q^3{-}q^{-1})v_0{\otimes} v_0-q^{-1}\left(v_1{\otimes} v_{-1}
		{-}q^2v_{-1}{\otimes} v_1\right)\\
		&{+}\left(v_2{\otimes} v_{-2}{-}v_{-2}{\otimes} v_2\right)
		{-}(q^{-1}{+}q)\left(v_3{\otimes} v_{-3}{-}v_{-3}{\otimes} v_3\right).
	\end{aligned}
\eeq

\item Basis $B(V_{\lambda_1})$ for $V_{\lambda_{1}}$:
write $B(V_{\lambda_1}) =\{b_a\mid a=0, \pm 1, \pm 2, \pm 3\}$, 
and impose the additional condition  that 
$v_a\mapsto b_a$, for all $a$, extends to a $\UG$-module isomorphism $V\simeq V_{\lambda_1}\subset V\ot V$.  Then
%%%%%%%%%%%%%%%%%%%%%%%%%%
\beq\label{basis10}
\begin{aligned}
b_1=			&v_{1}\otimes v_{0}-q^{-6}v_{0}\otimes v_{1}
					-(q^{-3}+q^{-1})\left(v_{2}\otimes v_3-q^{-3}v_3\otimes v_2\right)\\
b_2=			&q^{-4}\left(v_0\otimes v_2-q^{2}v_{2}\otimes v_{0}\right)
					-q^{-1}v_{1}\otimes v_{-3}+q^{-6}v_{-3}\otimes v_{1},\\
b_3=			&	q^{-1}v_{1}\otimes v_{-2}-q^{-6}v_{-2}\otimes v_{1}
					-q^{-4}\left(q^{2}v_{3}\otimes v_{0}-v_0\otimes v_3\right),\\
b_0=			&q^{-6}v_{-1}{\otimes} v_{1}{-}q^{-2}v_{1}{\otimes} v_{-1}
					{-}(q^{-2}{-}q^{-4})v_{0}{\otimes}  v_{0}\\
	          &{+}q^{-1}v_{2}{\otimes} v_{-2}-q^{-7}v_{-2}{\otimes} v_{2}
					{+}q^{-4}\left(v_{3}{\otimes} v_{-3}{-}v_{-3}{\otimes}v_3\right),\\
b_{-3}=	&q^{-2}(1+q^2)	\left(v_{2}\otimes v_{-1}-q^{-5}v_{-1}\otimes v_{2}\right)\\
					&-q^{-4}\left(q^{2}v_{0}\otimes v_{-3}-v_{-3}\otimes v_{0}\right),\\
b_{-2}=	&q^{-4}\left(v_{-2}\otimes v_0-q^{2}v_{0}\otimes v_{-2}\right)\\
					&-q^{-2}(1+q^2)\left(v_{3}\otimes v_{-1}-q^{-5}v_{-1}\otimes v_{3}\right),\\
b_{-1}=	&v_{0}\otimes v_{-1}-q^{-6}v_{-1}\otimes v_{0}
					-q^{-2}\left(v_{-3}\otimes v_{-2} -q^{-3}v_{-2}\otimes v_{-3}\right).
\end{aligned} 
\eeq
\end{itemize}	

%
%
%\subsection{$R$-matrix}
%

%	
An important structural property of $\UG$ is the existence of a universal $R$-matrix $\R$,
 belonging to a completion of $\UG\otimes \UG$. We write 
\[\R=\sum_t\alpha_t\ot\beta_t.\]  
It has a well defined action on the tensor product of any two locally finite $\U$-modules of type-${\bf 1}$. 
For any such modules $V_1$ and $V_2$, the $R$-matrix 
$
\check{R}_{V_1,V_2}:V_1\otimes V_2\rightarrow V_2\otimes V_1
$
is an isomorphism of $\U$-modules defined as follows. 
\beq
\check{R}_{V_1,V_2}(v\ot v') = \sum_t\beta_t\cdot  v' \ot \alpha_t\cdot  v, \quad \forall v\in V_1, v'\in V_2.  
\eeq 
Given any locally finite $\U$-module $V_1$, $V_2$ and $V_3$, we have the following 
celebrated Yang-Baxter relation 
between $\U$-module isomorphisms $V_1\ot V_2\ot V_3 \lra V_3\ot V_2 \ot V_1$. 
\beq
&&(\check{R}_{V_2,V_3}\ot \id_{V_1}) (\id_{V_2}\ot \check{R}_{V_1,V_3}) (\check{R}_{V_1,V_2}\ot \id_{V_3})\\ 
&&= (\id_{V_3}\ot \check{R}_{V_1,V_2}) (\check{R}_{V_1,V_3}\ot \id_{V_2}) (\id_{V_1}\ot \check{R}_{V_2,V_3}). \nonumber
\eeq

If $V_2= V_3=V_1$, the Yang-Baxter relation reduces to the following braid relation
\[
(\check{R}_{V_1,V_1}\ot \id_{V_1}) (\id_{V_1}\ot \check{R}_{V_1,V_1}) (\check{R}_{V_1,V_1}\ot \id_{V_1}) 
= (\id_{V_1}\ot \check{R}_{V_1,V_1}) (\check{R}_{V_1,V_1}\ot \id_{V_1}) (\id_{V_1}\ot \check{R}_{V_1,V_1}).
\]

Consider 
$\check{R}:=\check{R}_{V, V}$ $\in {\rm End}_{\UG} (V\ot V)$, which has a simple description, as $V$ is strongly multiplicity free (in the sense of \cite{lz:sm}). 
Denote by $P[\lambda]$ the idempotent projecting $V\otimes V$ onto the simple submodule $V_\lambda$ 
for $\lambda=2\lambda_{1}, 0$, $\lambda_{2}$, or $ \lambda_{1}$.
Then $\check{R}$ has the following spectral decomposition \cite{lz:sm}.
\begin{equation}\label{Rmatrix1}
	\check{R}=q^2P[2\lambda_{1}]+q^{-12}P[0]-P[\lambda_{2}]-q^{-6}P[\lambda_{1}].
\end{equation}
It follows that 
\begin{equation}\label{R0}
	(\check{R} -q^2 \id_V)(\check{R}-q^{-12} \id_V)(\check{R}+ \id_V)(\check{R}+q^{-6} \id_V)=0.
\end{equation}

\subsection{Definition of braided symmetric algebras}

The notion of module algebras over a Hopf algebra \cite[\S4.1]{m:ha} will be used extensively.
Let $H$ be a Hopf algebra. 
	An associative unital algebra $(A,\mu)$ with multiplication $\mu$
	is a $H$-module algebra if $A$ is a $H$-module and $\mu$ is a $H$-module morphism, 
	\[x\cdot \mu(a\otimes b)=\sum_{(x)}\mu\big(x_{(1)}\cdot a\otimes x_{(2)}\cdot b\big), 
	\quad a, b\in A,  x\in H, 
	\]
	where Sweedler's notation $\Delta(x)=\sum_{(x)}x_{(1)}\otimes x_{(2)}$ for the comultiplication is used.
A module algebra $A$ for a Hopf algebra $H$ is called \textit{locally finite} if $H\cdot f$ 
is finite dimensional for any $f\in A$. 

We shall use the term $H$-algebras as a synonym for $H$-module algebras.

Assume that $H$ is a quasi-triangular Hopf algebra, i.e., it is a Hopf algebra that  admits a universal $R$-matrix $R$. 
If $(A,\mu_A)$ and $(B,\mu_B)$ are locally finite $H$-algebras, it is well know that 
the tensor product $A\otimes B$ with the multiplication 
\beq\label{eq:mu-AB}
\mu_{A,B}:=(\mu_A\otimes \mu_B)\circ(\id_A\otimes \check{R}_{A, B}\otimes\id_B)
\eeq  
is a $H$-module algebra (see, e.g., \cite{b:dcb, lzz:ft, m:ha}). 
This is usually referred to as the braided tensor product of $(A,\mu_A)$ and $(B,\mu_B)$.

Now we return to the standard module $V$ for $\UG$. 
Denote by $T(V)$ the tensor algebra over $V$. 
Then $T(V)=\sum_{r=0}^\infty V^{\ot r}$ is a $\UG$-module algebra in the obvious way.  It is $\ZZ_+$-graded with $V^{\ot r}$ being the degree $r$ homogeneous submodule.  

Define the \textit{quantum skew symmetric tensor} $\land_q^2V\subset V\otimes V$ to be the sum of the eigenspaces of $\check{R}$ corresponding to the eigenvalues $-1$ and $-q^{-6}$. 
Then $\land_q^2V \simeq V_{\lambda_2}\oplus V_{\lambda_1}$. 
Let $\mathfrak{I}_q(V)$ be the two-sided ideal of $T(V)$ generated by 
$\land_q^2V\simeq V_{\lambda_2}\oplus V_{\lambda_1}$. The quantum symmetric algebra over $V$ is defined by
\beq \label{eq:SqV}
S_q(V):=T(V)/\mathfrak{I}_q(V).
\eeq
We denote the canonical surjection by
\beq
\tau: T(V)\lra S_q(V).
\eeq

\begin{remark}\label{rmk:non-flat}
It is known that $S_q(V)$ is not flat in the sense of \cite{bz:bs}.  This follows from a 
result of \cite{z:pa} and the fact that $\wedge_a^2 V$ is not a simple $\UG$-module. 
\end{remark}

Since $\land_q^2V$ is a $\UG$-submodule of $T(V)$, which is homogeneous of degree $2$, 
the two-sided ideal $\mathfrak{I}_q(V)$ is a $\mathbb{Z}_+$-graded submodule of $T(V)$.
Hence the quotient algebra $S_q(V)$ is a $\mathbb{Z}_+$-graded $\UG$-module algebra. 
Let $S_q(V)_n$ be the homogeneous component of degree $n$, thus 
$
S_q(V)=\sum_{n\ge 0} S_q(V)_n.
$
For each $n$, the restriction of $\tau$ to the degree $n$ homogeneous subspace leads to the surjective map 
\beq\label{eq:tau-restrict}
\tau_n:  V^{\ot n}\lra S_q(V)_n.
\eeq

By iterating the construction of braided tensor product for the $\UG$-algebra $S_q(V)$, 
we obtain the $\UG$-algebra 
\beq\label{eq:Am}
\cA_{m}(V):=(S_q(V)^{\otimes m}, \mu_{\cA_m}), \quad \mu_{\cA_m}:=\mu_{S_q(V), \cA_{m-1}(V)}. 
\eeq
It follows from properties of the universal $R$-matrix that  $ \mu_{\cA_m}=\mu_{\cA_{r}(V), \cA_{m-r}(V)}$ for any $r=1, 2, \dots, m-1$.  
We refer to $\cA_{m}(V)$ as a braided symmetric algebra over $V$.

Since $S_q(V)$ is a $\ZZ_+$-graded $\UG$-algebra, $\cA_{m}(V)$ has a natural $\ZZ_+^m$-grading with  the homogeneous component $\cA_{m}(V)_{\textbf{d}}$ of degree $\textbf{d}=(d_1,\cdots,d_m)$  given by
\beq
\cA_{m}(V)_{\textbf{d}}=S_q(V)_{d_1}\otimes\cdots\otimes S_q(V)_{d_m}.
\eeq

\section{Structure of braided symmetric algebras}\label{sect:algebras}
We now develop the structure of the braided symmetric algebras $\cA_m(V)$ over $V$. 
\subsection{Decomposition of the quantum symmetric algebra}\label{sect:Sq}

 Let $X_a =\tau(v_a)$ for the basis elements $v_a$ of $V$.
 Using the bases $B(V_{\lambda_1})$ and $B(V_{\lambda_2})$  constructed in Section \ref{sect:st-mod} for the submodules $V_{\lambda_1}$ and $ V_{\lambda_2}$ of $V\otimes V$, we immediately obtain a complete set of defining relations of $S_q(V)$.    
We have the following result.

\begin{lemma}\label{bsa}
	The quantum symmetric algebra $S_q(V)$ of the standard ${\UG}$-module $V$ is generated by $\{X_i|i=0, \pm 1,\pm 2, \pm 3\}$ subject to the following relations.
	\begin{align}
		&X_j X_i=a_{(i, j)} X_iX_j,\quad X_{-i}X_{-j}=a_{(i,  j)}X_{-j}X_{-i}, 
		\quad \text{for }  (i, j)\in\mathcal{P}, \label{r1}\\
		&\begin{array}{l}
			X_3X_2{=}q^3X_2X_3{+}(q^2{-}q^4)X_1X_0,\quad
			X_{-2}X_{-3}{=}q^3X_{-3}  X_{-2}{+}(q{-}q^5)X_{0} X_{-1};\\
			X_0X_2{=}q^2X_2X_0{+}(q^3{-}q)X_1X_{-3},\quad
			X_{-2}X_0{=}q^2X_0X_{-2}{+}(q^4{-}1)X_3X_{-1 };\label{r2}\\
			X_0X_3{=}q^2X_3X_0{+}(q{-}q^3)X_1X_{-2},\quad
			X_{-3}X_0{=}q^2X_0X_{-3}{+}(1{-}q^4)X_2X_{-1};\end{array}\\
		&\begin{array}{l}
			X_{-1}X_1=q^2 X_1X_{-1}+(q^{-1}{-}q^{3} )X_2X_{-2}+(q^{-4}{-}1)X_3X_{-3}\\
			\phantom{XXXX} +(q^{-4}{-   }q^{-2})(X_0X_{0}+X_1X_{-1}),\\
			X_{-2}X_2=X_2X_{-2}
			+(q^{-3}-q)X_3X_{-3}+(q^{-3}-q^{-1})(X_0X_{0}+X_1X_{-1}),\\
			X_{-3}X_3=X_3X_{-3}
			+(q^{3}-q)X_2X_{-2}+(1-q^{2})(X_0X_{0}+X_1X_{-1}),  \label{r3}
		\end{array}
	\end{align}
where $\mathcal{P}=\mathcal{P}_1\cup \mathcal{P}_2\cup \mathcal{P}_3$ with
$
\mathcal{P}_1=\{(1,2), (1,3), (2,-3)\}, 
\mathcal{P}_2=\{(1, 0)\},  \mathcal{P}_3=\{(1, -3), (1,-2)\},
$ 
and 
$a_{(i, j)} = q^\ell$ for $(i, j)\in \mathcal{P}_\ell$.

\end{lemma}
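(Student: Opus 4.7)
The strategy is to derive the presentation directly from the definition $S_q(V)=T(V)/\mathfrak{I}_q(V)$, with $\mathfrak{I}_q(V)$ the two-sided ideal generated by $\land_q^2V=V_{\lambda_1}\oplus V_{\lambda_2}\subset V\otimes V$. The generators $\{X_a\}$ of $S_q(V)$ are the images under $\tau$ of a basis of $V$, and a complete set of defining relations is obtained by imposing $\tau_2(b)=0$ for $b$ ranging over any spanning set of $\land_q^2V$, where $\tau_2$ is the degree $2$ restriction \eqref{eq:tau-restrict} of $\tau$. This suffices because $\mathfrak{I}_q(V)$ is by definition the two-sided ideal of $T(V)$ generated by these degree $2$ elements.

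As such a spanning set I would take $B(V_{\lambda_1})\cup B(V_{\lambda_2})$ from \eqref{basis10} and \eqref{basis01}. For each $b$ in this union, the substitution $v_a\otimes v_c\mapsto X_aX_c$ yields a linear relation among degree $2$ monomials. Since $\land_q^2V$ is weight-graded and the listed target relations \eqref{r1}--\eqref{r3} are also weight-homogeneous, the proof reduces to a weight-by-weight comparison within $V\otimes V$: in each weight subspace, one shows that the image of $\land_q^2V$ under $\tau_2$ and the span of the listed relations lying in that weight space coincide.

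The easy weight subspaces are those of low dimension. For example, in the two-dimensional subspace $\langle v_1\otimes v_2,\, v_2\otimes v_1\rangle$ the single basis element $v_1\otimes v_2-q^{-1}v_2\otimes v_1$ of $\land_q^2V$ translates directly to the $\mathcal{P}_1$-type relation $X_2X_1=qX_1X_2$ of \eqref{r1}. The delicate cases are those where the relevant basis element of $\land_q^2V$ has more than two terms, such as $v_2\otimes v_3-q^{-3}v_3\otimes v_2+q^{-1}(v_1\otimes v_0-v_0\otimes v_1)$; here one must combine this with the $V_{\lambda_1}$ element $b_1$ from \eqref{basis10} in order to separate out the $\mathcal{P}_2$-type $q$-commutation $X_0X_1=q^2X_1X_0$ of \eqref{r1} and the deformed commutation $X_3X_2=q^3X_2X_3+(q^2-q^4)X_1X_0$ of \eqref{r2}. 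The zero-weight space of $V\otimes V$ has dimension $7$; in it, $\land_q^2V$ contributes three elements (two from $B(V_{\lambda_2})$ and $b_0$ from $B(V_{\lambda_1})$), and Gaussian elimination against the pivots $X_{-i}X_i$ for $i=1,2,3$ delivers the three relations in \eqref{r3}.

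The main obstacle is purely computational bookkeeping: there are enough $q^{\pm\ell}$ coefficients distributed throughout \eqref{basis10} and \eqref{basis01} that one must carefully track signs and powers to extract exactly the coefficients $a_{(i,j)}=q^\ell$ in \eqref{r1}, together with the correction terms $q^2-q^4$, $q^3-q$, $q-q^3$, $q^4-1$, $1-q^4$ in \eqref{r2} and $q^{-1}-q^3$, $q^{-4}-q^{-2}$, $q^{-3}-q$, $q^{-3}-q^{-1}$, $q^3-q$, $1-q^2$ in \eqref{r3}. Once the weight-by-weight matching is complete, a $\CC(q)$-dimension count confirms that the listed relations span $\tau_2(\land_q^2V)$ and hence generate the same two-sided ideal as $\mathfrak{I}_q(V)$, which together with the trivial observation that the $X_a$ generate $S_q(V)$ establishes the claimed presentation.
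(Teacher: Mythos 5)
Your proposal is correct and follows essentially the same route as the paper: both observe that a complete set of defining relations is obtained by setting to zero the images under $\tau_2$ of a basis of $\wedge_q^2V=V_{\lambda_1}\oplus V_{\lambda_2}$, and that the listed relations arise from a suitable change of basis on $B(V_{\lambda_1})\cup B(V_{\lambda_2})$. Your weight-by-weight elimination simply spells out the basis change that the paper mentions without detailing.
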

\begin{proof}
The submodule $\wedge^2_q V$ of $V\otimes V$ has a basis 
 $B_- =B(V_{\lambda_1})\cup B(V_{\lambda_2})$. 
The number of relations  given in the lemma is equal to $\#B_-=21$.  
We have performed a basis change
on  $B_-$ to obtain a new basis 
	for $\wedge^2_q V$. The relations 
	for $S_q(V)$ listed in the lemma arise from the elements of the new basis. 
\end{proof}

Denote by $\Phi\in S_q(V)$ the image of the element $c_0$ given by \eqref{basis0}. Then 
\beq\label{eq:quadr-inv}
 &&\Phi:=q^4X_1X_{-1}+q^3X_2X_{-2}+X_3X_{-3}+q^{-6}X_{-1}X_1  \\
&&\phantom{XX}+q^{-5}X_{-2}X_2+q^{-2}X_{-3}X_3+X_0X_{0},   \nonumber
\eeq
which spans the subspace $(S_q(V)_2)^{\UG}$ 
of invariants.

We have the following result. It in particular shows explicitly that $S_q(V)$ is not flat (cf., Remark \ref{rmk:non-flat}). 

\begin{lemma} \label{lem:Phi-nil}
	The element $\Phi$ satisfies 
	$
	\Phi X_a = X_a \Phi =0
	$
	for all $a=0, \pm1, \pm 2,\pm3$.  This in particular implies $\Phi^2=0$.
\end{lemma}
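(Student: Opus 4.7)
The plan is to use a $\UG$-equivariance argument to reduce these annihilation identities to a minimal pair of scalar checks, and then to verify those directly using the presentation of $S_q(V)$ given in Lemma \ref{bsa}.

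Since $\Phi$ spans the trivial $\UG$-submodule $(S_q(V)_2)^{\UG}$, one has $x\cdot\Phi=\epsilon(x)\Phi$ for all $x\in\UG$. Combined with the $\UG$-module algebra axiom and the Hopf identities $\sum x_{(1)}\epsilon(x_{(2)})=x=\sum\epsilon(x_{(1)})x_{(2)}$, this implies that both right- and left-multiplication by $\Phi$ define $\UG$-module homomorphisms
\[
R_\Phi:V\lra S_q(V)_3,\ v_a\mapsto \Phi X_a,\qquad L_\Phi:V\lra S_q(V)_3,\ v_a\mapsto X_a\Phi.
\]
As $V$ is a simple $\UG$-module, each of $R_\Phi$ and $L_\Phi$ is either zero or injective, and vanishes identically if and only if it kills the highest weight vector $v_1$. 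The lemma therefore reduces to the two identities $\Phi X_1=0$ and $X_1\Phi=0$ in $S_q(V)_3$. The claim $\Phi^2=0$ is then automatic: writing $\Phi=\sum c_{ij}X_iX_j$ via \eqref{eq:quadr-inv}, one computes $\Phi^2=\sum c_{ij}(\Phi X_i)X_j=0$.

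To verify $\Phi X_1=0$, I will expand it into seven cubic monomials using \eqref{eq:quadr-inv} and then apply the relations of Lemma \ref{bsa} to commute the factor $X_1$ to the leftmost position in each summand. The $q$-commutation relations \eqref{r1} dispose of the pairs $X_iX_1$ for $i\in\{2,3,0,-2,-3\}$ with scalar $q$-power rescalings; the relations \eqref{r2} introduce correction monomials when $X_0$-factors or $X_3X_{-3}$-type pairs appear on their way past $X_1$; and the relation \eqref{r3} is needed to rewrite $X_{-1}X_1$ in terms of $X_1X_{-1}$, $X_2X_{-2}$, $X_3X_{-3}$ and $X_0^2$. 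After collecting all terms on an ordered basis of $S_q(V)_3$, the coefficients should collapse to zero. The computation for $X_1\Phi$ is entirely analogous, instead commuting $X_1$ to the rightmost position.

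The main obstacle is the computational bookkeeping in the middle step: the correction terms from \eqref{r3} must cancel precisely against contributions arising from $q^3X_2X_{-2}X_1$, $X_3X_{-3}X_1$ and $X_0X_0X_1$ in $\Phi X_1$, and the cancellation depends crucially on the particular $q$-powers appearing in \eqref{eq:quadr-inv}, which are themselves dictated by the invariant bilinear form \eqref{bilinear}. Conceptually, the identity $\Phi X_a=0$ is equivalent to the statement that the $\UG$-submodule $c_0\ot V\subset V^{\ot 3}$, with $c_0$ as in \eqref{basis0}, lies inside $\wedge_q^2V\ot V+V\ot\wedge_q^2V$, the degree-$3$ part of the ideal $\mathfrak{I}_q(V)$; the direct computation is just an explicit verification of this inclusion.
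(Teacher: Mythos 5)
Your reduction via $\UG$-equivariance is sound: because $\Phi$ is invariant, the module-algebra axiom does make $R_\Phi: v_a\mapsto\Phi X_a$ and $L_\Phi: v_a\mapsto X_a\Phi$ into $\UG$-module maps $V\to S_q(V)_3$, so vanishing on $v_1$ forces vanishing on all of $V$. This is the same equivariance observation the paper exploits, only at the start rather than at the end of the argument.

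The gap is in the middle step. The claim that ``after collecting all terms on an ordered basis of $S_q(V)_3$, the coefficients should collapse to zero'' is false, because the ordered cubic monomials $X_aX_bX_c$ are \emph{not} linearly independent in $S_q(V)_3$: the algebra $S_q(V)$ is a non-flat deformation, so $\dim_{\CC(q)}S_q(V)_3$ is strictly less than the number of ordered monomials. Concretely, when you expand $\Phi X_1$ using \eqref{eq:quadr-inv} and commute $X_1$ to the left via \eqref{r1}--\eqref{r3}, the calculation terminates not at zero but at an expression of the form
\[
\Phi X_1 \;=\; c(q)\,\bigl[(1+q^6)X_1X_1X_{-1}+(q^3+q^5)X_1X_2X_{-2}+(1+q^2)X_1X_3X_{-3}+X_1X_0X_0\bigr],
\]
where $c(q)$ is a nonzero Laurent polynomial (for instance, the coefficient of $X_1X_0X_0$ works out to $1-q^{-2}+q^{-8}+q^{-10}-2q^{-12}+q^{-14}$). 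The bracketed combination is precisely a scalar multiple of $X_1\Phi$; in other words, the ``ordered form'' of $\Phi X_1$ is a nonzero multiple of the degree-$3$ relation you are trying to establish. Reaching a manifestly zero expression this way would require knowing in advance that these four monomials are dependent, which is exactly the content of the lemma (and, in the classical limit $q\to 1$, this dependency disappears: in $S(\CC^7)$ one has $\phi\cdot x_1\neq 0$, so a purely formal coefficient collapse could never happen). So the direct calculation, as planned, is circular.

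This is precisely the difficulty the paper's proof is engineered to avoid. Rather than reducing $\Phi X_a$ to a normal form and hoping it vanishes, the paper establishes two \emph{independent} $q$-commutation relations between $\Phi$ (through $\Phi'$ and $\vartheta=X_{-1}X_1-q^4X_1X_{-1}$) and a single generator $X_{-3}$: a direct computation gives $X_{-3}\Phi'=\Phi'X_{-3}$, while the $\vartheta$-relations give $X_{-3}\Phi'=q^2\Phi'X_{-3}$; since $q^2\neq 1$, both sides must be zero. The equivariance step then extends this from $a=-3$ to all $a$. If you want to salvage your strategy, you should follow a similar two-relation scheme: compute $\Phi X_1$ as a scalar multiple of $X_1\Phi$, and separately $X_1\Phi$ as a different scalar multiple of $\Phi X_1$; the product of the two scalars being $\neq 1$ then forces vanishing.
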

\begin{proof}
The proof requires delicate computations using Lemma \ref{bsa}. As this is a crucial result, 
we present the proof in deatil, even though it is elementary for the most part. 

Let us  first eliminate $X_{-a} X_a$, for $a=1, 2, 3$, from the definition of the invariant $\Phi$ (equation \eqref{eq:quadr-inv}) by using the three relations \eqref{r3}. We obtain 
\beq
\Phi&=&\left(q^{-10}-q^{-6}+q^{-2}\right)\Phi^\prime, \quad \text{with}\\
\Phi^\prime:&=&(1+q^{6})X_1X_{-1}+(q^3+q^{5})X_2X_{-2}+(1+q^2)X_3X_{-3}+X_0X_{0}.
\eeq
Now $X_{-3}\Phi^\prime$ is equal to 
	\begin{align*}
		(1+q^{6})X_{-3}X_1X_{-1}+(q^3+q^{5})X_{-3}X_2X_{-2}
		+(1+q^2)X_{-3}X_3X_{-3}+X_{-3}X_0X_{0}.
	\end{align*}
We use Lemma \ref{bsa} to move $X_{-3}$ to the right in all the terms, obtaining  
\begin{align*}
&(1+q^{6})q^{2}X_1X_{-1}X_{-3}+(q^3+q^{5})\left(q^{-2}X_2X_{-2}X_{-3}-(q^{-1}-q^3)X_2X_0X_{-1}\right)\\
		&+(1+q^2)\left(X_3X_{-3}X_{-3}+(1-q^{2})\left(X_1X_{-1}X_{-3} -qX_2X_{-2}X_{-3}+X_0X_{0}X_{-3}\right)\right)\\
		&+q^{4}X_0X_{0}X_{-3}+(q^4+q^2)(1-q^4)X_{2}X_0X_{-1}\\
		&+(1-q^4)(q^5-q^3)q^{-1}X_1X_{-1}X_{-3}\\
		&=\left((1+q^{6})X_1X_{-1}+(q^3+q^{5})X_2X_{-2}+(1+q^2)X_3X_{-3}+X_0X_{0}	\right)X_{-3}.
	\end{align*}
The right hand side of the above equation is equal to $\Phi^\prime X_{-3}$. Hence  
	\begin{align}\label{r2-1}
	X_{-3}\Phi^\prime=	\Phi^\prime X_{-3}, \  \text{  i.e. } \ 	X_{-3}\Phi=\Phi X_{-3}.
	\end{align}

Now let us consider the element $(q^{-4}-q^{-2})\Phi^\prime$. We have 
\[
\baln
(q^{-4}-q^{-2})\Phi^\prime= & 
(q^{-4}-q^{-2}+q^2-q^{4})X_1X_{-1}+(q^{-1}-q^{3})X_2X_{-2}\\
&+(q^{-4}-1)X_3X_{-3} +(q^{-4}-q^{-2})X_0X_{0}.
\ealn
\]
Thus  the first relation in \eqref{r3} is equivalent to 
$X_{-1}X_1=(q^{-4}-q^{-2})\Phi^\prime+q^4X_1X_{-1}.$
Set 
\beq
\vartheta:=X_{-1}X_1-q^4X_1X_{-1}.
\eeq
Then 
$ \Phi^\prime = \frac{\vartheta}{q^{-4}-q^{-2}}$ and $ 
\Phi = \frac{q^{-8}-q^{-4}+1}{q^{-2}-1}\vartheta.
$

Now we determine the commutation relations of $\vartheta$ with the $X_i$'s.
Using Lemma \ref{bsa}, we can prove the following relations,  
\beq\label{quasicom}
	\begin{aligned}
		X_{i}\vartheta 
		&=q^{-2}\vartheta X_{i},\mbox{~if~}i= 2, 3,\\ 
		X_{i}\vartheta 
		&=q^{2}\vartheta X_{i},\mbox{~if~}i=- 2, -3,\\	
		X_{0}\vartheta 
		&=\vartheta X_{0}, \\
		X_1\vartheta &=(q^{-4}+q^2-q^{-2})^{-1}\vartheta X_1, \\
		X_{-1}\vartheta &=(q^{-4}+q^2-q^{-2})\vartheta X_{-1}, 
	\end{aligned}
	\eeq
and hence the same relations with $\vartheta$ replaced by $\Phi$. 

The first three relations are easier to prove. Let us 
consider the the fourth relation as an example. Using the first relation of \eqref{r3}, 
we obtain 
	\begin{align*}
		&q^4(q^{-4}+q^2-q^{-2})X_1X_1X_{-1}\\
		=&q^4X_1\left[X_{-1}X_1-\big((q^{-1}{-}q^{3} )X_2X_{-2}+(q^{-4}{-}1)X_3X_{-3}+(q^{-4}{-   }q^{-2})X_0X_{0}\big)\right]\\
		=&q^4X_1X_{-1}X_1-q^4X_1\big((q^{-1}{-}q^{3} )X_2X_{-2}+(q^{-4}{-}1)X_3X_{-3}+(q^{-4}{-   }q^{-2})X_0X_{0}\big).
	\end{align*}
By using \eqref{r1} to the terms after the first minus sign, we can cast the right hand side into  
\begin{align*}
q^4X_1X_{-1}X_1-\big((q^{-1}{-}q^{3} )X_2X_{-2}+(q^{-4}{-}1)X_3X_{-3}+(q^{-4}{-   }q^{-2})X_0X_{0}\big)X_1, 
	\end{align*}
which can be further simplied to 
\begin{align*}
		q^4X_1X_{-1}X_1+\left[(q^{-4}+q^2-q^{-2})X_1X_{-1}-X_{-1}X_1\right]X_1
		\end{align*}
by using the first relation of \eqref{r3}. Hence 
\begin{align*}
	q^4(q^{-4}+q^2-q^{-2})X_1X_1X_{-1}
		&=q^4X_1X_{-1}X_1+(q^{-4}+q^2-q^{-2})X_1X_{-1}X_1-X_{-1}X_1X_1.
	\end{align*}
This immediately leads to 
$
X_1\vartheta =(q^{-4}+q^2-q^{-2})^{-1}\vartheta, 
$
which is what we seek to prove.

Compare the relation \eqref{r2-1} with the $i=-3$ case of \eqref{quasicom} with $\vartheta = \frac{q^{-2}-1}{q^{-8}-q^{-4}+1} \Phi$, we obtain $\Phi X_{-3}= X_{-3}\Phi=0$.
Since $\Phi$ is a  $\UG$-invariant, applying the $\UG$-action to the relation we obtain $\Phi X_i = 0 = X_i \Phi$ for all $i$. It immediately follows that $\Phi^2=0$. 
\end{proof}

The element $X_1^n\in S_q(V)_n$ generates a simple $\UG$-submodule, denoted by ${\mathcal H}_n$, which is isomorphic to $V_{n\lambda_1}$.  We have the following decomposition of $S_q(V)$ into simple $\UG$-submodules. 
\begin{theorem}\label{homogeneous}
The homogeneous components of $S_q(V)$ are given by
\[
\baln
S_q(V)_2={\mathcal H}_2+\CC(q)\Phi, \quad  S_q(V)_n= {\mathcal H}_n \ 	\text{ for\   $n\ne 2$}, 
\ealn
\]
i.e.,  $S_q(V)=\CC(q)\Phi+ \sum_{n\ge 0} \CH_n$.  
\end{theorem}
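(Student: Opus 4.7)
I would prove the theorem by constructing a Borel--Weil model $\Gamma := \bigoplus_{n\ge 0}\mathcal{H}_n$ externally and identifying $S_q(V)$ with $\Gamma \oplus \CC(q)\Phi$ via Lemma \ref{lem:Phi-nil} combined with a dimension count. The first step is to invoke the quantum Borel--Weil theorem for $\UG$, following \cite{APW, GZ}, producing a $\ZZ_+$-graded $\UG$-module algebra $\Gamma=\bigoplus_{n\ge 0}\Gamma_n$, where $\Gamma_n$ is the space of global sections of the quantum line bundle $L_{n\lambda_1}$ on the quantum flag manifold of $\UG$; by Borel--Weil, $\Gamma_n\simeq V_{n\lambda_1}$, with $\Gamma_0=\CC(q)$, $\Gamma_1\simeq V$ and $\Gamma_2\simeq V_{2\lambda_1}$. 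The algebra $\Gamma$ is generated by $\Gamma_1$ because the iterated product of the highest-weight vector of $\Gamma_1$ with itself $n$ times generates $V_{n\lambda_1}$. Consequently, the surjection $\Gamma_1\otimes\Gamma_1\twoheadrightarrow \Gamma_2$ is the Cartan projection and annihilates the summands $\wedge_q^2 V \oplus V_0$ of $V\otimes V$. Since the defining relations of $S_q(V)$ lie in $\wedge_q^2 V$, this yields a surjective graded $\UG$-algebra homomorphism $\pi\colon S_q(V) \twoheadrightarrow \Gamma$ which is the identity in degrees $0$ and $1$.

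In degree $2$, a direct count gives $\dim S_q(V)_2 = 49-21 = 28$ while $\dim \Gamma_2 = 27$, so $\ker \pi\cap S_q(V)_2$ is one-dimensional; as $V_{2\lambda_1}$ contains no trivial subrepresentation, this kernel must be the unique $\UG$-invariant line $\CC(q)\Phi$. By Lemma \ref{lem:Phi-nil}, the two-sided ideal of $S_q(V)$ generated by $\Phi$ is exactly $\CC(q)\Phi$ and is concentrated in degree $2$, so $\pi$ factors through $S_q(V)/\CC(q)\Phi$, and the theorem reduces to proving $\dim S_q(V)_n = \dim V_{n\lambda_1}$ for all $n\ne 2$. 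The $\UG$-submodule $\mathcal{H}_n\subseteq S_q(V)_n$ generated by the highest-weight vector $X_1^n$ is nonzero since it maps to a generator of $\Gamma_n$ under $\pi$, hence is isomorphic to $V_{n\lambda_1}$; once the dimension identity is established the stated decomposition is immediate.

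The dimension identity is the main obstacle, because an a priori upper bound on $\dim S_q(V)_n$ is not apparent from the defining relations alone. My approach is a noncommutative Gr\"obner basis analysis: fix the lexicographic order $X_3 > X_2 > X_1 > X_0 > X_{-1} > X_{-2} > X_{-3}$, treat the $21$ quadratic relations of Lemma \ref{bsa} as rewriting rules, and apply Bergman's diamond lemma to verify that every overlap ambiguity among the leading monomials resolves, so that the resulting normal monomials form a linear basis of $S_q(V)$. Counting them by degree and matching against the Weyl dimension $\dim V_{n\lambda_1} = \tfrac{1}{120}(n+1)(n+2)(n+3)(n+4)(2n+5)$ then yields the equality (with an extra $+1$ in degree $2$ accounting for $\Phi$). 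A more conceptual alternative is to transfer the classical harmonic decomposition $S(\overline V) \simeq \overline\Gamma\otimes \CC[\overline\Phi]$ to the quantum setting using flatness of the $q\to 1$ filtration (see \cite[\S 4]{Lai-Z}), and then to invoke $\Phi^2=0$ to truncate the polynomial part $\CC[\overline\Phi]$ to $\CC(q)\cdot 1\oplus \CC(q)\Phi$. Either route closes the argument and yields $S_q(V) = \CC(q)\Phi + \sum_{n\ge 0}\mathcal{H}_n$.
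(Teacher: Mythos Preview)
Your overall architecture---construct $\Gamma$ via quantum Borel--Weil, produce a surjection $\pi\colon S_q(V)\twoheadrightarrow\Gamma$, identify $\ker\pi$ with the ideal generated by $\Phi$, and use Lemma~\ref{lem:Phi-nil} to see that this ideal is the single line $\CC(q)\Phi$---is exactly the paper's strategy. You are also right that the whole proof reduces to the upper bound $\dim S_q(V)_n\le\dim V_{n\lambda_1}$ for $n\neq 2$ (equivalently $\dim HS_q(V)_n\le\dim V_{n\lambda_1}$), and that this is the only nontrivial point.

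Where your proposal falls short is in establishing that bound. Your option~(b) does not work: $S_q(V)$ is \emph{not} a flat deformation of $S(\overline V)$---indeed the theorem you are proving is precisely the statement that it is strictly smaller in every degree $\ge 3$---so you cannot ``transfer'' the classical harmonic decomposition by flatness. The classical algebra $S(\overline V)$ has $\overline\Phi$ a nonzero-divisor and $\overline\Phi^{\,k}\neq 0$ for all $k$; the quantum collapse $\Phi^2=0$ is exactly what breaks flatness, not a tool you can apply after assuming it. Your option~(a), the Diamond Lemma computation, is in principle valid but you have not carried it out, and resolving all overlaps among the $21$ relations of Lemma~\ref{bsa} and then matching the normal-form count to the Weyl dimension formula is substantial work.

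The paper obtains the upper bound with no computation at all, by applying semi-continuity not to $S_q(V)$ but to the further quotient $HS_q(V):=S_q(V)/\langle\Phi\rangle$. Since $HS_q(V)$ is a quadratic algebra whose relation space specialises at $q=1$ to that of $HS(\CC^7):=S(\CC^7)/\langle\phi\rangle$, one has the general quadratic-algebra inequality $\dim HS_q(V)_n\le\dim HS(\CC^7)_n$. The classical harmonic decomposition (which you cite) gives $\dim HS(\CC^7)_n=\dim V^{(0)}_{n\lambda_1}=\dim V_{n\lambda_1}$, and the embedding $\mathcal{H}_n\hookrightarrow HS_q(V)_n$ gives the reverse inequality. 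This closes the argument cleanly; the point you missed is to pass to $HS_q(V)$ \emph{before} invoking semi-continuity, so that the classical comparison algebra already has the correct graded dimensions.
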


The proof of the theorem will be given in Section \ref{sect:Sq-struct}, which makes use of 
an alternative construction of $S_q(V)$.

The following corollary immediately follows from the theorem and Lemma \ref{lem:Phi-nil}. 
\begin{corollary}\label{cor:homo}
	The subalgebra of $\UG$-invariants in $S_q(V)$ is given by 
	\[
	S_q(V)^{\UG}=\CC +\CC \Phi. 
	\]
\end{corollary}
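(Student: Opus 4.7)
The plan is to read off the result directly from the grading decomposition in Theorem \ref{homogeneous} combined with Schur's lemma. Since the multiplication map $\mu$ of $S_q(V)$ is a $\UG$-module morphism and the grading is preserved by the $\UG$-action (because the defining ideal $\mathfrak{I}_q(V)$ is a $\ZZ_+$-graded submodule), the subalgebra of invariants splits as a direct sum of the invariants of the homogeneous components:
\[
S_q(V)^{\UG} \;=\; \bigoplus_{n\ge 0} \bigl(S_q(V)_n\bigr)^{\UG}.
\]

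First I would handle each degree $n$ separately using Theorem \ref{homogeneous}. In degree $0$ we have $S_q(V)_0 = \CC(q)$, contributing the constants. For every $n\ge 1$, the summand $\CH_n$ is isomorphic to $V_{n\lambda_1}$, a nontrivial simple $\UG$-module of type-$\mathbf 1$, so by Schur's lemma it contains no nonzero $\UG$-invariant vector. Therefore for $n=1$ and $n\ge 3$ we get $\bigl(S_q(V)_n\bigr)^{\UG} = 0$, while in degree $2$ we obtain
\[
\bigl(S_q(V)_2\bigr)^{\UG} \;=\; \bigl(\CH_2\bigr)^{\UG} \oplus \bigl(\CC(q)\Phi\bigr)^{\UG} \;=\; \CC(q)\Phi,
\]
since $\Phi$ was constructed from the invariant vector $c_0 \in (V\otimes V)^{\UG}$ and is nonzero in $S_q(V)$ (its nonvanishing is guaranteed by the fact that $\CC(q)\Phi$ appears as a genuine summand in Theorem \ref{homogeneous}, complementing $\CH_2$).

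Assembling the contributions gives $S_q(V)^{\UG} = \CC(q) \oplus \CC(q)\Phi$, which is the statement of the corollary. The role of Lemma \ref{lem:Phi-nil} in this argument is only implicit, in that it is consistent with this conclusion: since $\Phi^2=0$, no new invariants are produced by multiplying $\Phi$ with itself, and $\Phi\cdot X_a = 0$ means $\Phi$ annihilates all degree-$1$ generators, consistent with the absence of invariants in higher degrees.

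I do not anticipate any real obstacle: once Theorem \ref{homogeneous} is in hand, the corollary is essentially a bookkeeping statement using Schur's lemma on simple $\UG$-modules of type $\mathbf{1}$. The only point to be careful about is justifying that $\CC(q)\Phi$ is not absorbed into $\CH_2$ (so that it really contributes a one-dimensional piece to the invariants), but this is exactly the content of the direct-sum decomposition in Theorem \ref{homogeneous}.
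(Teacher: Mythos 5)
Your proof is correct and takes essentially the same route as the paper, which simply states that the corollary follows immediately from Theorem \ref{homogeneous} together with Lemma \ref{lem:Phi-nil}. Your version is slightly more explicit (invoking Schur's lemma and the grading to decompose the invariants degree by degree), and your observation that Lemma \ref{lem:Phi-nil} is not strictly needed once Theorem \ref{homogeneous} is in hand is accurate — it enters the paper mainly through the proof of Theorem \ref{homogeneous} itself and to confirm the (trivial) algebra structure on the invariants.
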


\begin{remark}
Theorem \ref{homogeneous} is the only case where the decomposition of a non-flat quantum symmetric algebra 
into simple ${\rm U}_q(\g)$-modules is understood. 
\end{remark}
%%%%%%%%%%%%%%%%%%%%%

%
%
\subsection{Proof of Theorem \ref{homogeneous}}
We need to better understand the quantum symmetric algebra $S_q(V)$ in order to prove Theorem \ref{homogeneous}. 

\subsubsection{An alternative construction of $S_q(V)$}
The general framework for the material in this section is set up in \cite{GZ}. 
Denote by $\UD$ the finite dual of $\UG$, which is the subalgebra of ${\rm U}_q^*$ spanned by the matrix elements 
of the finite dimensional representations of $\UG$. It has the structure of a Hopf algebra.  
There are two natural left actions of $\UG$ on $\UD$, 
\[
L, R: \UG\ot \UD \lra \UD, 
\]
which are respectively defined,  for any $x\in\UG$ and $f\in \UD$, by 
\beq
L_x(f)(y)= f(S(x)y), \quad R_x(f)(y) = f(y x), \quad \forall y\in \UG.
\eeq
One can readily verify that these are indeed left actions of $\UG$, 
and they commute with each other.  Furthermore, for any $f, g\in \UD$ and $x\in\UG$, 
\beq\label{eq:LR-prod}
L_x(fg)=\sum L_{x_{(2)}}(f)  L_{x_{(1)}}(g), \quad R_x(fg)=\sum R_{x_{(1)}}(f)  R_{x_{(2)}}(g). 
\eeq
Thus $\UD$ is a module algebra for $R_\UG$ with respect to the co-multiplication $\Delta$, and also for $L_\UG$ with respect to $\Delta'$. 

Let $\CC_q[G_2]$ be the Hopf subalgebra of $\UD$ spanned by the matrix elements of 
the finite dimensional type-${\bf 1}$ representations of $\UG$. 
The restrictions of  $L_{\UG}$ and $R_{\UG}$ leads to 
left actions of $\UG$ on $\CC_q[G_2]$.  For any $n\in\ZZ_+$, we define the subspace 
\[
\Gamma_n:= \{ f\in \CC_q[G_2] \mid L_{E_i}(f)=0, \ L_{K_i}(f) = q^{-n(\alpha_i, \lambda_1)} f, \ i=1, 2\},
\]
which is a $\UG$-module under the action of $R_{\UG}$ because of the commutativity between $L_{\UG}$ and $R_{\UG}$. 

\begin{lemma} 
Let $\Gamma=\sum_{n\in\ZZ_+}\Gamma_n$. Then $\Gamma\simeq \sum_{n\ge 0} V_{n\lambda_1}$ as $R_{\UG}$-module. 
Furthermore, $\Gamma$ is a $\UG$-module subalgebra of $\CC_q[G_2]$, which is 
\begin{enumerate}[i)]
\item  $\ZZ_+$-graded with $\Gamma_m \Gamma_n\subseteq \Gamma_{m+n}$ for all $m, n$; and 
\item  generated by $\Gamma_1$. 
\end{enumerate}
\end{lemma}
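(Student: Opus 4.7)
The plan is to derive the result from the quantum Peter--Weyl decomposition of $\CC_q[G_2]$ combined with the quantum Borel--Weil theorem of \cite{APW, GZ}. As an $(L_{\UG}, R_{\UG})$-bimodule, $\CC_q[G_2]$ decomposes as a direct sum over $\mu \in P^+$ of summands isomorphic to $V_\mu^* \otimes V_\mu$, with $L_{\UG}$ acting (via the antipode) on the first factor and $R_{\UG}$ on the second. The conditions $L_{E_i}(f) = 0$ and $L_{K_i}(f) = q^{-n(\alpha_i, \lambda_1)} f$ cut out the one-dimensional highest weight line of weight $n\lambda_1$ in $V_\mu^*$, which is non-zero precisely when $\mu = n\lambda_1$. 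Hence $\Gamma_n \cong V_{n\lambda_1}$ as $R_{\UG}$-modules, giving the asserted module decomposition of $\Gamma$.

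Part (i) follows from a short computation. Using the product formula \eqref{eq:LR-prod} together with $\Delta(E_i) = E_i \otimes K_i + 1 \otimes E_i$ and $\Delta(K_i) = K_i \otimes K_i$, for any $f \in \Gamma_m$ and $g \in \Gamma_n$ one obtains
\[
L_{E_i}(fg) = L_{K_i}(f)\, L_{E_i}(g) + L_{E_i}(f)\, g = 0, \qquad L_{K_i}(fg) = q^{-(m+n)(\alpha_i, \lambda_1)} fg,
\]
so $fg \in \Gamma_{m+n}$. This simultaneously shows that $\Gamma$ is closed under multiplication and establishes the $\ZZ_+$-grading. Stability of $\Gamma$ under $R_{\UG}$ is automatic from the commutativity of $L_{\UG}$ with $R_{\UG}$.

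The main obstacle is (ii). Since each $\Gamma_n \cong V_{n\lambda_1}$ is simple as an $R_{\UG}$-module, the submodule $\Gamma_1^{\,n} \subseteq \Gamma_n$ obtained by iterated multiplication is either zero or all of $\Gamma_n$, so it suffices to verify non-vanishing. I would fix the highest weight vector $\xi \in \Gamma_1$ coming from the matrix coefficient of $v_{\lambda_1}$ against $v_{\lambda_1}^*$ in the Peter--Weyl decomposition, and show $\xi^n \neq 0$. Unwinding the multiplication via the Peter--Weyl embedding, $\xi^n$ is essentially the matrix coefficient on $V_{\lambda_1}^{\otimes n}$ determined by $v_{\lambda_1}^{\otimes n}$, and the latter generates the Cartan component $V_{n\lambda_1}$ of $V_{\lambda_1}^{\otimes n}$, whose non-vanishing projection is a $q$-deformation of the standard classical fact. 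Alternatively, one may invoke the quantum Borel--Weil theorem directly: $\Gamma$ is the section algebra of the ample quantum line bundle associated with $\lambda_1$ on the quantum flag variety of $\UG$, and such section algebras are known to be generated in degree one \cite{APW, GZ}, which concludes the argument.
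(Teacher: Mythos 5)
Your proposal is correct and takes essentially the same route as the paper: identify $\Gamma_n\simeq V_{n\lambda_1}$ from Peter--Weyl/Borel--Weil, verify the grading via the coproduct formula \eqref{eq:LR-prod}, and prove generation in degree one by showing the $n$-th power of the highest-weight matrix coefficient is non-zero (the paper does this with the coaction $\delta_n$ and the observation that $v_1^{\ot n}$ is the highest weight vector of $V_{n\lambda_1}\subset V^{\ot n}$, which is the same computation you sketch). One small caveat: your alternative closing appeal — that quantum Borel--Weil already guarantees section algebras of ample quantum line bundles are generated in degree one — goes beyond what \cite{APW, GZ} state (that is a projective-normality assertion, not just a Borel--Weil identification), so you should rely on the explicit matrix-coefficient argument, which both you and the paper give and which is complete on its own.
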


\begin{proof}
In the setting of \cite{GZ}, $\Gamma_n$ can be interpreted as the space of global sections of 
a quantum line bundle on the quantum flag manifold.  
It follows from the quantum Borel-Weil theorem \cite{APW, GZ} that 
$\Gamma_n\simeq V_{n\lambda_1}$ as $R_\UG$-module.  Hence $\Gamma\simeq \sum_{n\ge 0} V_{n\lambda_1}$.

It is evident that $f g\in\CC_q[G_2]$ for any $f\in\Gamma_m$ and $g\in \Gamma_n$.
By the first relation of \eqref{eq:LR-prod},  we have
\[
\baln
L_{K_i}(f g) &= L_{K_i}(f)  L_{K_i}(g) = q^{-(m+n)(\alpha_i, \lambda_1)} fg,\\
L_{E_i}(f g) &=  \sum L_{{E_i}_{(2)}}(f) L_{{E_i}_{(1)}}(g),  \quad i=1, 2, 
\ealn
\]
where Sweedler's notation $\Delta(E_i) = \sum  {E_i}_{(1)} \ot {E_i}_{(2)}$ for the  the co-multiplication of $E_i$ is used 
in the second equation. Now the opposite co-multiplication is given by 
$\Delta'(E_i) = \sum {E_i}_{(2)}\ot {E_i}_{(1)}$.  We have  $\Delta'(E_i) = E_i\ot K_i + 1\ot E_i$.   Thus one of the two components ${E_i}_{(1)}$ and ${E_i}_{(2)}$ is $E_i$, and hence either $L_{{E_i}_{(2)}}(f)=0$ or $L_{{E_i}_{(1)}}(g)=0$. 
This shows that 
\[
L_{E_i}(f g) =0,   \quad i=1, 2. 
\]
Thus
$fg\in\Gamma_{m+n}$, proving that $\Gamma$ is a $\ZZ_+$-graded algebra. 

Since each $\Gamma_n$ is a $\UG$-module under the $R_{\UG}$-action, 
it is evident from the second relation of \eqref{eq:LR-prod} that $\Gamma$ is a $\UG$-module algebra.

Finally, we prove that $\Gamma_1$ generates $\Gamma$. 

Denote by $\Gamma^{(\lambda)}$ the subspace of $\CC_q[G_2]$ spanned by the matrix elements of 
$V_\lambda$ for $\lambda\in P^+$. Then 
$\CC_q[G_2]=\sum_{\lambda\in P^+} \Gamma^{(\lambda)}$, which is known as the quantum Peter-Weyl theorem. 
Recall that $V$ generates all finite dimensional $\UG$-modules of type-${\bf 1}$ 
in the sense that any such module is a summand in the direct sum of some tensor powers of $V$. 
Thus $\Gamma^{(\lambda_1)}$ generates $\CC_q[G_2]$ as an algebra. 

Now $\Gamma_1=\{f\in \Gamma^{(\lambda_1)}\mid L_{E_1}(f)= L_{E_2}(f)= 0\}$. Denote by $\left(\Gamma_1\right)^n$ the subspace of $\Gamma$ spanned by the elements of the form $f_1 f_2 \dots f_n$ with $f_i \in \Gamma_1$ for all $i$. 
Then $\left(\Gamma_1\right)^n\subseteq \Gamma_n$ as $\UG$-module. 
We will show presently that $\left(\Gamma_1\right)^n\ne 0$, hence $\left(\Gamma_1\right)^n=\Gamma_n$ since $\Gamma_n$ is a simple $\UG$-module.

To prove $\left(\Gamma_1\right)^n\ne 0$, we consider the $\CC_q[G_2]$-co-actions on $V^{\ot n}$ 
denoted by
\[
\delta_n: V^{\ot n}\lra V^{\ot n}\ot \CC_q[G_2]. 
\]
Recall that we used $v_1$ to denote the highest weight vector of $V$. Then $\delta_1(v_1)=v_1\ot t_{11} + \text{``lower-weight terms"}$, where $t_{11}\in \Gamma^{(\lambda_1)}$, and the $\text{``lower-weight terms"}$ is a linear combination of tensor products of vectors with weights lower than that of $v_1$ and elements of $\Gamma^{(\lambda_1)}$. 
Also, 
$\delta_n(v_1^{\ot n})=  v_1^{\ot n}\ot t_{11}^n + \text{``lower weight terms"}$, for any $n$. 
As $v_1^{\ot n}$ is the highest weight vector of the simple $\UG$-submodule $V_{n\lambda_1}$ in $V^{\ot r}$, 
we must have $t_{11}^n\ne 0$. Hence $\left(\Gamma_1\right)^n\ne 0$. 

This completes the proof of the theorem.
\end{proof}

We define an algebra related to $S_q(V)$ as follows. Denote by $\widetilde{\mathfrak{I}}_q(V)$ the
the two-sided ideal of $T(V)$ generated by $\wedge_q^2 V + \CC(q) c_0$, and let
\beq
HS_q(V)=T(V)/\widetilde{\mathfrak{I}}_q(V).
\eeq
Then 
$HS_q(V)= S_q(V)/\langle \Phi\rangle$, where $\langle \Phi\rangle$ is the two-sided ideal of $S_q(V)$ generated by $\Phi$.

We have the following result.
\begin{theorem}\label{thm:CH-Sq}
There is the following isomorphism of $\ZZ_+$-graded $\UG$-module  algebras.
\[
\Gamma\simeq  HS_q(V).
\]
\end{theorem}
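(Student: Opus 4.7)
The strategy is to build a $\UG$-equivariant graded algebra surjection $\bar\phi\colon HS_q(V)\to\Gamma$ via the universal property of the tensor algebra, and then deduce injectivity degree-by-degree.

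Since the previous lemma identifies $\Gamma_1\simeq V_{\lambda_1}=V$ as $\UG$-modules, fix a $\UG$-isomorphism $\iota\colon V\to\Gamma_1$. The universal property of $T(V)$ extends $\iota$ uniquely to a $\UG$-equivariant graded algebra homomorphism $\phi\colon T(V)\to\Gamma$. To see that $\phi$ descends to $HS_q(V)=T(V)/\widetilde{\mathfrak{I}}_q(V)$ it suffices to verify that $\phi$ annihilates the generating submodule $W:=\wedge_q^2V\oplus\CC(q)c_0\subset V\otimes V$. From the bases constructed in Section~\ref{sect:st-mod}, $W\simeq V_{\lambda_2}\oplus V_{\lambda_1}\oplus V_0$, whereas $\phi(W)\subseteq\Gamma_2\simeq V_{2\lambda_1}$; since $W$ has no $V_{2\lambda_1}$ composition factor, Schur's lemma forces $\phi(W)=0$. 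This produces the desired $\bar\phi$, which is surjective because the previous lemma asserts that $\Gamma$ is generated by $\Gamma_1$.

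For injectivity it suffices to show $HS_q(V)_n\simeq V_{n\lambda_1}$ for every $n\ge 0$, since $\bar\phi$ is a graded surjection onto $\Gamma_n\simeq V_{n\lambda_1}$. The crucial structural fact is that $V_{n\lambda_1}$ appears with multiplicity exactly one in $V^{\otimes n}$: all weights of $V$ other than $\lambda_1$ are strictly smaller, so $v_1^{\otimes n}$ is the unique weight-$n\lambda_1$ vector in $V^{\otimes n}$. Thus it suffices to show that $HS_q(V)_n$ is cyclic as a $\UG$-module, generated by the class of $v_1^{\otimes n}$; any finite-dimensional type-${\bf 1}$ cyclic highest-weight module of highest weight $n\lambda_1$ must coincide with $V_{n\lambda_1}$. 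I would establish cyclicity by induction on $n$. The cases $n\le 2$ are immediate, and for $n\ge 3$ the algebra generation in degree one combined with the inductive hypothesis $HS_q(V)_{n-1}\simeq V_{(n-1)\lambda_1}$ yields a $\UG$-surjection $V\otimes V_{(n-1)\lambda_1}\twoheadrightarrow HS_q(V)_n$. A character computation (or the $G_2$ Pieri rule) gives
\[
V\otimes V_{(n-1)\lambda_1}\simeq V_{n\lambda_1}\oplus V_{(n-2)\lambda_1+\lambda_2}\oplus V_{(n-1)\lambda_1}\oplus V_{(n-2)\lambda_1},
\]
the four summands paralleling the decomposition $V\otimes V=V_{2\lambda_1}\oplus V_{\lambda_2}\oplus V_{\lambda_1}\oplus V_0$. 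The induction step reduces to showing that the three summands other than $V_{n\lambda_1}$ are killed in $HS_q(V)_n$.

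The main obstacle is this last claim. My plan is to argue highest weight vector by highest weight vector: for each $\mu\in\{(n-2)\lambda_1+\lambda_2,\,(n-1)\lambda_1,\,(n-2)\lambda_1\}$ I would exhibit a highest weight vector of weight $\mu$ in $V\otimes V_{(n-1)\lambda_1}$ as the image, modulo $V\otimes\widetilde{\mathfrak{I}}_q(V)_{n-1}$, of $w_\mu\otimes v_1^{\otimes(n-2)}$, where $w_\mu$ is the highest weight vector of the corresponding $W$-summand constructed explicitly in Section~\ref{sect:st-mod}. Since $w_\mu\otimes v_1^{\otimes(n-2)}$ lies in $W\otimes V^{\otimes(n-2)}\subset\widetilde{\mathfrak{I}}_q(V)$, these highest weight vectors vanish in $HS_q(V)_n$, the three unwanted isotypic summands collapse, and the induction closes.
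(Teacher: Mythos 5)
Your construction of the surjection $\bar\phi\colon HS_q(V)\twoheadrightarrow\Gamma$ is fine: Schur's lemma does kill $W=\wedge_q^2V\oplus\CC(q)c_0$ since $\Gamma_2\simeq V_{2\lambda_1}$ shares no composition factor with $W\simeq V_{\lambda_2}\oplus V_{\lambda_1}\oplus V_0$, and surjectivity follows from $\Gamma$ being generated by $\Gamma_1$. The injectivity argument, however, has a gap at the heart of the induction.

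The tensor product decomposition you invoke is incorrect for $n\ge 3$. By the Klimyk formula (or a direct dimension count) one finds
\[
V\otimes V_{(n-1)\lambda_1}\simeq V_{n\lambda_1}\oplus V_{(n-2)\lambda_1+\lambda_2}\oplus V_{(n-1)\lambda_1}\oplus V_{(n-3)\lambda_1+\lambda_2}\oplus V_{(n-2)\lambda_1},
\]
i.e.\ \emph{five} summands, not four. Already at $n=3$ your four terms have total dimension $77+64+27+7=175$, whereas $\dim(V\otimes V_{2\lambda_1})=7\cdot 27=189$; the missing $14$ dimensions are exactly $V_{\lambda_2}=V_{(n-3)\lambda_1+\lambda_2}$. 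The parallel with $V\otimes V$ holds only because at $n=2$ the weight $(n-3)\lambda_1+\lambda_2$ fails to be dominant and that Klimyk term lands on a wall.

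This is fatal to the step as written, because your mechanism for killing unwanted summands is weight-constrained. The candidate singular vectors you construct are images of $w_\mu\otimes v_1^{\otimes(n-2)}$ with $w_\mu$ a highest weight vector of $W$, so their weights are restricted to $\lambda_2+(n-2)\lambda_1$, $\lambda_1+(n-2)\lambda_1$, and $(n-2)\lambda_1$ --- precisely your three $\mu$'s, never $(n-3)\lambda_1+\lambda_2$. Consequently the fifth summand is untouched, the kernel of $V\otimes HS_q(V)_{n-1}\to HS_q(V)_n$ is not shown to contain the full complement of $V_{n\lambda_1}$, and cyclicity of $HS_q(V)_n$ is not established. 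Repairing this would require exhibiting a singular vector of weight $(n-3)\lambda_1+\lambda_2$ inside the image of $W\otimes V^{\otimes(n-2)}$ (it is not of the form $w_\mu\otimes v_1^{\otimes(n-2)}$), plus verifying --- which you also elide for the other three --- that the relevant vectors survive the projection $V\otimes V^{\otimes(n-1)}\to V\otimes HS_q(V)_{n-1}$.

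For contrast, the paper sidesteps tensor decompositions entirely. It builds the homomorphism $HS_q(V)\to\Gamma$ via matrix coefficients $t_{1a}$ of the $\CC_q[G_2]$-coaction, and then squeezes $\dim HS_q(V)_n$ between $\dim\Gamma_n=\dim V_{n\lambda_1}$ (from surjectivity) and $\dim HS(\CC^7)_n=\dim V^{(0)}_{n\lambda_1}$ (from semicontinuity of dimension under specialisation $q\to 1$). Since these bounds coincide, the isomorphism follows degree by degree without any Pieri-type analysis. If you prefer to keep your representation-theoretic route, you should either add the missing summand and the corresponding singular vector, or replace the kernel analysis with the specialisation bound.
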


The proof will be given in Section \ref{sect:proof-CH-Sq}. 

\subsubsection{Comments on $S(\CC^7)$ as $G_2$-module algebra}\label{sect:classical}

In order to prove Theorem \ref{thm:CH-Sq}, we need to gather some information on the symmetric algebra $S(\CC^7)$ 
over the simple $G_2$-module $\CC^7$, and also a related algebra of it.  
What discussed below is not new, which is included here for the convenience of the reader.

We have $\CC^7\ot \CC^7= S^2(\CC^7)\oplus \wedge^2 \CC^7$, 
and $S^2(\CC^7)$ contains the $1$-dimensional $G_2$-submodule $(\CC^7\ot \CC^7)^{G_2}$.  
Then $S(\CC^7)=T(\CC^7)/\mathfrak{I}(\CC^7)$, where 
$\mathfrak{I}(\CC^7)$ is the two-sided ideal of $T(\CC^7)$ generated by $\wedge^2 \CC^7$. 
Denote by  $\widetilde{\mathfrak{I}}(\CC^7)$ the two-sided ideal generated 
by $\wedge^2 \CC^7\oplus (\CC^7\ot \CC^7)^{G_2}$, and let
\[
HS(\CC^7)= T(\CC^7)/\widetilde{\mathfrak{I}}(\CC^7). 
\]
Clearly $\dim HS_q(V)_i = \dim HS(\CC^7)_i$ for $i=0, 1, 2$.  It is a general fact that
\beq\label{eq:quadratic}
\dim HS_q(V)_n \le \dim HS(\CC^7)_n, \quad \forall n\ge 3. 
\eeq

Let
$\phi$ be an element spanning $(\CC^7\ot \CC^7)^{G_2}\subset S^2(\CC^7)$. Then $HS(\CC^7)=S(\CC^7)/\langle \phi\rangle$.

Denote by $V^{(0)}_\lambda$ the simple $G_2$-module with the highest weight $\lambda\in P^+$. 
It is evident that the homogeneous component $S^n(\CC^7)$ of  $S(\CC^7)$  contains a $G_2$-submodule 
${\mathcal H}^{(0)}_n\simeq V^{(0)}_{n\lambda_1}$ for any $n$. 
In particular, $\CC^7=V^{(0)}_{\lambda_1}$ and $S^2(\CC^7)= \CH_2^{(0)}\oplus \CC\phi$. 
Recall that $\dim S^k(\CC^7) =\begin{pmatrix} k+6\\ k \end{pmatrix}$ for all $k$, and by
Weyl's dimension formula,  
\[
\baln
	\dim  V^{(0)}_{n\lambda_1}&= 
\frac{(n+4)! (2n+5)}{5! n!}=\begin{pmatrix} n+6\\ n \end{pmatrix} - \begin{pmatrix} n+4\\ n-2 \end{pmatrix}\\
&= \dim S^n(\CC^7) - \dim S^{n-2}(\CC^7), \quad \forall n\ge 2. 
\ealn
\]
An easy dimension counting argument leads to 
\[
S^n (\CC^7) = {\mathcal H}^{(0)}_n+ \phi S^{n-2}(\CC^7), \quad \forall n\ge 2,  
\] 
and hence 
\beq\label{eq:harmonic}
S^n(\CC^7)=\sum_{k=0}^{[n/2]} \phi^k {\mathcal H}^{(0)}_{n-2k}, \quad HS(\CC^7)_n= {\mathcal H}^{(0)}_n. 
\eeq
Thus ${\mathcal H}^{(0)}_{n}$ is the ``harmonic space" of $S^n(\CC^7)$.

Now we are ready to prove Theorem \ref{thm:CH-Sq}.

\subsubsection{Proof of Theorem \ref{thm:CH-Sq}}\label{sect:proof-CH-Sq}

\begin{proof}

Consider the $\CC_q[G_2]$-co-action on the elements of the basis $\{v_a\mid a=0, \pm 1, \pm 2, \pm 3\}$ 
of $V$ given in Section \ref{sect:st-mod}. There exist  $t_{a' a}\in \CC_q[G_2]$ such that 
\[
\delta_1(v_a)= \sum_{a'} v_{a'}\ot  t_{a' a}, \quad \Gamma^{(\lambda_1)}=\sum_{a' a} \CC(q)t_{a' a}. 
\]
From this description of $\Gamma^{(\lambda_1)}$, we immediately obtain 
\beq
\Gamma_1=\sum_a \CC(q) t_{1 a}. 
\eeq 
We claim that the algebra isomorphism of Theorem \ref{thm:CH-Sq} is given by 
\beq\label{eq:iso-T-Sq}
 X_a + \langle \Phi\rangle \mapsto t_{1 a}, \quad a=0, \pm 1, \pm 2, \pm 3.
\eeq
To prove this, we consider  the $\CC_q[G_2]$-co-action on $\wedge_q^2 V$. 
For any $\sum c_{a b} v_a\ot v_b\in \wedge_q^2 V$, 
\[
\delta_2\left(\sum_{a, b} c_{a b} v_a\ot v_b\right)= \sum_{a', b'}  v_{a'}\ot v_{b'} \ot \sum_{a, b}c_{a b} t_{a' a}t_{b' b}. 
\]
Since $v_1\ot v_1\not\in \wedge_q^2 V$, we have 
\beq
 \sum_{a, b}c_{a b} t_{1 a}t_{1 b} =0. 
\eeq 
Thus the elements $t_{1 a}$ satisfy the defining relations of $S_q(V)$ obeyed by $X_a$. 

Let us write the basis vector $c_0$ of $(V\ot V)^{\UG}$ as 
\beq\label{eq:c0}
c_0=\sum_{a, b} \varphi_{a b} v_a\ot v_b, 
\eeq 
where the scalars $\varphi_{a b}$ can be read off \eqref{basis0}.
Then under
the map \eqref{eq:iso-T-Sq},  
\[
0= \sum_{a, b}\varphi_{a b} X_a X_b + \langle \Phi\rangle\mapsto 
\varphi:=\sum_{a, b}\varphi_{a b} t_{1 a}t_{1 b}. 
\]
To see that  $\varphi=0$ in $\Gamma$, consider
\[
\delta_2(c_0)=\sum_{a', b'}  v_{a'}\ot v_{b'} \ot \sum_{a, b}\varphi_{a b} t_{a' a}t_{b' b} = c_0\ot 1.
\]
As $v_1\ot v_1$ is not a term in $c_0$, we have 
\beq
\sum_{a, b}\varphi_{a b} t_{1 a}t_{1 b} =0. 
\eeq
This proves that \eqref{eq:iso-T-Sq} is an algebra homomorphism. 

Now if we can further show that  
$\Gamma\simeq HS_q(V)$ as $\ZZ_+$-graded $\UG$-module,   
then \eqref{eq:iso-T-Sq} is an algebra isomorphism.

Recall that $S_q(V)_n\supset \CH_n\simeq V_{n\lambda_1}$. 
It follows \eqref{eq:quadratic} that
\[
\dim V_{n\lambda_1}\le \dim HS_q(V)_n \le\dim HS(\CC^7)_n=\dim V^{(0)}_{n\lambda_1}. 
\]
Since $\dim V_{n\lambda_1} = \dim V^{(0)}_{n\lambda_1}$, we conclude that $\dim HS_q(V)_n=\dim V_{n\lambda_1} = \dim \Gamma_n$. Hence 
$\Gamma\simeq HS_q(V)$ as $\ZZ_+$-graded $\UG$-module.  

This completes the proof of the theorem.     
\end{proof}

\subsubsection{Re-construction of $S_q(V)$}\label{sect:Sq-struct}
Theorem \ref{thm:CH-Sq} enables us to reconstruct $S_q(V)$  from $\Gamma$ as follows. 

Let $\widetilde{\Gamma}$ be the $\ZZ_+$-graded algebra generated by $\Gamma$ and the extra generator $\theta$ of degree $2$ and additional relations
$
\theta^2=0$ and $\theta \Gamma_n=  \Gamma_n \theta=0$ for all $n.
$
Then  
\[
\widetilde{\Gamma}=\Gamma+\CC(q)\theta.
\]
Since $X_a \Phi=\Phi X_a=0$ by Lemma \ref{lem:Phi-nil}, it immediately follows Theorem \ref{thm:CH-Sq} that 
\beq
\widetilde{\Gamma}\simeq S_q(V)
\eeq 
as $\ZZ_+$-graded $\UG$-module algebra. 

\begin{proof}[Proof of Theorem \ref{homogeneous}]
The isomorphism $\widetilde{\Gamma}\simeq S_q(V)$ of $\ZZ_+$-graded $\UG$-module algebras immediately implies Theorem \ref{homogeneous}. 
\end{proof}

%%%%%%%%%%%%%%%%%%%%%

%
%
\subsection{Presentations of braided symmetric algebras}\label{sect:Am}

We now develop the structure of the braided symmetric algebra $\cA_{m}(V)$ in more details.

The multiplication of $\cA_{m}(V)$ can be easily described. 
For any $A, B\in S_q(V)$, let $A_i=1\otimes \cdots\otimes 1\otimes \underbrace{A}_i\otimes 1\otimes\cdots\otimes 1$ and $B_j=1\otimes \cdots\otimes 1\otimes \underbrace{B}_j\otimes 1\otimes\cdots\otimes 1$ 
with $i<j$, then $B_j A_i= \sum_{t}\beta_t(A_i)\alpha_t(B_j)$. 

More specifically, denote $X_{i a}=\underbrace{1\otimes \cdots\otimes 1}_{i-1}\otimes X_a\otimes \underbrace{1\otimes\cdots\otimes 1}_{m-i}$ in $\cA_m(V)$,   for $i=1, 2, \dots, m$ 
and $a=0, \pm 1, \pm 2, \pm3$,
where $X_a$ are the generators of $S_q(V)$. 	
Then  
\begin{equation}\label{braid}
		X_{ja}X_{ib}=\sum_{t}\beta_t(X_{ia})\alpha_t(X_{jb}), \quad 1\le i < j \le m.
\end{equation}

The elements  $X_{ia}$ generate $\cA_{m}(V)$.  
To describe their relations, we introduce some notation.  
For any integers $r<s$, we let $[r, s]=\{r, r+1, \dots, s\}$. 
Define the map 
\[
\baln 
&[1, 7]\lra [-3, 3], \quad a\mapsto  \ol{a}, \\
&\text{$\ol{4}=0$,} \quad \text{$\ol{a}=a$, if $a=1, 2, 3$,}\\
&\text{$\ol{a}=a-8$,  if $a=5, 6, 7$.}
\ealn
\]
We also introduce the following subsets of $[1, 7]^2=[1, 7]\times[1,7]$.  
Let $\mathcal{I}=\left\{(1,2),(1,3),(2,5),(3,6),(5,7),(6,7)\right\}$, 
and 
$\mathcal{J}_r = \{(a, b)\in [1, 7]^{2} \mid a<b, \ a+b=r\} \backslash\mathcal{I}$ for $8\ne r\in [5, 11]$. 
One can easily check,  case by case,  that $\#\mathcal{J}_r=2$ for all
$r\in[5, 11]\backslash\{8\}$. Thus $\mathcal{J}_r=\{(a_1, r-a_1), (a_2, r-a_2)\}$ for some $a_1< a_2$ (depending on $r$).
\begin{proposition}\label{pre-Am}
Retain notation above. 
The braided symmetric algebra $\cA_{m}(V)$ is generated by $X_{ia}$ with $i\in[1,m]$ 
and $a\in[-3,3]$, subject to the following relations:
\begin{enumerate}
\item for fixed $i$,
			the elements $X_{ia}$, for all $a\in[-3, 3]$, obey the relations \eqref{r1}, 
			\eqref{r2} and \eqref{r3} with $X_a$ replaced by $X_{i a}$; 
\item for $j>i$ in $[1,m]$, and $a, b\in[1, 7]$:
\beq	
&\begin{aligned}
				X_{j\ol{a}} X_{i\ol{a}}&=q^2X_{i\ol{a}} X_{j\ol{a}},\, a\neq 4;\\		
				X_{j\ol{a}} X_{i\ol{b}}&=qX_{i\ol{b}}X_{j\ol{a}},  \,(a,b)\in\mathcal{I},\\
	X_{j\ol{b}}X_{i\ol{a}}&=qX_{i\ol{a}} X_{j\ol{b}}{+}(q^2-1) X_{i\ol{b}}X_{j\ol{a}},\,(a,b)\in\mathcal{I};\\
			\end{aligned}\\
%\eeq
%
%\beq
&\begin{pmatrix}
				X_{j 1}X_{i, -1}\\
				X_{j 2}X_{i, -2}\\
				X_{j 3}X_{i, -3}\\
				X_{j 0}X_{i0}\\ 
				X_{j, -3}X_{i3}\\
				X_{j, -2}X_{i2}\\
				X_{j, -1}X_{i1}
\end{pmatrix}
=A_8
\begin{pmatrix}
X_{i, -1}X_{j1}\\
X_{i, -2}X_{j2}\\
X_{i, -3}X_{j3}\\	
X_{i0}X_{j0}\\
X_{i3}X_{j, -3}\\
X_{i2}X_{j, -2}\\
X_{i1}X_{j, -1}
\end{pmatrix}, 
\eeq
and 
for $r\in[5, 11]\backslash\{8\}$, write $\mathcal{J}_r=\{(a_1, r-a_1), (a_2, r-a_2)\}$ 
with $a_1< a_2$, then 
\beq\left(\begin{array}{c}	
				X_{j,\ol{a_1}}X_{i, \ol{r-a_1}}\\
				X_{j, \ol{a_2}}X_{i, \ol{r-a_2}}\\	
				X_{j, \ol{r-a_2}}X_{i,\ol{a_2}}\\
				X_{j, \ol{r-a_1}}X_{i,  \ol{a_1}}
			\end{array}
			\right)=A_r\left(\begin{array}{c}	
				X_{i,\ol{r-a_1}}X_{j, \ol{a_1}}\\
				X_{i, \ol{r-a_2}}X_{j, \ol{a_2}}\\
				X_{i, \ol{a_2}}X_{j, \ol{r-a_2}}\\
				X_{i, \ol{a_1}}X_{j, \ol{r-a_1}}
			\end{array}\right); 
\eeq
\end{enumerate}
where $A_8$ is a $7\times 7$,  and $A_r$, for each $r\in[5, 11]\backslash\{8\}$,  is a $4\times 4$, non-singular lower-triangular matrix given below.
\end{proposition}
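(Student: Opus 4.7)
The plan is to derive the listed relations from the braided multiplication formula \eqref{braid} and then establish completeness by a dimension comparison. Generation is immediate: since $\cA_m(V)=S_q(V)^{\otimes m}$ as $\ZZ_+^m$-graded $\UG$-modules and $S_q(V)$ is generated by $\{X_a\}_{a\in[-3,3]}$ by Lemma \ref{bsa}, the collection $\{X_{ia}\}$ generates $\cA_m(V)$ as an algebra. The intra-component relations in item (1) are inherited verbatim from Lemma \ref{bsa}, since for each fixed $i$ the map $A\mapsto 1^{\otimes(i-1)}\otimes A\otimes 1^{\otimes(m-i)}$ is an algebra embedding of $S_q(V)$ into $\cA_m(V)$.

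The cross-component relations for $i<j$ are derived from \eqref{braid} by computing the action $\check{R}(v_a\otimes v_b)\in V\otimes V$ using the spectral decomposition \eqref{Rmatrix1}, and then reading off the coefficients of the expansion in the ordered weight basis of $V\otimes V$. Since $\check{R}$ preserves weights, its image lies in the weight subspace of weight $\wt(v_a)+\wt(v_b)$, which I index by $r=a+b$ in the $[1,7]$-labelling of Section \ref{sect:st-mod}. The diagonal cases $(a,a)$ give $v_a\otimes v_a\in V_{2\lambda_1}$, on which $\check R$ acts by $q^2$, producing $X_{j\bar a}X_{i\bar a}=q^2X_{i\bar a}X_{j\bar a}$. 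For $(a,b)\in\mathcal{I}$ the relevant weight subspace is $2$-dimensional; comparison with the bases in \eqref{basis20} and \eqref{basis01} identifies $v_a\otimes v_b+q\,v_b\otimes v_a$ (or the sign-conjugate normalisation associated with $\widetilde{\mathcal P}_1$) as a vector in $V_{2\lambda_1}$ and $v_a\otimes v_b-q^{-1}v_b\otimes v_a$ as a vector in $V_{\lambda_2}$, so solving the resulting $2\times 2$ system yields $\check R(v_a\otimes v_b)=qv_b\otimes v_a$ and $\check R(v_b\otimes v_a)=qv_a\otimes v_b+(q^2-1)v_b\otimes v_a$, which translate under \eqref{braid} into the second and third commutation relations of item (2).

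For the remaining cases one proceeds analogously with larger weight subspaces: for $r\in[5,11]\setminus\{8\}$ the subspace is $4$-dimensional, its four ordered basis vectors coming from the two pairs in $\mathcal J_r$ together with their reverses, and for $r=8$ (the zero-weight subspace) it is $7$-dimensional. In each case I expand every weight vector $v_a\otimes v_b$ in the direct-sum basis $B(V_{2\lambda_1})\cup B(V_{\lambda_2})\cup B(V_{\lambda_1})\cup B(V_0)$ supplied in Section \ref{sect:st-mod}, apply $\check R$ to each summand using the corresponding eigenvalue from \eqref{Rmatrix1}, and re-expand in the original weight basis; the resulting coefficient matrix is $A_r$. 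The lower-triangular shape is a consequence of the chosen ordering of the weight basis: the ``ascending'' pairs $v_a\otimes v_b$ with $a<b$ precede the ``descending'' ones, and $\check R$ applied to an ascending pair produces only its own pair and the descending pair in its span.

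Completeness of the relations is established by a dimension argument. Using the listed cross-relations together with the intra-component relations, any monomial in $\{X_{ia}\}$ can be rewritten as a linear combination of ordered monomials $\prod_{i=1}^{m}w_i$ with $w_i$ a monomial in $\{X_{ia}\}_{a\in[-3,3]}$. Hence the algebra abstractly presented by these relations surjects onto $\cA_m(V)$ with each $\ZZ_+^m$-graded component of dimension at most $\prod_i\dim S_q(V)_{d_i}=\dim\cA_m(V)_{\mathbf d}$; equality forces the surjection to be an isomorphism, so the relations are complete. The main technical obstacle is the explicit determination of $A_8$: the seven weight-zero vectors receive contributions from all four summands of $V\otimes V$, and resolving them simultaneously requires a finite but laborious linear computation, after which the smaller matrices $A_r$ for $r\ne 8$ follow by the same procedure applied to lower-dimensional subspaces.
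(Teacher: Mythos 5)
Your proposal is correct and follows essentially the same route as the paper: the cross-component relations are read off from \eqref{braid} by applying $\check{R}$ to each weight subspace of $V\otimes V$ via the spectral decomposition \eqref{Rmatrix1} and the explicit bases of Section \ref{sect:st-mod}, reducing by homogeneity to the case $i=1$, $j=2$. The only material difference is that you make explicit a completeness argument (ordered-monomial spanning plus a dimension count over $\ZZ_+^m$-graded components) that the paper leaves implicit by simply remarking that the relations "spell out \eqref{braid}"; this is a worthwhile clarification but does not change the substance of the proof.
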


\noindent{\bf The matrices $A_\ell$ for $\ell\in[5, 11]$}.  

$\hspace{3.5cm}
			A_{5}{=}\left(\begin{array}{cccc}	
				1&0&0&0\\
				q\mathbf{d}&q^{-1}&0&0\\	
				{-}q^{-2}\mathbf{d}&\mathbf{b}\mathbf{d}&q^{-1}&0\\
				\mathbf{c}{-}q^{-2}\mathbf{d}&{-}(q^{-4}{+}q^{-2})\mathbf{d}&(q{+}q^{{-}1})\mathbf{d}&1
			\end{array}
			\right),
$

$\hspace{3.5cm}
			A_{6}{=}\left(\begin{array}{cccc}	
				q^{-1}&0&0&0\\
				-\mathbf{d}&1&0&0\\	
				q^{-2}\mathbf{d}&(1{+}q^{2})\mathbf{d}&1&0\\
				\mathbf{c}&(1{+}q^{-2})\mathbf{d}&-(1{+}q^{2})\mathbf{d}&q^{-1}
			\end{array}
			\right),
$

$\hspace{3.5cm}
A_{9}{=}\left(\begin{array}{cccc}	
				q^{-1}&0&0&0\\
				(q{+}q^{-1})\mathbf{d}&1&0&0\\	
				-q^{-1}\mathbf{d}&-(1{+}q^{2})\mathbf{d}&1&0\\
				\mathbf{c}&-q^{-1}\mathbf{d}&q\mathbf{d}&q^{-1}
			\end{array}
			\right), 
$

$\hspace{3.5cm}
A_{11}{=}\left(\begin{array}{cccc}	
				1&0&0&0\\
				(1+q^{2})\mathbf{d}&q^{-1}&0&0\\	
				-q^{-1}\mathbf{d}&\mathbf{b}\mathbf{d}&q^{-1}&0\\
				\mathbf{c}{-}q^{-2}\mathbf{d}&-q^{-3}\mathbf{d}&\mathbf{d}&1
			\end{array}
			\right),
$

\noindent and 
$A_{7}$, $A_{10}$ are related to $A_{6}, A_{9}$ as follows:
the $(i, j)$-entry of $A_{7}$ (resp. $A_{10}$) is the inverse of the $(i,j)$-entry of $A_{6}$ (resp. $A_{9}$) for $(i,j)\in\{(2,1),(3,1),(4,2),(4,3)\}$,  
while all other entries remain the same. Also,  

$\hspace{.5cm}
A_8=\begin{pmatrix}
q^{-2}							&\\
- q^{-1} \mathbf{d}  	& q^{-2}&\\
- q^{-4} \mathbf{d}  	& (q^{-5}-q)							& q^{-2}					&\\		
q^{-2}\mathbf{t}	& (q^{-7}{-}q^{-3})			& (q^{-4}{-}1)	& 1\\
\mathbf{s}					& {-}q^{{-}5}\mathbf{d} & q^2\mathbf{t}& (q^{{-}2}{-}q^2)	&q^{-2}\\
q^{-3}\mathbf{s}& (q^2{-}q^{-8})\mathbf{d} 	& {-}q^{-5}\mathbf{d}
						&(q^{-5}{-}q^{-1}) & (q^{-5}-q)&q^{-2}	\\		
q^{-4}\mathbf{s}+\mathbf{c}  & q^{-3}\mathbf{s} & \mathbf{s}&  \mathbf{t}	
&- q^{-4} \mathbf{d} & - q^{-1} \mathbf{d}  & q^{-2}
\end{pmatrix}.
$
			
\noindent
Here 
	$\mathbf{b}=q^2{+}1{+}q^{-2}$,
	$\mathbf{c}=q^2{+}q^{-2}{-}q^{-6}{-}1$,
	$\mathbf{d}=1-q^{-2}$,
	$\mathbf{s}=(q^{-8}{+}q^{-4}{-}q^{-6}{-}1)$, and 
	$\mathbf{t}=(q^{-6}{+}1{-}q^{-4}{-}q^{2})$.

\begin{proof}[Proof of Proposition \ref{pre-Am}]
The proposition merely spells out \eqref{braid} in the present case. 
The $R$-matrix involved is that given by \eqref{Rmatrix1}. 
To prove the proposition, one only needs to consider the case with $i=1$ and $j=2$. 

We have the explicit bases for the simple submodules of 
$S_q(V)_1\ot S_q(V)_1 = V\ot V$ constructed in Section \ref{sect:st-mod}.   
Expressing $X_{1 a} X_{2 b}$ in terms of the basis elements, 
we can then easily work out the right hand side of \eqref{braid} (for $i=1, j=2$)
using the formula \eqref{Rmatrix1} for the $R$-matrix  $\check{R}$. 

This requires nothing more than elementary linear algebra. 
The sort of calculations involved are similar to those in the proof of \cite[Lemma 3.7]{lzz:ft}, 
but much more lengthy and tedious. We omit the details, which are not very illuminating anyway. 
\end{proof}

\subsection{Classical limits of braided symmetric algebras}\label{rem:qto1}

Denote $\AA:=\CC[q, q^{-1}]\subset\CC(q)$, and let ${\rm U}_\AA(G_2)$ be the quantum group of $G_2$ over $\AA$. 
Let $V_\AA=\sum_a \AA v_a$ and $V_\AA^{\ot n}= \underbrace{V_{\AA}\ot_{\AA} V_{\AA}\ot_{\AA} \dots \ot_{\AA} V_{\AA} }_n$. Then we have the  ${\rm U}_\AA(G_2)$-algebras $S_\AA(V_\AA)= S_q(V)_n\cap V_\AA^{\ot n}$ and 
$\cA_m(V_\AA) = \underbrace{S_\AA(V_\AA)\ot _\AA S_\AA(V_\AA)\ot _\AA \dots \ot_\AA S_\AA(V_\AA)}_m$, 
where  $\cA_m(V_\AA)$ is endowed with the braided multiplication.  
The classical limits of $S_q(V)$ and $\cA_m(V)$, which will be denoted by $\ol{S_q(V)}$ and $\ol{\cA_m(V)}$ respectively, are the specialisations of $S_\AA(V_\AA)$ and $\cA_m(V_\AA)$ with respect to the $\CC$-linear ring homomorphism $\theta: \AA\lra\CC$ with $\theta(q)=1$. Explicitly, 
\[
\ol{S_q(V)}= S_\AA(V_\AA)\ot_\theta \CC, \quad \ol{\cA_m(V)}= \cA_m(V_\AA)\ot_\theta \CC. 
\]

Theorem \ref{homogeneous} states that $S_q(V)=\CC(q)\Phi+ \sum_{n\ge 0} \CH_n$, where 
$\CH_n\simeq V_{n\lambda_1}$. As any simple $\UG$-module specialises to 
the corresponding simple $G_2$-module in the classical limit, we obtain 
\beq\label{eq:Sq0}
\ol{S_q(V)}= \CC\Phi^{(0)}+\sum_{n\ge 0} \CH^{(0)}_n,
\eeq
where $\Phi^{(0)}$ is the specialisation of $\Phi$, and $\CH^{(0)}_n$ as defined in Section \ref{sect:classical} but regarded as the specialisation of $\CH_n$ here.

We have the following result. 
\begin{lemma} As algebras, 
\begin{enumerate}
\item  
$
\ol{S_q(V)}\simeq S(\CC^7)/\langle S^1(\CC^7)\phi\rangle,
$
where $\langle S^1(\CC^7)\phi\rangle$ is the two-sided ideal in $S(\CC^7)$ generated by 
$ S^1(\CC^7)\phi$; and 

\item 
$
\ol{\cA_m(V)} = \ol{S_q(V)}^{\ot m} 
$
with component wise multiplication.   
\end{enumerate}
\begin{proof}
One can easily see part (1) by comparing the equations \eqref{eq:harmonic} and \eqref{eq:Sq0}. 
Part (2) follows from the definition of $\cA_m(V)$ and the fact that  the the $R$-matrix $\check{R}$ reduces to the permutation $x\ot y\mapsto y\ot x$, for all $x, y$,  in the classical limit. 
\end{proof}
\end{lemma}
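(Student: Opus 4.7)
The plan is to prove (1) by constructing an explicit surjective algebra homomorphism from $S(\CC^7)$ onto $\ol{S_q(V)}$, showing it factors through the stated quotient, and then matching homogeneous dimensions; and to prove (2) by tracking the semiclassical limit of the $R$-matrix through the braided tensor product formula.

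For part (1), I would first observe that the submodule $\wedge_q^2 V=V_{\lambda_2}\oplus V_{\lambda_1}\subset V\ot V$ that cuts out the quadratic relations of $S_q(V)$ specialises at $q=1$ to the honest alternating square $\wedge^2\CC^7$. Consequently $\ol{S_q(V)}$ is commutative, and sending each coordinate of $\CC^7$ to the corresponding $\ol{X_a}$ extends to a surjection $\pi:S(\CC^7)\twoheadrightarrow\ol{S_q(V)}$. Next I would identify $\Phi^{(0)}$ with a nonzero scalar multiple of $\phi$: setting $q=1$ in \eqref{eq:quadr-inv} yields $\Phi^{(0)}=2X_1X_{-1}+2X_2X_{-2}+2X_3X_{-3}+X_0^2$, which is a nonzero $G_2$-invariant in $S^2(\CC^7)$, hence proportional to $\phi$ since $(S^2(\CC^7))^{G_2}$ is one-dimensional. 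Lemma \ref{lem:Phi-nil} together with specialisation then gives $\phi\cdot S^1(\CC^7)\subseteq\ker\pi$, so $\pi$ factors through a surjection $\bar\pi:S(\CC^7)/\langle S^1(\CC^7)\phi\rangle\twoheadrightarrow\ol{S_q(V)}$. To upgrade $\bar\pi$ to an isomorphism I would count homogeneous dimensions: the ideal $\langle S^1(\CC^7)\phi\rangle=S^{\geq 1}(\CC^7)\phi$ is concentrated in degrees $\geq 3$, so by \eqref{eq:harmonic} the quotient equals $\CH_n^{(0)}$ for $n\neq 2$ and equals $\CH_2^{(0)}\oplus\CC\phi$ in degree $2$; this matches the decomposition \eqref{eq:Sq0} of $\ol{S_q(V)}$ term by term.

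For part (2), I would unwind the iterated definition \eqref{eq:Am} of $\cA_m(V)$ with the braided product \eqref{eq:mu-AB}. The spectral decomposition \eqref{Rmatrix1} specialises at $q=1$ to $P[2\lambda_1]+P[0]-P[\lambda_2]-P[\lambda_1]$, which on $\CC^7\ot\CC^7$ is precisely the transposition $x\ot y\mapsto y\ot x$, and more generally $\check R_{M,N}$ specialises to the flip on any pair of $\UG$-modules admitting an integral form. Substituting the flip for $\check R$ in $\mu_{\cA_m}=(\mu\ot\mu)\circ(\id\ot\check R\ot\id)$ replaces the braiding by the ordinary transposition, so the specialised product on $\ol{\cA_m(V)}$ satisfies $(a_1\ot\cdots\ot a_m)(b_1\ot\cdots\ot b_m)=a_1b_1\ot\cdots\ot a_mb_m$, which is exactly the componentwise product on $\ol{S_q(V)}^{\ot m}$.

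The main subtlety is the dimension bookkeeping in part (1): one must verify both that $\CC\phi$ survives intact in degree $2$ and that every higher summand $\phi^k\CH_{n-2k}^{(0)}$ with $k\geq 1$ is annihilated in degree $n\geq 3$. Both follow immediately once one locates the lowest degree in which $\langle S^1(\CC^7)\phi\rangle$ contributes, so the genuinely non-formal content of the lemma lies in combining the scalar identification $\Phi^{(0)}\propto\phi$ with the nilpotence of $\Phi$ from Lemma \ref{lem:Phi-nil} to conclude that the ideal of relations in the classical limit is generated precisely by $S^1(\CC^7)\phi$.
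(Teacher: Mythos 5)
Your proof is correct and takes essentially the same approach as the paper's terse argument: both hinge on comparing the harmonic decomposition \eqref{eq:harmonic} of $S(\CC^7)$ with the classical-limit decomposition \eqref{eq:Sq0} of $\ol{S_q(V)}$, and on the fact that $\check{R}$ specialises to the flip at $q=1$. You have usefully filled in the details the paper leaves implicit -- constructing the surjection $\pi$, identifying $\Phi^{(0)}$ with a multiple of $\phi$, invoking Lemma \ref{lem:Phi-nil} to show $\langle S^1(\CC^7)\phi\rangle\subseteq\ker\pi$, and then matching graded dimensions -- but the underlying route is the one the authors intend.
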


\begin{remark}
The true interest of studying $\cA_m(V)$ in that we shall regard it as defining a non-commutative geometry admitting a ${\rm U}_q(G_2)$-action. Its classical limit yields some space with $G_2$-symmetry in the commutative world, which is not a variety, but an affine scheme, as $\ol{\cA_m(V)}$ is not an integral domain. 
\end{remark}

\section{Diagrammatics for ${\rm U}_q(G_2)$-representations}\label{sect:diagrams}
In this section, we develop diagrammatics suitable for studying
${\rm U}_q(G_2)$-invariants in braided symmetric algebras.  

%%%%%%%%%%%%%%%%%
\iffalse
We remark that there is a close connection between the material in Section \ref{diagdep} 
and Kuperberg's  ${\rm U}_q(G_2)$ spiders \cite{kuper:s} (also see \cite{Ms}). 
However,  the diagrammatics in loc. cit. is more complicated  as diagrams with double lines 
are involved (see Remark \ref{rmk:kuper-reduct}). 
It is not immediately applicable to our problem.  
\fi
%%%%%%%%%%%%%%%%%%%%
%
\subsection{The category ${\mathscr V}$ of tensor modules for ${\rm U}_q(G_2)$}
Denote by ${\mathscr V}$  the full sub-category of $\UG$-modules 
with objects $V^{\ot r}$ for all $r\in\ZZ_+$. 
This is a braided tensor category, which is semi-simple. 
We shall refer to it as the category of tensor modules for $\UG$. 

We now discuss some morphisms of ${\mathscr V}$, 
which will play important roles later.

By \eqref{eq:VtV},  the tensor product $V\otimes V$ contains the $1$-dimensional $\UG$ submodule
$V_0=\CC c_0$, where the explicit formula for $c_0$ is given by \eqref{basis0}. Recall from \eqref{eq:c0} that $
c_0=\sum_{a, b} \varphi_{a b} v_a\ot v_b, 
$
where $\varphi_{a b}\in \CC(q)$ 
can be read off \eqref{basis0}:
\beq\label{eq:check-C}
\baln
& \varphi_{0 0}= 1, \quad  \varphi_{1, -1}= q^4, \quad \varphi_{-1, 1}= q^{-6}, \quad \varphi_{2, -2}= q^3, \quad 
\varphi_{-2, 2}= q^{-5}, \\
&\varphi_{3, -3}= 1, \quad \varphi_{-3, 3}= q^{-2}, \quad \text{rest} =0.
\ealn
\eeq
Note that the $\UG$-invariant non-degenerate bilinear form \eqref{bilinear} on $V$ satisfies
\beq
(v_a, v_b)= (\varphi^{-1})_{a b}, \quad \varphi=(\varphi_{a b}).  
\eeq

Now we have the following $\UG$-maps
\beq
&\check{C}:& \mathbb{C}(q) \longrightarrow V \otimes V, \quad 1\mapsto c_0, \label{phi} \\
&\hat{C}:&  V \otimes V\lra \CC(q), \quad  v_a\ot v_b\mapsto (v_a, v_b), \quad \forall a, b. \label{eq:hat-C}
\eeq

The tensor product $V\ot V$ also contains a simple $\UG$-submodule isomorphic to $V$ with multiplicity $1$.  
As $V\ot V$ is semi-simple, $\Hom_{\UG}(V, V\ot V) \simeq \Hom_{\UG}(V\ot V, V)\simeq\CC(q)$. 
Thus there exist $\UG$-maps,  
\beq\label{eq:gam}
\baln
\upgamma:  V\longrightarrow V\otimes V, \quad p: V\otimes V \longrightarrow V, 
\ealn
\eeq
which are unique up to scalar multiples.  
We choose 
$\upgamma$ so that 
$\upgamma(v_1)=b_1$, thus  
\[
\upgamma(v_a) = b_a, \quad \forall a=0, \pm1, \pm2, \pm3.
\]
We also take the map $p$ so that 
\beq
p\circ \upgamma=-([7]_q{-}1)\id_V. \label{eq:p-gam}
\eeq

We have the following result. 
\begin{lemma}\label{lem:easy}
The following relations hold for $\chi_0(V)= -12$ and $\varepsilon_0(V)=1$. 
\beq
&(\hat{C}\ot\id_V)\circ(\id_V\ot \check{C}) = \id_V, \quad   (\id_V\ot \hat{C})\circ(\check{C}\ot \id_V)= \id_V, \label{eq:C-1}\\
& \check{C} \circ\hat{C} =  \dim_q V P[0], \quad \hat{C}\circ \check{C} = \dim_q V, \label{eq:C-2}\\
& \check{R}^{\pm 1}\circ  \check{C} =\varepsilon_0(V)q^{\pm \chi_0(V)}  \check{C}, \quad \hat{C}\circ  \check{R}^{\pm 1}= \varepsilon_0(V)q^{\pm \chi_0(V)}  \hat{C}; \label{eq:C-3} \\
&(\id_V \ot \hat{C})\circ(\upgamma\ot \id_V)= -q^{-6} p =(\hat{C}\ot\id_V)\circ(\id_V\ot \upgamma), \label{eq:gamma-p} \label{up-p}\\
&\hat{C}\circ(\id\ot \hat{C}\ot \id)\circ(\upgamma\ot \upgamma)=q^{-6}\left([7]_q-1\right)\hat{C}. \label{eq:gamma-C}
\eeq

\end{lemma}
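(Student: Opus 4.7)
The plan is to reduce each of the six identities to a single scalar computation, using strong multiplicity-freeness of $V$ to guarantee that the relevant $\UG$-Hom spaces are one-dimensional; the required scalars are then pinned down from the explicit data in \eqref{basis0}, \eqref{bilinear}, and \eqref{basis10}.

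The snake identities \eqref{eq:C-1} reduce to the matrix identity $\sum_a (v_c, v_a)\,\varphi_{ab} = \delta_{cb}$, which is immediate because the matrices $(\varphi_{ab})$ from \eqref{eq:check-C} and $((v_a,v_b))$ from \eqref{bilinear} are block-anti-diagonal and their paired nonzero entries are mutually reciprocal. For \eqref{eq:C-2}, a direct summation yields $\hat C\circ \check C(1) = \sum_{a,b}\varphi_{ab}(v_a,v_b) = 1+q^{10}+q^{-10}+q^8+q^{-8}+q^2+q^{-2} = \dim_q V$; for the other composition, note that $V_0\subset V\ot V$ is the unique one-dimensional trivial summand, so the $\UG$-map $\hat C$ must vanish on $V_0^\perp$, whence $\check C\circ \hat C$ is supported on $V_0$ and sends $c_0\mapsto \dim_q V\cdot c_0$, matching $\dim_q V\cdot P[0]$. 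The $R$-matrix identities \eqref{eq:C-3} then follow from the spectral decomposition \eqref{Rmatrix1}, which acts on $c_0\in V_0$ by $q^{-12}$, combined with the factorisation of $\hat C$ through $V_0$.

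For \eqref{eq:gamma-p}, the composite $\psi_1 := (\id_V\ot \hat C)\circ (\upgamma\ot \id_V)$ is a $\UG$-map $V\ot V\to V$, and since $\dim\Hom_{\UG}(V\ot V, V) = 1$ we may write $\psi_1 = \mu p$ for some $\mu\in\CC(q)$. Post-composing with $\upgamma$ and invoking the normalisation \eqref{eq:p-gam} reduces the task to computing $\psi_1\circ\upgamma(v_0)$: expanding $b_0 = \upgamma(v_0)$ and each $b_a = \upgamma(v_a)$ via \eqref{basis10}, then applying $\hat C$ through the pairings of \eqref{bilinear}, the surviving contributions collect to $q^{-6}([7]_q-1)\,v_0$, and comparison with $-\mu([7]_q-1)v_0$ yields $\mu = -q^{-6}$. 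The same argument, applied symmetrically, handles the second equality involving $(\hat C\ot \id_V)\circ(\id_V\ot \upgamma)$.

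For \eqref{eq:gamma-C}, the left hand side is a $\UG$-map $V\ot V\to \CC(q)$, so it equals $\kappa\,\hat C$ for some $\kappa$; I would evaluate on $v_1\ot v_{-1}$. Expanding $b_1\ot b_{-1}$ via \eqref{basis10} and applying the inner $\hat C$ retains precisely the four $(j,k)$-pairings with $j+k=0$ allowed by \eqref{bilinear}, whose outer $\hat C$-images sum to $q^6 + q^4 + q^2 + q^{-2} + q^{-4} + q^{-6} = [7]_q-1$; since $\hat C(v_1\ot v_{-1})=q^6$, we conclude $\kappa = q^{-6}([7]_q-1)$. The main obstacle throughout is the combinatorial bookkeeping in these last two identifications: the seven nonzero pairings of \eqref{bilinear} combined with the four-term expansions of the $b_a$ generate many small contributions that must be tracked with correct signs and $q$-powers.
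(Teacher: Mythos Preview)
Your proposal is correct and follows essentially the same strategy as the paper: direct verification of \eqref{eq:C-1}--\eqref{eq:C-3} from the explicit data, and reduction of \eqref{eq:gamma-p}--\eqref{eq:gamma-C} to a single scalar check using that the relevant $\UG$-Hom spaces are one-dimensional. The only minor difference is in the choice of test vectors---the paper evaluates \eqref{eq:gamma-p} on $b_1$ and \eqref{eq:gamma-C} on $c_0$ (yielding seven contributions), whereas your choices of $b_0$ and $v_1\ot v_{-1}$ are slightly more economical but otherwise equivalent.
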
  

\begin{proof}
Equations \eqref{eq:C-1}--\eqref{eq:C-3}
are familiar results valid for any self dual simple module for any quantum (super)group with the appropriate $\chi_0(V)$ and $\varepsilon_0(V)$. The relations in \eqref{eq:C-2} can be easily verified using the definitions of $\check{C}$ and $\hat{C}$, 
and the relations in \eqref{eq:C-3} are a consequence of the spectral decomposition of $\check{R}$ and the definitions of $\check{C}$ and $\hat{C}$. Equation \eqref{eq:C-1} involves the canonical identifications $V\ot\CC(q) \simeq V \simeq \CC(q)\ot V$. To illustrate the proof, we consider the first relation as an example. We have
$$
(\hat{C}\ot\id_V)\circ(\id_V\ot \check{C})(v_a) = \sum_{b, c} \varphi_{b c} (v_a, v_b) v_c 
= \sum_{b, c}  (\varphi^{-1})_{a b} \varphi_{b c} v_c = v_a, \quad \forall a, 
$$
as desired.  

Equations \eqref{eq:gamma-p} and \eqref{eq:gamma-C} are verified by direct calculations using \eqref{eq:gam} and \eqref{eq:p-gam}, and the explicit basis $B(\lambda_1)$ of the submodule $V_{\lambda_1}$ in $V\ot V$.  
Since the maps involved are all $\UG$-morphisms, we only need to show 
that the relations hold when applied to the highest weight vectors 
$v_1\ot v_1$, $v_1 \ot v_2 - q v_2\ot v_1$,  $b_1$ and $c_0$ of the simple submodules of $V\ot V$.  
However, very lengthy calculations are needed even for these vectors.  

Let us prove \eqref{eq:gamma-p} for $b_1$. Using the explicit formulae for $b_1$, we obtain
\begin{align*}
(\upgamma\ot \id_V)(b_1)
=&b_{1}\otimes v_{0}-q^{-6}b_{0}\otimes v_{1}-(q^{-3}+q^{-1})\left(b_{2}\otimes v_3-q^{-3}b_3\otimes v_2\right).
\end{align*}
Since $\hat{C}(v_i, v_j)=0$ unless $i+j=0$, we have 
\begin{align*}
&(\id_V \ot \hat{C})\circ(\upgamma\ot \id_V)(b_1) \\
=&(\id_V \ot \hat{C})(v_1\otimes v_0\otimes v_{0}+q^{-6}q^{-2}v_1\otimes v_{-1}\otimes v_{1}\\
& -(q^{-3}+q^{-1})
(-q^{-1}v_1\otimes v_{-3}\otimes v_3-q^{-3}q^{-1}v_1\otimes v_{-2}\otimes v_2))\\
=&v_1+q^{-8}q^{-4}v_1+(q^{-3}+q^{-1})q^{-1}v_1+(q^{-3}+q^{-1})q^{-4}q^{-3}v_1\\
%=&(q^{-12}+q^{-10}+q^{-8}+q^{-4}+q^{-2}+1)v_1\\
=&q^{-6}([7]_q-1)v_1=-q^{-6}p(b_1).
\end{align*}
This proves the first equality. The proof of the second equality is similar:
\begin{align*}	
&(\hat{C}\ot\id_V)\circ(\id_V\ot \upgamma)(b_1)\\				
=&(\hat{C}\ot\id_V)(v_{1}\otimes b_{0}-q^{-6}v_{0}\otimes b_{1}
-(q^{-3}+q^{-1})(v_{2}\otimes b_3-q^{-3}v_3\otimes b_2))\\
=&(\hat{C}\ot\id_V)(q^{-6}v_{1}\otimes v_{-1}\otimes v_{1}+q^{-6}q^{-6}v_{0}\otimes v_{0}\otimes v_{1}\\
&-(q^{-3}+q^{-1})
(-q^{-6}v_{2}\otimes v_{-2}\otimes v_{1}-q^{-3}q^{-6}v_3\otimes v_{-3}\otimes v_{1}))\\
%=&q^{-6}q^{6}v_1+q^{-12}v_1+(q^{-3}+q^{-1})q^{-6}q^5v_1
%+(q^{-3}+q^{-1})q^{-9}q^2v_1\\
=&\left(1+q^{-12}+q^{-3}(q+q^{-1})+q^{-9}(q+q^{-1})\right)v_1\\
=&q^{-6}([7]_q-1)v_1=-q^{-6}p(b_1).
\end{align*}
This shows that  \eqref{eq:gamma-p} indeed holds when applied to $b_1$.  
The proofs for the  other $3$ highest weight vectors are similar but simpler. We omit the details. 

Now we prove \eqref{eq:gamma-C} for $c_0$. Apply both sides to $c_0$. We have  
\[
\baln
\text{LHS}:=& \hat{C}\circ(\id\ot \hat{C}\ot \id)\circ(\upgamma\ot \upgamma)(c_0) 
%&=\sum_{a, c} \varphi_{a c} \hat{C}\circ(\id\ot \hat{C}\ot \id)\circ(\upgamma\ot \upgamma)  (v_a\ot  v_c)\\
= \sum_{a, c} \varphi_{a c} \hat{C}\circ(\id\ot \hat{C}\ot \id) (b_a\ot  b_c), \\
\text{RHS}:=& q^{-6}\left([7]_q-1\right)\hat{C}(c_0)= q^{-6}\left([7]_q-1\right)\dim_qV, 
\ealn
\]
where the second relation of \eqref{eq:C-2} was used in obtaining the result for $\text{RHS}$. To consider $\text{LHS}$, 
recall that $\hat{C}\circ(\id\ot \hat{C}\ot \id) (b_a\ot  b_c)=0$ if $a+c\ne 0$.  
We can show that  
\beq\label{eq:comp+-}
\begin{aligned}
\hat{C}\circ(\id\ot \hat{C}\ot \id)(b_1\otimes b_{-1})&=[7]_q-1, \\
\hat{C}\circ(\id\ot \hat{C}\ot \id)(b_{-1}\otimes b_{1})&=q^{-10}([7]_q-1),\\
\hat{C}\circ(\id\ot \hat{C}\ot \id)(b_2\otimes b_{-2})&=q^{-1}([7]_q-1),\\
\hat{C}\circ(\id\ot \hat{C}\ot \id)(b_{-2}\otimes b_{2})&=q^{-9}([7]_q-1),\\
\hat{C}\circ(\id\ot \hat{C}\ot \id)(b_3\otimes b_{-3})&=q^{-4}([7]_q-1),\\
\hat{C}\circ(\id\ot \hat{C}\ot \id)(b_{-3}\otimes b_{3})&=q^{-6}([7]_q-1),\\
\hat{C}\circ(\id\ot \hat{C}\ot \id)(b_{0}\otimes b_{0})&=q^{-6}([7]_q-1).  
\end{aligned}
\eeq
Consider, e.g.,  the first relation.  The terms in $b_1\ot b_{-1}$ with non-zero contributions are 
\begin{align*}
& v_1\ot v_0\ot v_0\ot v_{-1}+q^{-12}v_0\ot v_1\ot v_{-1}\ot v_{0}\\
&+q^{-2}(q^{-3}+q^{-1})v_2\ot v_3\ot v_{-3}\ot v_{-2}+q^{-8}(q^{-3}+q^{-1})v_3\ot v_2\ot v_{-2}\ot v_{-3}.
\end{align*}
Hence 
\begin{align*}
&\hat{C}\circ(\id\ot \hat{C}\ot \id)(b_1\otimes b_{-1})\\
%=&\hat{C}\circ(\id\ot \hat{C}\ot \id)(v_1\ot v_0\ot v_0\ot v_{-1}+q^{-12}v_0\ot v_1\ot v_{-1}\ot v_{0}\\
%&+q^{-2}(q^{-3}+q^{-1})v_2\ot v_3\ot v_{-3}\ot v_{-2}+q^{-8}(q^{-3}+q^{-1})v_3\ot v_2\ot v_{-2}\ot v_{-3})\\
=&\hat{C}(v_1\ot v_{-1}+q^{-6}v_0\ot v_{0}+(q^{-3}+q^{-1})v_2\ot v_{-2}+q^{-3}(q^{-3}+q^{-1})v_3\ot v_{-3})\\
%=&q^6+q^{-6}+q^5(q^{-3}+q^{-1})+q^{-3}(q^{-3}+q^{-1})q^2\\
=&q^6+q^{-6}+q^5(q^{-3}+q^{-1})+q^{-1}(q^{-3}+q^{-1})
=[7]_q-1.
\end{align*}
The other relations in \eqref{eq:comp+-} can be proved similarly.

It immediately follows from \eqref{eq:comp+-}  that 
\begin{align*}
\text{LHS}
%=&\left(q^4+q^{-6}q^{-10}+q^{3}q^{-1}+
%q^{-5}q^{-9}+q^{-4}+q^{-2}q^{-6}+q^{-6}\right)([7]_q-1)\\
=&\left(q^4+q^{-16}+q^{2}+
q^{-14}+q^{-4}+q^{-8}+q^{-6}\right)([7]_q-1)\\
=&q^{-6}([7]_q-1)\dim_qV.
\end{align*}	
This shows that  \eqref{eq:gamma-C} indeed holds when applied to $c_0$.  
The proofs for other $3$ highest weight vectors are similar but simpler.
\end{proof}

\subsection{Diagrammatic description of ${\mathscr V}$}\label{diagdep}
\subsubsection{A diagram category and a tensor functor}%\label{diagdep}
The diagram category suitable for describing ${\mathscr V}$  is a monoidal category of un-oriented framed tangles \cite{fy:bc} with coupons 
(i.e., a category of un-direct oriented ribbon graphs with coupons if adopting the terminology of \cite{rt:rg}). 
The set of objects of the diagram category is $\ZZ_+$, and the morphisms are generated by the ribbon graphs in Figure \ref{fig:generators} under composition and juxtaposition.
\begin{figure}[h]
\begin{gather*}
	\begin{tikzpicture}[baseline=5pt,scale=1,color=\clr]
		\draw[-,line width=1pt](0,0)--(0,0.7);
	\end{tikzpicture}\quad\quad
	\begin{tikzpicture}[baseline=25pt,scale=0.7,color=\clr]
		\draw[-,line width=1pt] (0.5,2)--(1.5,1);
		\draw [-,line width=1pt](0.5,1)--(0.9,1.4);
		\draw [-,line width=1pt](1.1,1.6)--(1.5,2);
	\end{tikzpicture}\quad\quad
	\begin{tikzpicture}[baseline=25pt,scale=0.7,color=\clr]
		\draw [-,line width=1pt](5.5,1)--(6.5,2);
		\draw[-,line width=1pt] (6.5,1)--(6.1,1.4);
		\draw [-,line width=1pt](5.9,1.6)--(5.5,2);
	\end{tikzpicture}\quad\quad
	\begin{tikzpicture}[baseline=5pt,scale=1,color=\clr]
		\draw[-,line width=1pt](0,0.7)parabola bend (0.35,0) (0.7,0.7);
	\end{tikzpicture}\quad\quad
	\begin{tikzpicture}[baseline=5pt,scale=1,color=\clr]
		\draw[-,line width=1pt,color=\clr](0,0)parabola bend (0.35,0.7) (0.7,0);
	\end{tikzpicture}\quad\quad
	\begin{tikzpicture}[baseline=20pt,scale = 0.7,color=\clr]
		\draw[-, line width=1pt] (0.5,1.4)--(0.5,2);
		\draw [-, line width=1pt](0.5,1.4)--(0,0.8);
		\draw [-, line width=1pt](0.5,1.4)--(1,0.8);
	\end{tikzpicture}
\end{gather*}
\caption{Generators~ of~ un-oriented ~$3$-tangle~diagrams}
\label{fig:generators}
\end{figure}
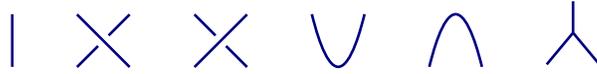

The first four diagrams in Figure 1 are the standard generators for the unoriented tangle category obeying  the usual relations
(i..e., the relations in \cite{rt:rg} without orientation, also see \cite{fy:bc}). The last generator is a coupon, which obeys the ``sliding" relations

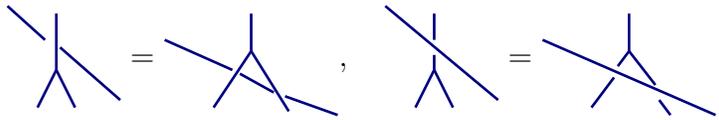
\begin{figure}[h]
\begin{gather*}
\begin{tikzpicture}[baseline=37pt,scale=0.5,color=\clr]
\draw[-,line width=1pt] (0.5,1.5)--(1,2.5);\draw[-,line width=1pt] (1.5,1.5)--(1,2.5);\draw[-,line width=1pt] (1,2.5)--(1,4);
\draw[-,line width=1pt] (-0.3,4.2)--(0.7,3.3);\draw[-,line width=1pt] (1.1,3.1)--(2.7,1.7);
\end{tikzpicture}
=\begin{tikzpicture}[baseline=37pt,scale=0.5,color=\clr]
\draw[-,line width=1pt] (0,1.5)--(1,3);\draw[-,line width=1pt] (2,1.4)--(1,3);\draw[-,line width=1pt] (1,3)--(1,4);
\draw[-,line width=1pt] (-1.3,3.3)--(0.5,2.5);\draw[-,line width=1pt] (0.7,2.4)--(1.6,1.9);
\draw[-,line width=1pt] (1.9,1.8)--(3.3,1.3);
\end{tikzpicture}, 
\quad
\begin{tikzpicture}[baseline=37pt,scale=0.5,color=\clr]
\draw[-,line width=1pt] (0.5,1.5)--(1,2.5);\draw[-,line width=1pt] (1.5,1.5)--(1,2.5);
\draw[-,line width=1pt] (1,2.5)--(1,2.9);\draw[-,line width=1pt] (1,3.3)--(1,4);
\draw[-,line width=1pt] (-0.3,4.2)--(2.7,1.7);
\end{tikzpicture}
=\begin{tikzpicture}[baseline=37pt,scale=0.5,color=\clr]
\draw[-,line width=1pt] (0,1.5)--(0.6,2.3);
\draw[-,line width=1pt] (0.7,2.6)--(1,3);
\draw[-,line width=1pt] (2.1,1.3)--(1.8,1.7);
\draw[-,line width=1pt] (1.7,2.1)--(1,3);
\draw[-,line width=1pt] (1,3)--(1,4);
\draw[-,line width=1pt] (-1.3,3.3)--(3.3,1.3);\end{tikzpicture}. 
\end{gather*}
\caption{Relations involving  the coupon}
\label{fig:slide}
\end{figure}

\noindent
We denote by $\widetilde{\mathscr T}$ this monoidal category of un-oriented framed tangles with coupons.

However, for the purpose of studying representations of $\UG$, it is more convenient to consider a quotient monoidal category 
of $\widetilde{\mathscr T}$ by imposing further relations on the coupon. 
Fix non-zero scalars $\alpha, \beta$. Denote by ${\mathfrak I}(\alpha, \beta)$ the tensor ideal of $\widetilde{\mathscr T}$ generated by the following morphisms under composition and juxtaposition of diagrams. 
\begin{equation}\label{J-gen}
\begin{tikzpicture}[baseline=29pt,scale=0.6,color=\clr]
		\draw[-,line width=1pt] (0.5,2)--(1.5,1);\draw [-,line width=1pt](0.5,1)--(0.9,1.4);\draw [-,line width=1pt](1.1,1.6)--(1.5,2);
	     \draw[-,line width=1pt] (0.5,2)--(1,2.5);\draw[-,line width=1pt] (1.5,2)--(1,2.5);\draw[-,line width=1pt] (1,2.5)--(1,3);
\end{tikzpicture}
-\alpha\begin{tikzpicture}[baseline=29pt,scale=0.6,color=\clr]
		\draw[-,line width=1pt] (0.5,1)--(1,2);\draw[-,line width=1pt] (1.5,1)--(1,2);\draw[-,line width=1pt] (1,2)--(1,3);
	\end{tikzpicture},\qquad
\begin{tikzpicture}[baseline=30pt,scale = 0.6,color=\clr]
		\draw[-,line width=1pt] (1.5,0.8)to (1.5,1.5);
		\draw[-,line width=1pt] (1.5,1.5) to (1,2);
		\draw [-,line width=1pt](1.5,1.5) to (2,2);
		\draw[-,line width=1pt] (1,2) to (1.5,2.5);
		\draw[-,line width=1pt] (2,2) to (1.5,2.5);
		\draw[-,line width=1pt] (1.5,2.5) to (1.5,3);
	\end{tikzpicture}
	-\beta\ 
	\begin{tikzpicture}[baseline=30pt,scale = 0.6,color=\clr]
		\draw [-,line width=1pt](8.7,0.8) to (8.7,3);
	\end{tikzpicture}~, 
\qquad
\begin{tikzpicture}[baseline=37pt,scale=0.6,color=\clr]
\draw[-,line width=1pt] (0.5,2)--(1,2.5);\draw[-,line width=1pt] (1.5,2)--(1,2.5);\draw[-,line width=1pt] (1,2.5)--(1,3);
\draw[-,line width=1pt](0.5,2)parabola bend (1,1.5) (1.5,2);
\end{tikzpicture}~, 
\qquad
\begin{tikzpicture}[baseline=20pt,scale = 0.6,color=\clr]		
	\draw[-, line width=1pt] (6,1.4)--(6,2);
	\draw [-, line width=1pt](6,1.4)--(5.5,0.8);
	\draw [-, line width=1pt](6,1.4)--(6.5,0.8);
	\draw[-, line width=1pt] (5.5,0.8)parabola(5.2,2);
\end{tikzpicture}-
\begin{tikzpicture}[baseline=20pt,scale = 0.6,color=\clr]		
	\draw[-, line width=1pt] (3,1.4)--(3,2);
	\draw [-, line width=1pt](3,1.4)--(2.5,0.8);
	\draw [-, line width=1pt](3,1.4)--(3.5,0.8);\draw[thick, line width=1pt] (3.5,0.8)parabola(3.8,2);
\end{tikzpicture}~.
\end{equation}

\begin{definition}
Fix non-zero scalars $\alpha, \beta$. Let ${\mathscr T}(\alpha, \beta)=\widetilde{\mathscr T}/{\mathfrak I}(\alpha, \beta)$. 
\end{definition}
This quotient  category is monoidal.  
\begin{remark} 
We shall use the same pictures for diagrams in  $\widetilde{\mathscr T}$ to denote their images in the quotient category ${\mathscr T}(\alpha, \beta)$. 
\end{remark}

We now consider some frequently used diagrams in ${\mathscr T}(\alpha, \beta)$, 
some of which will be given more convenient pictorial representations. 
\begin{enumerate}[i)]
\item Denote 
\begin{tikzpicture}[baseline=20pt,scale = 0.6,color=\clr]		
	\draw[-, line width=1pt] (1,0.8)--(1,1.5);
	\draw[-, line width=1pt] (1,1.5)--(0.5,2);
	\draw[-, line width=1pt] (1,1.5)--(1.5,2);	
\end{tikzpicture}:=\begin{tikzpicture}[baseline=20pt,scale = 0.6,color=\clr]		
	\draw[-, line width=1pt] (6,1.4)--(6,2);
	\draw [-, line width=1pt](6,1.4)--(5.5,0.8);
	\draw [-, line width=1pt](6,1.4)--(6.5,0.8);
	\draw[-, line width=1pt] (5.5,0.8)parabola(5.2,2);
\end{tikzpicture}=
\begin{tikzpicture}[baseline=20pt,scale = 0.6,color=\clr]		
	\draw[-, line width=1pt] (3,1.4)--(3,2);
	\draw [-, line width=1pt](3,1.4)--(2.5,0.8);
	\draw [-, line width=1pt](3,1.4)--(3.5,0.8);\draw[thick, line width=1pt] (3.5,0.8)parabola(3.8,2);
\end{tikzpicture}.

\item Composing 
\begin{tikzpicture}[baseline=20pt,scale = 0.6,color=\clr]		
	\draw[-, line width=1pt] (1,0.8)--(1,1.5);
	\draw[-, line width=1pt] (1,1.5)--(0.5,2);
	\draw[-, line width=1pt] (1,1.5)--(1.5,2);	
\end{tikzpicture}
and
\begin{tikzpicture}[baseline=20pt,scale = 0.7,color=\clr]
		\draw[-, line width=1pt] (0.5,1.4)--(0.5,2);
		\draw [-, line width=1pt](0.5,1.4)--(0,0.8);
		\draw [-, line width=1pt](0.5,1.4)--(1,0.8);
\end{tikzpicture}
with the former diagram on top, 
we obtain  
\begin{tikzpicture}[baseline=22pt,scale=0.6,color=\clr]		  
	\draw[-,line width=1pt] (1,1.3)--(1,1.7);
	\draw[-,line width=1pt] (1,1.7)--(0.5,2);
	\draw[-,line width=1pt] (1,1.7)--(1.5,2);
	\draw[-,line width=1pt] (1,1.3)--(0.5,1);
	\draw[-,line width=1pt] (1,1.3)--(1.5,1);    
\end{tikzpicture}.
\item 
We have
$
\begin{tikzpicture}[baseline=5pt,-,color=\clr]
		\draw[-,line width=1pt](0,0.4)parabola bend (0.3,0) (0.6,0.6);
		\draw[-,line width=1pt](0,0.4)--(-0.2,0.6);\draw[-,line width=1pt](0,0.4)--(0.2,0.6);
	\end{tikzpicture}
	=\begin{tikzpicture}[baseline=5pt,-,color=\clr]
		\draw[-,line width=1pt](0,0.6)parabola bend (0.3,0) (0.6,0.4);
		\draw[-,line width=1pt](0.6,0.4)--(0.4,0.6);\draw[-,line width=1pt](0.6,0.4)--(0.8,0.6);
	\end{tikzpicture},  
$
and will also denote both diagrams by 
$
\begin{tikzpicture}[baseline=-11pt,scale=1.3,color=\clr]
		\draw[-,line width=1pt](0,0)parabola bend (0.3,-0.4) (0.6,0);\draw[-,line width=1pt](0.3,-0.4)--(0.3,0);
	\end{tikzpicture}~. 
$

\item 
By concatenating 
\begin{tikzpicture}[baseline=-13pt,scale=0.75,color=\clr]
	\draw[-,line width=1pt](0,-0.8)--(0,-0.2);\draw[-,line width=1pt](1.4,-0.8)--(1.4,-0.2);
	\draw[-,line width=1pt](0.3,-0.4)--(0.1,-0.2);\draw[-,line width=1pt](0.3,-0.4)--(0.5,-0.2);
	\draw[-,line width=1pt](1.1,-0.4)--(0.9,-0.2);\draw[-,line width=1pt](1.1,-0.4)--(1.3,-0.2);
	\draw[-,line width=1pt](0.3,-0.4)parabola bend (0.7,-0.8) (1.1,-0.4);
\end{tikzpicture} 
and $\caps~ \ids ~ \ids ~ \caps$ with the latter diagram on top, we obtain the diagram 
\begin{tikzpicture}[baseline=-8pt,scale=0.5,color=\clr]
\draw[-,line width=1pt](-0.6,-0.8)parabola bend (-0.3,0.2) (0,0); 
\draw[-,line width=1pt](0,0)parabola bend (0.3,-0.4) (0.6,0.2);
\draw[-,line width=1pt](0.8,0.2)parabola bend (1.1,-0.4) (1.4,0);	
\draw[-,line width=1pt](1.4,0)parabola bend (1.7,0.2) (2,-0.8);
\draw[-,line width=1pt](0.3,-0.4)parabola bend (0.7,-0.8) (1.1,-0.4);
\end{tikzpicture}. 
We will denote it by  
\begin{tikzpicture}[baseline=2pt,scale=0.5,color=\clr]
\draw[-,line width=1pt](8.3,1)--(8.8,0.5);
\draw[-,line width=1pt](8.3,0)--(8.8,0.5);
\draw[-,line width=1pt](8.8,0.5)--(9.2,0.5);
\draw[-,line width=1pt](9.2,0.5)--(9.7,1);
\draw[-,line width=1pt](9.2,0.5)--(9.7,0);
\end{tikzpicture}.

\end{enumerate}
%%%%%%%%%%%%%%%%%%%%%%%%

The following relation is clear:
\begin{tikzpicture}[baseline=2pt,scale=0.5,color=\clr]
	\draw[-,line width=1pt](8.3,1)--(8.8,0.5);
	\draw[-,line width=1pt](8.3,0)--(8.8,0.5);
	\draw[-,line width=1pt](8.8,0.5)--(9.2,0.5);
	\draw[-,line width=1pt](9.2,0.5)--(9.7,1);
	\draw[-,line width=1pt](9.2,0.5)--(9.7,0);
	\draw[-,line width=1pt](8.3,0)parabola bend (8.1,-0.1)(7.9,1);
	\draw[-,line width=1pt](9.7,1)parabola bend (9.9,1.1)(10.1,0);
\end{tikzpicture}
=\begin{tikzpicture}[baseline=22pt,scale=0.6,color=\clr]		  
	\draw[-,line width=1pt] (1,1.3)--(1,1.7);
	\draw[-,line width=1pt] (1,1.7)--(0.5,2);
	\draw[-,line width=1pt] (1,1.7)--(1.5,2);
	\draw[-,line width=1pt] (1,1.3)--(0.5,1);
	\draw[-,line width=1pt] (1,1.3)--(1.5,1);    
\end{tikzpicture}.

\medskip

The following theorem is required for the proof of the main result of this paper,
the first fundamental theorem of invariant theory for $\UG$ (see Theorem \ref{FFT}). 

\begin{theorem}\label{thm:functor}
Denote by ${\mathscr T}$ the category ${\mathscr T}(\alpha, \beta)$ with  
$\alpha = -q^{-6}$ and $\beta=[7]_q-1$. 
\begin{enumerate}
\item There is a unique tensor functor ${\mathcal F}: {\mathscr T}\lra {\mathscr V}$, which maps an object $r$ of ${\mathscr T}$ to $V^{\ot r}$ for all $r\in\ZZ_+$, and acts on the generators of morphisms of ${\mathscr T}$ as follows
\begin{gather*}
\mathcal{F}\left(\, \ids\, \right)=\id_V, \quad
\mathcal{F}\left(\caps\right)=\widehat{C},\quad
\mathcal{F}\left(\cups\right)=\check{C},\\
\mathcal{F}\left(\cross\right)=\check{R},\quad
\mathcal{F}\left(\Icross\right)=\check{R}^{-1},\quad
\mathcal{F}\left(\merges\right)=q^{-3}p.
\end{gather*}
\item The functor ${\mathcal F}: {\mathscr T}\lra {\mathscr V}$ is essentially surjective and full. 
\end{enumerate}
\end{theorem}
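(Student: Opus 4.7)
The plan for Part (1) is to define $\mathcal F$ first on the free monoidal category $\widetilde{\mathscr T}$ of un-oriented framed tangles with coupons by the prescribed values on generators, and then descend to the quotient $\mathscr T={\mathscr T}(-q^{-6},[7]_q-1)$. Well-definedness on $\widetilde{\mathscr T}$ requires checking the standard Reshetikhin--Turaev relations for framed tangles (Reidemeister II and III, the zig-zag identities, and the framing/twist relations for cups and caps) together with the two coupon-sliding relations of Figure \ref{fig:slide}. The tangle relations hold because the universal $R$-matrix of $\UG$ yields a braiding and $V$ is self-dual via $(\check C,\hat C)$: the relevant identities are packaged in \eqref{eq:C-1}--\eqref{eq:C-3} together with the Yang--Baxter equation. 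The sliding relations amount to naturality of $p$ with respect to $\check R$, which is automatic because $p$ is a $\UG$-module morphism.

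To descend to $\mathscr T$, I check that $\mathcal F$ annihilates each of the four morphisms generating $\mathfrak{I}(\alpha,\beta)$ in \eqref{J-gen}. The first is $\merges\circ\cross = \alpha\merges$, which under $\mathcal F$ becomes $p\circ\check R = -q^{-6}p$; this holds because $p$ projects $V\ot V$ onto its $V_{\lambda_1}$-isotype, on which $\check R$ acts by $-q^{-6}$ per the spectral decomposition \eqref{Rmatrix1}. For the second (``bubble'') relation, I express the derived splitting vertex via the generators as $\splits = (\merges\ot\ids)\circ(\ids\ot\cups)$; a direct calculation using \eqref{up-p} and the explicit form of $\check C$ gives $\mathcal F(\splits)=-q^{3}\upgamma$, so $\mathcal F(\merges\circ\splits)=q^{-3}p\circ(-q^{3}\upgamma)=-p\circ\upgamma=([7]_q-1)\id_V = \beta\id_V$ by \eqref{eq:p-gam}. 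The third relation becomes $p\circ\check C = 0$, immediate because $\check C$ has image $V_0\subset V\ot V$ while $p$ projects onto the complementary $V_{\lambda_1}$-isotype. The fourth relation, which equates the two ways of attaching a cap to $\merges$, is exactly \eqref{eq:gamma-p}. Uniqueness of $\mathcal F$ is automatic once its values on the generators are fixed.

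For Part (2), essential surjectivity is tautological: $V^{\ot r} = \mathcal F(r)$ for every $r\in\ZZ_+$. For fullness, self-duality of $V$ via the zig-zag identities \eqref{eq:C-1} identifies every $\Hom_\UG(V^{\ot r},V^{\ot s})$ canonically with $\Hom_\UG(V^{\ot(r+s)},\CC(q))$ via caps, so it suffices to realise diagrammatically every $\UG$-linear endomorphism of $V^{\ot n}$. The crucial input is that $V$ is strongly multiplicity free in the sense of \cite{lz:sm}: each simple summand in $V\ot V$ has multiplicity exactly one, so the four orthogonal idempotents $P[2\lambda_1], P[\lambda_2], P[\lambda_1], P[0]$ can be expressed as polynomials in $\check R$ together with the rank-one idempotents $\tfrac{1}{\dim_q V}\check C\circ\hat C$ (projector onto $V_0$) and $-\tfrac{1}{[7]_q-1}\upgamma\circ p$ (projector onto $V_{\lambda_1}$), all of which lie in the image of $\mathcal F$. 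An induction on $n$ using these local two-strand projectors decomposes $V^{\ot n}$ into isotypes, and Schur's lemma combined with multiplicity-freeness produces every $\UG$-endomorphism as a diagram.

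The main obstacle I foresee is the bookkeeping needed to show that the locally constructed diagrams actually span the full $\Hom$-spaces, not merely a subalgebra. The cleanest organisation is to defer the explicit enumeration to Theorem \ref{lem:basis}, which exhibits a diagrammatic basis of $\Hom_\UG(V^{\ot r},V^{\ot s})$; for the present statement one needs only spanning, which propagates from the multiplicity-free structure of $V\ot V$ to every tensor power by semisimplicity of $\mathscr V$.
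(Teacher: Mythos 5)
Your approach mirrors the paper's: verify the tangle and coupon relations directly using the spectral decomposition \eqref{Rmatrix1} and the identities of Lemma~\ref{lem:easy}, and obtain fullness from strong multiplicity-freeness of $V$ via \cite{lz:sm}. Two small points worth flagging, neither of which is a genuine gap. First, the fourth morphism in \eqref{J-gen} attaches cups (not caps) to the coupon, and the relation it imposes is not literally \eqref{eq:gamma-p} but the rotated identity $(\id_V\ot q^{-3}p)\circ(\check{C}\ot\id_V)=(q^{-3}p\ot\id_V)\circ(\id_V\ot\check{C})$; this does follow from \eqref{eq:gamma-p} together with the zig-zag identities \eqref{eq:C-1}, but the deduction should be spelled out rather than asserted as identical. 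Second, $V^{\ot n}$ is \emph{not} multiplicity-free for $n\ge 3$, so ``Schur's lemma combined with multiplicity-freeness'' is not quite the right invocation for fullness; what is actually needed is the theorem of \cite[\S 8]{lz:sm} that for a strongly multiplicity-free $V$ the centralizer algebra $\End_{\UG}(V^{\ot n})$ is generated by the two-strand operators (the braidings together with $\check C\circ\hat C$ and $\upgamma\circ p$ placed in adjacent positions), which is exactly the citation the paper makes.
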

\begin{proof}[Remarks on the proof]
The proof of part (1) requires the verification of the defining relations among the generators of morphisms of ${\mathscr T}$, and this is a consequence of Lemma \ref{lem:easy} and general properties of the $R$-matrix.  For the tangle generators, i.e., 
the elements of Figure \ref{fig:generators} but the last one, 
the verification is the same as, e.g.,  in the case of the natural representation of 
${\rm U}_q(\frak{osp}_{m|2n})$ \cite{LZZ-2}. 
The coupon is particular to our case, and we define $\mathcal{F}$ so that 
it maps the coupon to $q^{-3}p$.    
Then for $\alpha = -q^{-6}$ and $\beta=[7]_q-1$, one can verify by direct calculations that the elements in \eqref{J-gen} are all mapped to $0$ by $\mathcal{F}$. 
The calculations are lengthy but straightforward. 

Evidently the functor is essentially surjective. It follows from \cite[\S8]{lz:sm} (and also \cite{Ms}) that the images under ${\mathcal F}$ of the  diagrams in Figure \ref{fig:generators}  generate all morphisms of ${\mathscr V}$ by composition and tensor product. 
\end{proof}

%%%%%%%%%%%%%%%%%
\iffalse
\begin{remark}\label{rmk:Ku-M}
One can deduce the theorem from \cite{kuper:s} using results of \cite{Ms}.
A category of trivalent graphs was introduced in \cite{Ms} to provide a neat description of ${\mathscr V}$. However, that category is not suitable for proving Theorem \ref{FFT}, the FFT of invariant theory for $\UG$.
%
%Note that Theorem \ref{thm:functor} is closer in spirit to \cite[Theorem 5.1]{rt:in}, 
%which constructs invariants of oriented ribbon graphs  from quantum groups.  
\end{remark}
\fi
%%%%%%%%%%%%%

\begin{remark}
Note that ${\mathcal F}({\mathscr T})$ is a quantum analogue of the category ${\mathscr T}_{\Gamma}$ with $\Gamma=\{\gamma_0,\gamma_1,\gamma_2\}$ in \cite{benkart},
where $\gamma_0,\gamma_1,\gamma_2$ corresponds to $\eqref{eq:loop}$, \eqref{checkTRI} and \eqref{cross}, respectively.
\end{remark}

%We point out that a similar treatment for quantum $F_4$ was given in \cite{SW}.

\subsubsection{Frequently used morphisms} {\ }

\noindent{\bf Notation and convention}.  Hereafter we will suppress the functor $\mathcal{F}$ from all notation for simplicity. Thus all diagrams will be regarded as morphisms in ${\mathcal F}({\mathscr T})$.   
 
\smallskip

We list some relations among morphisms for later use.

The second relation of \eqref{eq:C-2} can now be written as  
\beq\label{eq:loop}
\begin{tikzpicture}[baseline = -2mm,scale=.8,color=\clr]
\draw[-,line width=1pt] (0.06,-.51) to [out=15,in=-15] (0.06,.31);
\draw[-,line width=1pt] (0.06,-.51) to [out=165,in=-165] (0.06,.31);
\end{tikzpicture} = \dim_q V; 
\eeq

The following relations are consequences of properties of the $R$-matrix. 
\begin{equation}\label{checkID}
	\begin{tikzpicture}[baseline=55pt,scale=0.7,color=\clr]
		\draw[-,line width=1pt](0,3)parabola bend (0.3,2.5) (0.6,3.9);
		\draw[-,line width=1pt](0.4,3)arc (0:180:0.2);
		\draw[-,line width=1pt](0.4,3)arc (0:180:0.2);  
		\draw[-,line width=1pt](0.55,2.9)parabola (0.7,1.9);
	\end{tikzpicture}=q^{-12}{\ }
	\begin{tikzpicture}[baseline=55pt,scale=0.7,color=\clr]\draw[-,line width=1pt](0,1.9)--(0,3.9);\end{tikzpicture},\quad
	\begin{tikzpicture}[baseline=55pt,scale=0.7,color=\clr]
		\draw[-,line width=1pt](7,2.7)parabola bend (7.4,3.1) (7.7,1.9);
		\draw[-,line width=1pt](7.5,2.7)arc (0:-180:0.25);
		\draw[-,line width=1pt](7.62,2.8)parabola (7.9,3.9);\end{tikzpicture}=q^{12}{\ }
	\begin{tikzpicture}[baseline=55pt,scale=0.7,color=\clr]
		\draw[-,line width=1pt](0,1.9)--(0,3.9);	
	\end{tikzpicture}~.
\end{equation}
\begin{equation}\label{checkPOLO}
	\begin{tikzpicture}[baseline=2pt,scale=0.6,color=\clr]
		\draw[-,line width=1pt](0,0)arc (0:-180:0.5);
		\draw[-,line width=1pt](0,0)--(-1,1);	\draw[-,line width=1pt](-1,0)--(-0.6,0.4);\draw[-,line width=1pt](-0.4,0.6)--(0,1);	
	\end{tikzpicture}=q^{-12}
	\begin{tikzpicture}[baseline=2pt,scale=0.6,color=\clr]
		\draw[-,line width=1pt](2,1)parabola bend (2.5,-0.5) (3,1);\end{tikzpicture},\quad
	\begin{tikzpicture}[baseline=2pt,scale=0.6,color=\clr]
		\draw[-,line width=1pt](7,0)arc (0:-180:0.5);
		\draw[-,line width=1pt](7,0)--(6.6,0.4);\draw[-,line width=1pt](6.4,0.6)--(6,1);	\draw[-,line width=1pt](6,0)--(7,1);	
	\end{tikzpicture}=q^{12}
	\begin{tikzpicture}[baseline=2pt,scale=0.6,color=\clr]
		\draw[-,line width=1pt](9,1)parabola bend (9.5,-0.5) (10,1);
	\end{tikzpicture},
\end{equation}
Their proofs are quite standard, which we omit.
The relations below are required by the definition of the category $\mathscr T$. 
\begin{equation}\label{checkTRI}
	\begin{tikzpicture}[baseline=29pt,scale=0.6,color=\clr]
		\draw[-,line width=1pt] (0.5,2)--(1.5,1);\draw [-,line width=1pt](0.5,1)--(0.9,1.4);\draw [-,line width=1pt](1.1,1.6)--(1.5,2);
	     \draw[-,line width=1pt] (0.5,2)--(1,2.5);\draw[-,line width=1pt] (1.5,2)--(1,2.5);\draw[-,line width=1pt] (1,2.5)--(1,3);
	\end{tikzpicture}
	=-q^{-6}\begin{tikzpicture}[baseline=29pt,scale=0.6,color=\clr]
		\draw[-,line width=1pt] (0.5,1)--(1,2);\draw[-,line width=1pt] (1.5,1)--(1,2);\draw[-,line width=1pt] (1,2)--(1,3);
	\end{tikzpicture},\quad
	\begin{tikzpicture}[baseline=29pt,scale=0.6,color=\clr]
		\draw[-,line width=1pt] (7.5,2)--(7.9,1.6);\draw[-,line width=1pt](8.1,1.4)--(8.5,1);\draw [-,line width=1pt](7.5,1)--(8.5,2);
		\draw[-,line width=1pt] (7.5,2)--(8,2.5);\draw[-,line width=1pt] (8.5,2)--(8,2.5);\draw[-,line width=1pt] (8,2.5)--(8,3);
	\end{tikzpicture}=-q^{6}
	\begin{tikzpicture}[baseline=29pt,scale=0.6,color=\clr]
	\draw[-,line width=1pt] (7.5,1)--(8,2);\draw[-,line width=1pt] (8.5,1)--(8,2);\draw[-,line width=1pt] (8,2)--(8,3);
	\end{tikzpicture},
\end{equation}
\begin{equation}\label{cycle2}
	\begin{tikzpicture}[baseline=30pt,scale = 0.6,color=\clr]
		\draw[-,line width=1pt] (1.5,0.8)to (1.5,1.5);
		\draw[-,line width=1pt] (1.5,1.5) to (1,2);
		\draw [-,line width=1pt](1.5,1.5) to (2,2);
		\draw[-,line width=1pt] (1,2) to (1.5,2.5);
		\draw[-,line width=1pt] (2,2) to (1.5,2.5);
		\draw[-,line width=1pt] (1.5,2.5) to (1.5,3);
	\end{tikzpicture}
	=([7]_q-1)
	\begin{tikzpicture}[baseline=30pt,scale = 0.6,color=\clr]
		\draw [-,line width=1pt](8.7,0.8) to (8.7,3);
	\end{tikzpicture}~.
\end{equation}
\begin{equation}\label{checkCIR}
	\begin{tikzpicture}[baseline=-2pt,scale=0.6,color=\clr]
		\draw[-,line width=1pt] (0.5,0.5) circle (10pt);\draw[-,line width=1pt](0.5,0.15)--(0.5,-0.7);
	\end{tikzpicture}=0,	\qquad
	\begin{tikzpicture}[baseline=-2pt,scale=0.6,color=\clr]
		\draw[-,line width=1pt] (4,-0.35) circle (10pt);\draw[-,line width=1pt](4,0)--(4,0.8);
	\end{tikzpicture}=0.
%%%%%%%%%%%%%%%%%%%%%%%%%%%%%%%%%%%%%%%%%		
\end{equation}
Their proof constitutes part of the proof of Theorem  \ref{thm:functor}. 
Also it follows \eqref{up-p} that 
\beq
\upgamma=-q^{-3}\splits.
\eeq

The projection operators $P[0], P[{\lambda_1}]$, which map $V\ot V$ to the simple submodules $V_0$ and $V_{\lambda_1}$ respectively, can be diagrammatically represented by 
\begin{equation}
\frac{1}{\dim_qV}\begin{tikzpicture}[baseline=27pt,scale=0.7,color=\clr]		
	\draw[-,line width=1pt](0.5,2.1)parabola bend (1,1.6) (1.5,2.1);
	\draw[-,line width=1pt](0.5,0.9)parabola bend (1,1.4) (1.5,0.9);
\end{tikzpicture}= P[0]
	,\qquad 
\frac{1}{[7]_q-1}\begin{tikzpicture}[baseline=27pt,scale=0.7,color=\clr]		  
		\draw[-,line width=1pt] (1,1.3)--(1,1.7);
		\draw[-,line width=1pt] (1,1.7)--(0.5,2);
		\draw[-,line width=1pt] (1,1.7)--(1.5,2);
		\draw[-,line width=1pt] (1,1.3)--(0.5,1);
		\draw[-,line width=1pt] (1,1.3)--(1.5,1);    
	\end{tikzpicture} =P[{\lambda_1}]
.\end{equation}
The following result easily follows from the spectral decomposition \eqref{Rmatrix1} 
of $\check{R}$. 

\begin{lemma}\label{basicgraph}
The following relation holds.
\begin{equation}\label{checkD}
	q^{-1}\begin{tikzpicture}[baseline=25pt,scale=0.7,color=\clr]
		\draw[-,line width=1pt] (0.5,2)--(1.5,1);
		\draw [-,line width=1pt](0.5,1)--(0.9,1.4);
		\draw [-,line width=1pt](1.1,1.6)--(1.5,2);
	\end{tikzpicture}-q
	\begin{tikzpicture}[baseline=25pt,scale=0.7,color=\clr]
		\draw [-,line width=1pt](0.5,1)--(1.5,2);
		\draw[-,line width=1pt] (1.5,1)--(1.1,1.4);
		\draw [-,line width=1pt](0.9,1.6)--(0.5,2);
	\end{tikzpicture}=(q{-}q^{-1}) {\, }
	\begin{tikzpicture}[baseline=25pt,scale=0.7,color=\clr]		
		%\draw [-,line width=1pt](0.5,1)--(0.5,2);
		\draw [-,line width=1pt](0.7,1)--(0.7,2);
		\draw [-,line width=1pt](1.5,1)--(1.5,2);
	\end{tikzpicture}+(q^{-1}{-}q)(q^2{+}q^{-2})
	\begin{tikzpicture}[baseline=25pt,scale=0.7,color=\clr]		
		\draw[-,line width=1pt](0.5,2)parabola bend (1,1.6) (1.5,2);
		\draw[-,line width=1pt](0.5,1)parabola bend (1,1.4) (1.5,1);
	\end{tikzpicture}+(q{-}q^{-1})
	\begin{tikzpicture}[baseline=25pt,scale=0.7,color=\clr]		  
		\draw[-,line width=1pt] (1,1.3)--(1,1.7);
		\draw[-,line width=1pt] (1,1.7)--(0.5,2);
		\draw[-,line width=1pt] (1,1.7)--(1.5,2);
		\draw[-,line width=1pt] (1,1.3)--(0.5,1);
		\draw[-,line width=1pt] (1,1.3)--(1.5,1);    
	\end{tikzpicture}~.
\end{equation}	
\end{lemma}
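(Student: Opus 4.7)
The plan is to expand both sides of \eqref{checkD} as linear combinations of the four idempotent projectors $P[\mu]$, $\mu \in \{2\lambda_1, \lambda_2, \lambda_1, 0\}$, and to match coefficients.

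First, using the spectral decomposition \eqref{Rmatrix1}, I would write $\check{R}$ and its inverse as explicit $\CC(q)$-linear combinations of the $P[\mu]$ and subtract to obtain
$$q^{-1}\check{R} - q\check{R}^{-1} = (q-q^{-1})(P[2\lambda_1]+P[\lambda_2]) + (q^7-q^{-7})P[\lambda_1] + (q^{-13}-q^{13})P[0].$$
The completeness relation $P[2\lambda_1]+P[\lambda_2]+P[\lambda_1]+P[0]=\id$ then lets me eliminate $P[2\lambda_1]+P[\lambda_2]$ in favour of $\id$, $P[\lambda_1]$ and $P[0]$, yielding the left-hand side of \eqref{checkD} as a linear combination of $\id$, $P[0]$ and $P[\lambda_1]$.

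Next, I would translate each diagram on the right side of \eqref{checkD} into an operator on $V\ot V$ via the functor $\mathcal{F}$ of Theorem \ref{thm:functor}. The pair of parallel strands is $\id$. By the second relation of \eqref{eq:C-2}, the cup-cap diagram maps to $\check{C}\circ\hat{C} = (\dim_qV)P[0]$. The merge-split diagram factors as $\splits\circ\merges$; applying $\mathcal{F}(\merges)=q^{-3}p$ together with the identification $\upgamma=-q^{-3}\splits$ (so $\mathcal{F}(\splits)=-q^3\upgamma$), it maps to $-\upgamma\circ p$. From the normalisation \eqref{eq:p-gam}, $p\circ\upgamma = -([7]_q-1)\id_V$, so $(\upgamma p)^2 = -([7]_q-1)\upgamma p$; since $\upgamma$ embeds $V$ isomorphically as the $V_{\lambda_1}$-summand of $V\ot V$ and $p$ vanishes on $V_{2\lambda_1}\oplus V_{\lambda_2}\oplus V_0$, I would conclude $-\upgamma\circ p = ([7]_q-1)P[\lambda_1]$.

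The identity \eqref{checkD} thus reduces to two elementary scalar checks,
$$(q^{-1}-q)(q^2+q^{-2})\dim_qV = q^{-13}-q^{13}-q+q^{-1}, \qquad (q-q^{-1})([7]_q-1) = q^7-q^{-7}-q+q^{-1},$$
where the second is immediate from $(q-q^{-1})[7]_q = q^7-q^{-7}$, and the first follows by direct expansion using $\dim_qV=q^{10}+q^8+q^2+1+q^{-2}+q^{-8}+q^{-10}$, after which almost all monomials cancel telescopically. I expect no conceptual obstacle: the only point requiring care is the sign bookkeeping in the identification of the merge-split diagram with $([7]_q-1)P[\lambda_1]$, where the minus signs from $\mathcal{F}(\splits)=-q^3\upgamma$ and $p\circ\upgamma=-([7]_q-1)\id_V$ must combine correctly to produce the stated positive coefficient.
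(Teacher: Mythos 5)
Your proposal is correct and follows essentially the same route as the paper's proof: expand $q^{-1}\check{R}-q\check{R}^{-1}$ via the spectral decomposition \eqref{Rmatrix1}, use the completeness of the idempotents $P[\mu]$ to trade $P[2\lambda_1]+P[\lambda_2]$ for $\id_{V\ot V}$, and then recognise the resulting combination of $\id$, $P[0]$, $P[\lambda_1]$ as the right-hand side of \eqref{checkD}. The only difference is that you spell out the diagram-to-operator identifications (in particular deriving $-\upgamma\circ p=([7]_q-1)P[\lambda_1]$ from \eqref{eq:p-gam}), which the paper takes as already established by the displayed formulas for $P[0]$ and $P[\lambda_1]$ just before the lemma; the scalar checks you flag are exactly the factorisations $q^{13}-q^{-13}+q-q^{-1}=(q-q^{-1})(q^2+q^{-2})\dim_q V$ and $q^{7}-q^{-7}-q+q^{-1}=(q-q^{-1})([7]_q-1)$ noted in the paper.
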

\begin{proof}
It follows from the spectral decomposition \eqref{Rmatrix1} of the $R$-matrix that 
\[
\baln
q^{-1} \check{R} - q \check{R}^{-1} 
= &
(q - q^{-1}) P[2\lambda_{1}]- (q^{13}- q^{-13}) P[0] \\
&+ (q- q^{-1})  P[\lambda_{2}] + (q^7-q^{-7})P[\lambda_{1}].\\
\ealn
\]
As the idempotents $P[\mu]$,  for $\mu=2\lambda_1, \lambda_2, \lambda_1, 0$, are complete,  
we can rewrite the right hand side as 
\[
 (q - q^{-1}) \id_V\ot\id_V - (q^{13}- q^{-13} + q - q^{-1}) P[0] + (q^7-q^{-7} -q + q^{-1})P[\lambda_{1}].
\]
Note that $q^{13}- q^{-13} + q - q^{-1}=(q-q^{-1})(q^2 + q^{-2})\dim_q V$,  and $q^7-q^{-7} -q + q^{-1}=(q-q^{-1})([7]_q -1)$. Hence
\[
\baln
q^{-1} \check{R} - q \check{R}^{-1} 
=(q - q^{-1}) \left(\id_V\ot\id_V -(q^2+q^{-2})\dim_q V P[0] + ([7]_q-1)P[\lambda_{1}]\right).
\ealn
\]
Equation \eqref{checkD} is the diagrammatic representation of this relation.  
\end{proof} 

We have the following result. 
\begin{lemma}
The morphism 
$
\eta:=\begin{tikzpicture}[baseline=2pt,scale=0.5,color=\clr]
\draw[-,line width=1pt](8.3,1)--(8.8,0.5);
\draw[-,line width=1pt](8.3,0)--(8.8,0.5);
\draw[-,line width=1pt](8.8,0.5)--(9.2,0.5);
\draw[-,line width=1pt](9.2,0.5)--(9.7,1);
\draw[-,line width=1pt](9.2,0.5)--(9.7,0);
\end{tikzpicture}:  V\ot V\lra V\ot V
$
has the following spectral decomposition.
\beq
\phantom{XXX} \eta=([7]_q-1)P[0] -P[2\lambda_1] +(q^{2}+1+q^{-2})P[\lambda_2]-(q^{4}+1+q^{-4})P[\lambda_1].
\eeq
\end{lemma}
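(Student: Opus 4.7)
\medskip
\noindent\textbf{Proof plan.} By the decomposition \eqref{eq:VtV} and Schur's lemma, $\eta\in\End_{\UG}(V\otimes V)$ has the form
$\eta = \lambda_0 P[0] + \lambda_{2\lambda_1}P[2\lambda_1] + \lambda_{\lambda_2}P[\lambda_2] + \lambda_{\lambda_1}P[\lambda_1]$
for some scalars $\lambda_\mu\in\CC(q)$, and the task is to pin down the four scalars.

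The first step is to re-express $\eta$ in a form amenable to computation. Ambient isotopy of ribbon graphs identifies $\horiz$ with $(\id_V\otimes\caps\otimes\id_V)\circ(\splits\otimes\splits)$: simply straighten the interior horizontal edge into a cap joining the two split vertices. Applying $\mathcal{F}$, using $\mathcal{F}(\splits)=-q^{3}\upgamma$ from \eqref{up-p}, and simplifying via \eqref{eq:gamma-p} yields
\[
\eta \;=\; q^{6}\,(\id_V\otimes\hat C\otimes\id_V)\circ(\upgamma\otimes\upgamma)
\;=\; -(p\otimes\id_V)\circ(\id_V\otimes\upgamma).
\]
Since $\check C$ has image in $V_0$, one has $\eta\circ\check C=\lambda_0\check C$; post-composing with $\hat C$ and applying \eqref{eq:gamma-C} gives $\lambda_0\dim_q V = \hat C\circ\eta\circ\check C = q^{6}\cdot q^{-6}([7]_q-1)\dim_q V$, so $\lambda_0 = [7]_q-1$.

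The remaining three eigenvalues will be extracted by applying the second expression for $\eta$ to highest-weight vectors of each summand. For $v_1\otimes v_1\in V_{2\lambda_1}$, weight considerations annihilate every term of $(\id_V\otimes\upgamma)(v_1\otimes v_1)=v_1\otimes b_1$ under $(p\otimes\id_V)$ except one, and a short linear-algebra computation in the weight-$\alpha_4$ subspace of $V\otimes V$ gives $p(v_1\otimes v_0)=-q^{6}v_1$, whence $\eta(v_1\otimes v_1)=-v_1\otimes v_1$ and $\lambda_{2\lambda_1}=-1$. For $b_1\in V_{\lambda_1}$, evaluating $\eta(b_1)$ and reading off the coefficient of $v_1\otimes v_0$ brings in only the four weight-$\alpha_4$ projections $p(v_1\otimes v_0)$, $p(v_0\otimes v_1)$, $p(v_2\otimes v_3)$, $p(v_3\otimes v_2)$, each obtained by solving a $4\times 4$ linear system using the bases \eqref{basis0}--\eqref{basis10}. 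These simplify cleanly via the identity $q^{-6}([7]_q-1)=q^{-12}+q^{-10}+q^{-8}+q^{-4}+q^{-2}+1$, and the final tally produces $\lambda_{\lambda_1}=-(q^{4}+1+q^{-4})$. An analogous evaluation at the highest-weight vector $v_1\otimes v_2-q^{-1}v_2\otimes v_1\in V_{\lambda_2}$, which reduces to weight-$\alpha_4$ and weight-$\alpha_3$ computations of the same type, will yield $\lambda_{\lambda_2}=q^{2}+1+q^{-2}$.

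The main obstacle is bookkeeping: keeping track of the $V_{\lambda_1}$-projections of the relevant weight vectors via small linear systems. Each system is routine, and the common $[7]_q$-identity above keeps the algebra tractable throughout.
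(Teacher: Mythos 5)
The paper states this lemma without proof, so there is no internal argument to compare against; your proposal supplies one, and it is sound. The key re-expression $\eta = q^{6}(\id_V\ot\hat C\ot\id_V)\circ(\upgamma\ot\upgamma) = -(p\ot\id_V)\circ(\id_V\ot\upgamma)$ follows from $\mathcal{F}(\splits)=-q^{3}\upgamma$ and \eqref{eq:gamma-p}, exactly as you say, and I have checked your eigenvalue computations: $\hat C\circ\eta\circ\check C$ together with \eqref{eq:gamma-C} does give $\lambda_0=[7]_q-1$; $p(v_1\ot v_0)=-q^{6}v_1$ follows immediately from $p=-q^{6}(\hat C\ot\id)\circ(\id\ot\upgamma)$ applied to $v_1\ot b_0$, giving $\eta(v_1\ot v_1)=-v_1\ot v_1$; and the coefficient of $v_1\ot v_0$ in $(p\ot\id)(\id\ot\upgamma)(b_1)$ is $(q^4-q^2)+(-q^{-6})+(1+q^2)+(q^{-6}+q^{-4})=q^{4}+1+q^{-4}$, confirming $\lambda_{\lambda_1}=-(q^{4}+1+q^{-4})$. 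One small remark: you do not need to solve any $4\times 4$ linear systems to obtain $p(v_a\ot v_b)$ on the weight-$\alpha_4$ subspace; the formula $p=-q^{6}(\hat C\ot\id)\circ(\id\ot\upgamma)$ from \eqref{eq:gamma-p} already gives each of these directly by a one-term contraction against the bases $B(V_{\lambda_1})$, which is how I verified your numbers. With that simplification the whole argument is a straightforward weight-space evaluation, and the remaining eigenvalue $\lambda_{\lambda_2}$ is obtained in the same way from the $V_{\lambda_2}$ highest-weight vector $v_1\ot v_2-q^{-1}v_2\ot v_1$.
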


Using the lemma and the spectral decomposition of $\check{R}$, we easily  obtain the following result.  
\begin{lemma}\label{double}
The following relations hold.
\begin{equation}\label{cross}
	(q+q^{-1})\begin{tikzpicture}[baseline=25pt,scale=0.7,color=\clr]
		\draw[-,line width=1pt] (0.5,2)--(1.5,1);\draw [-,line width=1pt](0.5,1)--(0.9,1.4);\draw [-,line width=1pt](1.1,1.6)--(1.5,2);
	\end{tikzpicture}
	=q^3\begin{tikzpicture}[baseline=25pt,scale=0.7,color=\clr]
		\draw [-,line width=1pt](0.5,1)--(0.5,2);\draw [-,line width=1pt](1.5,1)--(1.5,2);\end{tikzpicture}+q^{-3}
	\begin{tikzpicture}[baseline=25pt,scale=0.7,color=\clr]
		\draw[-,line width=1pt](0.5,2)parabola bend (1,1.6) (1.5,2);
		\draw[-,line width=1pt](0.5,1)parabola bend (1,1.4) (1.5,1);  
	\end{tikzpicture}-q
	\begin{tikzpicture}[baseline=25pt,scale=0.7,color=\clr]
		\draw[-,line width=1pt](0.5,2)--(0.8,1.5);
		\draw[-,line width=1pt](0.5,1)--(0.8,1.5);
		\draw[-,line width=1pt](0.8,1.5)--(1.2,1.5);
		\draw[-,line width=1pt](1.2,1.5)--(1.5,2);
		\draw[-,line width=1pt](1.2,1.5)--(1.5,1);\end{tikzpicture}-q^{-1}
	\begin{tikzpicture}[baseline=25pt,scale=0.7,color=\clr]
		\draw[-,line width=1pt] (1,1.3)--(1,1.7);
		\draw[-,line width=1pt] (1,1.7)--(0.5,2);
		\draw[-,line width=1pt] (1,1.7)--(1.5,2);
		\draw[-,line width=1pt] (1,1.3)--(0.5,1);
		\draw[-,line width=1pt] (1,1.3)--(1.5,1);    
	\end{tikzpicture}~,
\end{equation}
\begin{equation}\label{crossprime}
	(q+q^{-1})\begin{tikzpicture}[baseline=25pt,scale=0.7,color=\clr]
		\draw [-,line width=1pt](0.5,1)--(1.5,2);
		\draw[-,line width=1pt] (1.5,1)--(1.1,1.4);
		\draw [-,line width=1pt](0.9,1.6)--(0.5,2);
	\end{tikzpicture}
	=q^{-3}\begin{tikzpicture}[baseline=25pt,scale=0.7,color=\clr]
		\draw [-,line width=1pt](0.5,1)--(0.5,2);\draw [-,line width=1pt](1.5,1)--(1.5,2);\end{tikzpicture}+q^{3}
	\begin{tikzpicture}[baseline=25pt,scale=0.7,color=\clr]
		\draw[-,line width=1pt](0.5,2)parabola bend (1,1.6) (1.5,2);
		\draw[-,line width=1pt](0.5,1)parabola bend (1,1.4) (1.5,1);  
	\end{tikzpicture}-q^{-1}
	\begin{tikzpicture}[baseline=25pt,scale=0.7,color=\clr]
		\draw[-,line width=1pt](0.5,2)--(0.8,1.5);
		\draw[-,line width=1pt](0.5,1)--(0.8,1.5);
		\draw[-,line width=1pt](0.8,1.5)--(1.2,1.5);
		\draw[-,line width=1pt](1.2,1.5)--(1.5,2);
		\draw[-,line width=1pt](1.2,1.5)--(1.5,1);\end{tikzpicture}-q
	\begin{tikzpicture}[baseline=25pt,scale=0.7,color=\clr]
		\draw[-,line width=1pt] (1,1.3)--(1,1.7);
		\draw[-,line width=1pt] (1,1.7)--(0.5,2);
		\draw[-,line width=1pt] (1,1.7)--(1.5,2);
		\draw[-,line width=1pt] (1,1.3)--(0.5,1);
		\draw[-,line width=1pt] (1,1.3)--(1.5,1);    
	\end{tikzpicture}~.
\end{equation}
\end{lemma}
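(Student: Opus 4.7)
The plan is to reduce both identities to scalar checks on each of the four isotypic summands of $V\otimes V$. Since the decomposition \eqref{eq:VtV} is multiplicity-free, $\End_{\UG}(V\otimes V)$ is four-dimensional with basis given by the orthogonal idempotents $P[2\lambda_1], P[\lambda_2], P[\lambda_1], P[0]$. Both sides of \eqref{cross} and \eqref{crossprime} lie in this endomorphism algebra, so each relation will follow once the coefficients of the four $P[\mu]$ agree.

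My first step would be to expand, in this basis, each of the diagrams appearing on the right hand sides. The identity equals $P[2\lambda_1]+P[\lambda_2]+P[\lambda_1]+P[0]$; by the second equation of \eqref{eq:C-2} the cap-cup diagram equals $(\dim_q V)\, P[0]$; by \eqref{eq:p-gam} together with the normalisation of $P[\lambda_1]$ recorded just above Lemma \ref{basicgraph}, the trivalent diagram equals $([7]_q-1)\, P[\lambda_1]$; and the horizontal diagram $\eta$ has the spectral expansion supplied by the lemma immediately preceding Lemma \ref{double}. Combining these with \eqref{Rmatrix1} for $\check{R}$ and with the analogous expansion $\check{R}^{-1}=q^{-2}P[2\lambda_1]-P[\lambda_2]-q^{6}P[\lambda_1]+q^{12}P[0]$ (obtained by inverting the eigenvalues), every term in the claimed identities becomes an explicit linear combination of the $P[\mu]$.

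The remaining task is to compare coefficients on each summand. For \eqref{cross}: on $V_{2\lambda_1}$ both sides evaluate to $q^3+q$; on $V_{\lambda_2}$ both evaluate to $-q-q^{-1}$; on $V_{\lambda_1}$ both evaluate to $-q^{-5}-q^{-7}$ after cancelling $q(q^4+1+q^{-4})$ against $q^{-1}([7]_q-1)$ up to the leading $q^3$; and on $V_0$ both evaluate to $q^{-11}+q^{-13}$ after cancelling $q^{-3}\dim_q V$ against $q([7]_q-1)$ up to $q^3$. Relation \eqref{crossprime} is settled identically from the spectral expansion of $\check{R}^{-1}$, the four scalar checks mirroring those for \eqref{cross} with the roles of the positive and negative powers of $q$ interchanged.

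The only potential obstacle is arithmetical bookkeeping on the $V_0$ summand, where the full polynomials $\dim_q V=q^{10}+q^8+q^2+1+q^{-2}+q^{-8}+q^{-10}$ and $[7]_q-1=q^6+q^4+q^2+q^{-2}+q^{-4}+q^{-6}$ both appear; but the cancellations are telescopic and introduce no new ideas beyond the spectral decompositions of $\check{R}$ and of $\eta$ already in hand. This is consistent with the text's assertion that the lemma follows easily from the immediately preceding material.
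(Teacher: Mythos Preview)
Your proof is correct and follows essentially the same approach as the paper: both arguments rest on the four-dimensionality of $\End_{\UG}(V\otimes V)$ together with the spectral decompositions of $\check{R}$ and of $\eta$ already established. The paper phrases it as expressing $P[2\lambda_1]$ and $P[\lambda_2]$ in terms of $\id$, $P[0]$, $P[\lambda_1]$, $\eta$ and substituting into \eqref{Rmatrix1}, whereas you expand everything in the $P[\mu]$ basis and match coefficients; these are the same linear algebra organised in opposite directions.
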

\begin{proof}
Note that $P[2\lambda_1]$ and $P[\lambda_2]$ can be expressed in terns 
of $\id_V\ot\id_V$, $P[0]$ and $P[\lambda_1]$ and $\eta$. Using them in 
\eqref{Rmatrix1}, we obtain the first relation in the lemma. 
The second relation can be similarly obtained from the spectral decomposition of $\check{R}^{-1}$. 
\end{proof}

\begin{remark}
It is important to observe that given any morphism in ${\mathscr V}$ represented by a diagram $D$, we can replace all 
crossings using Lemma \ref{double}, to obtain a linear combination of trivalent graphs without crossings.  
\end{remark}

\subsection{Acyclic trivalent graphs}\label{de-crossing}

If a diagram in ${\mathscr V}$ has no crossing, it is a trivalent graph.  A graph without cycles is called acyclic. 
We denote by $\Acycl(m, n)$ the set of acyclic trivalent graphs in $\Hom_{\UG}(V^{\ot m}, V^{\ot n})$, 
and let $\Acycl=\cup_{m, n}\Acycl(m, n)$.

\begin{theorem}\label{lem:basis} Retain notation above. 
The space  $\Hom_{\UG}(V^{\ot k}, V^{\ot \ell})$ of morphisms is spanned by $\Acycl(k, \ell)$ for any $k, \ell\in\ZZ_{\geq0}$.
\end{theorem}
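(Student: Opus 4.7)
The plan proceeds in three stages: reduce to a diagrammatic question via fullness, eliminate crossings, then eliminate cycles. By Theorem~\ref{thm:functor}(2) the functor $\mathcal{F}:\mathscr{T}\to\mathscr{V}$ is full, so any $f\in\Hom_{\UG}(V^{\ot k},V^{\ot \ell})$ is the image of a $\CC(q)$-linear combination of diagrams in $\mathscr{T}$ built from the generators of Figure~\ref{fig:generators} by composition and tensor product. It therefore suffices to show that each such diagram equals a $\CC(q)$-linear combination of elements of $\Acycl(k,\ell)$.

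Crossing elimination is immediate from Lemma~\ref{double}: equations \eqref{cross} and \eqref{crossprime} express a positive or negative crossing as a $\CC(q)$-linear combination of four crossingless local tangles built only from identity strands, cup--cap pairs, and trivalent vertices. Induction on the number of crossings in a diagram reduces the problem to crossingless trivalent graphs with boundary, that is, planar trivalent graphs embedded in a rectangle whose only interior vertices are trivalent.

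Cycle elimination I would carry out by induction on the lexicographically ordered complexity $c(D)=(\#\text{interior vertices of }D,\ \#\text{independent cycles of }D)$. If $D$ is acyclic there is nothing to do. Otherwise, a standard Euler-characteristic argument for planar trivalent graphs in a disk (using $3V+b=2E$ together with $V+b-E+F=1$, where $V,b,E,F$ are the numbers of interior vertices, boundary points, edges, and faces of $D$) shows that $D$ must contain either a disjoint closed loop, a monogon (a loop attached to a single trivalent vertex), a bigon, or a triangle. The first three possibilities are removed by the local rewrites \eqref{eq:loop}, \eqref{checkCIR}, and \eqref{cycle2}, each of which strictly decreases $c(D)$ and rewrites $D$ as a linear combination of lower-complexity trivalent graphs.

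The main obstacle is the triangle case, which is not directly handled by the relations above. To handle it I would derive the needed triangle identity by combining the available relations: use \eqref{cross} or \eqref{crossprime} to rewrite an identity strand along one edge of the triangle as a linear combination involving a crossing, so that a crossing now sits immediately adjacent to one of the triangle's trivalent vertices; apply \eqref{checkTRI} to collapse that crossing-vertex configuration into a scalar multiple of a single trivalent vertex; and re-expand any newly appearing crossings via Lemma~\ref{double}. The result rewrites a trivalent triangle as a $\CC(q)$-linear combination of crossingless diagrams with strictly fewer interior vertices, completing the inductive step and proving the spanning property. This triangle reduction is a stripped-down, single-edge analogue of the web reductions in Kuperberg's $G_2$ spider (compare Remark~\ref{rmk:kuper-reduct}); producing it explicitly is where the technical work lies.
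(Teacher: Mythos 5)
The strategy of de-crossing by Lemma~\ref{double} and then removing cycles matches the paper's, but your cycle-elimination step contains a genuine error: the Euler-characteristic claim is false. A planar trivalent graph in a disk need not contain a closed loop, monogon, bigon, or triangle. The simplest counterexample is a hexagon with six pendant legs: six trivalent interior vertices, six boundary points, twelve edges, and only one cycle, which has length~$6$. Checking the numbers, $3V+b=24=2E$ and $V-E+F=6-12+7=1$, so the graph satisfies your constraints, yet its girth is~$6$. More generally an $n$-gon with $n$ pendant legs has girth $n$ for any $n$. The standard Euler count does force a small face only for \emph{closed} trivalent webs (and there the bound is a face with at most $5$ sides, not $3$); once boundary points are allowed, the faces adjacent to the boundary absorb the slack and no uniform bound on the girth survives. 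So your induction cannot get started on such examples: there is no loop, monogon, bigon, or triangle to which your listed rewrites apply.

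The paper avoids this by not invoking an Euler argument at all. It fixes an arbitrary cycle of $n$ vertices and inducts on $n$: the cases $n=2,3,4$ are handled by the explicit relations \eqref{cycle2}, \eqref{checkTRI}, \eqref{rec}, and for $n\ge 5$ the paper peels off one $\horiz$ from the cycle, substitutes via \eqref{cross-1}, and thereby expresses the $n$-gon in terms of trees plus cycles of length $n-1$ and $n-2$ (equation \eqref{eq:de-circle}), with the residual crossings cleaned up by further applications of \eqref{cross}. Your sketch of the triangle reduction (insert a crossing, hit it with \eqref{checkTRI}, re-expand) is in the right spirit and is essentially what the paper does at $n=3$; but to complete the proof you must perform the analogous reduction for \emph{every} $n\ge 2$, which is the recursion the paper sets up, rather than hoping that the Euler count bounds $n\le 3$.
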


\begin{remark}\label{rmk:kuper-reduct}
This result can be extracted \cite{Ms}
from \cite[Theorem 6.10]{kuper:s} on $\UG$-spiders, which are  
 more complicated as they have two types of edges and more trivalent vertices.
As the proof in our case is quite simple, we briefly explain it below, 
as Theorem \ref{lem:basis}  plays a key role in this paper.  
\end{remark}

\begin{proof}[Comments on the proof of Theorem \ref{lem:basis}]
We have already seen from Lemma \ref{double} that trivalent graphs without crossings span 
all morphisms in ${\mathscr V}$. In view of Theorem \ref{thm:functor}, we only need to show that 
cycles in  ${\mathscr V}$ can all be reduced to linear combinations of acyclic trivalent graphs, 
in order to prove the theorem.

Now assume that a given a $(k, \ell)$ trivalent graph contains a cycle of $n$ vertices, 
which is a $(r, n-r)$-diagram for some $r$. 
We will denote the cycle by $\rectangle$ without specifying the $r$.  
We will prove the theorem by using induction on $n$ to show that we can always replace $\rectangle$ by 
a linear combination of acyclic trivalent graphs. 

The $n=2$ case follows from \eqref{cycle2}. For $n=3$, we have 
\beq
\begin{tikzpicture}[baseline=20pt,scale=0.45,color=\clr]
	\draw[-,line width=1pt] (0.5,1.5)--(1,2.5);\draw[-,line width=1pt] (1.5,1.5)--(1,2.5);\draw[-,line width=1pt] (1,2.5)--(1,3);
	\draw[-,line width=1pt](0.2,1)--(0.5,1.5);
	\draw[-,line width=1pt](0.5,1.5)--(1.5,1.5);
	\draw[-,line width=1pt](1.5,1.5)--(1.8,1);
\end{tikzpicture}
=-(q^4{+}1{+}q^{-4})
\begin{tikzpicture}[baseline=21pt,scale=0.5,color=\clr]
	\draw[-,line width=1pt] (1,2)--(1,2.6);\draw[-,line width=1pt] (1,2)--(0.5,1);\draw[-,line width=1pt] (1,2)--(1.5,1);
\end{tikzpicture} .
\eeq
To prove this, note that 
\begin{tikzpicture}[baseline=24pt,scale=0.5,color=\clr]
	\draw[-,line width=1pt] (0.5,1.5)--(1,2.5);\draw[-,line width=1pt] (1.5,1.5)--(1,2.5);\draw[-,line width=1pt] (1,2.5)--(1,3);
	\draw[-,line width=1pt](0.2,1)--(0.5,1.5);
	\draw[-,line width=1pt](0.5,1.5)--(1.5,1.5);
	\draw[-,line width=1pt](1.5,1.5)--(1.8,1);
\end{tikzpicture}
can be obtained by composition of graphs as follows 
\begin{tikzpicture}[baseline=28pt,scale=0.55,color=\clr]
	\draw[-,line width=1pt] (0.5,2)--(1,2.5);\draw[-,line width=1pt] (1.5,2)--(1,2.5);\draw[-,line width=1pt] (1,2.5)--(1,3);
	\draw[-,line width=1pt](0.5,2)--(0.8,1.5);
	\draw[-,line width=1pt](0.5,1)--(0.8,1.5);
	\draw[-,line width=1pt](0.8,1.5)--(1.2,1.5);
	\draw[-,line width=1pt](1.2,1.5)--(1.5,2);
	\draw[-,line width=1pt](1.2,1.5)--(1.5,1);
	\draw[densely dashed,red](0,2)--(2,2);\end{tikzpicture}. 
Then the relation \eqref{cross} in Lemma \ref{double} gives
\begin{equation*}
	(q+q^{-1})\begin{tikzpicture}[baseline=25pt,scale=0.55,color=\clr]
		\draw[-,line width=1pt] (0.5,2)--(1,2.5);\draw[-,line width=1pt] (1.5,2)--(1,2.5);\draw[-,line width=1pt] (1,2.5)--(1,3);
		\draw[-,line width=1pt] (0.5,2)--(1.5,1);\draw [-,line width=1pt](0.5,1)--(0.9,1.4);\draw [-,line width=1pt](1.1,1.6)--(1.5,2);
	\end{tikzpicture}
	=q^3\begin{tikzpicture}[baseline=25pt,scale=0.55,color=\clr]
		\draw[-,line width=1pt] (0.5,2)--(1,2.5);\draw[-,line width=1pt] (1.5,2)--(1,2.5);\draw[-,line width=1pt] (1,2.5)--(1,3);
		\draw [-,line width=1pt](0.5,1)--(0.5,2);\draw [-,line width=1pt](1.5,1)--(1.5,2);\end{tikzpicture}+q^{-3}
	\begin{tikzpicture}[baseline=25pt,scale=0.55,color=\clr]
		\draw[-,line width=1pt] (0.5,2)--(1,2.5);\draw[-,line width=1pt] (1.5,2)--(1,2.5);\draw[-,line width=1pt] (1,2.5)--(1,3);
		\draw[-,line width=1pt](0.5,2)parabola bend (1,1.6) (1.5,2);
		\draw[-,line width=1pt](0.5,1)parabola bend (1,1.4) (1.5,1);  
	\end{tikzpicture}-q
	\begin{tikzpicture}[baseline=20pt,scale=0.45,color=\clr]
	\draw[-,line width=1pt] (0.5,1.5)--(1,2.5);\draw[-,line width=1pt] (1.5,1.5)--(1,2.5);\draw[-,line width=1pt] (1,2.5)--(1,3);
	\draw[-,line width=1pt](0.2,1)--(0.5,1.5);
	\draw[-,line width=1pt](0.5,1.5)--(1.5,1.5);
	\draw[-,line width=1pt](1.5,1.5)--(1.8,1);\end{tikzpicture}
-q^{-1}
	\begin{tikzpicture}[baseline=25pt,scale=0.55,color=\clr]
		\draw[-,line width=1pt] (0.5,2)--(1,2.5);\draw[-,line width=1pt] (1.5,2)--(1,2.5);\draw[-,line width=1pt] (1,2.5)--(1,3);
		\draw[-,line width=1pt] (1,1.3)--(1,1.7);
		\draw[-,line width=1pt] (1,1.7)--(0.5,2);
		\draw[-,line width=1pt] (1,1.7)--(1.5,2);
		\draw[-,line width=1pt] (1,1.3)--(0.5,1);
		\draw[-,line width=1pt] (1,1.3)--(1.5,1);    
	\end{tikzpicture}.
\end{equation*}
Using \eqref{cycle2} and \eqref{checkTRI} in the relation, 
we easily obtain the result.

In the case $n=4$, we have 
\begin{tikzpicture}[baseline=18pt,scale=0.5,color=\clr]
	\draw[-,line width=1pt](-5,1)rectangle(-4,2);\draw[-,line width=1pt](-5,1)--(-5.3,0.7);\draw[-,line width=1pt](-4,1)--(-3.7,0.7);\draw[-,line width=1pt](-4,2)--(-3.7,2.3);\draw[-,line width=1pt](-5,2)--(-5.3,2.3);	
\end{tikzpicture} 
$=$
\begin{tikzpicture}[baseline=-3pt,scale=0.5,color=\clr]
	\draw[-,line width=1pt](8.3,1)--(8.8,0.5);
	\draw[-,line width=1pt](8.3,0)--(8.8,0.5);
	\draw[-,line width=1pt](8.8,0.5)--(9.2,0.5);
	\draw[-,line width=1pt](9.2,0.5)--(9.7,1);
	\draw[-,line width=1pt](9.2,0.5)--(9.7,0);%
	\draw[-,line width=1pt](8.3,0)--(8.8,-0.5);
	\draw[-,line width=1pt](8.3,-1)--(8.8,-0.5);
	\draw[-,line width=1pt](8.8,-0.5)--(9.2,-0.5);
	\draw[-,line width=1pt](9.2,-0.5)--(9.7,0);
	\draw[-,line width=1pt](9.2,-0.5)--(9.7,-1);
	\draw[densely dashed,red](8,0)--(10,0);
\end{tikzpicture} . 
By concatenating each diagram in \eqref{cross} with $\horiz$ at the bottom, we arrive at
\[
	\begin{aligned}
		\begin{tikzpicture}[baseline=25pt,scale=0.65,color=\clr]
			\draw[-,line width=1pt](-5,1)rectangle(-4,2);\draw[-,line width=1pt](-5,1)--(-5.3,0.7);\draw[-,line width=1pt](-4,1)--(-3.7,0.7);\draw[-,line width=1pt](-4,2)--(-3.7,2.3);\draw[-,line width=1pt](-5,2)--(-5.3,2.3);
		\end{tikzpicture}
&=q^{-4}\left([7]-1\right)
		\begin{tikzpicture}[baseline=25pt,scale=0.7,color=\clr]
			\draw[-,line width=1pt](4,2)parabola bend (4.4,1.6) (4.9,2);
			\draw[-,line width=1pt](4,1)parabola bend (4.4,1.4) (4.9,1); 
		\end{tikzpicture}+q^{2}
		\begin{tikzpicture}[baseline=25pt,scale=0.7,color=\clr]
			\draw[-,line width=1pt](6.3,2)--(6.6,1.5);\draw[-,line width=1pt](6.3,1)--(6.6,1.5);\draw[-,line width=1pt](6.6,1.5)--(7,1.5);\draw[-,line width=1pt](7,1.5)--(7.3,2);\draw[-,line width=1pt](7,1.5)--(7.3,1);
		\end{tikzpicture}-(1{+}q^{-2})
		\begin{tikzpicture}[baseline=25pt,scale=0.7,color=\clr]
			\draw[-,line width=1pt](-1.5,1)--(-1,1.5);
			\draw[-,line width=1pt](-1,1.5)--(-0.4,1.5);
			\draw[-,line width=1pt](-0.4,1.5)--(-0.9,2.2);
			\draw[-,line width=1pt](-0.4,1.5)--(-0.1,1);
			\draw[-,line width=1pt](-1,1.5)--(-0.7,1.8);
			\draw[-,line width=1pt](-0.6,1.9)--(-0.3,2.2);
		\end{tikzpicture}+\left(q^{2}{+}q^{-2}{+}q^{-6}\right)
		\begin{tikzpicture}[baseline=25pt,scale=0.7,color=\clr]
			\draw[-,line width=1pt] (12.5,1.3)--(12.5,1.7);
			\draw[-,line width=1pt] (12.5,1.7)--(12,2);
			\draw[-,line width=1pt] (12.5,1.7)--(13,2);
			\draw[-,line width=1pt] (12.5,1.3)--(12,1);
			\draw[-,line width=1pt] (12.5,1.3)--(13,1); 
		\end{tikzpicture} .
	\end{aligned}	
\]
Using the following relation 
\[
	\baln
	(1{+}q^{-2})\begin{tikzpicture}[baseline=25pt,scale=0.7,color=\clr]
		\draw[-,line width=1pt](-1.5,1)--(-1,1.5);
		\draw[-,line width=1pt](-1,1.5)--(-0.4,1.5);
		\draw[-,line width=1pt](-0.4,1.5)--(-0.9,2.2);
		\draw[-,line width=1pt](-0.4,1.5)--(-0.1,1);
		\draw[-,line width=1pt](-1,1.5)--(-0.7,1.8);
		\draw[-,line width=1pt](-0.6,1.9)--(-0.3,2.2);
	\end{tikzpicture}
	=(q^{-10}{+}q^{-8}{+}q^{-6})
	\begin{tikzpicture}[baseline=25pt,scale=0.7,color=\clr]
		\draw[-,line width=1pt](0.5,2)parabola bend (1,1.6) (1.5,2);
		\draw[-,line width=1pt](0.5,1)parabola bend (1,1.4) (1.5,1);  
	\end{tikzpicture}-q^{-2}
	\begin{tikzpicture}[baseline=25pt,scale=0.7,color=\clr]
		\draw[-,line width=1pt](0.5,2)--(0.8,1.5);
		\draw[-,line width=1pt](0.5,1)--(0.8,1.5);
		\draw[-,line width=1pt](0.8,1.5)--(1.2,1.5);
		\draw[-,line width=1pt](1.2,1.5)--(1.5,2);
		\draw[-,line width=1pt](1.2,1.5)--(1.5,1);\end{tikzpicture}+q^{-6}
	\begin{tikzpicture}[baseline=25pt,scale=0.7,color=\clr]
		\draw[-,line width=1pt] (1,1.3)--(1,1.7);
		\draw[-,line width=1pt] (1,1.7)--(0.5,2);
		\draw[-,line width=1pt] (1,1.7)--(1.5,2);
		\draw[-,line width=1pt] (1,1.3)--(0.5,1);
		\draw[-,line width=1pt] (1,1.3)--(1.5,1);    
	\end{tikzpicture}-(q^2{+}1{+}q^{-2})
	\begin{tikzpicture}[baseline=25pt,scale=0.7,color=\clr]
	\draw[-,line width=1pt] (16,1)--(16,2); \draw[-,line width=1pt] (16.5,1)--(16.5,2);
	\end{tikzpicture}
	\ealn
\]
to the right hand side, we obtain 
\begin{equation}\label{rec}
	\begin{aligned}
		\begin{tikzpicture}[baseline=25pt,scale=0.65,color=\clr]
			\draw[-,line width=1pt](-5,1)rectangle(-4,2);\draw[-,line width=1pt](-5,1)--(-5.3,0.7);\draw[-,line width=1pt](-4,1)--(-3.7,0.7);\draw[-,line width=1pt](-4,2)--(-3.7,2.3);\draw[-,line width=1pt](-5,2)--(-5.3,2.3);
		\end{tikzpicture}
=
		(q^2{+}1{+}q^{-2})\left[\begin{tikzpicture}[baseline=-13pt,scale=0.7,color=\clr]
			\draw[-,line width=1pt](0.6,-1)--(0.6,0);\draw[-,line width=1pt](1.6,-1)--(1.6,0);
		\end{tikzpicture}+
		\begin{tikzpicture}[baseline=-13pt,scale=0.7,color=\clr]
			\draw[-,line width=1pt](2.5,0.1)parabola bend (2.9,-0.4) (3.3,0.1);
			\draw[-,line width=1pt](2.5,-1.1)parabola bend (2.9,-0.7) (3.3,-1.1); 
		\end{tikzpicture}\right]+(q^2{+}q^{-2})
		\left[	\begin{tikzpicture}[baseline=-13pt,scale=0.7,color=\clr]
			\draw[-,line width=1pt](7.4,0.1)--(7.7,-0.4);\draw[-,line width=1pt](7.4,-0.9)--(7.7,-0.4);\draw[-,line width=1pt](7.7,-0.4)--(8.1,-0.4);\draw[-,line width=1pt](8.1,-0.4)--(8.4,0.1);\draw[-,line width=1pt](8.1,-0.4)--(8.4,-0.9);
		\end{tikzpicture}+
		\begin{tikzpicture}[baseline=-13pt,scale=0.7,color=\clr]
			\draw[-,line width=1pt] (10,-0.6)--(10,-0.2);
			\draw[-,line width=1pt] (10,-0.2)--(9.5,0.1);
			\draw[-,line width=1pt] (10,-0.2)--(10.5,0.1);
			\draw[-,line width=1pt] (10,-0.6)--(9.5,-0.9);
			\draw[-,line width=1pt] (10,-0.6)--(10.5,-0.9); 
		\end{tikzpicture}\right].
	\end{aligned}	
\end{equation}

In the general case with $n\geq5$, 
\begin{tikzpicture}[baseline=3pt,scale=0.5,color=\clr]
		\draw[-,line width=1pt](0,0)rectangle(2,1);	
		\node at(1,0.5){$R_n$};
	\end{tikzpicture} 
is an $n$-gon with $n$ legs. We can always put a  part of it on the top as $\horiz$. 
Remove this $\horiz$, and denote the resulting graph by 
	\begin{tikzpicture}[baseline=3pt,scale=0.5,color=\clr]
		\draw[-,line width=1pt](0,0)rectangle(2,1);	
		\node at(1,0.5){$C_{n-1}$};
	\end{tikzpicture}.
It is a connected acyclic trivalent graph with $n-2$ vertexes, i.e., it is a tree.

Now  \eqref{cross} leads to 
\begin{equation}\label{cross-1}
	q \begin{tikzpicture}[baseline=25pt,scale=0.7,color=\clr]
		\draw[-,line width=1pt](0.5,2)--(0.8,1.5);
		\draw[-,line width=1pt](0.5,1)--(0.8,1.5);
		\draw[-,line width=1pt](0.8,1.5)--(1.2,1.5);
		\draw[-,line width=1pt](1.2,1.5)--(1.5,2);
		\draw[-,line width=1pt](1.2,1.5)--(1.5,1);
\end{tikzpicture}
	=- (q+q^{-1})\begin{tikzpicture}[baseline=25pt,scale=0.7,color=\clr]
		\draw[-,line width=1pt] (0.5,2)--(1.5,1);\draw [-,line width=1pt](0.5,1)--(0.9,1.4);\draw [-,line width=1pt](1.1,1.6)--(1.5,2);
	\end{tikzpicture} \ +  q^3 \ 
\begin{tikzpicture}[baseline=25pt,scale=0.7,color=\clr]
		\draw [-,line width=1pt](0.5,1)--(0.5,2);\draw [-,line width=1pt](1.5,1)--(1.5,2);\end{tikzpicture}+q^{-3}
	\begin{tikzpicture}[baseline=25pt,scale=0.7,color=\clr]
		\draw[-,line width=1pt](0.5,2)parabola bend (1,1.6) (1.5,2);
		\draw[-,line width=1pt](0.5,1)parabola bend (1,1.4) (1.5,1);  
	\end{tikzpicture}-q^{-1}
	\begin{tikzpicture}[baseline=25pt,scale=0.7,color=\clr]
		\draw[-,line width=1pt] (1,1.3)--(1,1.7);
		\draw[-,line width=1pt] (1,1.7)--(0.5,2);
		\draw[-,line width=1pt] (1,1.7)--(1.5,2);
		\draw[-,line width=1pt] (1,1.3)--(0.5,1);
		\draw[-,line width=1pt] (1,1.3)--(1.5,1);    
	\end{tikzpicture}~.
\end{equation}
We replace the $\horiz$ on the top of 
\begin{tikzpicture}[baseline=3pt,scale=0.5,color=\clr]
		\draw[-,line width=1pt](0,0)rectangle(2,1);	
		\node at(1,0.5){$R_n$};
	\end{tikzpicture} 
by using \eqref{cross-1}. We obtain
\beq\label{eq:de-circle}
\begin{tikzpicture}[baseline=10pt,scale=0.6,color=\clr]
	\draw[-,line width=1pt](0,0.3)rectangle(2,1.3);	
	\node at(1,0.8){$R_{n}$};   
\end{tikzpicture}=q^2\begin{tikzpicture}[baseline=5pt,scale=0.6,color=\clr]
\draw[-,line width=1pt](0,0)rectangle(2,1);	
\node at(1,0.5){$C_{n-1}$};
\end{tikzpicture}-(1+q^{-2})\begin{tikzpicture}[baseline=13pt,scale=0.6,color=\clr]
		\draw[-,line width=1pt] (0.5,2)--(1.5,1);\draw [-,line width=1pt](0.5,1)--(0.9,1.4);\draw [-,line width=1pt](1.1,1.6)--(1.5,2);
		\draw[-,line width=1pt](0,0)rectangle(2,1);	
		\node at(1,0.5){$C_{n-1}$};
	\end{tikzpicture}
	+q^{-4}
	\begin{tikzpicture}[baseline=-5pt,scale=0.6,color=\clr]
		\draw[-,line width=1pt](0.2,0.8)parabola bend (1,0.3) (1.7,0.8);
		\draw[-,line width=1pt](0,-1)rectangle(2,0);	
		\node at(1,-0.5){$R_{n-2}$}; 
	\end{tikzpicture}-q^{-2}
	\begin{tikzpicture}[baseline=17pt,scale=0.6,color=\clr]
		\draw[-,line width=1pt] (1,1.3)--(1,1.7);
		\draw[-,line width=1pt] (1,1.7)--(0.5,2);
		\draw[-,line width=1pt] (1,1.7)--(1.5,2);
		\draw[-,line width=1pt](0,0.3)rectangle(2,1.3);	
		\node at(1,0.8){$R_{n-1}$};   
	\end{tikzpicture}~.
	\eeq

Let us illustrate this in the $n=6$ case.  We have 
\[\begin{tikzpicture}[baseline=3pt,scale=0.5,color=\clr]
		\draw[-,line width=1pt](0,0)rectangle(2,1);	
		\node at(1,0.5){$R_6$};
	\end{tikzpicture} 
=\begin{tikzpicture}[baseline=7pt,scale=0.7,color=\clr]
\draw[-,line width=1pt](0,0)--(0.7,0);	
\draw[-,line width=1pt](-0.4,0.5)--(0,0);
\draw[-,line width=1pt](0.7,0)--(1.1,0.5);
\draw[-,line width=1pt](-0.4,0.5)--(0,1);	
\draw[-,line width=1pt](1.1,0.5)--(0.7,1);
\draw[-,line width=1pt](0,1)--(0.7,1);	%%%%
\draw[-,line width=1pt](0,0)--(-0.2,-0.2);
\draw[-,line width=1pt](0.7,0)--(0.9,-0.2);
\draw[-,line width=1pt](-0.4,0.5)--(-0.6,0.5);
\draw[-,line width=1pt](1.1,0.5)--(1.3,0.5);
\draw[-,line width=1pt](0,1)--(-0.2,1.2);
\draw[-,line width=1pt](0.7,1)--(0.9,1.2);
\end{tikzpicture}~,\quad
\begin{tikzpicture}[baseline=3pt,scale=0.5,color=\clr]
		\draw[-,line width=1pt](0,0)rectangle(2,1);	
		\node at(1,0.5){$C_5$};
	\end{tikzpicture} 
=\begin{tikzpicture}[baseline=3pt,scale=0.7,color=\clr]
\draw[-,line width=1pt](0,0)--(0.7,0);	
\draw[-,line width=1pt](-0.4,0.5)--(0,0);
\draw[-,line width=1pt](0.7,0)--(1.1,0.5);
\draw[-,line width=1pt](-0.4,0.5)--(-0.2,0.7);	
\draw[-,line width=1pt](1.1,0.5)--(0.9,0.7);
%\draw[-,line width=1pt](0,1)--(0.7,1);	%%%%
\draw[-,line width=1pt](0,0)--(-0.2,-0.2);
\draw[-,line width=1pt](0.7,0)--(0.9,-0.2);
\draw[-,line width=1pt](-0.4,0.5)--(-0.6,0.5);
\draw[-,line width=1pt](1.1,0.5)--(1.3,0.5);
%\draw[-,line width=1pt](0,1)--(-0.2,1.2);
%\draw[-,line width=1pt](0.7,1)--(0.9,1.2);
\end{tikzpicture},
\quad
\begin{tikzpicture}[baseline=13pt,scale=0.6,color=\clr]
		\draw[-,line width=1pt] (0.5,2)--(1.5,1);\draw [-,line width=1pt](0.5,1)--(0.9,1.4);\draw [-,line width=1pt](1.1,1.6)--(1.5,2);
		\draw[-,line width=1pt](0,0)rectangle(2,1);	
		\node at(1,0.5){$C_{5}$};
	\end{tikzpicture}
	=\begin{tikzpicture}[baseline=7pt,scale=0.7,color=\clr]
\draw[-,line width=1pt](0,0)--(0.7,0);	
\draw[-,line width=1pt](-0.4,0.5)--(0,0);
\draw[-,line width=1pt](0.7,0)--(1.1,0.5);
\draw[-,line width=1pt] (1.1,0.5)--(-0.3,1.3);
\draw [-,line width=1pt](-0.4,0.5)--(0.16,0.8);
\draw[-,line width=1pt](0.47,1)--(1.2,1.3);	
\draw[-,line width=1pt](0,0)--(-0.2,-0.2);
\draw[-,line width=1pt](0.7,0)--(0.9,-0.2);
\draw[-,line width=1pt](-0.4,0.5)--(-0.6,0.5);
\draw[-,line width=1pt](1.1,0.5)--(1.3,0.5);
\end{tikzpicture}~.\]
\[
	\begin{tikzpicture}[baseline=-5pt,scale=0.6,color=\clr]
		\draw[-,line width=1pt](0.2,0.8)parabola bend (1,0.3) (1.7,0.8);
		\draw[-,line width=1pt](0,-1)rectangle(2,0);	
		\node at(1,-0.5){$R_{4}$}; 
	\end{tikzpicture}=
	\begin{tikzpicture}[baseline=10pt,scale=0.7,color=\clr]
\draw[-,line width=1pt](-0.4,1.5)parabola bend (0.35,1.1) (1.1,1.5);
\draw[-,line width=1pt](-0.2,0.7)parabola bend (0.35,0.9) (0.9,0.7);	
\draw[-,line width=1pt](0,0)--(0.7,0);	
\draw[-,line width=1pt](-0.4,0.5)--(0,0);
\draw[-,line width=1pt](0.7,0)--(1.1,0.5);
\draw[-,line width=1pt](-0.4,0.5)--(-0.2,0.7);	
\draw[-,line width=1pt](1.1,0.5)--(0.9,0.7);
\draw[-,line width=1pt](0,0)--(-0.2,-0.2);
\draw[-,line width=1pt](0.7,0)--(0.9,-0.2);
\draw[-,line width=1pt](-0.4,0.5)--(-0.6,0.5);
\draw[-,line width=1pt](1.1,0.5)--(1.3,0.5);
\end{tikzpicture},
\quad
	\begin{tikzpicture}[baseline=17pt,scale=0.6,color=\clr]
		\draw[-,line width=1pt] (1,1.3)--(1,1.7);
		\draw[-,line width=1pt] (1,1.7)--(0.5,2);
		\draw[-,line width=1pt] (1,1.7)--(1.5,2);
		\draw[-,line width=1pt](0,0.3)rectangle(2,1.3);	
		\node at(1,0.8){$R_{5}$};   
	\end{tikzpicture}=
\begin{tikzpicture}[baseline=2pt,scale=0.4,color=\clr]
\draw[-,line width=1pt] (3,0) +(18:1cm) -- +(90:1cm) -- +(162:1cm) -- +(234:1cm) -- +(306:1cm)-- cycle;
\draw[-,line width=1pt] (2.05,0.3)--(1.65,0.3);			
\draw[-,line width=1pt] (3,0.95)--(3,1.5);	
\draw[-,line width=1pt] (3,1.5)--(3.8,2);
\draw[-,line width=1pt] (3,1.5)--(2.2,2);		
\draw[-,line width=1pt] (3.95,0.3)--(4.35,0.3);			
\draw[-,line width=1pt] (3.6,-0.8)--(4,-1.2);			
\draw[-,line width=1pt] (2.4,-0.8)--(2,-1.2);
\end{tikzpicture}~.
\]

Return to equation \eqref{eq:de-circle}. 
Observe that the first trivalent graph on the right hand side has no cycle, the third and fourth have cycles but have less than $n$ vertices.  Thus we only need to analyse the second diagram. It is of the form  
$
\begin{tikzpicture}[baseline=7pt,scale=0.55,color=\clr]
	\draw[-,line width=1pt] (0.3,2)--(1.3,1);\draw [-,line width=1pt](0.3,1)--(0.7,1.4);\draw [-,line width=1pt](0.9,1.6)--(1.3,2);
	\draw[-,line width=1pt](0,0)rectangle(1,1);	
	\node at(0.5,0.5){$D$};%
\draw[-,line width=1pt](0.5,0)--(0.8,-0.5);%
	\draw[-,line width=1pt](0.5,-1)--(0.8,-0.5);
	\draw[-,line width=1pt](0.8,-0.5)--(1.2,-0.5);
	\draw[-,line width=1pt](1.2,-0.5)--(1.5,0);
	\draw[-,line width=1pt](1.2,-0.5)--(1.5,-1);%
	\draw[-,line width=1pt](1.5,0)--(1.5,0.5);\draw[-,line width=1pt](1.5,0.5)--(1.3,1);\draw[-,line width=1pt](1.5,0.5)--(1.7,1);
\end{tikzpicture} , 
$
where $D$ is an acyclic trivalent graph with $n-5$ vertices. 
Using the graphical relation  
\begin{tikzpicture}[baseline=-2pt,scale=0.5,color=\clr]
\draw[-,line width=1pt](0.5,1)--(0.5,0);	
\draw[-,line width=1pt](0.5,0)--(0.8,-0.5);%
\draw[-,line width=1pt](0.5,-1)--(0.8,-0.5);
\draw[-,line width=1pt](0.8,-0.5)--(1.2,-0.5);
\draw[-,line width=1pt](1.2,-0.5)--(1.5,0);
\draw[-,line width=1pt](1.2,-0.5)--(1.5,-1);%
\draw[-,line width=1pt](1.5,0)--(1.5,0.5);
\draw[-,line width=1pt](1.5,0.5)--(1.3,1);
\draw[-,line width=1pt](1.5,0.5)--(1.7,1);
\end{tikzpicture} 
=
\begin{tikzpicture}[baseline=13pt,scale=0.5,color=\clr]
	\draw[-,line width=1pt] (1,1.3)--(1,1.7);
	\draw[-,line width=1pt] (1,1.7)--(0.5,2);
	\draw[-,line width=1pt] (1,1.7)--(1.5,2);
	\draw[-,line width=1pt] (1,1.3)--(0.5,1);
	\draw[-,line width=1pt] (1,1.3)--(1.5,1); %
	\draw[-,line width=1pt] (0,1)--(0,2);\draw[-,line width=1pt] (0,1)--(0.25,0.5); \draw[-,line width=1pt] (0.5,1)--(0.25,0.5);   
 \draw[-,line width=1pt] (0.25,0)--(0.25,0.5); \draw[-,line width=1pt] (1.5,1)--(1.5,0);
\end{tikzpicture} , 
we obtain 
\[
\begin{tikzpicture}[baseline=11pt,scale=0.6,color=\clr]
	\draw[-,line width=1pt] (0.5,2)--(1.5,1);\draw [-,line width=1pt](0.5,1)--(0.9,1.4);\draw [-,line width=1pt](1.1,1.6)--(1.5,2);
	\draw[-,line width=1pt](0,0)rectangle(2,1);	
	\node at(1,0.5){$C_{n-1}$};
\end{tikzpicture}
=
\begin{tikzpicture}[baseline=5pt,scale=0.5,color=\clr]
	\draw[-,line width=1pt] (0.3,2)--(1.3,1);\draw [-,line width=1pt](0.3,1)--(0.7,1.4);\draw [-,line width=1pt](0.9,1.6)--(1.3,2);
	\draw[-,line width=1pt](0,0)rectangle(1,1);	
	\node at(0.5,0.5){$D$};%
	\draw[-,line width=1pt] (1.8,0.3)--(1.8,0.7);%
	\draw[-,line width=1pt] (1.8,0.7)--(1.3,1);
	\draw[-,line width=1pt] (1.8,0.7)--(2.3,1);
	\draw[-,line width=1pt] (1.8,0.3)--(1.3,0);
	\draw[-,line width=1pt] (1.8,0.3)--(2.3,0); \draw[-,line width=1pt] (2.3,1)--(2.3,2); 	%
	\draw[-,line width=1pt] (0.5,0)--(0.9,-0.4);\draw[-,line width=1pt] (1.3,0)--(0.9,-0.4);
	\draw[-,line width=1pt] (0.9,-0.7)--(0.9,-0.4);
	\draw[-,line width=1pt] (2.3,0)--(2.3,-0.7);
\end{tikzpicture} .
\]

Replacing 
\begin{tikzpicture}[baseline=-13pt,scale=0.7,color=\clr]
			\draw[-,line width=1pt] (10,-0.6)--(10,-0.2);
			\draw[-,line width=1pt] (10,-0.2)--(9.5,0.1);
			\draw[-,line width=1pt] (10,-0.2)--(10.5,0.1);
			\draw[-,line width=1pt] (10,-0.6)--(9.5,-0.9);
			\draw[-,line width=1pt] (10,-0.6)--(10.5,-0.9); 
		\end{tikzpicture}
on the right hand side by using \eqref{cross} again, we obtain 
\beq\label{eq:de-crossing}
\begin{tikzpicture}[baseline=13pt,scale=0.6,color=\clr]
	\draw[-,line width=1pt] (0.5,2)--(1.5,1);\draw [-,line width=1pt](0.5,1)--(0.9,1.4);\draw [-,line width=1pt](1.1,1.6)--(1.5,2);
	\draw[-,line width=1pt](0,0)rectangle(2,1);	
	\node at(1,0.5){$C_{n-1}$};
\end{tikzpicture}
=
q^{-2}
\begin{tikzpicture}[baseline=5pt,scale=0.5,color=\clr]
	\draw[-,line width=1pt] (0.3,2)--(1.3,1);\draw [-,line width=1pt](0.3,1)--(0.7,1.4);\draw [-,line width=1pt](0.9,1.6)--(1.3,2);
	\draw[-,line width=1pt](0,0)rectangle(1,1);	
	\node at(0.5,0.5){$D$};%
	\draw[-,line width=1pt](1.3,1)parabola bend (1.8,0.6) (2.3,1);%
	\draw[-,line width=1pt](1.3,0)parabola bend (1.8,0.4) (2.3,0); %
	\draw[-,line width=1pt] (0.5,0)--(0.9,-0.4);\draw[-,line width=1pt] (1.3,0)--(0.9,-0.4);
	\draw[-,line width=1pt] (0.9,-0.7)--(0.9,-0.4);
	\draw[-,line width=1pt] (2.3,0)--(2.3,-0.6);\draw[-,line width=1pt] (2.3,1)--(2.3,2);
\end{tikzpicture}-(1{+}q^2)\begin{tikzpicture}[baseline=5pt,scale=0.5,color=\clr]
	\draw[-,line width=1pt] (0.3,2)--(1.3,1);\draw [-,line width=1pt](0.3,1)--(0.7,1.4);\draw [-,line width=1pt](0.9,1.6)--(1.3,2);
	\draw[-,line width=1pt](0,0)rectangle(1,1);	
	\node at(0.5,0.5){$D$};%
	\draw[-,line width=1pt] (1.3,1)--(2.3,0);%
	\draw [-,line width=1pt](1.3,0)--(1.7,0.4);\draw [-,line width=1pt](1.9,0.6)--(2.3,1);%
	\draw[-,line width=1pt] (0.5,0)--(0.9,-0.4);\draw[-,line width=1pt] (1.3,0)--(0.9,-0.4);
	\draw[-,line width=1pt] (0.9,-0.7)--(0.9,-0.4);
	\draw[-,line width=1pt] (2.3,0)--(2.3,-0.6);\draw[-,line width=1pt] (2.3,1)--(2.3,2);
\end{tikzpicture}+q^{4}
\begin{tikzpicture}[baseline=5pt,scale=0.5,color=\clr]
	\draw[-,line width=1pt] (0.3,2)--(1.3,1);\draw [-,line width=1pt](0.3,1)--(0.7,1.4);\draw [-,line width=1pt](0.9,1.6)--(1.3,2);
	\draw[-,line width=1pt](0,0)rectangle(1,1);	
	\node at(0.5,0.5){$D$};%
	\draw [-,line width=1pt](1.3,0)--(1.3,1);\draw [-,line width=1pt](1.7,0)--(1.7,2);
	\draw[-,line width=1pt] (0.5,0)--(0.9,-0.4);\draw[-,line width=1pt] (1.3,0)--(0.9,-0.4);
	\draw[-,line width=1pt] (0.9,-0.7)--(0.9,-0.4);
	\draw[-,line width=1pt] (1.7,0)--(1.7,-0.6);
\end{tikzpicture}-q^2
\begin{tikzpicture}[baseline=5pt,scale=0.5,color=\clr]
	\draw[-,line width=1pt] (0.3,2)--(1.3,1);\draw [-,line width=1pt](0.3,1)--(0.7,1.4);\draw [-,line width=1pt](0.9,1.6)--(1.3,2);
	\draw[-,line width=1pt](0,0)rectangle(1,1);	
	\node at(0.5,0.5){$D$};%
	\draw[-,line width=1pt](1.3,1)--(1.6,0.5);
	\draw[-,line width=1pt](1.3,0)--(1.6,0.5);
	\draw[-,line width=1pt](1.6,0.5)--(2,0.5);
	\draw[-,line width=1pt](2,0.5)--(2.3,1);
	\draw[-,line width=1pt](2,0.5)--(2.3,0);
	\draw[-,line width=1pt] (0.5,0)--(0.9,-0.4);\draw[-,line width=1pt] (1.3,0)--(0.9,-0.4);
	\draw[-,line width=1pt] (0.9,-0.7)--(0.9,-0.4);
	\draw[-,line width=1pt] (2.3,0)--(2.3,-0.6);\draw[-,line width=1pt] (2.3,1)--(2.3,2);
\end{tikzpicture} .
\eeq
We use \eqref{cross} to replace the crossings in the first and second diagrams to obtain 
\beq\label{eq:de-D}
\baln
(q{+}q^{-1})\begin{tikzpicture}[baseline=5pt,scale=0.5,color=\clr]
	\draw[-,line width=1pt] (0.3,2)--(1.3,1);\draw [-,line width=1pt](0.3,1)--(0.7,1.4);\draw [-,line width=1pt](0.9,1.6)--(1.3,2);
	\draw[-,line width=1pt](0,0)rectangle(1,1);	
	\node at(0.5,0.5){$D$};%
	\draw[-,line width=1pt](1.3,1)parabola bend (1.8,0.6) (2.3,1);%
	\draw[-,line width=1pt](1.3,0)parabola bend (1.8,0.4) (2.3,0); %
	\draw[-,line width=1pt] (0.5,0)--(0.9,-0.4);\draw[-,line width=1pt] (1.3,0)--(0.9,-0.4);
	\draw[-,line width=1pt] (0.9,-0.7)--(0.9,-0.4);\draw[-,line width=1pt] (2.3,0)--(2.3,-0.6);\draw[-,line width=1pt] (2.3,1)--(2.3,2);
\end{tikzpicture}&=q^3\begin{tikzpicture}[baseline=5pt,scale=0.5,color=\clr]
\draw [-,line width=1pt](0.3,1)--(0.3,2);\draw [-,line width=1pt](1.3,1)--(1.3,2);
\draw[-,line width=1pt](0,0)rectangle(1,1);	
\node at(0.5,0.5){$D$};%
\draw[-,line width=1pt](1.3,1)parabola bend (1.8,0.6) (2.3,1);%
\draw[-,line width=1pt](1.3,0)parabola bend (1.8,0.4) (2.3,0); %
\draw[-,line width=1pt] (0.5,0)--(0.9,-0.4);\draw[-,line width=1pt] (1.3,0)--(0.9,-0.4);
\draw[-,line width=1pt] (0.9,-0.7)--(0.9,-0.4);\draw[-,line width=1pt] (2.3,0)--(2.3,-0.6);\draw[-,line width=1pt] (2.3,1)--(2.3,2);
\end{tikzpicture}
+q^{-3}
\begin{tikzpicture}[baseline=5pt,scale=0.5,color=\clr]
\draw[-,line width=1pt](0.3,2)parabola bend (0.8,1.6) (1.3,2);
\draw[-,line width=1pt](0.3,1)parabola bend (0.8,1.4) (1.3,1); \draw[-,line width=1pt](0,0)rectangle(1,1);	
\node at(0.5,0.5){$D$};%
\draw[-,line width=1pt](1.3,1)parabola bend (1.8,0.6) (2.3,1);%
\draw[-,line width=1pt](1.3,0)parabola bend (1.8,0.4) (2.3,0); %
\draw[-,line width=1pt] (0.5,0)--(0.9,-0.4);\draw[-,line width=1pt] (1.3,0)--(0.9,-0.4);
\draw[-,line width=1pt] (0.9,-0.7)--(0.9,-0.4); \draw[-,line width=1pt] (2.3,0)--(2.3,-0.6);\draw[-,line width=1pt] (2.3,1)--(2.3,2);
\end{tikzpicture}+q
\begin{tikzpicture}[baseline=5pt,scale=0.5,color=\clr]
\draw[-,line width=1pt](0.3,2)--(0.6,1.5);
\draw[-,line width=1pt](0.3,1)--(0.6,1.5);
\draw[-,line width=1pt](0.6,1.5)--(1,1.5);
\draw[-,line width=1pt](1,1.5)--(1.3,2);
\draw[-,line width=1pt](1,1.5)--(1.3,1);\draw[-,line width=1pt](0,0)rectangle(1,1);	
\node at(0.5,0.5){$D$};%
\draw[-,line width=1pt](1.3,1)parabola bend (1.8,0.6) (2.3,1);%
\draw[-,line width=1pt](1.3,0)parabola bend (1.8,0.4) (2.3,0); %
\draw[-,line width=1pt] (0.5,0)--(0.9,-0.4);\draw[-,line width=1pt] (1.3,0)--(0.9,-0.4);
\draw[-,line width=1pt] (0.9,-0.7)--(0.9,-0.4);\draw[-,line width=1pt] (2.3,0)--(2.3,-0.6);\draw[-,line width=1pt] (2.3,1)--(2.3,2);\end{tikzpicture}+q^{-1}
\begin{tikzpicture}[baseline=5pt,scale=0.5,color=\clr]
\draw[-,line width=1pt] (0.8,1.3)--(0.8,1.7);
\draw[-,line width=1pt] (0.8,1.7)--(0.3,2);
\draw[-,line width=1pt] (0.8,1.7)--(1.3,2);
\draw[-,line width=1pt] (0.8,1.3)--(0.3,1);
\draw[-,line width=1pt] (0.8,1.3)--(1.3,1);  \draw[-,line width=1pt](0,0)rectangle(1,1);	
\node at(0.5,0.5){$D$};%
\draw[-,line width=1pt](1.3,1)parabola bend (1.8,0.6) (2.3,1);%
\draw[-,line width=1pt](1.3,0)parabola bend (1.8,0.4) (2.3,0); %
\draw[-,line width=1pt] (0.5,0)--(0.9,-0.4);\draw[-,line width=1pt] (1.3,0)--(0.9,-0.4);
\draw[-,line width=1pt] (0.9,-0.7)--(0.9,-0.4);  \draw[-,line width=1pt] (2.3,0)--(2.3,-0.6);\draw[-,line width=1pt] (2.3,1)--(2.3,2);
\end{tikzpicture}~, \\
(1{+}q^2)\begin{tikzpicture}[baseline=5pt,scale=0.5,color=\clr]
	\draw[-,line width=1pt] (0.3,2)--(1.3,1);\draw [-,line width=1pt](0.3,1)--(0.7,1.4);\draw [-,line width=1pt](0.9,1.6)--(1.3,2);
	\draw[-,line width=1pt](0,0)rectangle(1,1);	
	\node at(0.5,0.5){$D$};%
	\draw[-,line width=1pt] (1.3,1)--(2.3,0);%
	\draw [-,line width=1pt](1.3,0)--(1.7,0.4);\draw [-,line width=1pt](1.9,0.6)--(2.3,1);%
	\draw[-,line width=1pt] (0.5,0)--(0.9,-0.4);\draw[-,line width=1pt] (1.3,0)--(0.9,-0.4);
	\draw[-,line width=1pt] (0.9,-0.7)--(0.9,-0.4);
	\draw[-,line width=1pt] (2.3,0)--(2.3,-0.6);\draw[-,line width=1pt] (2.3,1)--(2.3,2);
\end{tikzpicture}&=
(q{+}q^{-1})\begin{tikzpicture}[baseline=-2pt,scale=0.5,color=\clr]
	\draw[-,line width=1pt](0,0)rectangle(1,1);	\draw[-,line width=1pt] (0.5,1)--(0.5,1.5);
	\node at(0.5,0.5){$D$};%
	\draw[-,line width=1pt] (0.5,0)--(0.9,-0.4);\draw[-,line width=1pt] (1.3,0)--(0.9,-0.4);
	\draw[-,line width=1pt] (0.9,-0.6)--(0.9,-0.4);
	\draw[-,line width=1pt] (1.3,0)--(1.3,1.5);
	\draw[-,line width=1pt] (-0.2,-0.5)--(0.9,-1.6);\draw [-,line width=1pt](-0.1,-1.6)--(0.3,-1.2);\draw [-,line width=1pt](0.5,-1)--(0.9,-0.6);%
	\draw[-,line width=1pt] (-0.2,-0.5)--(-0.2,1.5);
\end{tikzpicture}=q^3\begin{tikzpicture}[baseline=-2pt,scale=0.5,color=\clr]
	\draw[-,line width=1pt](0,0)rectangle(1,1);	\draw[-,line width=1pt] (0.5,1)--(0.5,1.5);
	\node at(0.5,0.5){$D$};%
	\draw[-,line width=1pt] (0.5,0)--(0.9,-0.4);\draw[-,line width=1pt] (1.3,0)--(0.9,-0.4);
	\draw[-,line width=1pt] (0.9,-0.6)--(0.9,-0.4);
	\draw[-,line width=1pt] (1.3,0)--(1.3,1.5);
	\draw[-,line width=1pt] (-0.2,-0.5)--(-0.2,-1.6);\draw[-,line width=1pt] (0.9,-0.5)--(0.9,-1.6);%
	\draw[-,line width=1pt] (-0.2,-0.5)--(-0.2,1.5);
\end{tikzpicture}+q^{-3}
\begin{tikzpicture}[baseline=-2pt,scale=0.5,color=\clr]
	\draw[-,line width=1pt](0,0)rectangle(1,1);	\draw[-,line width=1pt] (0.5,1)--(0.5,1.5);
	\node at(0.5,0.5){$D$};%
	\draw[-,line width=1pt] (0.5,0)--(0.9,-0.4);\draw[-,line width=1pt] (1.3,0)--(0.9,-0.4);
	\draw[-,line width=1pt] (0.9,-0.6)--(0.9,-0.4);
	\draw[-,line width=1pt] (1.3,0)--(1.3,1.5);
	\draw[-,line width=1pt] (-0.2,-0.5)parabola bend (0.35,-0.8)(0.9,-0.6);\draw[-,line width=1pt] (-0.2,-1.6)parabola bend (0.35,-1)(0.9,-1.6);%
	\draw[-,line width=1pt] (-0.2,-0.5)--(-0.2,1.5);
\end{tikzpicture}
+q\begin{tikzpicture}[baseline=-2pt,scale=0.5,color=\clr]
	\draw[-,line width=1pt](0,0)rectangle(1,1);	\draw[-,line width=1pt] (0.5,1)--(0.5,1.5);
	\node at(0.5,0.5){$D$};%
	\draw[-,line width=1pt] (0.5,0)--(0.9,-0.4);\draw[-,line width=1pt] (1.3,0)--(0.9,-0.4);
	\draw[-,line width=1pt] (0.9,-0.6)--(0.9,-0.4);
	\draw[-,line width=1pt] (1.3,0)--(1.3,1.5);
	\draw[-,line width=1pt](-0.2,-0.6)--(0.1,-0.9);%
	\draw[-,line width=1pt](-0.2,-1.6)--(0.1,-0.9);
	\draw[-,line width=1pt](0.1,-0.9)--(0.5,-0.9);
	\draw[-,line width=1pt](0.5,-0.9)--(0.9,-0.6);
	\draw[-,line width=1pt](0.5,-0.9)--(0.9,-1.6);%
	\draw[-,line width=1pt] (-0.2,-0.6)--(-0.2,1.5);
\end{tikzpicture}
+q^{-1}\begin{tikzpicture}[baseline=-2pt,scale=0.5,color=\clr]
	\draw[-,line width=1pt](0,0)rectangle(1,1);	\draw[-,line width=1pt] (0.5,1)--(0.5,1.5);
	\node at(0.5,0.5){$D$};%
	\draw[-,line width=1pt] (0.5,0)--(0.9,-0.4);\draw[-,line width=1pt] (1.3,0)--(0.9,-0.4);
	\draw[-,line width=1pt] (0.9,-0.6)--(0.9,-0.4);
	\draw[-,line width=1pt] (1.3,0)--(1.3,1.5);
	\draw[-,line width=1pt] (0.3,-1.3)--(0.3,-0.9);%
	\draw[-,line width=1pt] (0.3,-0.9)--(-0.2,-0.6);
	\draw[-,line width=1pt] (0.3,-0.9)--(0.9,-0.6);
	\draw[-,line width=1pt] (0.3,-1.3)--(-0.2,-1.6);
	\draw[-,line width=1pt] (0.3,-1.3)--(0.9,-1.6);% 
	\draw[-,line width=1pt] (-0.2,-0.6)--(-0.2,1.5);
\end{tikzpicture}  , 
\ealn
\eeq
where the trivalent graphs on the right hand sides are all acyclic. 
The third and fourth diagrams on the right hand side of \eqref{eq:de-crossing} are 
\begin{tikzpicture}[baseline=5pt,scale=0.5,color=\clr]
	\draw[-,line width=1pt] (0.5,2)--(1.5,1);\draw [-,line width=1pt](0.5,1)--(0.9,1.4);\draw [-,line width=1pt](1.1,1.6)--(1.5,2);
	\draw[-,line width=1pt](0,0)rectangle(2,1);	
	\node at(1,0.5){$C_{n-3}$};\draw [-,line width=1pt](2.3,-0.6)--(2.3,2);
\end{tikzpicture}
and
\begin{tikzpicture}[baseline=5pt,scale=0.5,color=\clr]
\draw[-,line width=1pt] (0.5,2)--(1.5,1);\draw [-,line width=1pt](0.5,1)--(0.9,1.4);\draw [-,line width=1pt](1.1,1.6)--(1.5,2);
\draw[-,line width=1pt](0,0)rectangle(2,1);	
\node at(1,0.5){$C_{n-2}$};
\draw[-,line width=1pt](2,0.5)--(2.3,0.5);
	\draw[-,line width=1pt](2.3,0.5)--(2.6,1);
	\draw[-,line width=1pt](2.3,0.5)--(2.6,0);
	\draw[-,line width=1pt] (2.6,0)--(2.6,-0.6);\draw[-,line width=1pt] (2.6,1)--(2.6,2);
\end{tikzpicture}, 
 which have $n-4$ and $n-3$ vertices respectively. 

This completes the induction step, proving the theorem. 
\end{proof}

\section{Invariant theory of ${\rm U}_q(G_2)$}\label{sect:inv-theo}

We now develop a non-commutative algebraic invariant theory for $\UG$ within the context 
of braided symmetric algebras. The main result obtained is 
a quantum first fundamental theorem of invariant theory, that is, Theorem \ref{FFT}, 
 which describes the generators of the subalgebra of $\UG$-invariants in $\cA_m(V)$
for all $m$.

Hereafter we denote $\cA_m(V)$ by $\cA_m$. 
\subsection{The strategy}\label{sect:strategy}
Let us start by describing our strategy for proving Theorem \ref{FFT}, 
and determining commutation relations among the generators of invariants. 
Define
\beq
\cA_m^\UG:=\{ f\in \cA_m\mid x\cdot f=\epsilon(x) f,  \forall x\in\UG\},
\eeq
where $\epsilon$ is the co-unit of $\UG$.  This is the subspace of $\UG$-invariants in $\cA_m$.   
It is well known (and easy to show) that $\cA_m^\UG$ is a subalgebra of $\cA_m$. We shall refer to it as the subalgebra of invariants of $\cA_m$.  

Note that $\cA_m^\UG$ is non-commutative for all $m\ge 3$. 
Our aim is to understand the structure of $\cA_m^\UG$ as an associative algebra. 
Let us now outline our strategy for doing this. 

Let $\CT_m=T(V)^{\ot m}$ for any $m$, which is $\ZZ_+^m$-graded with 
\[ 
\left(\CT_{m}\right)_{\mathbf{d}}=T(V)_{d_{1}} \otimes \cdots \otimes T(V)_{d_{m}},  \quad {\bf d}=(d_1, d_2, \dots, d_m)\in\ZZ_+^m.
\]
Set $|{\bf d}|=\sum_{i=1}^m d_i$.  Then we have the linear isomorphism 
\[
j_{\bf d}: \left(\CT_{m}\right)_{\mathbf{d}}\stackrel{\sim}\lra V^{\ot |{\bf d}|}.
\] 

Recall that $\cA_m$ is also $\ZZ_+^m$-graded, with $\left(\cA_{m}\right)_{\mathbf{d}}:=\cA_m(V)_{\mathbf{d}}=S_{q}(V)_{d_{1}} \otimes \cdots \otimes S_{q}(V)_{d_{m}}$. 
The restriction of the map $\tau^{\ot m}: \CT_m \lra \cA_m$ to the homogeneous components leads to the surjections 
\beq\label{eq:tau-d}
\tau_{\bf d}:= \tau_{d_1}\ot \cdots \ot \tau_{d_m}: (\CT_m)_{\bf d}\lra 
(\cA_m)_{\bf d}, \quad \tau_{\bf d}\circ j_{\bf d}^{-1}: V^{\ot |{\bf d}|}\lra (\cA_m)_{\bf d}.
\eeq

Let $A=\cA_{m}^{\UG}$ and $T=\CT_{m}^{\UG}$, which are $\ZZ_+^m$-graded algebras. 
Since both $\CT_{m}$ and $\mathcal{A}_{m}$ are semi-simple as $\UG$-modules, 
and $\tau^{\otimes m}$ is a $\UG$-map, we have $A=\tau^{\otimes m}(T)$.  
Denote by $A_{\bf d}$ and $T_{\bf d}$
the degree ${\bf d}\in\ZZ_+^m$ homogeneous components of $A$ and $B$ respectively.  
Clearly
	$
	A_{\mathbf{d}}=\tau_{\bf d}\left(T_{\mathbf{d}}\right).
	$
Thus we have the surjection 
\beq\label{eq:key-map}
	\hat\tau_{\bf d}:   \operatorname{Hom}_{\UG}\left(\mathbb{C}(q), V^{\otimes |{\bf d}|}\right) {\longrightarrow} A_{\mathbf{d}},
	\eeq
defined by 
the composition of the following maps 
\[
\operatorname{Hom}_{\UG}\left(\mathbb{C}(q), V^{\otimes |{\bf d}|}\right)\stackrel{\sim}\lra \left(V^{\otimes |{\bf d}|}\right)^{\UG} \stackrel{j_{\bf d}^{-1}}\lra T_{\bf d} \stackrel{\tau_{\bf d}}\lra A_{\bf d},
\]
where the first map is given by $f \mapsto f(1)$.

The map $\hat\tau_{\bf d}$ enables us to describe $A_{\bf d}$, quite explicitly, using the diagrammatic method for $ \Hom_{\UG}\left(\mathbb{C}(q), V^{\otimes |{\bf d}|}\right)$ 
 developed in Section \ref{diagdep}. 
Such a diagrammatic method for studying invariants was used in \cite{lzz:ft}. 

\begin{remark}
Note that we only use $\tau^{\ot m}: \CT_m \lra \cA_m$ as a linear map, even though it is a
$\ZZ_+^m$-graded algebra homomorphism when $\CT_m$ is endowed with the braided multiplication (obtained by iterating \eqref{eq:mu-AB}). 
\end{remark}
\subsection{Elementary invariants}\label{sect:elmt}
%We work in the setting of Section \ref{sect:strategy}. 
Let us first consider the invariants arising from the graphs in Figure \ref{Figure-elementary}. For easy reference, we call these graphs, and also the corresponding invariants {\em elementary}.
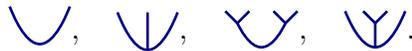
\begin{figure}[h]
\begin{gather*}
\begin{tikzpicture}[baseline=2pt,scale=1,color=\clr]\draw[-,line width=1pt,color=\clr](0,0.5)parabola bend (0.4,0) (0.8,0.5);\end{tikzpicture}, \quad 
\begin{tikzpicture}[baseline=-10pt,scale=1,color=\clr]
	\draw[-,line width=1pt](-0.1,0)parabola bend (0.3,-0.5) (0.7,0);\draw[-,line width=1pt](0.3,-0.5)--(0.3,0);	\end{tikzpicture},\quad
\begin{tikzpicture}[baseline=-15pt,scale=0.8,color=\clr]
	\draw[-,line width=1pt](0.3,-0.4)--(0.1,-0.2);\draw[-,line width=1pt](0.3,-0.4)--(0.5,-0.2);
	\draw[-,line width=1pt](1.1,-0.4)--(0.9,-0.2);\draw[-,line width=1pt](1.1,-0.4)--(1.3,-0.2);
	\draw[-,line width=1pt](0.3,-0.4)parabola bend (0.7,-0.8) (1.1,-0.4);
\end{tikzpicture},\quad 
\begin{tikzpicture}[baseline=-15pt,scale=0.8,color=\clr]
	\draw[-,line width=1pt](0.8,-0.8)--(0.8,-0.4);\draw[-,line width=1pt](0.8,-0.4)--(0.6,-0.2);\draw[-,line width=1pt](0.8,-0.4)--(1,-0.2);
	\draw[-,line width=1pt](0.3,-0.2)parabola bend (0.8,-0.8) (1.3,-0.2);
\end{tikzpicture}.
\end{gather*}
\caption{Elementary $(0, r)$ trivalent  graphs}
\label{Figure-elementary}
\end{figure}
It will be shown in Section \ref{sect:FFT} that the elementary invariants arising from the first two diagrams of Figure \ref{Figure-elementary} generate $\cA_m^{\UG}$. 

We will need the following notation. For $i\in [1, m]$, we define 
${\bf d}_i\in \ZZ_+^m$ by
\[
\baln
{\bf d}_i:=& (0, \dots, 0, \underbrace{1}_{i}, 0, \dots, 0).
\ealn
\]
For any $i_1\le   i_2 \le \dots \le  i_r$ with $i_s\in [1,  m]$ for all $s$, we let 
\beq\label{eq:bf-d}
{\bf d}_{i_1, i_2, \dots, i_r}:={\bf d}_{i_1}+ {\bf d}_{i_2}+\dots + {\bf d}_{i_r} \in \ZZ_+^m.
\eeq
Note that $|{\bf d}_{i_1, i_2, \dots, i_r}|=r$. 

\subsubsection{The elementary invariants}{\ }
\smallskip

\noindent{\em a). The invariants $\Phi^{(i, j)}$}. 
Recall the invariant $\Phi\in S_q(V)_2$ given by \eqref{eq:quadr-inv}. There are associated invariants $\Phi^{(i, j)}$ in $\cA_m$, 
which are defined as follows. 

For any $i\le j\in [1, m]$, we consider $A_{{\bf d}_{i,j}}=\hat{\tau}_{{\bf d}_{i,j}}\left( \Hom_{\UG}\left(\mathbb{C}(q), V^{\ot 2}\right)\right)$. Recall that 
$c_0=\begin{tikzpicture}[baseline=2pt,scale=1,color=\clr]
\draw[-,line width=1pt,color=\clr](0,0.5)parabola bend (0.4,0) (0.8,0.5);\end{tikzpicture}(1)$ spans
$\Hom_{\UG}\left(\mathbb{C}(q), V^{\ot 2}\right)$. 
Using the basis $c_0$ (see \eqref{basis0}) for $(V\ot V)^{\UG}$, we obtain the elements
$\Phi^{(i, j)}:=\hat\tau_{{\bf d}_{i, j}}(c_0)$  
in $A_{{\bf d}_{i, j}}$ for $i\le j$. We have 
\beq
\baln
\Phi^{(i, j)}=&q^4X_{i1}X_{j,-1}{+}q^{-6}X_{i,-1}X_{j1}{+}q^3X_{i2}X_{j, -2}
+q^{-5}X_{i, -2}X_{j2}\\
&+X_{i3}X_{j, -3}+q^{-2}X_{i, -3}X_{j3}+X_{i0}X_{j0}.
\ealn
\eeq
Note in particular that 
$
\Phi^{(i, i)} = \underbrace{1\ot\dots\ot 1}_{i-1}\ot \Phi\ot \underbrace{1\ot\dots\ot 1}_{m-i}.
$
For $i<j$, we will also define $\Phi^{(j, i)}$ by exchanging $i$ and $j$ in the formula. 
Then using the spectral decomposition \eqref{Rmatrix1} for the $R$-matrix $\check{R}$, we obtain 
	\begin{equation}\label{r0}
		\Phi^{(j,i)}=q^{-12}\Phi^{(i,j)}, \quad i<j.
	\end{equation}	

\smallskip
\noindent{\em b).The invariants $\Psi^{(i, j, k)}$}.
Recall from Section \ref{diagdep} the $\UG$-morphisms $\check{C}$ and $\upgamma$, which can be shown to satisfy the relation $(\upgamma\ot\id)\circ \check{C}= (\id\ot \upgamma)\circ \check{C}$. Therefore,  
$%\[
(\upgamma\otimes \id)(c_0)=(\id\otimes\upgamma)(c_0), 
$%\]
which spans  $\left(V\ot V\ot V\right)^{\UG}$. This is equal to $-q^{-3}$ times the following element
%Let 
\[
\Psi:=\begin{tikzpicture}[baseline=-9pt,-,color=\clr]
		\draw[-,line width=1pt](0,0)parabola bend (0.3,-0.4) (0.6,0);\draw[-,line width=1pt](0.3,-0.4)--(0.3,0);
	\end{tikzpicture}(1).
\]

We write the basis elements $b_a$ of $V_{\lambda_1}\subset V\ot V$ as $b_a=\sum_{c, d} \gamma_{a r s} v_r\ot v_s$ for all $a$, 
where the scalars $\gamma_{a r s}$ can be read off \eqref{basis0}.  Let $c_{r s t} = -q^3 \sum_c \varphi_{r c} \gamma_{c s t }$ for all $r$, where $\varphi_{r c}$ are the scalars in the formula \eqref{eq:c0} for $c_0$. Then  
\beq
\Psi= \sum_{r,s,t}c_{rst}v_r\otimes v_s\otimes v_t.
\eeq

For $m\geq 3$, and any $i<j<k$ in $[1, m]$, we have $|{\bf d}_{i, j, k}|=3$. 
Then $(\cA_m)_{{\bf d}_{i, j, k}}^{\UG}\simeq (V\ot V\ot V)^{\UG}$, and  
\beq\label{eq:Psi}
\Psi^{(i, j, k)}:=\hat\tau_{{\bf d}_{i, j, k}}(\Psi)= \sum_{r,s,t}c_{rst}X_{i r}X_{js}X_{k t}. 
\eeq

Now we define $\Psi^{(i, j, k)}$ for all $i, j, k\in [1, m]$ by \eqref{eq:Psi}. 
Then
	$\Psi^{(i,i,j)}=\Psi^{(i,j,j)}=0$;
	$\Psi^{(j,i,k)}=-q^{-6}\Psi^{(i,j,k)}$ if $j>i$;
	and $\Psi^{(i,k,j)}=-q^{-6}\Psi^{(i,j,k)}$ if $k>j$.

We have the following explicit formula:
\begin{align*}
&\Psi^{(i, j, k)}= - q^7X_{i 1} \left(X_{j 0} X_{k, -1}-q^{-6}X_{j, -1} X_{k 0}
					-q^{-2}\left(X_{j, -3} X_{k, -2} -q^{-3}X_{j, -2} X_{k, -3}\right)\right)\\
&\phantom{X}{-}q^{-3}X_{i, -1} \left(X_{j1} X_{k0}-q^{-6}X_{j0} X_{k1}
					-q^{-3}(1+q^2)\left(X_{j2} X_{k3}-q^{-3}X_{j3} X_{k 2}\right)\right)\\
&\phantom{X}{+}q^4X_{i 2} \left(q^{-2}X_{j, -2} X_{k0}-X_{j 0} X_{k, -2}
					-(1+q^2)\left(X_{j 3} X_{k, -1}-q^{-5}X_{j, -1} X_{k 3}\right)\right)\\
&\phantom{X}{-}q^{-3}X_{i, -2} \left(q^{-3}\left(X_0 X_{2}-q^{2}X_{2} X_{0}\right)
					-X_{1} X_{-3}+q^{-5}X_{-3} X_{1}\right)\\
&\phantom{X}{-}qX_{i 3} \left((1+q^2)	\left(X_{j2} X_{k, -1}-q^{-5}X_{j, -1} X_{k 2}\right)
					-\left(X_{j 0} X_{k, -3}- q^{-2}X_{j, -3} X_{k 0}\right)\right)\\
&\phantom{X}{-}X_{i, -3} \left(X_{j 1} X_{k, -2}-q^{-5}X_{j, -2} X_{k 1}
					-q^{-3}\left(q^{2}X_{j 3} X_{k 0}-X_{j 0} X_{k 3}\right)\right)\\
&\phantom{X}{-}q^{2}X_{i 0} \left(q^{-5}X_{j, -1} X_{k 1}{-}q^{-1}X_{j 1} X_{k, -1}
					+q^{-3}(1-q^{2})X_{j 0}  X_{k 0}
	          {+}X_{j 2} X_{k, -2}-q^{-6}X_{j, -2} X_{k 2}\right). 
\end{align*}

\smallskip
\noindent{\em c).The invariants $\Upsilon^{(i, j, k, \ell)}$}.
Let 
$
\Upsilon:=\begin{tikzpicture}[baseline=-15pt,scale=0.7,color=\clr]
		\draw[-,line width=1pt](0.3,-0.4)--(0.1,-0.2);\draw[-,line width=1pt](0.3,-0.4)--(0.5,-0.2);
		\draw[-,line width=1pt](1.1,-0.4)--(0.9,-0.2);\draw[-,line width=1pt](1.1,-0.4)--(1.3,-0.2);
		\draw[-,line width=1pt](0.3,-0.4)parabola bend (0.7,-0.8) (1.1,-0.4);
	\end{tikzpicture}(1)
$.
Clearly $\Upsilon=(\upgamma\otimes \upgamma)(c_0)$. 
We write  
$\Upsilon
	=\sum_{a, b, c, d}c_{a b c d}v_a\otimes v_b\otimes v_c\otimes v_d$ with 
$c_{a b c d}= q^6 \sum_{f, h} \varphi_{f h}\gamma_{f a b} \gamma_{h c d}$. This leads to the following elements in 
$(\cA_{m})^{\UG}$ for $m\ge 4$: 
\beq
\Upsilon^{(i, j, k, \ell)} :=\tau_{{\bf d}_{i, j, k, \ell} }(\Upsilon)
	=\sum_{a, b, c, d}c_{a b c d}X_{i a} X_{j b} X_{k c} X_{\ell d}, \quad i<j<k<\ell\in[1, m].
\eeq

We will see later that there is no need to consider $\Upsilon^{(i, j, k, \ell)}$ 
with different orders of the superscripts.

Also, the invariant arising from the morphism
$
\upgaprimeform:   \mathbb{C}(q)\longrightarrow V^{\otimes 4}
$
can be expressed in terms of $\Phi^{(i, j)}$ and $\Upsilon^{(i, j, k, \ell)}$, 
see Section \ref{sect:up-prime}.

\subsubsection{Commutation relations}\label{sect:CRs}
In this section, we determine the commutation relations of the elementary invariants 
$\Phi^{(i, j)}$, 
$\Psi^{(i, j, k)}$ and $\Upsilon^{(i,j,k,\ell)}$ with the generators $X_{r a}$ of $\cA_{m}(V)$.

We consider $\Phi^{(i,j)}$ first.   
By pulling up the bottom right (left) leg of the diagram 
\begin{tikzpicture}[baseline=20pt,scale=0.55,color=\clr]		  
	\draw[-,line width=1pt] (1,1.3)--(1,1.7);
	\draw[-,line width=1pt] (1,1.7)--(0.5,2);
	\draw[-,line width=1pt] (1,1.7)--(1.5,2);
	\draw[-,line width=1pt] (1,1.3)--(0.5,1);
	\draw[-,line width=1pt] (1,1.3)--(1.5,1);    
\end{tikzpicture} 
to the top right (left) using $\cups$, 
we obtain
$T_{+}:=
\begin{tikzpicture}[baseline=0pt,scale=0.3,color=\clr]
	\draw[-,line width=1pt](-1,1)--(0,0); \draw[-,line width=1pt](1,1)--(0,0); \draw[-,line width=1pt](0,0)--(0,-1);\draw[-,line width=1pt](-0.5,0.5)--(0,1);
\end{tikzpicture}$ 
and 
$T_{-}:=
\begin{tikzpicture}[baseline=0pt,scale=0.3,color=\clr]
\draw[-,line width=1pt](-1,1)--(0,0); \draw[-,line width=1pt](1,1)--(0,0); \draw[-,line width=1pt](0,0)--(0,-1);\draw[-,line width=1pt](0.5,0.5)--(0,1);
\end{tikzpicture}$.
Write
\[
\baln
&T_{+}: V\longrightarrow V^{\otimes 3}, 
	v_a\mapsto \phi^{+}_{a},\\
& T_{-}: V\longrightarrow V^{\otimes 3}, 
	v_a\mapsto \phi^{-}_{a}, 
\ealn
\]
by introducing the $\phi^{\pm}_{a}\in V^{\ot 3}$, 
and denote $(\phi^{\pm}_a)^{(i,j,k)} =\hat \tau_{{\bf d}_{i, j, k}}(\phi_a^{(\pm)})$. 
	
	\begin{lemma}\label{ppp}
		The elements $\Phi^{(i,j)}$ satisfy the following relations  for all $a\in[-3, 3]$:
		\begin{equation*}
			\begin{aligned}
				X_{ia}\Phi^{(i,i)}&=\Phi^{(i,i)}X_{ia}=0,\quad\mbox{for~ all~} i,\\
				X_{ka}\Phi^{(i,i)}&=\Phi^{(i,i)}X_{ka}, \quad\mbox{for~} k\neq i,\\
				X_{ka}\Phi^{(i,j)}&=\Phi^{(i,j)}X_{ka}, \quad\mbox{for~} i\neq j, k< i,j, \mbox{~or~}k> i,j,\\
				X_{ia}\Phi^{(i,j)}&=q^{2}\Phi^{(i,j)}X_{ia}+\frac{1-q^{14}}{\dim_q(V)}\Phi^{(i,i)}X_{ja}, \quad\mbox{for~} i< j, \\
				X_{ja}\Phi^{(i,j)}&=q^{-2}\Phi^{(i,j)}X_{ja}-q^{-2}\frac{1-q^{14}}{\dim_q(V)}\Phi^{(j,j)}X_{ia}, \quad\mbox{for~} i< j,\\
		\end{aligned}\end{equation*}	
\begin{equation*}	
\baln	
	q^{-1}X_{ka}\Phi^{(i,j)}=&q\Phi^{(i,j)}X_{ka}+(q{-}q^{-1})
			\left[\Phi^{(k,j)}X_{ia}{-}(q^2+q^{-2})\Phi^{(i,k)}X_{ja}+(\phi^{+}_{a})^{(i,k,j)}\right],\\
& \hspace{80mm}\mbox{for~} i{<}k{<}j.
\ealn
\end{equation*}
	\end{lemma}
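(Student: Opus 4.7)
The plan is to prove each identity by translating both sides into morphisms of the diagram category $\mathscr{T}$ of Section \ref{diagdep} and then verifying the corresponding diagrammatic identity. Using the surjection $\hat\tau_{\bf d}$ of \eqref{eq:key-map}, I will realise $X_{ka}\Phi^{(i,j)}$ and $\Phi^{(i,j)}X_{ka}$ as $\hat\tau_{\bf d}$ applied to explicit morphisms $V\to V^{\otimes 3}$ built from the cup $\check{C}$ at the positions corresponding to $i$ and $j$, together with a single strand for $X_{ka}$, braided with the appropriate cup leg whenever the product order in $\cA_m$ does not match the spatial order of positions. Concretely, the braiding formula $B_jA_i=\sum_t\beta_t(A_i)\alpha_t(B_j)$ for $i<j$ translates the position of $X_{ka}$ relative to the cup into applications of $\check R^{\pm 1}$.

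The degenerate cases will be handled first. The vanishing $X_{ia}\Phi^{(i,i)}=\Phi^{(i,i)}X_{ia}=0$ is immediate from Lemma \ref{lem:Phi-nil}. The commutation $X_{ka}\Phi^{(i,i)}=\Phi^{(i,i)}X_{ka}$ for $k\ne i$, and $X_{ka}\Phi^{(i,j)}=\Phi^{(i,j)}X_{ka}$ when $k$ lies outside $[i,j]$, will follow from naturality of the braiding: since $\Phi^{(i,i)}$ and $\Phi^{(i,j)}$ are $\UG$-invariants, the braiding of $V$ with a trivial module reduces to the canonical flip, so no correction is produced when the strand $X_{ka}$ is slid past them.

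For the case $k\in\{i,j\}$ with $i<j$, the two sides will differ by a single braiding of the $X_{ka}$ strand with the non-matching leg of the cup. Combining the spectral decomposition \eqref{Rmatrix1} of $\check R$ with the zig-zag identity \eqref{eq:C-1} and $\check{C}\circ\hat{C}=\dim_q V\cdot P[0]$ from \eqref{eq:C-2}, I expect the combination $X_{ia}\Phi^{(i,j)}-q^{2}\Phi^{(i,j)}X_{ia}$ to pick up exactly the $P[0]$-component of that crossing, producing a new cup at positions $(i,i)$ multiplied by a free $X_{ja}$ strand; the prefactor $\frac{1-q^{14}}{\dim_q V}$ will fall out from the eigenvalue $q^{-12}$ of $\check R$ on $P[0]$ together with the normalisation of the projector. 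The case $k=j$ will be symmetric.

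The main computation is the case $i<k<j$, where the $X_{ka}$ strand threads between the two legs of the cup of $\Phi^{(i,j)}$. The precise combination $q^{-1}X_{ka}\Phi^{(i,j)}-q\,\Phi^{(i,j)}X_{ka}$ is engineered so that, after using $\check R\circ\check C=q^{-12}\check C$ from \eqref{eq:C-3} to slide the uncrossed leg out of the way, one of the two strand--leg crossings is replaced by the difference $q^{-1}\check R-q\check R^{-1}$. By equation \eqref{checkD} of Lemma \ref{basicgraph}, this difference expands as $(q-q^{-1})$ times a linear combination of $\id$, $P[0]$ and $P[\lambda_1]$. I anticipate that the identity component reconnects the cup to the pair $(k,j)$, giving $\Phi^{(k,j)}X_{ia}$; the $P[0]$ component, via $\check C\circ\hat C=\dim_q V\cdot P[0]$ and the zig-zag identity, produces a new cup on $(i,k)$ together with a free $X_{ja}$ strand, contributing $-(q^{2}+q^{-2})\Phi^{(i,k)}X_{ja}$; and the $P[\lambda_1]$ component is by the very definition of $T_+$ in Section \ref{sect:elmt} exactly $(\phi^{+}_{a})^{(i,k,j)}$. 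The main obstacle will be the bookkeeping of the $q^{-12}$ factors accumulated each time the uncrossed leg of the cup is slid past the strand, ensuring that the scalar prefactors line up precisely with the stated formula.
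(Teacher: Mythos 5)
Your plan follows the paper's proof in all essential respects: the degenerate case is handled via Lemma \ref{lem:Phi-nil}; the cases $k\notin\{i,j\}$ use triviality of the braiding past invariants; the $k=i$ case extracts the $P[0]$ component of $(1-q^{2}\check R^{-1})$ acting on the first two slots (whose $P[2\lambda_1]$ part vanishes outright, whose $P[\lambda_2]$, $P[\lambda_1]$ parts die under $\tau_2\otimes\tau_1$, and whose $P[0]$ part yields the coefficient $1-q^{14}$ over $\dim_q V$); and the case $i<k<j$ is obtained by bending up the bottom-right leg of \eqref{checkD}, applying to $v_a$, and pushing through $\hat\tau_{\bf d}$, with the identity, $P[0]$, and $P[\lambda_1]$ pieces giving $\Phi^{(k,j)}X_{ia}$, $-(q^2+q^{-2})\Phi^{(i,k)}X_{ja}$, and $(\phi^+_a)^{(i,k,j)}$ respectively, exactly as in the paper.

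One detail in your sketch of the last case is off, though it would not derail the argument: you invoke $\check R\circ\check C=q^{-12}\check C$ ``to slide the uncrossed leg out of the way,'' but that identity concerns braiding the two legs of a single cup with each other, not sliding a strand past one leg. What is actually used — and is hidden in the paper's identification of the two bent-up crossings with $q^{-1}X_{ka}\Phi^{(i,j)}$ and $q\Phi^{(i,j)}X_{ka}$ — is the naturality/hexagon identity $(\check R^{-1}\otimes\id)(\id\otimes\check C)=(\id\otimes\check R)(\check C\otimes\id)$, which produces no $q^{\pm 12}$ factors. After making this substitution your ``bookkeeping of $q^{-12}$ factors'' worry disappears, and the computation lands cleanly on the stated relation. (Also a minor citation slip: $T_{\pm}$ and $\phi^{\pm}_a$ are defined at the start of Section \ref{sect:CRs}, not Section \ref{sect:elmt}.)
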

\begin{proof}
The proofs of all the relations but the last one are conceptually 
similar to that of \cite[Lemma 3.8]{lzz:ft}.  
Thus we will only give the proof of the fourth relation as an example. 
To prove the relation,  it suffices to treat the case $i=1, j=2$.

Observe that $X_{1a}\Phi^{(1, 2)}=(\tau_2\ot \tau_1)(w_a)$ 
and $\Phi^{(1, 2)}X_{1 a}=(\tau_2\ot \tau_1)(w'_a)$, where  
\[
w_a=		
\begin{tikzpicture}[baseline=14pt,scale=0.5,color=\clr]
\draw [-,line width=1pt](6.3,0.5)--(6.3,2);
			\draw[-,line width=1pt](6.5,2)parabola bend (7,0.5) (7.5,2);
\end{tikzpicture}  (v_a), \quad 
w'_a=
\begin{tikzpicture}[baseline=14pt,scale=0.5,color=\clr]
			\draw[-,line width=1pt](2.85,1)parabola (2.5,2); \draw[-,line width=1pt](3.05,1)parabola (3.5,2);\draw [-,line width=1pt](2.95,0.5)--(2.95,2);
		\end{tikzpicture} (v_a). 
\]
Using the spectral decomposition \eqref{Rmatrix1} of $\check{R}$, we obtain  
\[
\baln
w_1 - q^2 w'_a =& \frac{1-q^{14}}{\dim_q V}\, 
\begin{tikzpicture}[baseline=14pt,scale=0.5,color=\clr]
			\draw[-,line width=1pt](1.6,2)parabola bend (2.1,0.5) (2.6,2); \draw[-,line width=1pt](2.7,0.5)--(2.7,2);
\end{tikzpicture}\,(v_a) + \text{terms in ${\rm ker}(\tau_2\ot \tau_1)$}\\
=& \frac{1-q^{14}}{\dim_q V} c_0\ot v_a + \text{terms in ${\rm ker}(\tau_2\ot \tau_1)$}. 
\ealn
\]
Hence 
\[
\baln
X_{1a}\Phi^{(1, 2)}- q^2\Phi^{(1, 2)}X_{1 a}=&(\tau_2\ot \tau_1)(w_a- q^2 w'_a)
=\frac{1-q^{14}}{\dim_q V} \Phi^{(1, 1)} X_{2 a},
\ealn
\]
proving the relation.  

To prove the last relation, 
we only need to consider the case $i=1, k=2$ and $j=3$. 
By pulling the bottom right node of each diagram in \eqref{checkD} to the top right using $\cups$, we obtain 
\begin{equation*}\label{Phir}
		q^{-1}\begin{tikzpicture}[baseline=22pt,scale=0.7,color=\clr]
			\draw[-,line width=1pt](0.5,2)parabola bend (1,1) (1.5,2);\draw [-,line width=1pt](1,2)--(1,1.2);\draw [-,line width=1pt](1,0.9)--(1,0.5);
		\end{tikzpicture}-q
		\begin{tikzpicture}[baseline=22pt,scale=0.7,color=\clr]
			\draw[-,line width=1pt](2.85,1)parabola (2.5,2); \draw[-,line width=1pt](3.05,1)parabola (3.5,2);\draw [-,line width=1pt](2.95,0.5)--(2.95,2);
		\end{tikzpicture}=(q{-}q^{-1})
		\begin{tikzpicture}[baseline=18pt,scale=0.6,color=\clr]
			\draw [-,line width=1pt](6.3,0.5)--(6.3,2);\draw[-,line width=1pt](6.5,2)parabola bend (7,0.5) (7.5,2);
		\end{tikzpicture}\ {+}(q^{-1}{-}q)(q^2{+}q^{-2})
		\begin{tikzpicture}[baseline=18pt,scale=0.6,color=\clr]
			\draw[-,line width=1pt](11.6,2)parabola bend (12.1,0.5) (12.6,2); \draw[-,line width=1pt](12.8,0.5)--(12.8,2);
		\end{tikzpicture}\ {+}(q{-}q^{{-}1})
		\begin{tikzpicture}[baseline=18pt,scale=0.6,color=\clr]
			\draw[-,line width=1pt](15.5,2)--(16.1,1.4); \draw[-,line width=1pt](16.1,1.4)--(16.7,2);\draw[-,line width=1pt](16.1,1.4)--(16.1,0.5);\draw[-,line width=1pt](15.8,1.7)--(16.1,2);
		\end{tikzpicture}.
\end{equation*}
Applying both sides to $v_a$, we obtain an equality of two elements in $V^{\ot 3}$. The images of these elements under the map $\tau_1\ot \tau_1\ot \tau_1$ are equal, leading to the last relation of the lemma.  
\end{proof}

\begin{remark}
		The last relation can also be equivalently expressed as
		\begin{equation*}		
			q^{-1}\Phi^{(i,j)}X_{ka}{-}qX_{ka}\Phi^{(i,j)}{=}(q{-}q^{-1})
			\left[ \Phi^{(i,k)}X_{ja}{-}(q^2+q^{-2})\Phi^{(k,j)}X_{ia}{+}(\phi^{-}_{a})^{(i,k,j)}\right].
		\end{equation*}
	\end{remark}

Next we consider $\Psi^{(i,j,k)}$. 
%%%%%%%%%%%%%%%

\begin{lemma}\label{xPsi}
Assume that $m\geq 3$. The elements $\Psi^{(i,j,k)}\in \cA_m^{\UG}$, with $i<j<k$ in $[1, m]$, 
satisfy the following relations for all $a\in[-3, 3]$:
\begin{align}
X_{ra}\Psi^{(i,j,k)}=&\Psi^{(i,j,k)}X_{ra}, \quad \text{if $r< i$ or $r>k$}, \nonumber \\
X_{i a} \Psi^{(i,j,k)}=&q^{2}\Psi^{(i,j,k)} X_{i a}+\frac{1-q^{14}}{\mathrm{dim}_qV}\Phi^{(i,i)} \left({\upgamma(v_a)}\right)^{(j,k)},\nonumber\\
X_{k a}\Psi^{(i,j,k)}=&q^{-2}\Psi^{(i,j,k)}X_{k a}-q^{-2}\frac{1-q^{14}}{\mathrm{dim}_qV}\Phi^{(k,k)} \left({\upgamma(v_a)}\right)^{(i,j)},\nonumber\end{align}
\begin{align}
X_{j a} \Psi^{(i,j,k)}=&\Psi^{(i,j,k)}{X_{ja}}{+}{(q^{-2}{-}1)}
 \left[\Phi^{(i,j)}{\left({\upgamma(v_a)}\right)^{(j,k)}}\right.\label{xPsi-1}\\
&{-}\left.q^2\Phi^{(j,k)}{\left({\upgamma(v_a)}\right)^{(i,j)}}{-}\frac{1{-}q^{14}}{\dim_q{V}}\Phi^{(j,j)}{\left({\upgamma(v_a)}\right)^{(i,k)}}\right], 
\nonumber\end{align}
\begin{align}
&\left(X_{ra}\Psi^{(i,j,k)}{+}q^2X_{ja}\Psi^{(i,r,k)}\right)-\left(q^2\Psi^{(i,j,k)}X_{ra}{+}\Psi^{(i,r,k)}X_{ja}\right) \label{xPsi-2}\\
=&
(q^2{-}1)\left[\Psi^{(r,j,k)}X_{ia}{-}\Psi^{(i,r,j)}X_{ka}\right.\nonumber\\
&-\left.(q^2{+}q^{-2})\left(\Phi^{(i,r)}(\upgamma(v_{a}))^{(j,k)}{-}\Phi^{(j,k)}(\upgamma(v_{a}))^{(i,r)}\right)\right],  \  \mbox{if~}  i{<}r{<}j{<}k.\nonumber
\end{align}
\end{lemma}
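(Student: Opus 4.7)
The strategy parallels the diagrammatic approach used in Lemma \ref{ppp}, but invokes the full richness of the spectral decomposition \eqref{Rmatrix1} and the cross-resolution \eqref{checkD}. Throughout, I would represent $\Psi^{(i,j,k)}$ as the image under $\hat\tau_{\mathbf{d}_{i,j,k}}$ of the invariant tensor $\Psi = (\upgamma\otimes\id)(c_0) = (\id\otimes\upgamma)(c_0)$, and reduce each commutation identity to a relation among morphisms in ${\mathscr V}$ applied to the relevant weight vectors.

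First, the two ``distant'' relations ($r<i$ or $r>k$) are immediate: in the braided product $\mu_{\cA_m}$, moving $X_{ra}$ past the three tensor legs of $\Psi^{(i,j,k)}$ amounts to composing three copies of $\check R^{\pm 1}$ along $V\otimes V^{\otimes 3}$; by $\UG$-invariance of $\Psi$ the composite equals $\id_V\otimes \Psi$, hence the two orderings coincide. For the boundary relations ($r=i$ and $r=k$), I would proceed exactly as in the fourth and fifth relations of Lemma \ref{ppp}: write $X_{ia}\Psi^{(i,j,k)}-q^2\Psi^{(i,j,k)}X_{ia}$ as $(\tau_2\otimes\tau_1\otimes\tau_1)$ applied to $(\check R - q^2\id)\otimes \id_{V\otimes V}$ acting on $v_a\otimes \Psi$. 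The spectral decomposition \eqref{Rmatrix1} then reduces this to the contribution of $P[0]$ on the first two strands, whose residue $\frac{1-q^{14}}{\dim_q V}\check C\otimes \id_{V\otimes V}$ combined with the identity $\Psi=(\id\otimes\upgamma)(c_0)$ produces precisely $\Phi^{(i,i)}(\upgamma(v_a))^{(j,k)}$; the $r=k$ case is symmetric using $\Psi=(\upgamma\otimes\id)(c_0)$.

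The middle-position relation \eqref{xPsi-1} is more delicate because the $P[\lambda_1]$-component of $\check R$ on the inner two strands now contributes nontrivially. Here I would apply \eqref{checkD} to the crossing that arises when moving $X_{ja}$ through the $\upgamma$-vertex of $\Psi$, and use the identities in Lemma \ref{lem:easy}, particularly \eqref{eq:p-gam} and \eqref{eq:gamma-C}, to simplify the resulting trivalent graphs. The three correction terms on the right-hand side of \eqref{xPsi-1} correspond respectively to the $P[0]$-contribution at the $i$--$j$ interface, the $P[0]$-contribution at the $j$--$k$ interface, and the $P[\lambda_1]$-contribution straddling the middle leg.

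The main obstacle will be the combined identity \eqref{xPsi-2} for $i<r<j<k$, where no individual commutation relation for $X_{ra}\Psi^{(i,j,k)}$ is available. My plan is to apply \eqref{checkD} once more to the single crossing that must be resolved when sliding $X_{ra}$ past the $j$-th leg of $\Psi^{(i,j,k)}$: the $\id_V\otimes\id_V$ term produces $q^2\Psi^{(i,j,k)}X_{ra}$, the $P[0]$ term produces the $\Phi^{(i,r)}$ and $\Phi^{(j,k)}$ contributions (after using $(\upgamma\otimes\id)c_0=(\id\otimes\upgamma)c_0$ to redistribute the coupon), and the $P[\lambda_1]$ term generates precisely the companion monomial $X_{ja}\Psi^{(i,r,k)}$ together with the $\Psi^{(r,j,k)}X_{ia}$ and $\Psi^{(i,r,j)}X_{ka}$ terms via the ``sliding'' relation of Figure \ref{fig:slide}. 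Tracking the scalar $q^2-1$ out front requires careful bookkeeping of the coefficients $[7]_q-1$, $\dim_q V$, and the factor $-q^{-3}$ in $\mathcal F(\merges)=q^{-3}p$, but once the diagrammatic reduction is in place the scalars match by direct comparison with the closed-form expression for $\Psi$.
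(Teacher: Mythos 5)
Your treatment of the distant-position relations and of the $r=i$, $r=k$ boundary cases correctly mirrors the paper's approach: invariance of $\Psi$ for the first, and the $P[0]$-residue of the spectral decomposition \eqref{Rmatrix1} exactly as in the fourth and fifth relations of Lemma~\ref{ppp} for the second and third. Your outline of \eqref{xPsi-1} is sketchy (the paper in fact works through the refined map $\tau_{(1,2,1)}=\id\ot(P[2\lambda_1]+P[0])\ot\id$ and auxiliary maps $F^i_\pm$, then massages the output using the commutation rules of Lemma~\ref{ppp}), but the direction is plausible.

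Your plan for \eqref{xPsi-2} has a genuine gap, on two counts. First, a factual one: for $i<r<j<k$ the braided product $X_{ra}\Psi^{(i,j,k)}$ involves one crossing, and it is the crossing of $X_{ra}$ with the $i$-leg (since $r>i$), not with the $j$-leg; the opposite product $\Psi^{(i,j,k)}X_{ra}$ involves two crossings (at the $j$- and $k$-legs). So the two sides of the would-be commutation relation are not related by flipping one crossing, and resolving a single crossing does not put $q^2\Psi^{(i,j,k)}X_{ra}$ on the other side of the equation. Second, and more structurally, identity \eqref{xPsi-2} is an irreducibly four-term relation on its left-hand side, with two distinct cubic invariants $\Psi^{(i,j,k)}$ and $\Psi^{(i,r,k)}$ appearing. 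No application of \eqref{checkD} to one crossing can change the label set of the $\Psi$; your appeal to the sliding relation of Figure~\ref{fig:slide} to conjure $X_{ja}\Psi^{(i,r,k)}$, $\Psi^{(r,j,k)}X_{ia}$, $\Psi^{(i,r,j)}X_{ka}$ from the $P[\lambda_1]$ term is a hand-wave that would not survive a careful diagram chase. The paper instead manipulates \eqref{checkD} in \emph{two different ways} — pulling up the bottom-right leg and postcomposing with $\ids\ \ids\ \splits$, then pulling up the bottom-left leg and postcomposing with $\splits\ \ids\ \ids$ — and takes the \emph{difference}, producing the four-term skein identity \eqref{3-skein} in $\Hom_{\UG}(V,V^{\ot 4})$. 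Applying $\hat\tau_{(1,1,1,1)}$ then matches the four maps $D^2_\pm$, $D^3_\pm$ to the four products in the left side of \eqref{xPsi-2} (and $D^l_0$, $D^r_0$ to $\Psi^{(r,j,k)}X_{ia}$ and $\Psi^{(i,r,j)}X_{ka}$). This differencing step is the essential idea your proposal is missing; without it you cannot produce the companion $\Psi^{(i,r,k)}$ terms at all.
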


	\begin{proof} 
The first relation  are clear.
The second and third relations can be proved in the same way as for Lemma \ref{ppp}. 
They can also be obtained from the fourth and fifth relations in Lemma \ref{ppp}. 
Thus we will focus on \eqref{xPsi-1} and \eqref{xPsi-2}.

Let us manipulate \eqref{checkD} in two different ways. 
One way is to pull the bottom right note to the top right using $\cups$, then compose both sides of the resulting relation with $\ids\  \ \ids \ \, \splits$. 
The other is to pull the bottom left note to the top left using $\cups$, then compose both sides of the resulting relation with $\splits\   \,  \ids\  \ \ids$. Then we obtain two equations in $\End_{\UG}(V, V^{\ot 4})$. Take the difference of these equations, we obtain 
		\begin{equation}\label{3-skein}
\baln
	&		q^{-1}\left(
			\begin{tikzpicture}[baseline=0pt,scale = 0.56,color=\clr]
				\draw[-,line width=1pt](0.6,0.9)parabola bend (1.4,0) (2,0.9);\draw[-,line width=1pt](1.4,0)--(1.4,0.9);\draw[-,line width=1pt](1.1,0.3)--(1.1,0.9);\draw[-,line width=1pt](1.1,0)--(1.1,-0.5);
			\end{tikzpicture}{-}
			\begin{tikzpicture}[baseline=0pt,scale = 0.56,color=\clr]
				\draw[-,line width=1pt](2.6,0.9)parabola bend (3.2,0) (3.5,0.3);	\draw[-,line width=1pt](3.6,-0.5)--(3.6,0.9);	
				\draw[-,line width=1pt](3.7,0.4)--(4,0.9);
				\draw[-,line width=1pt](3.2,0)--(3.2,0.9);
			\end{tikzpicture}\right){-}q\left(
			\begin{tikzpicture}[baseline=0pt,scale = 0.56,color=\clr]
				\draw[-,line width=1pt](6,0.45)--(5.7,0.9);\draw[-,line width=1pt](6.1,-0.5)--(6.1,0.9); \draw[-,line width=1pt](6.2,0.4) parabolabend (6.5,0) (7.1,0.9);\draw[-,line width=1pt](6.5,0)--(6.5,0.9);
			\end{tikzpicture}{-}
			\begin{tikzpicture}[baseline=0pt,scale = 0.56,color=\clr]
				\draw[-,line width=1pt](7.7,0.9)parabola bend (8.3,0) (9.1,0.9);\draw[-,line width=1pt](8.3,0)--(8.3,0.9);\draw[-,line width=1pt](8.6,0.2)--(8.6,0.9);\draw[-,line width=1pt](8.6,0)--(8.6,-0.5);
			\end{tikzpicture}\right)\\
&{=}(q{-}q^{-1})
			\left[\left(\begin{tikzpicture}[baseline=3pt,scale = 0.62,color=\clr]	
				\draw[-,line width=1pt](13.1,-0.2)--(13.1,0.9);\draw[-,line width=1pt](13.3,0.9)parabola bend (13.7,-0.2) (14.1,0.9);	\draw[-,line width=1pt](13.7,-0.2)--(13.7,0.9);
			\end{tikzpicture}{-}
			\begin{tikzpicture}[baseline=3pt,scale = 0.62,color=\clr]
				\draw[-,line width=1pt](14.5,0.9)parabola bend (14.9,-0.2) (15.3,0.9);	\draw[-,line width=1pt](14.9,-0.2)--(14.9,0.9);\draw[-,line width=1pt](15.5,-0.2)--(15.5,0.9);\end{tikzpicture}\right){-}(q^2{+}q^{{-}2})
			\left(\begin{tikzpicture}[baseline=3pt,scale = 0.62,color=\clr]
				\draw[-,line width=1pt](19,0.9)parabola bend (19.3,0) (19.6,0.9);
				\draw[-,line width=1pt](19.7,0.9)--(19.9,0.3);\draw[-,line width=1pt](20.2,0.9)--(19.9,0.3);\draw[-,line width=1pt](19.9,0.3)--(19.9,-0.2);
			\end{tikzpicture}{-}
			\begin{tikzpicture}[baseline=3pt,scale = 0.62,color=\clr]
				\draw[-,line width=1pt](20.6,0.9)--(20.8,0.3);\draw[-,line width=1pt](21,0.9)--(20.8,0.3);\draw[-,line width=1pt](20.8,0.3)--(20.8,-0.2);
				\draw[-,line width=1pt](21.2,0.9)parabola bend (21.5,0) (21.8,0.9);\end{tikzpicture}\right)\right].
\ealn
		\end{equation}
Denote
		$$
		D_{0}^{l}:=\begin{tikzpicture}[baseline=0pt,scale=0.5,color=\clr]	
			\draw[-,line width=1pt](1,-0.2)--(1,0.9);\draw[-,line width=1pt](1.2,0.9)parabola bend (1.6,-0.2) (2,0.9);	
			\draw[-,line width=1pt](1.6,-0.2)--(1.6,0.9);
		\end{tikzpicture},\quad%: V\longrightarrow V^{\otimes 4},\quad
		D_{0}^{r}:=	\begin{tikzpicture}[baseline=0pt,scale=0.5,color=\clr]
			\draw[-,line width=1pt](1,0.9)parabola bend (1.4,-0.2) (1.8,0.9);	\draw[-,line width=1pt](1.4,-0.2)--(1.4,0.9);\draw[-,line width=1pt](2,-0.2)--(2,0.9);
		\end{tikzpicture},%: V\longrightarrow V^{\otimes 4},
\quad		D_{+}^{2}:=\begin{tikzpicture}[baseline=0pt,scale=0.5,color=\clr]
			\draw[-,line width=1pt](1,0.45)parabola (0.7,0.9);\draw[-,line width=1pt](1.1,-0.5)--(1.1,0.9); \draw[-,line width=1pt](1.2,0.4) parabolabend (1.5,0) (2.1,0.9);\draw[-,line width=1pt](1.5,0)--(1.5,0.9);
		\end{tikzpicture}~,%: V\longrightarrow V^{\otimes 4},
		\quad
		D_{-}^{2}:=\begin{tikzpicture}[baseline=0pt,scale=0.5,color=\clr]
			\draw[-,line width=1pt](0.6,0.9)parabola bend (1.4,0) (2,0.9);\draw[-,line width=1pt](1.4,0)--(1.4,0.9);\draw[-,line width=1pt](1.1,0.3)--(1.1,0.9);\draw[-,line width=1pt](1.1,0)--(1.1,-0.5);
		\end{tikzpicture},\quad%: V\longrightarrow V^{\otimes 4},
		D_{+}^{3}:=\begin{tikzpicture}[baseline=0pt,scale=0.5,color=\clr]
			\draw[-,line width=1pt](1,0.9)parabola bend (1.6,0) (1.9,0.3);	\draw[-,line width=1pt](2,-0.5)--(2,0.9);	\draw[-,line width=1pt](2.1,0.4) parabola (2.4,0.9);
			\draw[-,line width=1pt](1.6,0)--(1.6,0.9);
		\end{tikzpicture},%: V\longrightarrow V^{\otimes 4},
		\quad 
		D_{-}^{3}:=\begin{tikzpicture}[baseline=0pt,scale=0.5,color=\clr]
			\draw[-,line width=1pt](1,0.9)parabola bend (1.6,0) (2.4,0.9);\draw[-,line width=1pt](1.6,0)--(1.6,0.9);\draw[-,line width=1pt](1.9,0.2)--(1.9,0.9);\draw[-,line width=1pt](1.9,0)--(1.9,-0.5);
		\end{tikzpicture}%: V\longrightarrow V^{\otimes 4}.
		$$
Applying these maps to a basis element $v_a$ of $V$, we obtain 
\begin{align*}
&D_{0}^{l}(v_a)=\sum_{r,s,t}c_{rst}v_a\otimes v_r\otimes v_s\otimes v_t,\hspace{9mm}D_{0}^{r}(v_a)=\sum_{r,s,t}c_{rst}v_r\otimes v_s\otimes v_t\otimes v_a,\\
&D_{-}^{2}(v_a){=}\sum_{r,s,t}c_{rst}\check{R}(v_a\otimes v_r)\otimes v_s\otimes v_t,
\quad D_{+}^{3}(v_a){=}\sum_{r,s,t}c_{rst}v_r\otimes v_s\otimes \check{R}(v_t\otimes v_a)\\
&D_{+}^{2}(v_a){=}\sum_{r,s,t}c_{rst}(\id\otimes\check{R}\otimes\id)\left(v_r\otimes v_s\otimes \check{R}(v_t\otimes v_a)\right),\\
&D_{-}^{3}(v_a){=}\sum_{r,s,t}c_{rst}(\id\otimes\check{R}\otimes\id)\left(\check{R}(v_a\otimes v_r)\otimes v_s\otimes v_t\right).
\end{align*}

Let us now prove the relation \eqref{xPsi-2}. It suffices to consider the case $m=2$ and $i=1, j=3$ and $k=4$. We have   
		$$
		\begin{array}{ll}
			\tau_{(1,1,1,1)}\circ D_{-}^2\left(v_a\right)=X_{2 a} \Psi^{(1,3,4)}, & \tau_{(1,1,1,1)} \circ D_{+}^2\left(v_a\right)=\Psi^{(1,3,4)} X_{2 a}, \\
			\tau_{(1,1,1,1)} \circ D_{-}^3\left(v_a\right)=X_{3 a} \Psi^{(1,2,4)}, & \tau_{(1,1,1,1)} \circ D_{+}^3\left(v_a\right)=\Psi^{(1,2,4)} X_{3 a} ,\\
			\tau_{(1,1,1,1)} \circ D_{0}^l\left(v_a\right)=X_{1 a} \Psi^{(2,3,4)}, & \tau_{(1,1,1,1)} \circ D_{0}^r\left(v_a\right)=\Psi^{(1,2,3)} X_{4 a}.
		\end{array}
		$$
		Evaluating both sides of \eqref{3-skein} at $v_a$,
		then applying the mapping $\tau_{(1,1,1,1)}$ to the 
resulting elements of $V^{\otimes 4}$, 
		we obtain the last relation for 
		$i=1,m=2,j=3$ and $k=4$.

		Let us now prove the relation \eqref{xPsi-1}.
		We only need to prove the case $i=1, j=2$ and $k=3$.
We observe that the map $\tau_{(1, 2, 1)}: V\ot  (V\ot V) \ot V\lra S_q(V)_1\otimes S_q(V)_2\otimes S_q(V)_1$ coincides with 
$\id_V\ot(P[2\lambda_1]+P[0])\ot\id_V$.
		Denote  by $E_2$ the map
		\begin{tikzpicture}[baseline=-5pt,scale=0.9,color=\clr]
			\draw[-,line width=1pt](0.05,0.3)parabola bend (0.2,0.05) (0.35,0.3);
			\draw[-,line width=1pt](-0.1,0.3)--(0.2,-0.2);\draw[-,line width=1pt](0.5,0.3)--(0.2,-0.2);\draw[-,line width=1pt](0.2,-0.2)--(0.2,-0.5);
		\end{tikzpicture}:$V\longrightarrow V^{\otimes 4}$, 
		and let $F^i_{\pm}:=\left({\id}\otimes\left(P[2\lambda_1]+P[0]\right) \otimes {\id}\right) D^i_{\pm}$, for $i=2,3$.
	We have 
	\begin{equation*}
	\begin{aligned}
			%F_+^2:=& \left({\id}\otimes\left(P[2\lambda_1]+P[0]\right) \otimes {\id}\right) D^2_+\\
			F_+^2 =&\left({\id}\otimes P[2\lambda_1]  \otimes {\id}\right) D^2_++\left({\id}\otimes P[0] \otimes {\id}\right) D^2_+\\
			=&q^2\left({\id}\otimes P[2\lambda_1] \otimes {\id}\right) D^3_+-q^{-6}\dim_qVE_2\\
			=&q^2\left({\id}\otimes\left(P[2\lambda_1]+P[0]\right) \otimes {\id}\right) D^3_++\frac{q^8-q^{-6}}{\dim_qV}E_2\\
			=&q^2F^3_++\frac{q^8-q^{-6}}{\dim_qV}E_2,
\end{aligned}
\end{equation*}
and we can similarly prove that 
\begin{equation*}
	\begin{aligned}
		F^3_- =&q^2F_-^2+\frac{q^8-q^{-6}}{\dim_qV}E_2.\end{aligned}
\end{equation*}
			Furthermore, the relation \eqref{3-skein} gives 
		\begin{equation}\label{middle}
			F_-^2-F^3_+=(q^{-2}-1)\left(\mathrm{id}\otimes\left(P[2\lambda_1]+P[0]\right) \otimes \mathrm{id}\right)\left(
			\begin{tikzpicture}[baseline=2pt,scale=0.6,color=\clr]
				\draw[-,line width=1pt](19,0.9)parabola bend (19.3,0) (19.6,0.9);
				\draw[-,line width=1pt](19.7,0.9)--(19.9,0.3);\draw[-,line width=1pt](20.2,0.9)--(19.9,0.3);\draw[-,line width=1pt](19.9,0.3)--(19.9,-0.2);\end{tikzpicture}
			-\begin{tikzpicture}[baseline=2pt,scale=0.6,color=\clr]		
				\draw[-,line width=1pt](20.6,0.9)--(20.8,0.3);\draw[-,line width=1pt](21,0.9)--(20.8,0.3);\draw[-,line width=1pt](20.8,0.3)--(20.8,-0.2);
				\draw[-,line width=1pt](21.2,0.9)parabola bend (21.5,0) (21.8,0.9);
			\end{tikzpicture}\right).
		\end{equation}
Note that
		$$
		\tau_{(1,2,1)} \circ F_-^2\left(v_a\right)=X_{2 a} \Psi^{(1,2,3)}, \quad \tau_{(1,2,1)} \circ F^3_+\left(v_a\right)=\Psi^{(1,2,3)} X_{2 a}.
		$$
Therefore, evaluating both sides of \eqref{middle} on $v_a$, and  applying $\tau_{(1,2,1)}$ to the resulting elements of $V^{\ot 3}$,  
		we obtain the following relation for $i=1,j=2$ and $k=3$:
\begin{equation}\label{xppx}
X_{j a} \Psi^{(i,j,k)}-\Psi^{(i,j,k)}X_{ja}=(q^{-2}{-}1)\left[\Phi^{(i,j)}\left({\upgamma(v_a)}\right)^{(j,k)}{-}\left({\upgamma(v_a)}\right)^{(i,j)}\Phi^{(j,k)}\right].
\end{equation}	
Now $\left({\upgamma(v_a)}\right)^{(i,j)}=\sum_{b,c}\gamma_{a b c}X_{ib}X_{jc}$.
By using Lemma \ref{ppp}, we obtain
\begin{align*}
\left({\upgamma(v_a)}\right)^{(i,j)}\Phi^{(j,k)}
=&\sum_{b,c}\gamma_{a b c}X_{ib}X_{jc}\Phi^{(j,k)}\\
=&\sum_{b,c}\gamma_{a b c}X_{ib}\left[q^2\Phi^{(j,k)}X_{jc}+\frac{1-q^{14}}{\dim_qV}\Phi^{(j,j)}X_{kc}\right]\\
=&q^2\Phi^{(j,k)}\left({\upgamma(v_a)}\right)^{(i,j)}+\frac{1-q^{14}}{\dim_qV}\Phi^{(j,j)}\left({\upgamma(v_a)}\right)^{(i,k)},
\end{align*}
Using this in \eqref{xppx}, we obtain the  relation \eqref{xPsi-1}.

This completes the proof of the lemma.
	\end{proof}

%%%%%%%%

Finally we consider the $\UG$-invariants $\Upsilon^{(i,j,k,l)}$.
\begin{lemma}\label{xU}
Assume that $m\geq 4$. The elements $\Upsilon^{(i,j,k,l)}\in \cA_m^{\UG}$, for  $i<j<k<l$ in $[1, m]$,  satisfy the following relations for all $a\in[-3, 3]$:
\begin{align*}
&\left(q^{-1}X_{ra}\Upsilon^{(i,j,k,l)}{+}qX_{ja}\Upsilon^{(i,r,k,l)}\right)-\left(q\Upsilon^{(i,j,k,l)}X_{ra}+q^{-1}\Upsilon^{(i,r,k,l)}X_{ja}\right)\\
&=(q{-}q^{-1})\left[\Upsilon^{(r,j,k,l)}X_{ia}{-}\Psi^{(i,r,j)}(\upgamma(v_a))^{(k,l)}\right.\\
&\phantom{X}\left. {+}(q^2{+}q^{-2})\left(\Psi^{(j,k,l)}(\upgamma(v_a))^{(i,r)}{-}\Phi^{(i,r)}(\phi^{-}_{a})^{(j,k,l)}\right)\right], \ \text{if~}  i<r<j<k<l,\\
&\left(q^{-1}\Upsilon^{(i,j,k,l)}X_{ra}+q\Upsilon^{(i,j,r,l)}X_{ka}\right)-q\left(X_{ra}\Upsilon^{(i,j,k,l)}+q^{-1}X_{ka}\Upsilon^{(i,j,r,l)}\right)\\
&=(q{-}q^{-1})\left[\Upsilon^{(i,j,k,r)}X_{la}{-}\Psi^{(k,r,l)}(\upgamma(v_a))^{(i,j)}\right.\\
&\phantom{X}\left.{+} (q^2{+}q^{-2})\left(\Psi^{(i,j,k)}(\upgamma(v_a))^{(r,l)}{-}\Phi^{(r,l)}(\phi^{+}_{a})^{(i,j,k)}\right)\right], \ \text{if~}  i<j<k<r<l,
\end{align*}
\begin{align*}
X_{ra}\Upsilon^{(i,j,k,l)}=&\Upsilon^{(i,j,k,l)}X_{ra},  \qquad\text{if~}  r{<}i\mbox{~or~}r{>}k,\\
X_{ia}\Upsilon^{(i,j,k,l)}=&q^{2}\Upsilon^{(i,j,k,l)}X_{ia}+\frac{1-q^{14}}{\mathrm{dim}_qV}\Phi^{(i,i)} (\phi_a^-)^{(j,k,l)},\\
X_{la}\Upsilon^{(i,j,k,l)}=&q^{-2}\Upsilon^{(i,j,k,l)}X_{la}-q^{-2}\frac{1-q^{14}}{\mathrm{dim}_qV}\Phi^{(l,l)}(\phi_a^+)^{(i,j,k)},\\	
X_{ja}\Upsilon^{(i,j,k,l)}=&\Upsilon^{(i,j,k,l)}X_{ja}+(q^{-2}{-}1)\left[\Phi^{(i,j)}\left(\phi_a^-\right)^{(j,k,l)}\right.\\
&\left.-q^2\Psi^{(j,k,l)}\left({\upgamma(v_a)}\right)^{(i,j)}
-\frac{1-q^{14}}{\dim_qV}\Phi^{(j,j)}(\phi^{-}_a)^{(i,k,l)}\right],\\
X_{ka}\Upsilon^{(i,j,k,l)}
=&\Upsilon^{(i,j,k,l)}X_{ka}-(q^{-2}{-}1)
\left[q^2\Phi^{(k,l)}\left(\phi_a^+\right)^{(i,j,k)}\right.\\
&\left. +\frac{1-q^{14}}{\dim_qV}\Phi^{(k,k)}(\phi^{+}_{a})^{(i,j,l)}
-\Psi^{(i,j,k)}\left({\upgamma(v_a)}\right)^{(k,l)}\right].
\end{align*}
\end{lemma}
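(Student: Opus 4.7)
The plan is to mimic the diagrammatic strategy used in the proofs of Lemmas \ref{ppp} and \ref{xPsi}, with the morphism
$\Upsilon=(\upgamma\otimes\upgamma)(c_0)$ playing the role that $\Psi=(\upgamma\otimes\id)(c_0)$ played there. The key observation is that each relation in the lemma is the image under $\tau_{\bf d}$ of a skein identity in $\Hom_{\UG}(V, V^{\otimes 5})$, obtained by stacking a single strand onto the four-leg diagram $\upgaform$ through a crossing and then resolving that crossing. Thus the proof reduces to (i) writing down, for each position $r$, an identity in $\Hom_{\UG}(V, V^{\otimes 5})$, (ii) identifying the image of each constituent diagram under $\tau$ with one of the elementary invariants $\Upsilon$, $\Psi$, $\Phi$ or with the auxiliary tensors $(\phi_a^\pm)^{(\cdot,\cdot,\cdot)}$ and $(\upgamma(v_a))^{(\cdot,\cdot)}$ introduced in Section \ref{sect:elmt}.

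The cases $r<i$ and $r>l$ are immediate from the fact that $X_{ra}$ commutes, in the braided tensor product, with any element supported on disjoint tensor factors. The boundary exchange relations at $r=i$ and $r=l$ are derived exactly as the fourth and fifth relations of Lemma \ref{ppp}: pull the leftmost (respectively rightmost) strand of $\Upsilon$ past $X_{ra}$ using the spectral decomposition \eqref{Rmatrix1} of $\check{R}$ applied to the outer pair of strands; the $q^2$ and $q^{-12}$ eigenprojections combine to produce the coefficient $q^{\pm 2}$ together with the error term $\frac{1-q^{14}}{\dim_q V}\Phi^{(i,i)}(\phi_a^-)^{(j,k,l)}$, using $\hat{C}\circ(\id\otimes\upgamma)=-q^{-6}p$ from \eqref{eq:gamma-p} to convert the dangling cap into $\Phi^{(i,i)}$ times the remaining three-leg morphism $\phi_a^-$.

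For the internal positions $r=j$ and $r=k$, I will reuse the strategy of the proof of \eqref{xPsi-1}: apply the skein relation \eqref{checkD} with two appended strands attached via $\upgamma$, then project through $\id\otimes(P[2\lambda_1]+P[0])\otimes\id$ on the relevant pair of adjacent factors so that $\tau$ recognises the braided product. This yields $\Upsilon^{(i,j,k,l)}$ and its partners together with two $\Phi\cdot\phi^\pm$ terms from the middle two pieces on the right side of \eqref{checkD} and a $\Psi\cdot\upgamma$ term from the trivalent correction, precisely matching the stated formula. The cases $i<r<j<k<l$ and $i<j<k<r<l$ are the most delicate: here we stack \eqref{checkD} between the $r$-th strand and the adjacent leg of $\Upsilon$, then use \eqref{eq:gamma-p} and \eqref{eq:gamma-C} to re-express composite diagrams as $\Psi^{(i,r,j)}$, $\Upsilon^{(r,j,k,l)}$ and the cross-terms involving $\Phi^{(i,r)}(\phi_a^\pm)^{(j,k,l)}$.

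The principal obstacle will be the third paragraph above: correctly tracking the coefficient of each basis diagram after the projection through $P[2\lambda_1]+P[0]$ on the pair where $X_{ra}$ meets the leg of $\Upsilon$, since $\Upsilon$ itself contains a trivalent vertex there and so the skein relation produces terms that must be re-identified as $\Psi$'s attached to $\upgamma$'s rather than as new diagrams outside the elementary family. Once one verifies the single identity
\[
(F_-^2 - F_+^3)\circ\text{(append $\upgamma$ below)} = (q^{-2}-1)\bigl[\text{cap--$\upgamma$ terms}\bigr]
\]
in $\Hom_{\UG}(V,V^{\otimes 5})$ in analogy with \eqref{middle}, all four interior commutation relations follow by applying $\tau$ with the appropriate multi-index ${\bf d}$. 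The case analysis is tedious but completely parallel to Lemma \ref{xPsi}, and the final two boundary-like relations $r=i$ and $r=l$ can equivalently be deduced from the second and third relations of Lemma \ref{xPsi} by appending an extra $\upgamma$, which provides a useful consistency check.
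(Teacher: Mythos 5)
The paper itself gives no proof for this lemma --- the stated "proof" is a single-sentence remark that it is "much the same as that for Lemma \ref{xPsi}, thus will be omitted." Your proposal is exactly the expansion the authors intended: adapt the skein relation \eqref{checkD}, project through $\id\ot(P[2\lambda_1]+P[0])\ot\id$ so that $\tau_{\bf d}$ recognises the braided product, and reidentify the resulting pictures via \eqref{eq:gamma-p}, \eqref{eq:gamma-C}, with $\Upsilon=(\upgamma\ot\upgamma)(c_0)$ in place of $\Psi$. In particular your observation that the boundary cases $r=i$ and $r=l$ can be recovered from the analogous relations in Lemma \ref{xPsi} by composing with $\id\ot\id\ot\upgamma$ (since $\phi^-_a=(\id\ot\upgamma)\circ\upgamma(v_a)$ and $\Upsilon$ is, up to a scalar, $(\id^{\ot2}\ot\upgamma)(\Psi)$) is a genuine and useful shortcut not made explicit in the paper. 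Two small notational slips: \eqref{eq:gamma-p} reads $(\hat{C}\ot\id_V)\circ(\id_V\ot\upgamma)=-q^{-6}p$, not $\hat{C}\circ(\id\ot\upgamma)$; and $\Psi$ is $-q^3(\upgamma\ot\id)(c_0)$, not $(\upgamma\ot\id)(c_0)$ itself. Neither affects the argument. So this is the correct approach, and it coincides with the one the paper sketches and omits.
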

\begin{proof}[Comments on the proof]
The proof of this lemma is much the same as that for Lemma \ref{xPsi}, 
thus will be omitted. 
\end{proof}

\subsubsection{The invariants $\Upsilon^{'}{}^{(i,j,k,l)}$}\label{sect:up-prime}
Finally, we consider the invariants arising from 
$
\Upsilon'=\upgaprimeform(1)\in \left(V^{\ot 4}\right)^{\UG}$. 
We define 
\beq
\Upsilon^{'}{}^{(i,j,k,l)}=\tau_{{\bf d}_{i,  j, k, \ell}}(\Upsilon').
\eeq
To study these invariants, we need the following formulae with  $i<j\le k<l$.
\begin{align}
\hat\tau_{{\bf d}_i, j, k, \ell}(\begin{tikzpicture}[baseline=2pt,scale=0.6,color=\clr]
\draw[-,line width=1pt](1.2,0.9)parabola bend (1.8,-0.2) (2.4,0.9);	\draw[-,line width=1pt](1.4,0.9)parabola bend (1.8,0.2) (2.2,0.9);	
\end{tikzpicture})
=&\Phi^{(j,k)}\Phi^{(i,l)},\\
\hat\tau_{{\bf d}_i, j, k, \ell}(\begin{tikzpicture}[baseline=2pt,scale=0.6,color=\clr]
	\draw[-,line width=1pt](1.2,0.9)parabola bend (1.6,-0.2) (1.8,0.1);	\draw[-,line width=1pt](1.9,0.35)parabola  (2,0.9);
	\draw[-,line width=1pt](1.7,0.9)parabola bend (2.2,-0.2) (2.6,0.9);	
\end{tikzpicture})
=&\Phi^{(i,k)}\Phi^{(j,l)},\\
\hat\tau_{{\bf d}_i, j, k, \ell}(\begin{tikzpicture}[baseline=2pt,scale=0.6,color=\clr]
	\draw[-,line width=1pt](1.2,0.9)parabola bend (1.6,-0.2) (2,0.9);	
	\draw[-,line width=1pt](1.7,0.9)--(1.8,0.3);
	\draw[-,line width=1pt](1.9,0.1)parabola bend (2.2,-0.2) (2.6,0.9);	
\end{tikzpicture})
=&\Phi^{(j,l)}\Phi^{(i,k)}.
\end{align}

\begin{proof} By using Lemma \ref{ppp}, we obtain

\begin{align*}
\hat\tau_{{\bf d}_i, j, k, \ell}(\begin{tikzpicture}[baseline=2pt,scale=0.6,color=\clr]
\draw[-,line width=1pt](1.2,0.9)parabola bend (1.8,-0.2) (2.4,0.9);	\draw[-,line width=1pt](1.4,0.9)parabola bend (1.8,0.2) (2.2,0.9);	
\end{tikzpicture})
=&\sum_{a,b}\varphi_{a b}X_{ia}\Phi^{(j,k)}X_{lb}=\sum_{a,b}\varphi_{a b} \Phi^{(j,k)}X_{ia}X_{lb}\\
=&\Phi^{(j,k)}\Phi^{(i,l)}, \\
%\end{align*}
%
%\begin{align*}
\hat\tau_{{\bf d}_i, j, k, \ell}(\begin{tikzpicture}[baseline=2pt,scale=0.6,color=\clr]
	\draw[-,line width=1pt](1.2,0.9)parabola bend (1.6,-0.2) (1.8,0.1);	\draw[-,line width=1pt](1.9,0.35)parabola  (2,0.9);
	\draw[-,line width=1pt](1.7,0.9)parabola bend (2.2,-0.2) (2.6,0.9);	
\end{tikzpicture})
=&\sum_{\substack{a,b\\a',b'}}\varphi_{a b} \varphi_{a' b'} 
  X_{ia}\beta X_{ja'}\alpha X_{kb}X_{lb'}\\
=&\sum_{\substack{a,b\\a',b'}}\varphi_{a b} \varphi_{a' b'} X_{ia} X_{kb}X_{ja'} X_{lb'}\\
=&\Phi^{(i,k)}\Phi^{(j,l)},
\end{align*}
\begin{align*}
\hat\tau_{{\bf d}_i, j, k, \ell}(\begin{tikzpicture}[baseline=2pt,scale=0.6,color=\clr]
	\draw[-,line width=1pt](1.2,0.9)parabola bend (1.6,-0.2) (2,0.9);	
	\draw[-,line width=1pt](1.7,0.9)--(1.8,0.3);
	\draw[-,line width=1pt](1.9,0.1)parabola bend (2.2,-0.2) (2.6,0.9);	
\end{tikzpicture})
=&\hat\tau_{{\bf d}_i, j, k, \ell}(\begin{tikzpicture}[baseline=2pt,scale=0.6,color=\clr]
	\draw[-,line width=1pt](1.8,0.1)--(1.7,-0.1);
	\draw[-,line width=1pt](1.7,-0.1)parabola bend (2.2,-0.4) (2.9,0.9);	
	\draw[-,line width=1pt](1.9,0.35)parabola  (2,0.9);
	\draw[-,line width=1pt](1.7,0.9)parabola bend (2.2,-0.2) (2.6,0.9);	
\end{tikzpicture})
=\sum_{\substack{a,b\\a',b'}}\varphi_{a b} \varphi_{a' b'}\beta X_{ia'}\alpha X_{ja}X_{kb'}X_{lb}\\
=&\sum_{\substack{a,b\\a',b'}}\varphi_{a b} \varphi_{a' b'} X_{ja}X_{ia'}X_{kb'}X_{lb}
=\sum_{a,b}\varphi_{a b} X_{ja}\Phi^{(i,k)}X_{lb}\\
&=\sum_{a,b}\varphi_{a b} X_{ja}X_{lb}\Phi^{(i,k)}
=\Phi^{(j,l)}\Phi^{(i,k)}.
\end{align*}

This proves the formulae. 
\end{proof}

We obtain from \eqref{cross} and \eqref{crossprime} the following relation
\beq\label{up-upprime}
\begin{tikzpicture}[baseline=-15pt,scale=0.8,color=\clr]
	\draw[-,line width=1pt](0.3,-0.4)--(0.1,-0.2);\draw[-,line width=1pt](0.3,-0.4)--(0.5,-0.2);
	\draw[-,line width=1pt](1.1,-0.4)--(0.9,-0.2);\draw[-,line width=1pt](1.1,-0.4)--(1.3,-0.2);
	\draw[-,line width=1pt](0.3,-0.4)parabola bend (0.7,-0.8) (1.1,-0.4);
\end{tikzpicture}+
\begin{tikzpicture}[baseline=-15pt,scale=0.8,color=\clr]
	\draw[-,line width=1pt](0.8,-0.8)--(0.8,-0.4);\draw[-,line width=1pt](0.8,-0.4)--(0.6,-0.2);\draw[-,line width=1pt](0.8,-0.4)--(1,-0.2);
	\draw[-,line width=1pt](0.3,-0.2)parabola bend (0.8,-0.8) (1.3,-0.2);
\end{tikzpicture}=(q^2-1+q^{-2})\left(
\begin{tikzpicture}[baseline=2pt,scale=0.6,color=\clr]
\draw[-,line width=1pt](1.2,0.9)parabola bend (1.8,-0.2) (2.4,0.9);	\draw[-,line width=1pt](1.4,0.9)parabola bend (1.8,0.2) (2.2,0.9);	
\end{tikzpicture}+
\begin{tikzpicture}[baseline=2pt,scale=0.6,color=\clr]
\draw[-,line width=1pt](1.2,0.9)parabola bend (1.5,-0.2) (1.8,0.9);	\draw[-,line width=1pt](1.9,0.9)parabola bend (2.2,-0.2) (2.5,0.9);	
\end{tikzpicture}\right)-\left(\begin{tikzpicture}[baseline=2pt,scale=0.6,color=\clr]
	\draw[-,line width=1pt](1.2,0.9)parabola bend (1.6,-0.2) (2,0.9);	
	\draw[-,line width=1pt](1.7,0.9)--(1.8,0.3);
	\draw[-,line width=1pt](1.9,0.1)parabola bend (2.2,-0.2) (2.6,0.9);	
\end{tikzpicture}+
\begin{tikzpicture}[baseline=2pt,scale=0.6,color=\clr]
	\draw[-,line width=1pt](1.2,0.9)parabola bend (1.6,-0.2) (1.8,0.1);	\draw[-,line width=1pt](1.9,0.35)parabola  (2,0.9);
	\draw[-,line width=1pt](1.7,0.9)parabola bend (2.2,-0.2) (2.6,0.9);	
\end{tikzpicture}\right).
\eeq
Now apply $\hat\tau_{{\bf d}_i, j, k, \ell}$ with $i<j\leq k<l$ to both sides, and use the formulae above. We arrive at 
\beq\label{up-prime}
	\Upsilon^{(i,j,k,l)}{+}\Upsilon^{'}{}^{(i,j,k,l)}&=&(q^2-1+q^{-2})\left(\Phi^{(j,k)}\Phi^{(i,l)}{+}\Phi^{(i,j)}\Phi^{(k,l)}\right)\\
&&-\left(\Phi^{(j,l)}\Phi^{(i,k)}{+}\Phi^{(i,k)}\Phi^{(j,l)}\right). \nonumber
\eeq
It is clear from the diagrammatic definition of $\Upsilon'$ that ${\Upsilon^{'}}^{(i,j,j,l)}=0$.

\subsection{FFT of non-commutative algebraic invariant theory}\label{sect:FFT}

This section contains the key results on a non-commutative algebraic invariant theory for $\UG$.  In particular, Theorems \ref{thm:span} and \ref{FFT} 
amounts to a non-commutaive first fundamental theorem of invariant theory. 

Recall that the map $\hat\tau_{\bf d}:  \Hom_{\UG}(0, V^{|{\bf d}|})\lra A_{\bf d}$ defined by 
\eqref{eq:key-map} is surjective, thus 
$\cA_m^{\UG}=\sum_{{\bf d}\in\ZZ_+^m} {\rm im}(\hat\tau_{\bf d})$.

We use Theorem \ref{lem:basis} to construct a spanning set for $\cA_m^{\UG}$. 
For any ${\bf d}\in \ZZ_+^m$, let 
\[
 \widehat{\Acycl}_{\bf d}:=\left\{\hat\tau_{\bf d}(\zeta) \mid \zeta \in  \Acycl(0, |{\bf d}|)\right\}.
\]
Since $\Hom_{\UG}(0, V^{|{\bf d}|})$ is spanned by  $\Acycl(0, |{\bf d}|)$ by Theorem \ref{lem:basis}, ${\rm im}(\hat\tau_{\bf d})$ is spanned by $\widehat{\Acycl}_{\bf d}$. 
This immediately leads to the following result. 

\begin{theorem}\label{thm:span}
As vector space over $\CC(q)$, 
\[
\cA_m^{\UG}=\sum_{{\bf d}\in \ZZ_+^m} \CC(q)\widehat{\Acycl}_{\bf d}. 
\]
\end{theorem}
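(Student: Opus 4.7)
The plan is to reduce the statement to a direct consequence of Theorem \ref{lem:basis} via the surjection $\hat\tau_{\bf d}$ of \eqref{eq:key-map}. The main content has already been built up in the preceding sections, so what remains is simply to assemble the pieces in the correct order.

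First, I would observe that $\cA_m^{\UG}$ inherits the $\ZZ_+^m$-grading of $\cA_m$. Indeed, each homogeneous component $(\cA_m)_{\bf d} = S_q(V)_{d_1}\ot\cdots\ot S_q(V)_{d_m}$ is a $\UG$-submodule (since the grading on $S_q(V)$ is by $\UG$-submodules and the $\UG$-action on a tensor product respects such a decomposition), and $\cA_m$ is semisimple as a $\UG$-module. Therefore
\[
\cA_m^{\UG} = \bigoplus_{{\bf d}\in \ZZ_+^m} A_{\bf d}, \qquad A_{\bf d} = (\cA_m)_{\bf d}^{\UG},
\]
and it suffices to produce a spanning set for each $A_{\bf d}$.

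Second, I would invoke the surjection $\hat\tau_{\bf d} : \Hom_{\UG}(\CC(q), V^{\ot |{\bf d}|}) \lra A_{\bf d}$ constructed in Section \ref{sect:strategy}. Its surjectivity was argued there from the semisimplicity of $\CT_m$ and $\cA_m$, together with the fact that $\tau^{\ot m}$ is a $\UG$-module map and $A_{\bf d} = \tau_{\bf d}(T_{\bf d})$ under the identification $T_{\bf d} \simeq (V^{\ot |{\bf d}|})^{\UG} \simeq \Hom_{\UG}(\CC(q), V^{\ot |{\bf d}|})$. Hence any spanning set of $\Hom_{\UG}(\CC(q), V^{\ot |{\bf d}|})$ produces, under $\hat\tau_{\bf d}$, a spanning set of $A_{\bf d}$.

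Third, I would apply Theorem \ref{lem:basis} with $k = 0$ and $\ell = |{\bf d}|$, noting that the object $0$ of ${\mathscr T}$ corresponds to $V^{\ot 0} = \CC(q)$ under ${\mathcal F}$. This says $\Hom_{\UG}(\CC(q), V^{\ot |{\bf d}|})$ is spanned by $\Acycl(0, |{\bf d}|)$. Pushing this spanning set through $\hat\tau_{\bf d}$ yields precisely $\widehat{\Acycl}_{\bf d}$, which therefore spans $A_{\bf d}$. Summing over ${\bf d} \in \ZZ_+^m$ then gives the asserted equality. There is no substantive obstacle: all the technical work resides in the diagrammatic reduction of cycles carried out in Theorem \ref{lem:basis}, and the present statement is the natural packaging of that result into the invariant-theoretic setup of Section \ref{sect:strategy}.
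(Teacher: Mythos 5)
Your proposal is correct and follows exactly the same line of reasoning as the paper: decompose $\cA_m^{\UG}$ along the $\ZZ_+^m$-grading, use the surjectivity of $\hat\tau_{\bf d}$ established in Section \ref{sect:strategy}, and invoke Theorem \ref{lem:basis} to span each $\Hom_{\UG}(\CC(q), V^{\ot |{\bf d}|})$ by acyclic trivalent graphs. The paper states this in almost identical (if more terse) terms immediately before Theorem \ref{thm:span}.
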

Since ${\cA_m^{\UG}}_{(1, 1, \dots, 1)}\simeq \left(V^{\ot m}\right)^{\UG}$,  the theorem immediately implies the following result.  
\begin{corollary}\label{corr:tensor-FFT}
The following relation holds: 
$\left(V^{\ot m}\right)^{\UG}\simeq \CC(q)\widehat{\Acycl}_{(1, 1, \dots, 1)}$. 
\end{corollary}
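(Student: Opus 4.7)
The plan is to observe that this corollary is essentially a direct specialisation of Theorem~\ref{thm:span} to the single multi-degree $\mathbf{d}=(1,1,\dots,1)\in\ZZ_+^m$, together with a degree~$1$ identification.

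First I would verify the isomorphism $(\cA_m)_{(1,\dots,1)} \simeq V^{\otimes m}$ as $\UG$-modules. Since the defining ideal $\mathfrak{I}_q(V)\subset T(V)$ is generated by $\wedge_q^2 V$, which is homogeneous of degree~$2$, the surjection $\tau_1:V\lra S_q(V)_1$ of \eqref{eq:tau-restrict} has trivial kernel, so $S_q(V)_1 = V$. By the definition of the $\ZZ_+^m$-grading on $\cA_m$, we therefore have
\[
(\cA_m)_{(1,\dots,1)} \;=\; S_q(V)_1 \otimes \cdots \otimes S_q(V)_1 \;=\; V^{\otimes m}
\]
as $\UG$-modules, and $\tau_{(1,\dots,1)}:(\CT_m)_{(1,\dots,1)}\lra (\cA_m)_{(1,\dots,1)}$ coincides with the identity map $j_{(1,\dots,1)}:V^{\otimes m}\lra V^{\otimes m}$ under this identification.

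Next I would deduce that the general surjection $\hat\tau_{\mathbf{d}}$ of \eqref{eq:key-map} specialises, at $\mathbf{d}=(1,\dots,1)$, to an \emph{isomorphism}
\[
\hat\tau_{(1,\dots,1)}: \Hom_{\UG}\!\left(\CC(q),\, V^{\otimes m}\right) \stackrel{\sim}{\lra} A_{(1,\dots,1)},
\]
simply because each arrow in its defining composition is a bijection in this case. Composing with the canonical vector space isomorphism $\Hom_{\UG}(\CC(q),V^{\otimes m})\stackrel{\sim}{\lra}(V^{\otimes m})^{\UG}$, $f\mapsto f(1)$, yields the identification $A_{(1,\dots,1)}\simeq (V^{\otimes m})^{\UG}$ asserted before the corollary statement.

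Finally, I would invoke Theorem~\ref{thm:span} in the specific homogeneous degree $\mathbf{d}=(1,\dots,1)$: it gives $A_{(1,\dots,1)}=\CC(q)\,\widehat{\Acycl}_{(1,\dots,1)}$. Chaining this with the isomorphism of the previous step produces $(V^{\otimes m})^{\UG}\simeq \CC(q)\,\widehat{\Acycl}_{(1,\dots,1)}$, as required. No substantive obstacle arises here; the entire content of the corollary is packaged in Theorem~\ref{thm:span} together with the trivial observation that $S_q(V)$ has no relations in degree~$1$.
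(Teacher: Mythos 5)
Your proof is correct and takes essentially the same route as the paper: the paper observes that $(\cA_m^{\UG})_{(1,\dots,1)}\simeq (V^{\ot m})^{\UG}$ (because $S_q(V)_1=V$, so $\tau_{(1,\dots,1)}$ is an isomorphism) and then reads off the corollary from Theorem~\ref{thm:span} at the multi-degree $(1,\dots,1)$. You have simply spelled out the one-line argument in full detail.
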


Conceptually the theorem provides a veracious account of the $\UG$-invariants in $\cA_m$ 
at the level of vector spaces. In order to better understand the invariants, 
we need to consider the  structure of $\cA_m^{\UG}$ as a non-commutative associative algebra. 
The following theorem describes a set of generators for the subalgebra 
$\cA_{m}^{\UG}$ of $\UG$-invariants in $\cA_{m}$.  

	\begin{theorem}[FFT] \label{FFT}
		As an associative algebra,  $\cA_{m}^{\UG}$ is generated by the elements 
		$\Phi^{(i,j)}$ with $1\le i\leq j\le m$, and $\Psi^{(r,s,t)}$ with $1\le r<s<t\le m$.
	\end{theorem}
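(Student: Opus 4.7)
The starting point is Theorem \ref{thm:span}, which asserts that $\cA_m^{\UG}$ is spanned as a $\CC(q)$-vector space by the elements $\hat\tau_{\bf d}(\zeta)$ with $\zeta$ an acyclic trivalent $(0, |{\bf d}|)$-graph. Let $B \subseteq \cA_m^{\UG}$ denote the subalgebra generated by the $\Phi^{(i,j)}$'s and $\Psi^{(r,s,t)}$'s. It suffices to show $\hat\tau_{\bf d}(\zeta) \in B$ for every such $\zeta$. The plan is to split $\zeta$ into its connected components (each a tree with $\geq 2$ external legs), reduce to the case of a single tree using that $B$ is closed under the braided multiplication, and then induct on the number of external legs of a single tree.

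For the decomposition step, if $\zeta = \zeta_1 \sqcup \zeta_2$ with the external legs of $\zeta_i$ assigned to disjoint subsets of $[1,m]$, then $\hat\tau_{\bf d}(\zeta)$ equals the braided product of $\hat\tau(\zeta_1)$ and $\hat\tau(\zeta_2)$ in $\cA_m^{\UG}$; since $B$ is a subalgebra, it suffices to treat the single-tree case. For a tree with $k = 2$ legs the invariant is $\Phi^{(i,j)}$ up to a scalar; for $k = 3$ it is $\Psi^{(r,s,t)}$ up to a scalar and a reordering via $\check R$, which contributes only scalar factors by the spectral decomposition \eqref{Rmatrix1}. For $k = 4$ the identity \eqref{up-prime} expresses $\Upsilon^{(i,j,k,\ell)} + \Upsilon^{'}{}^{(i,j,k,\ell)}$ as a polynomial in $\Phi^{(p,q)}$'s; a suitable $q$-commutator among two $\Phi$'s, computed via the last commutation relation of Lemma \ref{ppp}, provides an independent relation between $\Upsilon^{(i,j,k,\ell)}$ and $\Upsilon^{'}{}^{(i,j,k,\ell)}$ that involves only $\Phi$'s and $\Psi$'s, and solving the two relations simultaneously places both degree-$4$ invariants in $B$.

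For $k \geq 5$ the plan is to exploit the fact that any acyclic trivalent tree with $\geq 4$ external legs contains at least one cherry, that is, an internal trivalent vertex with two external legs attached. Cutting under such a cherry and invoking the identity \eqref{up-p} relating $\upgamma$ to the projector onto $V_{\lambda_1}$, together with the skein-type relations \eqref{cross} and \eqref{crossprime} of Lemma \ref{double}, will express the $k$-leg tree invariant as a $\CC(q)$-linear combination of products of the form (generator of $B$) $\cdot$ (smaller tree invariant), plus correction terms dictated by the commutation relations of Lemmas \ref{xPsi} and \ref{xU}. The induction hypothesis then handles both the smaller tree invariants and the correction terms.

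The main obstacle is the combinatorial bookkeeping in the inductive step for large $k$: one must verify that every reduction strictly decreases the number of internal vertices of the tree and that all correction terms produced by applying Lemmas \ref{xPsi} and \ref{xU} correspond to trees of strictly smaller complexity. Theorem \ref{lem:basis} is the key technical input here, because it guarantees that each reduction stays inside the span of acyclic trivalent graphs rather than producing new cycles; together with the fact that cherry-removal always eliminates at least one internal vertex, this closes the induction and completes the proof.
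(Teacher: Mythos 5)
Your high-level strategy matches the paper's: invoke Theorem \ref{thm:span} to reduce to acyclic trivalent graphs, then peel off generators using skein relations and the commutation relations of Lemmas \ref{ppp}, \ref{xPsi}, \ref{xU}. But there are two genuine gaps.

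First, your decomposition into connected components implicitly assumes that when $\zeta = \zeta_1 \sqcup \zeta_2$, the invariant factors as a product $\hat\tau(\zeta_1)\hat\tau(\zeta_2)$. This holds when $\zeta_1$ and $\zeta_2$ are \emph{juxtaposed} (all legs of $\zeta_1$ to the left of all legs of $\zeta_2$), and the paper treats exactly this in part $a)$. But an acyclic planar graph can also have \emph{nested} components, where the legs of $\zeta_2$ sit inside a gap between legs of $\zeta_1$ — e.g.\ an outer cap enclosing an inner cap. There the factorisation fails: $\hat\tau_{\bf d}(\zeta)$ is of the form $\XX_{[d_1]}\cdots\Phi^{(i,j)}\cdots X_{|{\bf d}|a}$ with the $\Phi^{(i,j)}$ wedged in the \emph{middle} of the monomial, and one must use Lemma \ref{ppp} to commute it out to the boundary, picking up correction terms such as $\Phi^{(i,i)}\ele_{|{\bf d}|-2}$ that have to be fed back into the induction. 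The paper devotes part $b)$ of the proof to this, and your proposal has no mechanism for it; you cannot reduce to a single tree without it.

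Second, the $k \geq 5$ cherry-cutting step is too coarse to close the induction. The correct inductive parameter is the total degree $|{\bf d}|$ of $\zeta$, not the leg-count of a single tree, because the commutation relations replace $X_{ra}\Phi^{(i,j)}$ or $X_{ra}\Psi^{(i,j,k)}$ by sums involving $\Phi^{(p,q)}$'s and $\Psi^{(p,q,r)}$'s with \emph{other} index patterns and lower-degree factors, not strictly smaller subtrees of the original $\zeta$. Moreover, when the two legs of a cherry lie in the \emph{same} band the skein relations \eqref{eq:cr-repl} and formula \eqref{zeta4} produce an extra term (a $\left(\upgamma(v_c)\right)^{(i,l)}\Phi^{(j,j)}$ correction), which must be tracked separately. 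The paper's case analysis (I), (II), (III) together with equations \eqref{zeta1}--\eqref{zeta4} and \eqref{II-}--\eqref{III-} does exactly this bookkeeping; your phrase ``correction terms dictated by the commutation relations'' gestures at it but does not verify that every term lands in lower total degree. (Small citation error as well: the $q$-commutator among two $\Phi$'s producing $\Upsilon$ is the last relation of Lemma \ref{2-2-tensors}, not of Lemma \ref{ppp}; the former is derived from the latter, so the idea is sound.)
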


\begin{remark}
Note that Theorem \ref{FFT} requires quadratic and 
qubic generators only. This is very different from the classical situation, where 
the (commutative) subalgebra of $G_2$-invariants in the symmetric algebra of $\CC^7\ot\CC^m$ is generated by quadratic, 
qubic and also quartic invariants  \cite{S} (also see \cite{HZ}).  
This is due to the non-commutativity of the generators $\Phi^{(i,j)}$  and $\Psi^{(r,s,t)}$.
We will see in Remark \ref{rmk:2-2-tensors} that the quartic  invariants $\Upsilon^{(i,j,k,l)}$ 
arise from $q$-commutators of $\Phi^{(r,s)}$. 
\end{remark}

\subsection{Proof of Theorem \ref{FFT}}

Let us make some preparations for the proof.
We will follow the general strategy describe in Section \ref{sect:elmt}, and retain notation introduced in that section. 
Also, we use 
$\ele$ to denote an unspecified element in $A$,
and use $\ele_{ d}$ (resp. $\ele_{<d}$) to indicate that it degree is $d$ (resp. lower than $d$).	
	
Given a $(0, |{\bf d}|)$ diagram $\zeta$, 
where ${\bf d}=(d_1, \dots, d_k)\in\ZZ_+^k$, 
we enumerate its end points by $1, 2, \dots,  |{\bf d}|$ from left to right. 
Divide them into $k$ {\em bands} $I_1=[1, d_1], I_2=[d_1+1, d_2], , \dots, I_k=[d_{k-1}+1, d_k]$, and 
label the $r$-th end point by $i$ if $r\in I_i$. 
We will say that the end points labelled by $i$ belong to the $i$-th band.

Now 
apply $\hat\tau_{\bf d}$ to $\zeta$.  
Then $\hat\tau_{\bf d}(\zeta)$ can be expressed as a sum of products of $X_{j a}$'s. 
If for some $i$, all $X_{i a}$'s (there are $d_i$ of them) associated with the $i$-th band 
appear next to each other,  thus we have  a sequence of the form 
$X_{ia_1}X_{ia_2}\cdots X_{ia_{d_i}}$ in the expression for $\hat\tau_{\bf d}(\zeta)$, 
we symbolically denote 
the sequence by $\XX_{[d_i]}$ to simplify notation.

If  for some $r$, the $r$-th and $(r+1)$-th end points of $\zeta$ are the two top end points of a crossing
(whose bottom points are jointed to other parts of the diagram $\zeta$),  which belong to the same band $I_i$, 
then by using Lemma \ref{double}, we can replace the crossing as follows: 
\beq\label{eq:cr-repl}
\baln
\text{over crossing $\cross$} &\rightsquigarrow& 
q^2 \begin{tikzpicture}[baseline=4pt,scale=0.5,color=\clr]
	\draw[-,line width=1pt,color=\clr](0,0)--(0,1);
	\draw[-,line width=1pt,color=\clr](0.5,0)--(0.5,1);
\end{tikzpicture}-q^{-5}\frac{q^7-q^{-7}}{\dim_q V}
\begin{tikzpicture}[baseline=4pt,scale=0.5,color=\clr]
	\draw[-,line width=1pt,color=\clr](0,1)parabola bend (0.5,0.6)(1,1);
	\draw[-,line width=1pt,color=\clr](0,0)parabola bend(0.5,0.4)(1,0);
\end{tikzpicture} ,\\
\text{under crossing $\Icross$}&\rightsquigarrow&
q^{-2} \begin{tikzpicture}[baseline=4pt,scale=0.5,color=\clr]
	\draw[-,line width=1pt,color=\clr](0,0)--(0,1);
	\draw[-,line width=1pt,color=\clr](0.5,0)--(0.5,1);
\end{tikzpicture}+q^{5}\frac{q^7-q^{-7}}{\dim_q V}
\begin{tikzpicture}[baseline=4pt,scale=0.5,color=\clr]
	\draw[-,line width=1pt,color=\clr](0,1)parabola bend (0.5,0.6)(1,1);
	\draw[-,line width=1pt,color=\clr](0,0)parabola bend(0.5,0.4)(1,0);
\end{tikzpicture}  .
\ealn
\eeq

The following results will also be needed below.  
Let  
\beq\label{eq:zetas1-4}
\zeta_1= \begin{tikzpicture}[baseline=4pt,scale=0.5,color=\clr]
	\draw[-,line width=1pt,color=\clr](0,1)parabola bend (0.3,0) (0.5,0.4);
	\draw[-,line width=1pt,color=\clr](0.7,0.6)--(0.9,1);
	\draw[-,line width=1pt,color=\clr](0.4,1)--(0.9,0);
\end{tikzpicture},\quad
\zeta_2=\begin{tikzpicture}[baseline=4pt,scale=0.5,color=\clr]
	\draw[-,line width=1pt,color=\clr](0,1)parabola bend (0.4,0) (0.8,1);
	\draw[-,line width=1pt,color=\clr](0.4,1)--(0.6,0.5);
	\draw[-,line width=1pt,color=\clr](0.75,0.35)--(0.9,0);
\end{tikzpicture},\quad
\zeta_3= \begin{tikzpicture}[baseline=4pt,scale=0.5,color=\clr]
	\draw[-,line width=1pt,color=\clr](-0.2,1)parabola bend (0.3,0) (0.5,0.4);
	\draw[-,line width=1pt,color=\clr](0.7,0.6)--(0.9,1);	
	\draw[-,line width=1pt,color=\clr](0.2,0)--(0.2,1);	\draw[-,line width=1pt,color=\clr](0.4,1)--(0.9,0);
\end{tikzpicture},\quad
\zeta_4=\begin{tikzpicture}[baseline=4pt,scale=0.5,color=\clr]
	\draw[-,line width=1pt,color=\clr](-0.2,1)parabola bend (0.3,0) (0.8,1);
	\draw[-,line width=1pt,color=\clr](0.4,1)--(0.6,0.55);\draw[-,line width=1pt,color=\clr](0.2,0)--(0.2,1);		\draw[-,line width=1pt,color=\clr](0.75,0.35)--(0.9,0);
\end{tikzpicture}\ ,
\eeq
and consider $\tau_{\bf d}(\zeta_\alpha(v_c))$. 
For $\alpha =1$, suppose that the bands of the three endpoints are 
$i_1, i_2, i_3$ with $i_1\leq i_2<i_3$,
then we have 
\[
\baln
\tau_{\bf d}(\zeta_1(v_c))&
=\sum_{a, a'}\varphi_{a a'}X_{i_1a}\beta_{t}(X_{i_2c})\alpha_{t}(X_{i_3a'})\\
&=\sum_{a, a'} \varphi_{a a'} X_{i_1a}X_{i_3a'}X_{i_2c}
=\Phi^{(i_1,i_3)}X_{i_2c}.
\ealn
\]

Using the diagramatic relation $\begin{tikzpicture}[baseline=4pt,scale=0.5,color=\clr]
	\draw[-,line width=1pt,color=\clr](0.41,0)--(0.63,0.37);\draw[-,line width=1pt,color=\clr](0.8,0.5)--(1,1);
	\draw[-,line width=1pt,color=\clr](0.6,1)parabola bend (1,0) (1.35,1);
\end{tikzpicture}=\begin{tikzpicture}[baseline=4pt,scale=0.5,color=\clr]
	\draw[-,line width=1pt,color=\clr](0,1)parabola bend (0.4,0) (0.8,1);
	\draw[-,line width=1pt,color=\clr](0.4,1)--(0.6,0.5);
	\draw[-,line width=1pt,color=\clr](0.75,0.35)--(0.9,0);
\end{tikzpicture}$, 
we can change $\zeta_2$ to a diagram involving an over crossing. Then we obtain, for $i_1< i_2\leq i_3$, 
\[
\baln
\tau_{\bf d}(\zeta_2(v_c))&=\sum_{a, a'} \varphi_{a a'}\beta_{t}(X_{i_1a})\alpha_{t}(X_{i_2c})X_{i_3a'}\\
&=\sum_{a, a'} \varphi_{a a'} X_{i_2c}X_{i_1a}X_{i_3a'}=X_{i_2c}\Phi^{(i_1,i_3)}.
\ealn
\]

By taking a ${\bf d}$ which assigns the four endpoints of each of 
$\zeta_3(v_c)$ and $\zeta_4(v_c)$ to
bands  $i, j, k, \ell$, we can prove analogous formulae. 

To summarise, we have the following result. 
\begin{lemma} Let $\zeta_\alpha$ with $\alpha=1, 2, 3, 4$ be as defined by \eqref{eq:zetas1-4}. For all $v_c$, and and appropriate ${\bf d}$, the following formulae hold. 
\beq\label{zeta1}
\tau_{\bf d}(\zeta_1(v_c))&=&\Phi^{(i_1,i_3)}X_{i_2c}, \quad \text{for $i_1\leq i_2<i_3$}.
\eeq
\beq\label{zeta2}
\tau_{\bf d}(\zeta_2(v_c))&=&X_{i_2c}\Phi^{(i_1,i_3)}, \quad \text{for $i_1< i_2\leq i_3$}. 
\eeq
\beq
\tau_{\bf d}(\zeta_3(v_c))&=&\Psi^{(i,j,l)}X_{kc},\quad \mbox{for~}k<l;\label{zeta3}\\
\tau_{\bf d}(\zeta_4(v_c))&=&\left\{
\begin{array}{rl}
q^2X_{jc}\Psi^{(i,j,l)}-q^{-5}\frac{q^7-q^{-7}}{\dim_q V}\left(\upgamma(v_c)\right)^{(i,l)}\Phi^{(j,j)},\\
\mbox{if~}i<j=k<\ell;\\
X_{kc}\Psi^{(i,j,l)}, \quad \mbox{otherwise}.\hspace{3cm}
\end{array}
\right. \label{zeta4}
\eeq
\end{lemma}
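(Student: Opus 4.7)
The plan is to compute each $\tau_{\bf d}(\zeta_\alpha(v_c))$ by directly unfolding the diagram using the explicit formulas for $\check C$, $\upgamma$, and $\check R$ supplied by the functor $\mathcal{F}$, and then invoking the braided multiplication relation \eqref{braid} in the form $X_{j a'} X_{i a} = \sum_t \beta_t(X_{i a})\alpha_t(X_{j a'})$ for $i<j$ to convert the $\alpha_t\otimes\beta_t$ expressions produced by the crossings into ordered products of the $X_{i a}$'s.  Each such ordered product is then immediately recognizable as a product of $\Phi^{(p,q)}$'s, $\Psi^{(r,s,t)}$'s and generators $X_{ka}$.

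For \eqref{zeta1} and \eqref{zeta2} the calculation is already carried out in the paragraph preceding the lemma: expanding $\check C(1)=c_0=\sum_{a,a'}\varphi_{a a'}v_a\otimes v_{a'}$ and applying $\mathcal{F}(\cross)=\check R$ yields an element of $V^{\otimes 3}$ whose image under $\tau_{\bf d}$ is $\sum\varphi_{aa'}X_{i_1 a}\,\beta_t(X_{i_2 c})\,\alpha_t(X_{i_3 a'})$.  Applying \eqref{braid} to the pair $(i_2,i_3)$ (valid because $i_2<i_3$) collapses this to $\Phi^{(i_1,i_3)}X_{i_2 c}$, proving \eqref{zeta1}; the diagrammatic identity used to rewrite $\zeta_2$ in terms of an over-crossing lets the same recipe produce \eqref{zeta2}.

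For \eqref{zeta3} I would run the same recipe, with the additional input $\upgamma(v_s)=b_s=\sum\gamma_{srt}v_r\otimes v_t$ so that the cup-and-trivalent-vertex part of $\zeta_3$ generates precisely the coefficients $c_{rst}=-q^3\sum_a\varphi_{ra}\gamma_{ast}$ defining $\Psi$ (cf.\ \eqref{eq:Psi}).  Since the crossing in $\zeta_3$ lies between strands in distinct bands when $k<\ell$, \eqref{braid} applies and yields $\sum c_{rst}X_{ir}X_{js}X_{\ell t}X_{kc}=\Psi^{(i,j,\ell)}X_{kc}$.  For \eqref{zeta4} in the generic subcase (all four bands distinct, with the crossing still between different bands), the argument is the dual of the $\zeta_3$ computation and gives $X_{kc}\Psi^{(i,j,\ell)}$.

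The main obstacle is the exceptional subcase $i<j=k<\ell$ of \eqref{zeta4}.  Here the over-crossing in $\zeta_4$ sits between two strands of the same copy of $S_q(V)$, so \eqref{braid} no longer applies: the two factors $X_{ja}X_{jb}$ live in a single $S_q(V)_2$ and the $R$-matrix no longer acts between distinct tensor slots.  I would resolve this by applying the skein replacement \eqref{eq:cr-repl} to the offending crossing, which writes $\check R$ on that pair of strands as $q^2\,\id - q^{-5}\frac{q^{7}-q^{-7}}{\dim_q V}\check C\hat C$.  The identity term reproduces exactly the term $q^2X_{jc}\Psi^{(i,j,\ell)}$.  The $\check C\hat C$ term splits the diagram into two disjoint pieces: a cap–cup entirely supported on the $j$-th band whose $\tau_{\bf d}$-image is the invariant $\Phi^{(j,j)}$, and a residual piece joining bands $i$ and $\ell$ through a trivalent vertex whose $\tau_{\bf d}$-image is $(\upgamma(v_c))^{(i,\ell)}$.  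The combinatorial bookkeeping of how the skein expansion interacts with the already-present $\upgamma$ on neighboring strands, together with the fact (Lemma \ref{lem:Phi-nil}) that $\Phi^{(j,j)}$ commutes with every $X_{jb}$ in its own band, is the delicate step; done correctly it yields the stated correction $-q^{-5}\frac{q^{7}-q^{-7}}{\dim_q V}(\upgamma(v_c))^{(i,\ell)}\Phi^{(j,j)}$.
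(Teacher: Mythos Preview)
Your proposal is correct and follows the same approach as the paper's own argument: the paper proves \eqref{zeta1} and \eqref{zeta2} exactly as you describe and then simply states that $\zeta_3$ and $\zeta_4$ are handled by ``analogous formulae,'' which is precisely the recipe you spell out (unfold the trivalent cup using the coefficients $c_{rst}$, convert the crossing via \eqref{braid} when the two strands lie in distinct bands, and invoke the same-band replacement \eqref{eq:cr-repl} for the exceptional case $j=k$). Your treatment of the exceptional subcase is in fact more explicit than the paper's, and the coefficients you obtain from the over-crossing replacement match the claimed correction term exactly.
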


Now we prove Theorem \ref{FFT}.

\begin{proof}[Proof of Theorem \ref{FFT}]

For any  ${\bf d}\in\ZZ_+$, it follows Theorem \ref{lem:basis} that $A_{\bf d}$ 
is equal to the image of $\hat\tau_{\bf d}$ 
applied to the space spanned by the set of acyclic 
trivalent $(0,|{\bf d}|)$-graphs $\Acycl(0,|{\bf d}|)$. 
Thus in order to prove Theorem \ref{FFT}, we only need to show that if $\zeta\in \Acycl(0,|{\bf d}|)$, 
then $\hat\tau_{\bf d}(\zeta)$ belongs to the subalgebra $A'$ of $A$ generated 
by $\Phi^{(i, j)}$ and $\Psi^{(i, j, k)}$. 

We will prove this by induction on $|{\bf d}|$. 
The analysis on the elementary invariants in Section \ref{sect:elmt} has treated all cases with $|{\bf d}|\le 4$, 
which provides the initial step for the induction.  The induction hypothesis assumes that 
all $\ele_{<|{\bf d}|}$ belong to $A'$. 

\medskip
\noindent{\em a). Juxtaposed acyclic graphs}.

If $\zeta=\zeta_1\ot \zeta_2$ is the juxtaposition of two acyclic graphs $\zeta_1, \zeta_2$, then $\hat\tau_{\bf d}(\zeta)
=\hat\tau_{\bf d}(\zeta_1)\hat\tau_{\bf d}(\zeta_2)$, where both facts has degrees $< |{\bf d}|$.  
Thus invariants associated with juxtapositions of acyclic subgraphs belong to $A'$. 

\medskip
\noindent{\em b). Special nested acyclic graphs}.

Consider an acyclic graph $\zeta$ of the following form 
$$
\begin{tikzpicture}[baseline=13pt,scale=0.6,color=\clr]
\draw[-,line width=1pt] (-0.1,2)--(-0.1,4);\draw[-,line width=1pt] (0.8,2)--(0.8,4);
\draw [-,line width=1pt](3.3,2)--(3.3,4);\draw [-,line width=1pt](4.2,2)--(4.2,4);
\draw[-,line width=1pt](-0.5,1)rectangle(4.5,2);	
\draw[-,line width=1pt](1,2.5)rectangle(3,3.5);\node at (2,3){$E$};
\draw[-,line width=1pt](1.5,3.5)--(1.5,4);
\draw[-,line width=1pt](2.5,3.5)--(2.5,4);
\node at (2.05,3.7){$\cdots$};
\node at (0.4,3.1){$\cdots$};
\node at (3.8,3.1){$\cdots$};
\end{tikzpicture},
$$
where $E$ is an acyclic subgraph.  We say that $\zeta$ is  an {\em acyclic nested  graph}.

Let $E$
be one of the first two elementary graphs in Figure \ref{Figure-elementary},
for example,  
\begin{tikzpicture}[baseline=4pt,scale=0.5,color=\clr]
\draw[-,line width=1pt,color=\clr](1.3,1)parabola bend (2,0) (2.7,1);
\end{tikzpicture}.
Assume that the end points of 
$E$
are labelled by $i, j$,  with $j=i$ or $i+1$.
Then 
$\hat\tau_{\bf d}(\zeta)$ is a linear combination of elements of the form (for a fixed $s$)
\beq\label{eq:symbolic}
\XX_{[d_1]}\cdots \XX_{[d_{s-1}]}\XX_{[d_s]}\Phi^{(i, j)}\cdots X_{|{\bf d}|a_{|{\bf d}|}}.  
\eeq
If $i=j$,  we easily 
obtain $
\hat\tau_{\bf d}(\zeta)={\Phi^{(i,i)}}\ele_{|{\bf d}|-2}$, 
since $\Phi^{(i, i)}$ is central. 
Now assume that $i\ne j$. We use Lemma \ref{ppp} to move $\Phi^{(i,j)}$ in each term \eqref{eq:symbolic} of $\hat\tau_{\bf d}(\zeta)$ to the left if  $s\le i$.  We obtain 
\[
\baln
\hat\tau_{\bf d}(\zeta)
&=\left\{
\begin{array}{ll}
{\Phi^{(i,j)}}\ele_{|{\bf d}|-2}\,,&\mbox{if~}s<i;\\
{\Phi^{(i,j)}}\ele^{\,\prime}_{|{\bf d}|-2}+{\Phi^{(i,i)}}\ele^{\,\prime\prime}_{|{\bf d}|-2}\,,&\mbox{if~}s=i.
\end{array}
\right.
\ealn
\]
We can similarly treat the case with $E$
being the second graph in Figure \ref{Figure-elementary}, to show that the corresponding 
invariant lies in $A'$.

Thus the invariants associated with acyclic nested trivalent graphs belong to $A'$,  
 if $E$ is one of the first two graphs in Figure \ref{Figure-elementary}. 

\medskip
\noindent{\em c). General acyclic graphs}.

Consider an acyclic graph $\zeta\in \Acycl(0,|{\bf d}|)$ with $|{\bf d}|\geq 5$, which is not a juxtaposition of acyclic sub-graphs. 
%t
Then $\zeta$ is  either of one of the following  three types, 
$$
\begin{tikzpicture}[baseline=13pt,scale=0.6,color=\clr]
\draw[-,line width=1pt] (-0.1,2)--(-0.1,3.6);\draw[-,line width=1pt] (0.8,2)--(0.8,3.6);
\draw [-,line width=1pt](3.3,2)--(3.3,3.6);\draw [-,line width=1pt](4.2,2)--(4.2,3.6);
\draw[-,line width=1pt](-0.5,1)rectangle(4.5,2);	
\draw[-,line width=1pt] (2,2)--(2,3);
\draw[-,line width=1pt] (2,3)--(1.5,3.6);\draw[-,line width=1pt] (2,3)--(2.5,3.6);
\draw[-,line width=1pt] (1.7,3.3)--(2,3.6);
\node at (0.4,3.1){$\cdots$};
\node at (3.8,3.1){$\cdots$};
\node at (2,0.4){${\bf (I)}$};
\end{tikzpicture},\qquad
\begin{tikzpicture}[baseline=13pt,scale=0.6,color=\clr]
\draw[-,line width=1pt] (-0.1,2)--(-0.1,3.6);\draw[-,line width=1pt] (0.8,2)--(0.8,3.6);
\draw [-,line width=1pt](3.3,2)--(3.3,3.6);\draw [-,line width=1pt](4.2,2)--(4.2,3.6);
\draw[-,line width=1pt](-0.5,1)rectangle(4.5,2);	
\draw[-,line width=1pt] (2,2)--(2,3);
\draw[-,line width=1pt] (2,3)--(1.5,3.6);\draw[-,line width=1pt] (2,3)--(2.5,3.6);
\draw[-,line width=1pt] (2.3,3.3)--(2,3.6);
\node at (0.4,3.1){$\cdots$};
\node at (3.8,3.1){$\cdots$};
\node at (2,0.4){${\bf (II)}$};
\end{tikzpicture},\qquad
\begin{tikzpicture}[baseline=13pt,scale=0.6,color=\clr]
\draw[-,line width=1pt] (-0.1,2)--(-0.1,3.6);\draw[-,line width=1pt] (0.8,2)--(0.8,3.6);
\draw [-,line width=1pt](3.3,2)--(3.3,3.6);\draw [-,line width=1pt](4.2,2)--(4.2,3.6);
\draw[-,line width=1pt](-0.5,1)rectangle(4.5,2);	
\draw[-,line width=1pt] (2,2)--(2,3);
\draw[-,line width=1pt] (2,3)--(1.5,3.6);\draw[-,line width=1pt] (2,3)--(2.5,3.6);
\draw[-,line width=1pt] (1.7,3.3)--(1.9,3.6);\draw[-,line width=1pt] (2.3,3.3)--(2.1,3.6);
\node at (0.4,3.1){$\cdots$};
\node at (3.8,3.1){$\cdots$};
\node at (2,0.4){${\bf (III)}$};
\end{tikzpicture}, 
$$
or a nested graph, each of whose connected components is one of the first two graphs in Figure \ref{Figure-elementary}.  

If $\zeta$ is such a nested graph, we have treated it already. Now we consider the three types of graphs ${\bf(I)}$, ${\bf(II)}$ and ${\bf(III)}$. We will treat ${\bf(II)}$ and ${\bf(III)}$ in detail;  ${\bf(I)}$ can be treated in much the same way as ${\bf(II)}$.

We have the following relations 
\[
\baln
{\bf(II)}&\qquad \begin{tikzpicture}[baseline=40pt,scale=0.6,color=\clr]
\draw[-,line width=1pt] (-0.1,2)--(-0.1,3.6);\draw[-,line width=1pt] (0.8,2)--(0.8,3.6);
\draw [-,line width=1pt](3.3,2)--(3.3,3.6);\draw [-,line width=1pt](4.2,2)--(4.2,3.6);
\draw[-,line width=1pt](-0.5,1)rectangle(4.5,2);	
\draw[-,line width=1pt] (2,2)--(2,3);
\draw[-,line width=1pt] (2,3)--(1.5,3.6);\draw[-,line width=1pt] (2,3)--(2.5,3.6);
\draw[-,line width=1pt] (2.3,3.3)--(2,3.6);
\node at (0.4,3.1){$\cdots$};
\node at (3.8,3.1){$\cdots$};
\end{tikzpicture}
\  =\  \begin{tikzpicture}[baseline=40pt,scale=0.6,color=\clr]
\draw[-,line width=1pt] (-0.1,2)--(-0.1,3.6);\draw[-,line width=1pt] (0.8,2)--(0.8,3.6);
\draw [-,line width=1pt](3.3,2)--(3.3,3.6);\draw [-,line width=1pt](4.2,2)--(4.2,3.6);
\draw[-,line width=1pt](-0.5,1)rectangle(4.5,2);	
\draw[-,line width=1pt] (2.6,2.5)--(2.6,3);
\draw[-,line width=1pt] (2.6,3)--(2.1,3.6);\draw[-,line width=1pt] (2.6,3)--(3.1,3.6);
\draw[-,line width=1pt] (2.9,3.3)--(2.6,3.6);
\draw[-,line width=1pt,color=\clr](2.6,2.5)parabola bend (2.3,2.3) (1.9,2.5);
\draw[-,line width=1pt,color=\clr](1.9,2.5)--(1.9,3.1);
\draw[-,line width=1pt,color=\clr](1.9,3.1)parabola bend (1.6,3.6) (1.3,3.1);
\draw[-,line width=1pt,color=\clr](1.3,3.1)--(1.3,2);
\node at (0.4,3.1){$\cdots$};
\node at (3.8,3.1){$\cdots$};
\end{tikzpicture},\\
{  }\\
{\bf(III)}&\qquad
\begin{tikzpicture}[baseline=40pt,scale=0.6,color=\clr]
\draw[-,line width=1pt] (-0.1,2)--(-0.1,3.6);\draw[-,line width=1pt] (0.8,2)--(0.8,3.6);
\draw [-,line width=1pt](3.3,2)--(3.3,3.6);\draw [-,line width=1pt](4.2,2)--(4.2,3.6);
\draw[-,line width=1pt](-0.5,1)rectangle(4.5,2);	
\draw[-,line width=1pt] (2,2)--(2,3);
\draw[-,line width=1pt] (2,3)--(1.5,3.6);\draw[-,line width=1pt] (2,3)--(2.5,3.6);
\draw[-,line width=1pt] (1.7,3.3)--(1.9,3.6);\draw[-,line width=1pt] (2.3,3.3)--(2.1,3.6);
\node at (0.4,3.1){$\cdots$};
\node at (3.8,3.1){$\cdots$};
\end{tikzpicture}\  = \
\begin{tikzpicture}[baseline=40pt,scale=0.6,color=\clr]
\draw[-,line width=1pt] (-0.1,2)--(-0.1,3.6);\draw[-,line width=1pt] (0.8,2)--(0.8,3.6);
\draw [-,line width=1pt](3.3,2)--(3.3,3.6);\draw [-,line width=1pt](4.2,2)--(4.2,3.6);
\draw[-,line width=1pt](-0.5,1)rectangle(4.5,2);	
\draw[-,line width=1pt] (2.6,2.5)--(2.6,3);
\draw[-,line width=1pt] (2.6,3)--(2.1,3.6);\draw[-,line width=1pt] (2.6,3)--(3.1,3.6);
\draw[-,line width=1pt] (2.3,3.3)--(2.5,3.6);\draw[-,line width=1pt] (2.9,3.3)--(2.6,3.6);
\draw[-,line width=1pt,color=\clr](2.6,2.5)parabola bend (2.3,2.3) (1.9,2.5);
\draw[-,line width=1pt,color=\clr](1.9,2.5)--(1.9,3.1);
\draw[-,line width=1pt,color=\clr](1.9,3.1)parabola bend (1.6,3.6) (1.3,3.1);
\draw[-,line width=1pt,color=\clr](1.3,3.1)--(1.3,2);
\node at (0.4,3.1){$\cdots$};
\node at (3.8,3.1){$\cdots$};
\end{tikzpicture}.
\ealn
\]
Now apply \eqref{checkD} to the right hand sides to reduce the number of their vertices. 
The first two terms on the right-hand side of \eqref{checkD} produces diagrams with nested sud-diagrams,
which were treated already. 
Thus we only need to
 consider the two diagrams arising from the crossings on the left-hand side of \eqref{checkD}.
They are  
$$\zeta_{{\bf (II)}}^{+}:=\begin{tikzpicture}[baseline=40pt,scale=0.6,color=\clr]
\draw[-,line width=1pt] (-0.1,2)--(-0.1,3.6);\draw[-,line width=1pt] (0.8,2)--(0.8,3.6);
\draw [-,line width=1pt](3.3,2)--(3.3,3.6);\draw [-,line width=1pt](4.2,2)--(4.2,3.6);
\draw[-,line width=1pt](-0.5,1)rectangle(4.5,2);	
\draw[-,line width=1pt] (2.4,3)--(2.6,3.6);
\draw[-,line width=1pt,color=\clr](2.1,3.6)parabola bend (2.6,2.5) (3.1,3.6);
\draw[-,line width=1pt,color=\clr](1.9,2.3)parabola bend (1.6,3.6) (1.3,3.1);
\draw[-,line width=1pt,color=\clr](1.3,3.1)--(1.3,2);
\draw[-,line width=1pt,color=\clr](1.9,2.3)parabola bend (2.1,2.2)(2.25,2.7);
\node at (0.4,3.1){$\cdots$};
\node at (3.8,3.1){$\cdots$};
\node at (2.1,3.9){$i$};
\node at (2.6,3.9){$j$};
\node at (3.1,3.9){$k$};
\end{tikzpicture} \quad\mbox{and} \quad
\zeta_{{\bf (II)}}^{-}:=\begin{tikzpicture}[baseline=40pt,scale=0.6,color=\clr]
\draw[-,line width=1pt] (-0.1,2)--(-0.1,3.6);\draw[-,line width=1pt] (0.8,2)--(0.8,3.6);
\draw [-,line width=1pt](3.3,2)--(3.3,3.6);\draw [-,line width=1pt](4.2,2)--(4.2,3.6);
\draw[-,line width=1pt](-0.5,1)rectangle(4.5,2);	
\draw[-,line width=1pt] (2.1,3.6)--(2.3,2.9);
\draw[-,line width=1pt,color=\clr](2.5,2.75)parabola bend (2.7,2.5) (3.1,3.6);
\draw[-,line width=1pt,color=\clr](1.9,2.3)parabola bend (1.6,3.6) (1.3,3.1);
\draw[-,line width=1pt,color=\clr](1.3,3.1)--(1.3,2);
\draw[-,line width=1pt,color=\clr](1.9,2.3)parabola bend (2.1,2.3)(2.6,3.6);
\node at (0.4,3.1){$\cdots$};
\node at (3.8,3.1){$\cdots$};
\node at (2.1,3.9){$i$};
\node at (2.6,3.9){$j$};
\node at (3.1,3.9){$k$};\end{tikzpicture}$$ 
in case ${\bf (II)}$, and  
$$\zeta_{{\bf (III)}}^{+}:=\begin{tikzpicture}[baseline=40pt,scale=0.6,color=\clr]
\draw[-,line width=1pt] (-0.1,2)--(-0.1,3.6);\draw[-,line width=1pt] (0.8,2)--(0.8,3.6);
\draw [-,line width=1pt](3.3,2)--(3.3,3.6);\draw [-,line width=1pt](4.2,2)--(4.2,3.6);
\draw[-,line width=1pt](-0.5,1)rectangle(4.5,2);	
\draw[-,line width=1pt,color=\clr](1.9,3.1)parabola bend (1.6,3.6) (1.3,3.1);
\draw[-,line width=1pt,color=\clr](1.3,3.1)--(1.3,2);
\draw[-,line width=1pt,color=\clr](1.9,3.1)parabola bend (2.2,2.4)(2.4,2.7);
\draw[-,line width=1pt,color=\clr](2.6,2.9)--(2.8,3.6);
\draw[-,line width=1pt,color=\clr](2.6,2.7)parabola bend (2.8,2.4)(3.1,3.6);
\draw[-,line width=1pt,color=\clr](2.6,2.7)--(2.3,3.3);
\draw[-,line width=1pt,color=\clr](2.3,3.3)--(2.1,3.6);
\draw[-,line width=1pt,color=\clr](2.3,3.3)--(2.5,3.6);
\node at (2.1,3.95){$i$};
\node at (2.5,3.95){$j$};
\node at (2.85,3.95){$k$};
\node at (3.2,3.95){$l$};
\node at (0.4,3.1){$\cdots$};
\node at (3.8,3.1){$\cdots$};
\end{tikzpicture}\quad\mbox{and}\quad \zeta_{{\bf (III)}}^{-}:=\begin{tikzpicture}[baseline=40pt,scale=0.6,color=\clr]
\draw[-,line width=1pt] (-0.1,2)--(-0.1,3.6);\draw[-,line width=1pt] (0.8,2)--(0.8,3.6);
\draw [-,line width=1pt](3.3,2)--(3.3,3.6);\draw [-,line width=1pt](4.2,2)--(4.2,3.6);
\draw[-,line width=1pt](-0.5,1)rectangle(4.5,2);	
\draw[-,line width=1pt,color=\clr](1.9,3.1)parabola bend (1.6,3.6) (1.3,3.1);
\draw[-,line width=1pt,color=\clr](1.3,3.1)--(1.3,2);
\draw[-,line width=1pt,color=\clr](1.9,3.1)parabola bend (2.2,2.4)(2.8,3.6);
\draw[-,line width=1pt,color=\clr](2.6,2.7)parabola bend (2.8,2.4)(3.1,3.6);
\draw[-,line width=1pt,color=\clr](2.5,2.9)--(2.3,3.3);
\draw[-,line width=1pt,color=\clr](2.3,3.3)--(2.1,3.6);
\draw[-,line width=1pt,color=\clr](2.3,3.3)--(2.5,3.6);
\node at (0.4,3.1){$\cdots$};
\node at (3.8,3.1){$\cdots$};
\node at (2.1,3.95){$i$};
\node at (2.5,3.95){$j$};
\node at (2.85,3.95){$k$};
\node at (3.2,3.95){$l$};
\end{tikzpicture}$$ 
in case ${\bf (III)}$, 
where $i, j, k$ and $\ell$ indicate the bands which the relevant endpoints belong to 
when $\hat\tau_{\bf d}$ is applied to the diagrams.

Consider $\hat\tau_{\bf d}(\zeta_{{\bf (II)}}^{\pm})$.  If $i=j$ or $j=k$, the relations \eqref{eq:cr-repl} reduce both diagrams to sums of nested graphs, which were treated already. Thus we assume $i<j<k$.  

It follows from \eqref{zeta2} that the relevant endpoints lead to 
$\Phi^{(i,k)}X_{jb}$  $(i< j<k)$ in  $\hat\tau_{\bf d}(\zeta_{{\bf (II)}}^{-})$, thus  $\hat\tau_{\bf d}(\zeta_{{\bf (II)}}^{-})$ is a linear combination of elements  of the form $X_{[d_1]}\cdots \XX_{[d_s}]\Phi^{(i,k)}X_{jb}\XX_{[d_{s+1}]}\cdots \XX_{[d_k]}$ (for some fixed $s$).  The same arguments as in the case of nested graphs lead to 
\beq\label{II-}
\hat\tau_{\bf d}(\zeta_{{\bf (II)}}^{-})=\left\{
\begin{array}{ll}
\Phi^{(i,k)}\ele_{|{\bf d}|-2}\,,&\mbox{if~}s<i;\\
\Phi^{(i,k)}\ele^{\,\prime}_{|{\bf d}|-2}+\Phi^{(i,i)}\ele^{\,\prime\prime}_{|{\bf d}|-2},&\mbox{if~}s=i.
\end{array}\right.
\eeq
Also, by using \eqref{zeta1} for $i<j< k$ and Lemma \ref{ppp},
we similarly obtain
\beq\label{II+}
\hat\tau_{\bf d}(\zeta_{{\bf (II)}}^{+})=\ele_{|{\bf d}|-2}\Phi^{(i,k)}+c\,  \ele^{\,\prime}_{|{\bf d}|-2}\Phi^{(i,i)},\eeq
for some scalar $c$, which is zero if the endpoint label by $k$ and 
the one on its immediate  right  are not in the same band.

Now we consider $\zeta_{{\bf (III)}}^{\pm}$. Note that if $i=j$, the defining property of a trivalent vertex leads to $\zeta_{{\bf (III)}}^{\pm}=0$. If $k=\ell$, equation \eqref{eq:cr-repl} enables us to reduce them to the acyclic nested graph case. Hence we only need to consider the case with $i<j$ and $k<\ell$. 

By applysing  \eqref{zeta4}, we conclude that if  $ i<j=k<l$, then $\hat\tau_{\bf d}(\zeta_{{\bf (III)}}^{+})$ is a linear combination of 
\begin{align*}
&q^2\XX_{[d_1]}\cdots \XX_{[d_s]}X_{jc}\Psi^{(i,j,l)}\XX_{[d_{s+1}]}\cdots \XX_{[d_k]}\\
&-q^{-5}\frac{q^7-q^{-7}}{\dim_q V}\XX_{[d_1]}\cdots \XX_{[d_s]}\left(\upgamma(v_c)\right)^{(i,l)}\Phi^{(j,j)}\XX_{[d_{s+1}]}\cdots \XX_{[d_k]},  
\end{align*}
for some fixed $s$; and if $ i<j<k<l$, it is a  linear combination of elements of the form
\[ 
\XX_{[d_1]}\cdots \XX_{[d_s]}X_{kc}\Psi^{(i,j,l)}\XX_{[d_{s+1}]}\cdots \XX_{[d_k]},
\]
for some fixed $s$. 
To simply the expression of $\hat\tau_{\bf d}(\zeta_{{\bf (III)}}^{+})$, we use  the commutative relations between $\Psi^{(i,j,l)}$ and  $X_{r a}$ given by Lemma \ref{xPsi}. We obtained 
\beq\label{III+}
\hat\tau_{\bf d}(\zeta_{{\bf (III)}}^{+}){=}\left\{
\begin{array}{rl}
\ele_{|{\bf d}|-3}\Psi^{(i,j,l)}{+}\ele_{|{\bf d}|-2}\Phi^{(j,j)}{+}\ele^{\,\prime}_{|{\bf d}|-2}\Phi^{(l,l)},\\
\mbox{if~}i{<}j{=}k{<}l;\\
\ele^{\,\prime}_{|{\bf d}|-3}\Psi^{(i,j,l)}+ c\ \ele^{\,\prime\prime}_{|{\bf d}|-2}\Phi^{(l,l)},\quad \mbox{otherwise},
\end{array}\right.
\eeq
for some scalar $c$, which is zero if the endpoint labelled by $\ell$ and 
the one on its immediate  right  are not in the same band.

We can show in a similar way by using \eqref{zeta3}
and Lemma \ref{xPsi} that 
\beq\label{III-}
\hat\tau_{\bf d}(\zeta_{{\bf (III)}}^{-})=\Psi^{(i,j,l)}\ele_{|{\bf d}|-3}+ c \Phi^{(i,i)}\ele_{|{\bf d}|-2},
\eeq
for some scalar $c$, which is zero if the endpoint labelled by $i$ and 
the one on its immediate left  are not in the same band.
	
By induction hypothesis $\ele_d\in A'$ for all $d<|{\bf d}|$. Hence it follows from parts $a)$ and $b)$ and 
\eqref{II-}--\eqref{III-} in part $c)$ that $A=A'$. 

This completes the proof.
\end{proof}

\subsection{Further structural properties of the subalgebra of invariants}\label{sect:struct}
To study the algebraic structure of $\cA_m^{\UG}$,
we investigate commutation relations among its generators.

A new feature of the present case is that $q$-commutation relations of the generators 
give rise to invariants of higher degrees, which are associated with well defined elements of $\cup_{r\ge 0}\Acycl(0, r)$, 
see Remark \ref{rmk:2-2-tensors} and Lemma \ref{lem:Theta}. 
Some of these higher degree invariants are indispensable in the sense that they can not be expressed as linear combinations of ordered monomials of the generators, as discussed in Section \ref{comments}.

 \subsubsection{Commutation relations among generators}

The following relations among the invariants $\Phi^{(i, j)}$ 
are easy consequences of Lemma \ref{ppp}.
\begin{lemma}\label{2-2-tensors}
		These $\UG$-invariants $\Phi^{(i,j)}(i\leq j)$ satisfy the following relations:
		\begin{equation*}\label{phicross}
			\begin{aligned}
				\Phi^{(i, i)} \Phi^{(j, k)}-\Phi^{(j, k)} \Phi^{(i, i)} & =0, \quad \text { for all } i, j, k, \\
				\Phi^{(i, k)} \Phi^{(i, j)}-q^{2} \Phi^{(i, j)} \Phi^{(i, k)} & =\frac{1-q^{14}}{\textit{dim}_q(V)}\Phi^{(i, i)}\Phi^{(j, k)}, \quad k \neq i, j ; i<j, \\
				\Phi^{(i, j)} \Phi^{(j, k)}-q^2\Phi^{(j, k)} \Phi^{(i, j)}  & =\frac{1-q^{14}}{\textit{dim}_q(V)}\Phi^{(j, j)} \Phi^{(i, k)}, \quad k \neq i, j ; i<j, \\
				\Phi^{(i, j)} \Phi^{(k, l)}-\Phi^{(k, l)} \Phi^{(i, j)} & =0, \quad j<k,\mbox{ or }k<i<j<l,\mbox{ or } l<i,\\			
q^{-1} \Phi^{(k, l)}\Phi^{(i, j)}{-}q\Phi^{(i, j)}\Phi^{(k, l)} 
&{=}(q{-}q^{-1})\left[\Phi^{(i, l)} \Phi^{(k,j)}{-}(q^2{+}q^{-2})\Phi^{(i, k)} \Phi^{(j,l)}{+}\Upsilon^{(i,k,j,l)}\right],   \\
&\hspace{5cm} i{<}k{<}j{<}l.
\end{aligned} 
\end{equation*}
\end{lemma}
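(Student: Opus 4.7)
The plan is to derive each relation by writing $\Phi^{(r,s)}=\sum_{a,b}\varphi_{ab}X_{ra}X_{sb}$ (valid for any $r,s$ by the convention introduced after \eqref{eq:quadr-inv}) and then repeatedly applying the commutation rules of Lemma \ref{ppp} to migrate one copy of $\Phi$ past the other, collecting remainder terms as we go. The centrality relation $\Phi^{(i,i)}\Phi^{(j,k)}=\Phi^{(j,k)}\Phi^{(i,i)}$ is immediate: if $j=k=i$ both sides vanish by $\Phi^2=0$ (Lemma \ref{lem:Phi-nil}), otherwise every generator $X_{ja},X_{kb}$ occurring in $\Phi^{(j,k)}$ commutes with $\Phi^{(i,i)}$ by the first two relations of Lemma \ref{ppp}.

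For the $q$-commutation relations on common indices, namely the identities for $\Phi^{(i,k)}\Phi^{(i,j)}$ and $\Phi^{(i,j)}\Phi^{(j,k)}$ with $k\ne i,j$ and $i<j$, the argument is the same in each of the sub-cases $k<i$, $i<k<j$, $k>j$: expand the leftmost factor and use the fourth (resp.\ fifth) relation of Lemma \ref{ppp} on $X_{ia}\Phi^{(i,j)}$ (resp.\ $X_{jb}\Phi^{(i,j)}$), while $X_{kb}$ (or $X_{ka}$) commutes with $\Phi^{(i,j)}$ because $k\ne i,j$. The leading term reproduces $q^{\pm 2}\Phi^{(i,j)}\Phi^{(i,k)}$ and the remainder assembles as $\frac{1-q^{14}}{\dim_qV}\Phi^{(i,i)}\Phi^{(j,k)}$, once one recognises $\sum_{a,b}\varphi_{ab}X_{ja}X_{kb}=\Phi^{(j,k)}$. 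The pure commutation relation $\Phi^{(i,j)}\Phi^{(k,l)}=\Phi^{(k,l)}\Phi^{(i,j)}$ for $j<k$, $l<i$, or $k<i<j<l$ follows directly from the second and third relations of Lemma \ref{ppp}, since in these index configurations every generator in one invariant commutes with every generator in the other.

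The substantive case is the last relation, with $i<k<j<l$. Expand $\Phi^{(k,l)}=\sum_{a,b}\varphi_{ab}X_{ka}X_{lb}$. Since $l>j$, the generator $X_{lb}$ commutes with $\Phi^{(i,j)}$ outright. We then apply the final relation of Lemma \ref{ppp} to $X_{ka}\Phi^{(i,j)}$; this produces four terms, the first being $q^2\Phi^{(i,j)}X_{ka}$, and the other three containing $\Phi^{(k,j)}X_{ia}$, $\Phi^{(i,k)}X_{ja}$, and $(\phi^+_a)^{(i,k,j)}$ respectively. Re-summing against $\varphi_{ab}X_{lb}$ and re-using the commuting generators $X_{lb}$ yields, on the one hand, the terms $(q-q^{-1})\bigl[\Phi^{(k,j)}\Phi^{(i,l)}-(q^2+q^{-2})\Phi^{(i,k)}\Phi^{(j,l)}\bigr]$, and on the other, the sum $\sum_{a,b}\varphi_{ab}(\phi^+_a)^{(i,k,j)}X_{lb}$. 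The final, and only delicate, step is to identify this last sum with $\Upsilon^{(i,k,j,l)}$: diagrammatically $T_+$ is obtained from $\splits$ by pulling up one leg via $\cups$, so composing with $\hat{C}$ on the outer leg recovers the morphism
$\upgaform$; by the definition of $\Upsilon$ and the surjection $\hat\tau_{{\bf d}_{i,k,j,l}}$ this is precisely $\Upsilon^{(i,k,j,l)}$. Dividing by $q$ produces the stated identity.

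The main obstacle is this last identification at the level of invariants: tracking signs, powers of $q$, and orderings through the definitions of $c_0$, $\phi^{\pm}_a$, and $\Upsilon$ is mechanical but error-prone, and it must be carried out either by a direct computation using the explicit formulae \eqref{eq:check-C}, \eqref{basis10}, and the basis of $V_{\lambda_1}$, or, more conceptually, by verifying the underlying diagrammatic identity obtained from \eqref{checkD} after capping off the bottom two strands with $\cups$ and $\caps$. The remaining relations require no new ingredients beyond Lemma \ref{ppp} and the definitions in Section \ref{sect:elmt}.
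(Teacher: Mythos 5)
Your overall strategy matches what the paper actually does: the first four relations are derived as consequences of Lemma \ref{ppp} by expanding one factor and migrating the other, and the last relation is obtained similarly with the extra $(\phi^+_a)^{(i,k,j)}$ term identified with $\Upsilon^{(i,k,j,l)}$ (the paper's remark after the lemma gives an equivalent diagrammatic derivation by applying $\hat\tau_{\mathbf{d}_{i,k,j,l}}$ to an identity extracted from \eqref{checkD}). Your treatment of the first and fourth relations is correct.

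There is, however, a genuine gap in your argument for the second and third relations. You assert that ``$X_{kb}$ (or $X_{ka}$) commutes with $\Phi^{(i,j)}$ because $k\ne i,j$'' and that the three sub-cases $k<i$, $i<k<j$, $k>j$ are handled identically. This is false: the third relation of Lemma \ref{ppp} gives $X_{ka}\Phi^{(i,j)}=\Phi^{(i,j)}X_{ka}$ only when $k<i$ or $k>j$. When $i<k<j$, the last relation of Lemma \ref{ppp} applies instead, producing remainder terms involving $\Phi^{(k,j)}X_{ib}$, $\Phi^{(i,k)}X_{jb}$, and $(\phi^+_b)^{(i,k,j)}$, and the resulting expression for $\Phi^{(i,k)}\Phi^{(i,j)}-q^2\Phi^{(i,j)}\Phi^{(i,k)}$ does not collapse to $\frac{1-q^{14}}{\dim_qV}\Phi^{(i,i)}\Phi^{(j,k)}$ by this argument. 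One can check, by instead expanding $\Phi^{(i,j)}$ in the product $\Phi^{(i,j)}\Phi^{(i,k)}$ (using that $X_{jb}$ commutes with $\Phi^{(i,k)}$ since $j>k>i$), that the clean identity which falls out for $i<k<j$ is $\Phi^{(i,j)}\Phi^{(i,k)}-q^2\Phi^{(i,k)}\Phi^{(i,j)}=\frac{1-q^{14}}{\dim_qV}\Phi^{(i,i)}\Phi^{(k,j)}$, whose sides are swapped relative to the statement. So either the case $i<k<j$ is silently excluded from relations two and three, or a further argument is needed there; your claim that the sub-cases are interchangeable is not defensible as written.

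Two smaller points on the last relation. After applying Lemma \ref{ppp} you obtain the term $\Phi^{(k,j)}\Phi^{(i,l)}$, which needs the nesting case of relation four (the pairs $(k,j)$ and $(i,l)$ are nested when $i<k<j<l$) to be rewritten as $\Phi^{(i,l)}\Phi^{(k,j)}$; this should be stated, since it is itself part of the lemma. And your diagrammatic description of the final identification is garbled: $T_+$ is obtained by pulling up the bottom-right leg of the $(2,2)$-diagram $\splits\circ\merges$ (so it is $(\splits\ot\id_V)\circ\splits$, a two-vertex tree), not ``from $\splits$ by pulling up one leg''; and the relevant composition is $(T_+\ot\id_V)\circ\check{C}$ with the cup $\check{C}$, not the cap $\hat{C}$. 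The conclusion is right in spirit, but a careful reader tracking these scalars would notice that $\splits=-q^3\upgamma$ introduces a power of $q$ which must be reconciled with the paper's own (not entirely consistent) normalisation of $\Upsilon$.
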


\begin{remark}	\label{rmk:2-2-tensors}
Particularly noteworthy is the last relation, which produces the quartic invariants $\Upsilon^{(i,k,j,l)}$ as $q$-commutators of quadratic invariants.  
\end{remark}	
\begin{remark}	
The last relation of Lemma \ref{2-2-tensors} has the following more conceptual proof. 
We can easily obtain  the following relation from \eqref{checkD}.
		$$
		q^{-1}\begin{tikzpicture}[baseline=2pt,scale=0.6,color=\clr]
			\draw[-,line width=1pt](1.2,0.9)parabola bend (1.6,-0.2) (2,0.9);	
			\draw[-,line width=1pt](1.7,0.9)--(1.8,0.3);
			\draw[-,line width=1pt](1.9,0.1)parabola bend (2.2,-0.2) (2.6,0.9);	
		\end{tikzpicture}-q
		\begin{tikzpicture}[baseline=2pt,scale=0.6,color=\clr]
			\draw[-,line width=1pt](1.2,0.9)parabola bend (1.6,-0.2) (1.8,0.1);	\draw[-,line width=1pt](1.9,0.35)parabola  (2,0.9);
			\draw[-,line width=1pt](1.7,0.9)parabola bend (2.2,-0.2) (2.6,0.9);	
		\end{tikzpicture}=(q-q^{-1})
		\begin{tikzpicture}[baseline=2pt,scale=0.6,color=\clr]
			\draw[-,line width=1pt](1.2,0.9)parabola bend (1.8,-0.2) (2.4,0.9);	\draw[-,line width=1pt](1.4,0.9)parabola bend (1.8,0.2) (2.2,0.9);	
		\end{tikzpicture}-(q{-}q^{-1})(q^2+q^{-2})
		\begin{tikzpicture}[baseline=2pt,scale=0.6,color=\clr]
			\draw[-,line width=1pt](1.2,0.9)parabola bend (1.5,-0.2) (1.8,0.9);	\draw[-,line width=1pt](1.9,0.9)parabola bend (2.2,-0.2) (2.5,0.9);	
		\end{tikzpicture}+(q-q^{-1})	\begin{tikzpicture}[baseline=2pt,scale=0.6,color=\clr]
	\draw[-,line width=1pt] (1.2,0.4)--(0.9,0.8); \draw[-,line width=1pt] (1.2,0.4)--(1.5,0.8); 
	\draw[-,line width=1pt] (0.2,0.4)--(0.5,0.8); \draw[-,line width=1pt] (0.2,0.4)--(-0.1,0.8); 	
	\draw[-,line width=1pt](1.2,0.4)parabola bend(0.7,-0.3) (0.2,0.4);
		\end{tikzpicture}.
		$$
 Applying  the map $\hat\tau_{{\bf d}_{i, k, j, \ell}}$ with $i{<}k{<}j{<}l$ to both sides of the above relation, we obtain 
the last relation of  Lemma \ref{2-2-tensors}. 
\end{remark}

We now consider commutation relations between $\Phi^{(i, j)}$ and $\Psi^{(r,s,t)}$. 
When some of the indices coincide, the commutation relations actually become simpler.  We have the following result. 
\begin{lemma}\label{lem:phipsi-overlap}
Assume that $i \leq j$ and $s{<}t$. Then the following relations hold. 

\begin{equation}\label{ijij}
\begin{aligned}
& \Psi^{(i,j,k)}\Phi^{(i,j)}{=}\Phi^{(i,j)}\Psi^{(i,j,k)}, \\
& \Psi^{(i,j,k)}\Phi^{(j,k)}{=}\Phi^{(j,k)}\Psi^{(i,j,k)},\\
&\Psi^{(i,j,k)}\Phi^{(i,k)}{=}\Phi^{(i,k)}\Psi^{(i,j,k)};
\end{aligned}
\end{equation}
\begin{equation}\label{ijist}
			\begin{aligned}
				\Psi^{(s,t,i)}\Phi^{(i,j)}&=q^{2}\Phi^{(i,j)}\Psi^{(s,t,i)}+\frac{1-q^{14}}{\textit{dim}_q(V)}\Phi^{(i,i)}\Psi^{(s,t,j)}, \  \text { for } j>t,\\	
				\Psi^{(s,i,t)}\Phi^{(i,j)}&=q^{2}\Phi^{(i,j)}\Psi^{(s,i,t)}+\frac{1-q^{14}}{\textit{dim}_q(V)}\Phi^{(i,i)}\Psi^{(s,j,t)}, \ \text { for } s<j<t,\\
				\Psi^{(s,i,t)}\Phi^{(i,j)}&=\Phi^{(i,j)}\Psi^{(s,i,t)}+(1{-}q^{-2})\left[\Phi^{(s,i)}\Psi^{(i,t,j)}{-}\Phi^{(i,t)}\Psi^{(s,i,j)}\right], \ \text { for } j>t,\\
				\Psi^{(i,s,t)}\Phi^{(i,j)}&=q^{2}\Phi^{(i,j)}\Psi^{(i,s,t)}+\frac{1-q^{14}}{\textit{dim}_q(V)}\Phi^{(i,i)}\Psi^{(j,s,t)}, \  \text { for } j<s,\\
				\Psi^{(i,s,t)}\Phi^{(i,j)}&=q^{2}\Phi^{(i,j)}\Psi^{(i,s,t)}+\frac{1-q^{14}}{\textit{dim}_q(V)}\Phi^{(i,i)}\Psi^{(s,t,j)}, \ \text { for } j>t,\\
%\end{aligned}\end{equation}
%\begin{align*}
				\Psi^{(i,s,t)}\Phi^{(i,j)}&=\Phi^{(i,j)}\Psi^{(i,s,t)} 
+ (1{-}q^{-2})\Phi^{(i,t)}\Psi^{(i,s,j)}\\
& \phantom{X}-(1{-}q^{-2})
				\left(q^2\Phi^{(i,s)}\Psi^{(i,j,t)}{+}\frac{1{-}q^{14}}{\dim_qV}\Phi^{(i,i)}\Psi^{(s,j,t)}\right), \text { for }s< j<t; 
\end{aligned}
\end{equation}
\begin{equation}\label{ijjst}
	\begin{aligned}
	q^2\Psi^{(j,s,t)}\Phi^{(i,j)}&=\Phi^{(i,j)}\Psi^{(j,s,t)}-\frac{1-q^{14}}{\textit{dim}_q(V)}\Phi^{(j,j)}\Psi^{(i,s,t)}, \  \text { for }i<s,\\
				q^2\Psi^{(s,j,t)}\Phi^{(i,j)}&=\Phi^{(i,j)}\Psi^{(s,j,t)}-\frac{1-q^{14}}{\textit{dim}_q(V)}\Phi^{(j,j)}\Psi^{(s,i,t)}, \  \text { for }s<i<t,\\
				\Psi^{(s,j,t)\Phi^{(i,j)}}&=\Phi^{(i,j)}\Psi^{(s,j,t)}-(1{-}q^{-2})\left[\Phi^{(j,t)}\Psi^{(i,s,j)}{-}\Phi^{(s,j)}\Psi^{(i,j,t)}\right], \  \text { for }i<s,\\
				\Psi^{(s,t,j)}\Phi^{(i,j)}&=q^2\Phi^{(i,j)}\Psi^{(s,t,j)}+\frac{1-q^{14}}{\textit{dim}_q(V)}\Phi^{(j,j)}\Psi^{(i,s,t)}, \  \text { for }i<s,\\
				\Psi^{(s,t,j)}\Phi^{(i,j)}&=q^2\Phi^{(i,j)}\Psi^{(s,t,j)}+\frac{1-q^{14}}{\textit{dim}_q(V)}\Phi^{(j,j)}\Psi^{(s,t,i)}, \  \text { for }i>t,\\
\Psi^{(s,t,j)}\Phi^{(i,j)}&=\Phi^{(i,j)}\Psi^{(s,t,j)} -(1{-}q^{-2})\Phi^{(s,j)}\Psi^{(i,t,j)}\\
&\phantom{X}-(1{-}q^{-2})\left(\frac{1-q^{14}}{\textit{dim}_q(V)}\Phi^{(j,j)}\Psi^{(s,i,t)}{-}\Phi^{(t,j)}\Psi^{(s,i,j)}\right), \text { for }s<i<t.
\end{aligned}\end{equation}
\end{lemma}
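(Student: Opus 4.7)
The plan is to establish these relations by combining the expansion
$\Phi^{(i,j)}=\sum_{a,b}\varphi_{ab}X_{ia}X_{jb}$
with the $X_{ra}$--$\Psi^{(s,t,u)}$ commutation rules of Lemma \ref{xPsi}. Starting from $\Psi^{(s,t,u)}\Phi^{(i,j)}$ (or conversely from $\Phi^{(i,j)}\Psi^{(s,t,u)}$), I would commute $\Psi^{(s,t,u)}$ through $X_{ia}$ and then through $X_{jb}$. Each step yields a main term $q^{\pm 2}\Psi^{(s,t,u)}X$ (or simply $\Psi^{(s,t,u)}X$ when the index matches the middle position of $\Psi$, as in \eqref{xPsi-1}) together with ``defect'' terms of the form $\Phi^{(\cdot,\cdot)}(\upgamma(v_a))^{(\cdot,\cdot)}$ coming from the inhomogeneous parts of Lemma \ref{xPsi}.

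The key observation that converts the defect terms into the $\Phi\cdot\Psi$ expressions on the right-hand sides of \eqref{ijij}--\eqref{ijjst} is the identity
\[
\sum_{a,b}\varphi_{ab}X_{pa}(\upgamma(v_b))^{(q,r)}
= \sum_{a,b}\varphi_{ab}(\upgamma(v_a))^{(p,q)}X_{rb}
= -q^{-3}\Psi^{(p,q,r)},
\]
which follows from the two equivalent descriptions $\Psi=-q^{-3}(\id\otimes\upgamma)(c_0)=-q^{-3}(\upgamma\otimes\id)(c_0)$ together with the identification $c_0=\sum_{a,b}\varphi_{ab}v_a\otimes v_b$. Supplemented by the $X$--$\Phi$ commutation rules of Lemma \ref{ppp}, needed whenever the bands of the two $X$'s coming from $\Phi^{(i,j)}$ interleave with those of $\Psi^{(s,t,u)}$, this identity supplies exactly the coefficients appearing in the stated formulae.

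For the three relations in \eqref{ijij}, all three indices of $\Psi$ coincide with the pair of $\Phi$, so the procedure above produces defect terms that must cancel. The cleanest way to see this is diagrammatic: both $\Psi^{(i,j,k)}\Phi^{(i,j)}$ and $\Phi^{(i,j)}\Psi^{(i,j,k)}$ are images under $\hat\tau$ of morphisms in ${\mathscr T}$ that, by the skein relations of Section \ref{diagdep} and the $\UG$-equivariance encoded in Lemma \ref{lem:easy}, coincide in $\Hom_{\UG}(\CC(q),V^{\otimes 5})$; equivalently, the defect terms cancel in pairs because of the two equivalent expressions for $\Psi$ in the key identity. The main obstacle is the combinatorial case analysis required for \eqref{ijist} and \eqref{ijjst}: each splits into six sub-cases according to the position of the free $\Phi$-index relative to the $\Psi$-indices $s<t$, and the defect terms assemble in qualitatively different ways, producing either a $\Phi^{(p,p)}\Psi$ term with coefficient $\frac{1-q^{14}}{\dim_q V}$, or one (respectively three) $\Phi^{(p,q)}\Psi$ terms with coefficients $1-q^{-2}$. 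Once the case division is set up, each sub-case reduces to a short manipulation of the defect terms using the key identity and Lemma \ref{ppp}.
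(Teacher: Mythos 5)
The overall strategy — expand one factor, commute through the $X$'s, and then reassemble the defect terms into $\Phi$'s and $\Psi$'s via the identity $\sum_{a,b}\varphi_{ab}(\upgamma(v_a))^{(p,q)}X_{rb}\propto\Psi^{(p,q,r)}$ — is indeed the paper's route for most of the relations (though the paper expands $\Psi^{(i,j,k)}=\sum c_{abc}X_{ia}X_{jb}X_{kc}$ and uses Lemma \ref{ppp} rather than expanding $\Phi^{(i,j)}$ and using Lemma \ref{xPsi}). That said, your outline has a genuine gap for the final relation in each of \eqref{ijist} and \eqref{ijjst}.

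Take the sixth relation of \eqref{ijist}, with $i<s<j<t$. If you expand $\Phi^{(i,j)}=\sum\varphi_{ab}X_{ia}X_{jb}$ and commute $\Psi^{(i,s,t)}$ through, the step that fails is moving $\Psi^{(i,s,t)}$ past $X_{jb}$: here $s<j<t$, i.e.\ $j$ lies strictly between the middle and last index of $\Psi$. Lemma \ref{xPsi} covers $r<i$, $r>k$, $r=i,j,k$, and $i<r<j<k$, but \emph{not} the case $j<r<k$ — precisely the one you need. (The symmetric obstruction appears if you instead expand $\Psi$ and use Lemma \ref{ppp}: commuting $\Phi^{(i,j)}$ past $X_{sb}$ with $i<s<j$ lands in the last case of Lemma \ref{ppp}, whose defect term $(\phi^{\pm}_b)^{(i,s,j)}$ is a degree-3 quantity that is not covered by your key identity and does not reduce to $\Phi\Psi$ products without further work.) The paper flags this explicitly (``the sixth relation requires a different proof'') and instead applies the skein relation \eqref{checkD} twice to produce the identity \eqref{phipsiev} in $\Hom_{\UG}(\CC(q),V^{\ot 5})$, then post-composes with the projection $F=(P[2\lambda_1]+P[0])\ot\id^{\ot 3}$ before applying $\hat\tau$. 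None of that machinery appears in your outline, and the purely algebraic commute-through plan does not reach the answer here.

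Your treatment of \eqref{ijij} is also under-specified. In the braided algebra the two products $\Psi^{(i,j,k)}\Phi^{(i,j)}$ and $\Phi^{(i,j)}\Psi^{(i,j,k)}$ correspond, through $\hat\tau_{(2,2,1)}$, to \emph{different} morphisms of $\Hom_{\UG}(\CC(q),V^{\ot 5})$ (they differ by braidings that interleave the repeated bands), so the claim that they ``coincide'' as diagrams is not immediate and requires a skein-relation computation of its own. The paper's actual mechanism is algebraic and quite specific: it moves $\Phi^{(i,j)}$ through the factors of $\Psi^{(i,j,k)}=\sum c_{abc}X_{ia}X_{jb}X_{kc}$ via Lemma \ref{ppp}, and the defect terms vanish because $\Phi^{(j,j)}$ is central and $\sum c_{abc}X_{ia}X_{ib}X_{kc}=\Psi^{(i,i,k)}=0$. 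This vanishing of $\Psi$ at repeated indices, and the centrality of $\Phi^{(i,i)}$, are the key facts that make \eqref{ijij} go through, and your write-up does not invoke either.
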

	\begin{proof}
The proof of \eqref{ijjst} is the same as that for \eqref{ijist}, the we consider only latter only.
The first five relations of \eqref{ijist} can be obtained by using Lemma \ref{ppp} and Lemma \ref{xPsi},
but the sixth relation requires a different proof. 
By applying the relation \eqref{checkD} twice, we obtain
\begin{equation}\label{phipsiev}
\begin{aligned}
&\left[q^{-1}\begin{tikzpicture}[baseline=2pt,scale=0.6,color=\clr]
			\draw[-,line width=1pt](1.2,0.9)parabola bend (1.9,-0.2) (2.6,0.9);
			\draw[-,line width=1pt](1.6,0.9)parabola bend (1.9,0.2) (2.2,0.9);	\draw[-,line width=1pt](1.45,0.2)--(1.6,0.55);\draw[-,line width=1pt](1.75,0.65)--(1.85,0.9);
		\end{tikzpicture}-q\begin{tikzpicture}[baseline=2pt,scale=0.6,color=\clr]
			\draw[-,line width=1pt](1.2,0.9)parabola bend (1.9,-0.2) (2.6,0.9);
			\draw[-,line width=1pt](1.5,0.9)--(1.6,0.7); \draw[-,line width=1pt](1.7,0.5) parabola bend (1.9,0.2) (2.2,0.9);	
			\draw[-,line width=1pt](1.45,0.2)--(1.85,0.9);
		\end{tikzpicture}~\right]-\left[q^{-1}\begin{tikzpicture}[baseline=2pt,scale=0.6,color=\clr]
			\draw[-,line width=1pt](1.2,0.9)parabola bend (1.6,-0.2) (1.8,0.1);	\draw[-,line width=1pt](1.9,0.3)parabola  (2,0.9);
			\draw[-,line width=1pt](1.7,0.6)parabola bend (2.2,-0.2) (2.6,0.9);	\draw[-,line width=1pt](1.7,0.6)--(1.5,0.9);\draw[-,line width=1pt](1.7,0.6)--(1.9,0.9);
		\end{tikzpicture}-q\begin{tikzpicture}[baseline=2pt,scale=0.6,color=\clr]
			\draw[-,line width=1pt](1.2,0.9)parabola bend (1.6,-0.2) (2,0.9);	
			\draw[-,line width=1pt](1.7,0.6)--(1.8,0.3);\draw[-,line width=1pt](1.7,0.6)--(1.5,0.9);\draw[-,line width=1pt](1.7,0.6)--(1.9,0.9);
			\draw[-,line width=1pt](1.9,0.1)parabola bend (2.2,-0.2) (2.6,0.9);	
		\end{tikzpicture}~\right]\\
=&(q-q^{-1})\left[\begin{tikzpicture}[baseline=2pt,scale=0.6,color=\clr]
			\draw[-,line width=1pt](1.2,0.9)parabola bend (1.9,-0.2) (2.6,0.9);
			\draw[-,line width=1pt](1.9,0.9) parabola bend (2.1,0.2) (2.3,0.9);	
			\draw[-,line width=1pt](1.37,0.33)--(1.65,0.9);
		\end{tikzpicture}-\begin{tikzpicture}[baseline=2pt,scale=0.6,color=\clr]
			\draw[-,line width=1pt](1.1,0.9)parabola bend (1.4,-0.2) (1.7,0.9);\draw[-,line width=1pt](1.4,-0.2)--(1.4,0.9);			
			\draw[-,line width=1pt](1.9,0.9) parabola bend (2.1,-0.2) (2.3,0.9);	
		\end{tikzpicture}+(q^2+q^{-2})\left(\begin{tikzpicture}[baseline=2pt,scale=0.55,color=\clr]
			\draw[-,line width=1pt](0.3,0.9)parabola bend (0.8,-0.2) (1.3,0.9);
			\draw[-,line width=1pt](0.5,0.9)parabola bend (0.8,0) (1.1,0.9);	\draw[-,line width=1pt](0.8,0)--(0.8,0.9);
		\end{tikzpicture}-\begin{tikzpicture}[baseline=2pt,scale=0.55,color=\clr]
			\draw[-,line width=1pt](1.8,0.9)parabola bend (2,0.2) (2.2,0.9);	\draw[-,line width=1pt](1.6,0.9)parabola bend (2.2,-0.2) (2.6,0.6);	\draw[-,line width=1pt](2.6,0.6)--(2.4,0.9);\draw[-,line width=1pt](2.6,0.6)--(2.8,0.9);
		\end{tikzpicture}\right)				
		\right].
\end{aligned}\end{equation}

Introduce
		$F=\left(P[2\lambda_1]+P[0]\right) \otimes\id\otimes\id\otimes \id$.
We have 
\begin{align*}
		F\left(~\begin{tikzpicture}[baseline=4pt,scale=0.6,color=\clr]
			\draw[-,line width=1pt](1.2,0.9)parabola bend (1.6,-0.2) (1.8,0.1);	\draw[-,line width=1pt](1.9,0.3)parabola  (2,0.9);
			\draw[-,line width=1pt](1.7,0.6)parabola bend (2.2,-0.2) (2.6,0.9);	\draw[-,line width=1pt](1.7,0.6)--(1.5,0.9);\draw[-,line width=1pt](1.7,0.6)--(1.9,0.9);
		\end{tikzpicture}(1)\right)&=q^{-2}F\left(\begin{tikzpicture}[baseline=4pt,scale=0.6,color=\clr]
			\draw[-,line width=1pt](1.2,0.9)parabola bend (1.9,-0.2) (2.6,0.9);
			\draw[-,line width=1pt](1.5,0.9)--(1.6,0.7); \draw[-,line width=1pt](1.7,0.5) parabola bend (1.9,0.2) (2.2,0.9);	
			\draw[-,line width=1pt](1.45,0.2)--(1.85,0.9);
		\end{tikzpicture} (1)\right)+\frac{q^{-8}-q^{6}}{\mathrm{dim}_qV}\begin{tikzpicture}[baseline=-1pt,scale=1,color=\clr]
			\draw[-,line width=1pt](1.2,0.4)parabola bend (1.4,-0.2) (1.6,0.4);	\draw[-,line width=1pt](1.7,0.4)parabola bend (2,-0.2) (2.3,0.4);\draw[-,line width=1pt](2,-0.2)--(2,0.4);
		\end{tikzpicture}(1),\\
		F\left(~\begin{tikzpicture}[baseline=4pt,scale=0.6,color=\clr]
			\draw[-,line width=1pt](1.2,0.9)parabola bend (1.9,-0.2) (2.6,0.9);
			\draw[-,line width=1pt](1.6,0.9)parabola bend (1.9,0.2) (2.2,0.9);	\draw[-,line width=1pt](1.45,0.2)--(1.6,0.55);\draw[-,line width=1pt](1.75,0.65)--(1.85,0.9);
		\end{tikzpicture}(1)\right)&=q^{-2}\left(\begin{tikzpicture}[baseline=4pt,scale=0.6,color=\clr]
			\draw[-,line width=1pt](1.2,0.9)parabola bend (1.6,-0.2) (2,0.9);	
			\draw[-,line width=1pt](1.7,0.6)--(1.8,0.3);\draw[-,line width=1pt](1.7,0.6)--(1.5,0.9);\draw[-,line width=1pt](1.7,0.6)--(1.9,0.9);
			\draw[-,line width=1pt](1.9,0.1)parabola bend (2.2,-0.2) (2.6,0.9);	
		\end{tikzpicture}(1)\right)+\frac{q^{-8}-q^{6}}{\mathrm{dim}_qV}\begin{tikzpicture}[baseline=-1pt,scale=1,color=\clr]
			\draw[-,line width=1pt](1.2,0.4)parabola bend (1.4,-0.2) (1.6,0.4);	\draw[-,line width=1pt](1.7,0.4)parabola bend (2,-0.2) (2.3,0.4);\draw[-,line width=1pt](2,-0.2)--(2,0.4);
		\end{tikzpicture}(1).
\end{align*}
Let ${\bf d}={\bf d}_{r,i, s, j, t}$, and apply 
$\hat\tau_{{\bf d}}\circ F$ to both sides of \eqref{phipsiev},
we obtain
		$$\Psi^{(i,s,t)}\Phi^{(i,j)}=\Phi^{(i,j)}\Psi^{(i,s,t)}+(1{-}q^{-2})\left[\Phi^{(i,t)}\Psi^{(i,s,j)}{-}\Psi^{(i,j,t)}\Phi^{(i,s)}\right].$$
This together with the fourth relation of \eqref{ijist} implies the sixth relation of \eqref{ijist}.	
		
Now we prove  \eqref{ijij}. 
By using the fourth and fifth relations in Lemma \ref{ppp}, we can show that  
\[
\begin{aligned}
q^2\Psi^{(i,j,k)}\Phi^{(i,j)}
=&q^2\sum_{a,b,c} c_{a b c} X_{ia}X_{jb}\Phi^{(i,j)}X_{kc}\\
=&\sum_{a,b,c}c_{a b c} X_{ia}\left[\Phi^{(i,j)}X_{jb}-\frac{1-q^{14}}{\dim_q(V)}X_{ib}\Phi^{(j,j)}\right]X_{kc}. 
\end{aligned}
\]
Note that the second term on the right does not contribute. To see this, recall that $\Phi^{(j,j)}$ is central,  thus 
$\sum_{a,b,c}c_{a b c} X_{ia}X_{ib}\Phi^{(j,j)}X_{kc} = \Phi^{(j,j)}\sum_{a,b,c}c_{a b c} X_{ia}X_{ib}X_{kc}$, which vanishes since
$\sum_{a,b,c}c_{a b c} X_{ia}X_{ib}X_{kc}=0$. 
Therefore, 
\[
\begin{aligned}
q^2\Psi^{(i,j,k)}\Phi^{(i,j)}
=&\sum_{a,b,c}c_{a b c} X_{ia}\Phi^{(i,j)}X_{jb}X_{kc}\\
=&\sum_{a,b,c}c_{a b c} \left[q^{2}\Phi^{(i,j)}X_{ia}+\frac{1-q^{14}}{\dim_q(V)}X_{ja}\Phi^{(i,i)}\right]X_{jb}X_{kc}\\
=&q^{2}\Phi^{(i,j)}\Psi^{(i,j,k)},
\end{aligned}
\]
where the last equality is obtained by the same argument given above.  
This proves the first relation of \eqref{ijij},
and the other relations can be proved in the same way.
\end{proof}

\begin{lemma}\label{lem:phipsi}
Assume that $i \leq j$ and $r{<}s{<}t$. 
The invariants $\Phi^{(i, j)}$ and $\Psi^{(r,s,t)}$ satisfy the following relations.
\begin{enumerate}
\item 
If $i{\neq}r,s,t$, 
\begin{align*}
\Phi^{(i, i)} \Psi^{(r,s,t)}=\Psi^{(r,s,t)} \Phi^{(i, i)}.
\end{align*}
\item If $j{<}r$, or $r{<}i,j{<}s$, 
or $s{<}i,j{<}t$, 
or $t{<}i$, or $i{<}r, t{<}j$,
\begin{align*}
\Phi^{(i, j)} \Psi^{(r,s,t)}=\Psi^{(r,s,t)}\Phi^{(i, j)}.
\end{align*}
\item If $i{<}j{<}r{<}s{<}t$, 	
\begin{align*}
&q^{-1}\Psi^{(i,j,s)}\Phi^{(r,t)}-q\Phi^{(r,t)}\Psi^{(i,j,s)}+q\Psi^{(i,r,s)}\Phi^{(j,t)}-q^{-1}\Phi^{(j,t)}\Psi^{(i,r,s)}\\
=&%q^{-1}\Phi^{(j,t)}\Psi^{(i,r,s)}+q\Phi^{(r,t)}\Psi^{(i,j,s)}
					(q{-}q^{-1})\left[\Phi^{(s,t)}\Psi^{(i,j,r)}{-}\Phi^{(i,t)}\Psi^{(j,r,s)}\right]\\
&\phantom{X}+(q{-}q^{-1})(q^2{+}q^{-2})\left[\Phi^{(i,j)}\Psi^{(r,s,t)}{-}\Phi^{(r,s)}\Psi^{(i,j,t)}\right],
\end{align*}		
\begin{align*}			
&q^{-1}\Psi^{(i,r,s)}\Phi^{(j,t)}-q\Phi^{(j,t)}\Psi^{(i,r,s)} +q\Psi^{(i,r,t)}\Phi^{(j,s)}-q^{-1}\Phi^{(j,s)}\Psi^{(i,r,t)}\\
=&%q^{-1}\Phi^{(j,s)}\Psi^{(i,r,t)}{+}q\Phi^{(j,t)}\Psi^{(i,r,s)}
(q{-}q^{-1})\left[\Phi^{(i,j)}\Psi^{(r,s,t)}-\Phi^{(j,r)}\Psi^{(i,s,t)}\right]\\
&\phantom{X}+(q{-}q^{-1})(q^2{+}q^{-2})\left[\Phi^{(r,s)}\Psi^{(i,j,t)}-\Phi^{(i, t)}\Psi^{(j,r,s)}\right],
\end{align*}		
\begin{align*}	
&q^{-1}\Psi^{(i,r,t)}\Phi^{(j,s)}-q\Phi^{(j,s)}\Psi^{(i,r,t)}+q\Psi^{(j,r,t)}\Phi^{(i,s)}-q^{-1}\Phi^{(i,s)}\Psi^{(j,r,t)}\\
=&%q\Phi^{(j,s)}\Psi^{(i,r,t)}{+}q^{-1}\Phi^{(i,s)}\Psi^{(j,r,t)}+
(q{-}q^{-1})\left[\Phi^{(r,s)}\Psi^{(i,j,t)}{-}\Phi^{(s,t)}\Psi^{(i,j,r)}\right]\\
&\phantom{X}{+}(q{-}q^{-1})(q^2{+}q^{-2})\left[\Phi^{(i, t)}\Psi^{(j,r,s)}{-}\Phi^{(j,r)}\Psi^{(i,s,t)}\right],
\end{align*}		
\begin{align*}				
&q^{-1}\Psi^{(r,j,t)}\Phi^{(i,s)}-q\Phi^{(i,s)}\Psi^{(r,j,t)} +q\Psi^{(j,s,t)}\Phi^{(i,r)}-q^{-1}\Phi^{(i, r)}\Psi^{(j,s,t)}\\
=&%q^{-1}\Phi^{(i, r)}\Psi^{(j,s,t)}{+}q\Phi^{(i,s)}\Psi^{(r,j,t)}+
(q{-}q^{-1})\left[\Phi^{(i, t)}\Psi^{(r,j,s)}-\Phi^{(i, j)}\Psi^{(r,s,t)}\right]\\
&\phantom{X}{+}(q{-}q^{-1})(q^2{+}q^{-2})\left[\Phi^{(j,r)}\Psi^{(i,s,t)}-\Phi^{(s,t)}\Psi^{(i,r,j)}\right].										
\end{align*} 
\end{enumerate}

\end{lemma}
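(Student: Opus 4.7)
My plan is to exploit the diagrammatic framework of Section~\ref{sect:diagrams} exactly as in the proofs of Lemmas~\ref{ppp}, \ref{xPsi}, \ref{xU}: products of the generators $\Phi^{(i,j)}$ and $\Psi^{(r,s,t)}$ are realised as images under $\hat\tau_{\bf d}$ of concrete compositions of cups $\cups$ and trivalent graphs $\upgaprimeform$, and their commutation is controlled by moving generators $X_{\bullet a}$ past one another using the braiding on the right hand side of \eqref{braid}, which in diagrammatic language is the skein relation \eqref{checkD} (equivalently Lemma~\ref{double}). The case analysis in the statement is dictated by the relative positions of the bands $\{i,j\}$ and $\{r,s,t\}$, exactly as the nonzero corrections in Lemmas~\ref{ppp} and \ref{xPsi} arose from crossings within shared bands.

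For part~(1), since $\Phi^{(i,i)}=\underbrace{1\ot\cdots\ot 1}_{i-1}\ot\Phi\ot\underbrace{1\ot\cdots\ot 1}_{m-i}$ and $i\notin\{r,s,t\}$, the definition \eqref{eq:Psi} expresses $\Psi^{(r,s,t)}$ in terms of $X_{r\bullet},X_{s\bullet},X_{t\bullet}$ only; by the second relation of Lemma~\ref{ppp} each such generator commutes with $\Phi^{(i,i)}$, and the claim is immediate. For part~(2), each listed configuration is one in which every elementary move of an $X_{ka}$ past $\Phi^{(i,j)}$ (or past $\Psi^{(r,s,t)}$) falls into the \emph{purely commuting} clauses of Lemma~\ref{ppp} and Lemma~\ref{xPsi}. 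Concretely, I would expand $\Psi^{(r,s,t)}=\sum c_{abc}X_{ra}X_{sb}X_{tc}$, push $\Phi^{(i,j)}$ rightward one factor at a time, and verify for each of the five band-patterns ($j<r$; $r<i,j<s$; $s<i,j<t$; $t<i$; $i<r,t<j$) that no correction terms appear.

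Part~(3) is the heart of the lemma and the only place where the skein relation must be used nontrivially. The four identities are obtained by the same mechanism that produced the last relation of Lemma~\ref{ppp} and the relation \eqref{xPsi-2}: one writes a pair of crossings connecting the two strands of a $\Phi$-cup with two strands of a $\Psi$-graph, applies \eqref{checkD} in the form
\[
q^{-1}\cross-q\,\Icross
=(q-q^{-1})\Bigl(\,\ids\;\ids\;-(q^2+q^{-2})\,\cups\!\caps\;+\;\begin{tikzpicture}[baseline=22pt,scale=0.55,color=\clr]\draw[-,line width=1pt](1,1.3)--(1,1.7);\draw[-,line width=1pt](1,1.7)--(0.5,2);\draw[-,line width=1pt](1,1.7)--(1.5,2);\draw[-,line width=1pt](1,1.3)--(0.5,1);\draw[-,line width=1pt](1,1.3)--(1.5,1);\end{tikzpicture}\Bigr),
\]
and then reads off each term on the right hand side of \eqref{checkD} as one of the elementary invariants $\Phi^{(a,b)}\Psi^{(c,d,e)}$. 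The four distinct identities correspond to the four choices of which pair of strands (out of the two $\Phi$-legs and three $\Psi$-legs, with $i<j<r<s<t$) gets crossed with which; the factors $q^{\pm 1}$ on the left are the diagrammatic $q^{-1}\cross - q\,\Icross$ combination, while the $(q^2+q^{-2})$ on the right comes from the $P[0]$ term via \eqref{eq:loop}, and the trivalent term is absorbed into either $\Psi^{(r,s,t)}$ or a new $\Psi$ with shuffled indices.

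The main obstacle will be bookkeeping rather than conceptual: one must choose, for each of the four identities, precisely which two crossings to resolve and in which order, and then use \eqref{zeta1}--\eqref{zeta4} together with the symmetry $\Psi^{(\sigma(r,s,t))}=\pm q^{-6}\Psi^{(r,s,t)}$ (see \eqref{r0} and the analogous swap rules after \eqref{eq:Psi}) to rewrite all the intermediate acyclic graphs as $\Phi$-times-$\Psi$ monomials in the ordering exhibited on the right hand side. Since each reduction step produces at worst terms of the same degree in the generators (five $X$'s throughout), no further induction is needed, and the calculation terminates once all crossings have been removed.
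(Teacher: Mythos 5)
Your proposal follows essentially the same strategy as the paper's own (terse) proof: part (1) from the centrality of $\Phi^{(i,i)}$ with respect to $X_{ka}$ for $k\neq i$, part (2) by a case analysis moving $X$-generators past $\Phi^{(i,j)}$ or $\Psi^{(r,s,t)}$ using the commuting clauses of Lemmas \ref{ppp} and \ref{xPsi}, and part (3) by applying the skein relation \eqref{checkD} twice in diagram form and then pushing through $\hat\tau_{{\bf d}_{i,j,r,s,t}}$. The one place your sketch is worded too loosely is case five of part (2), $i<r<s<t<j$: there, pushing $\Phi^{(i,j)}$ through the $X$-factors of $\Psi^{(r,s,t)}$ (your default description) hits the non-commuting clause of Lemma \ref{ppp} (since $i<k<j$ for each $k\in\{r,s,t\}$), so you must instead use your parenthetical alternative — expand $\Phi^{(i,j)}$ and push $X_{ia}$, $X_{jb}$ through $\Psi^{(r,s,t)}$ via the first relation of Lemma \ref{xPsi}, which applies because $i<r$ and $j>t$. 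With that choice made explicit, the argument is complete and matches the paper's.
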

\begin{proof}
Part (1) and (2) are  consequences of relations in Lemma \ref{xPsi} and \ref{xU}.  
Part (3) follows from the last relations in Lemma \ref{ppp} and Lemma \ref{xPsi}.
It can also be proved diagrammatically. 
Take the last relation of part (3) as an example.
By applying the relation \eqref{checkD} twice, we obtain
\[
\baln
q^{-1}
		\begin{tikzpicture}[baseline=2pt,scale=0.55,color=\clr]
			\draw[-,line width=1pt](1.2,0.9)parabola bend (1.6,-0.2) (1.8,0.1);	\draw[-,line width=1pt](1.9,0.35)parabola  (2,0.9);
			\draw[-,line width=1pt](1.7,0.9)parabola bend (2.2,-0.2) (2.6,0.6);	\draw[-,line width=1pt](2.6,0.6)--(2.4,0.9);\draw[-,line width=1pt](2.6,0.6)--(2.8,0.9);
		\end{tikzpicture}{-}q
		\begin{tikzpicture}[baseline=2pt,scale=0.55,color=\clr]
			\draw[-,line width=1pt](1.2,0.9)parabola bend (1.6,-0.2) (2,0.9);	
			\draw[-,line width=1pt](1.7,0.9)--(1.8,0.3);
			\draw[-,line width=1pt](1.9,0.1)parabola bend (2.2,-0.2) (2.6,0.6);	\draw[-,line width=1pt](2.6,0.6)--(2.4,0.9);\draw[-,line width=1pt](2.6,0.6)--(2.8,0.9);
		\end{tikzpicture}
		&{=}q^{-1}
		\begin{tikzpicture}[baseline=2pt,scale=0.55,color=\clr]	
			\draw[-,line width=1pt](1.2,0.9)parabola bend (1.6,-0.2) (2,0.9);	
			\draw[-,line width=1pt](1.7,0.6)--(1.8,0.3);\draw[-,line width=1pt](1.7,0.6)--(1.5,0.9);\draw[-,line width=1pt](1.7,0.6)--(1.9,0.9);
			\draw[-,line width=1pt](1.9,0.1)parabola bend (2.2,-0.2) (2.6,0.9);	
		\end{tikzpicture}{-}q
		\begin{tikzpicture}[baseline=2pt,scale=0.55,color=\clr]
			\draw[-,line width=1pt](1.2,0.9)parabola bend (1.6,-0.2) (1.8,0.1);	\draw[-,line width=1pt](1.9,0.3)parabola  (2,0.9);
			\draw[-,line width=1pt](1.7,0.6)parabola bend (2.2,-0.2) (2.6,0.9);	\draw[-,line width=1pt](1.7,0.6)--(1.5,0.9);\draw[-,line width=1pt](1.7,0.6)--(1.9,0.9);
		\end{tikzpicture}{+}(q{-}q^{-1})
		\left(
		\begin{tikzpicture}[baseline=2pt,scale=0.55,color=\clr]
			\draw[-,line width=1pt](0.8,0.9)parabola bend (1.1,-0.2) (1.4,0.9);
			\draw[-,line width=1pt](1.7,0.9)parabola bend (2.2,-0.2) (2.6,0.9);	\draw[-,line width=1pt](2.2,-0.2)--(2.2,0.9);
		\end{tikzpicture}{-}
		\begin{tikzpicture}[baseline=2pt,scale=0.55,color=\clr]
			\draw[-,line width=1pt](0.3,0.9)parabola bend (0.8,-0.2) (1.3,0.9);
			\draw[-,line width=1pt](0.5,0.9)parabola bend (0.8,0) (1.1,0.9);	\draw[-,line width=1pt](0.8,0)--(0.8,0.9);
		\end{tikzpicture}
		\right)\\
		&{+}(q{-}q^{-1})(q^2{+}q^{-2})\left(
		\begin{tikzpicture}[baseline=2pt,scale=0.55,color=\clr]
			\draw[-,line width=1pt](0.8,0.9)parabola bend (1.1,-0.2) (1.4,0.9);\draw[-,line width=1pt](1.1,-0.2)--(1.1,0.9);
			\draw[-,line width=1pt](1.7,0.9)parabola bend (1.9,-0.2) (2.1,0.9);
		\end{tikzpicture}{-}
		\begin{tikzpicture}[baseline=2pt,scale=0.55,color=\clr]
			\draw[-,line width=1pt](1.8,0.9)parabola bend (2,0.2) (2.2,0.9);	\draw[-,line width=1pt](1.6,0.9)parabola bend (2.2,-0.2) (2.6,0.6);	\draw[-,line width=1pt](2.6,0.6)--(2.4,0.9);\draw[-,line width=1pt](2.6,0.6)--(2.8,0.9);
		\end{tikzpicture}\right).
\ealn
\]
Now applying   
$\hat\tau_{{\bf d}_{i, j, r, s, t}}$ to the above equation, and then re-arranging the terms,  
we obtain the last relation  of part (3). 
\end{proof}

Finally, we consider commutation relations 
between $\Psi^{(j,s,t)}$ and $\Phi^{(i,r)}$ for $i{<}j{<}r{<}s{<}t$. 
Let $\Theta
:=\begin{tikzpicture}[baseline=-13pt,scale=0.7,color=\clr]
		\draw[-,line width=1pt](1.4,-0.4)--(1.2,-0.2);\draw[-,line width=1pt](1.4,-0.4)--(1.6,-0.2);
		\draw[-,line width=1pt](0.9,-0.4)--(0.9,-0.8);		
		\draw[-,line width=1pt](0.9,-0.4)--(0.7,-0.2);\draw[-,line width=1pt](0.9,-0.4)--(1.1,-0.2);
		\draw[-,line width=1pt](0.3,-0.2)parabola bend (0.9,-0.8) (1.4,-0.4);
	\end{tikzpicture}
$, and   
denote $\Theta^{(i,j,r,s,t)} = \hat\tau_{{\bf d}_{i, j, r, s, t}}(\Theta)$. 
We can derive the following relation from equation \eqref{checkD}. 
\beq\label{grapht=3}
(q-q^{-1})\begin{tikzpicture}[baseline=-15pt,scale=0.8,color=\clr]
		\draw[-,line width=1pt](1.4,-0.4)--(1.2,-0.2);\draw[-,line width=1pt](1.4,-0.4)--(1.6,-0.2);
		\draw[-,line width=1pt](0.9,-0.4)--(0.9,-0.8);		
		\draw[-,line width=1pt](0.9,-0.4)--(0.7,-0.2);\draw[-,line width=1pt](0.9,-0.4)--(1.1,-0.2);
		\draw[-,line width=1pt](0.3,-0.2)parabola bend (0.9,-0.8) (1.4,-0.4);
	\end{tikzpicture}=q^{-1}
		\begin{tikzpicture}[baseline=2pt,scale=0.55,color=\clr]
			\draw[-,line width=1pt](1.2,0.9)parabola bend (1.6,-0.2) (1.8,0.1);	\draw[-,line width=1pt](1.9,0.35)parabola  (2,0.9);
			\draw[-,line width=1pt](1.7,0.9)parabola bend (2.2,-0.2) (2.6,0.6);	\draw[-,line width=1pt](2.6,0.6)--(2.4,0.9);\draw[-,line width=1pt](2.6,0.6)--(2.8,0.9);
		\end{tikzpicture}{-}q
		\begin{tikzpicture}[baseline=2pt,scale=0.55,color=\clr]
			\draw[-,line width=1pt](1.2,0.9)parabola bend (1.6,-0.2) (2,0.9);	
			\draw[-,line width=1pt](1.7,0.9)--(1.8,0.3);
			\draw[-,line width=1pt](1.9,0.1)parabola bend (2.2,-0.2) (2.6,0.6);	\draw[-,line width=1pt](2.6,0.6)--(2.4,0.9);\draw[-,line width=1pt](2.6,0.6)--(2.8,0.9);
		\end{tikzpicture}-(q{-}q^{-1})\left(
\begin{tikzpicture}[baseline=22pt,scale=0.6,color=\clr]		
	\draw[-,line width=1pt](0.5,2)parabola bend (0.7,1) (0.9,2);
	\draw[-,line width=1pt](1.1,2)parabola bend (1.4,1) (1.7,2);\draw[-,line width=1pt](1.4,1)--(1.4,2);
\end{tikzpicture}-(q^2{+}q^{-2})
\begin{tikzpicture}[baseline=22pt,scale=0.6,color=\clr]		
	\draw [-,line width=1pt](0.5,2)parabola bend (1,1)(1.4,1.4);
	\draw [-,line width=1pt](1.4,1.4)--(1.3,2);
	\draw [-,line width=1pt](1.4,1.4)--(1.6,2);
	\draw [-,line width=1pt](0.8,2)parabola bend (1,1.5)(1.2,2);
\end{tikzpicture}\right).
\eeq
Applying  $\hat\tau_{{\bf d}_{i, j, r, s, t}}$  to \eqref{grapht=3}, and re-arranging the terms, we obtain
the following result. 
\begin{lemma} \label{lem:Theta} 
Assume that $i{<}j{<}r{<}s{<}t$. Then 
\begin{align}
q\Psi^{(j,s,t)}\Phi^{(i,r)}&=q^{-1}\Phi^{(i,r)}\Psi^{(j,s,t)} -(q{-}q^{-1})
\Phi^{(i,j)}\Psi^{(r,s,t)} \\
&\phantom{X}-(q{-}q^{-1})
\left(
{-}(q^2{+}q^{-2})\Phi^{(j,r)}\Psi^{(i,s,t)}{+}\Theta^{(i,j,r,s,t)}
\right).\nonumber
\end{align}
\end{lemma}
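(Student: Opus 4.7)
The plan is to follow the template already used in Lemma \ref{lem:phipsi}(3) and in the remark after Lemma \ref{2-2-tensors}: promote the fundamental two-strand skein identity \eqref{checkD} to the five-strand graphical identity \eqref{grapht=3} displayed in the excerpt, then apply the surjection $\hat\tau_{{\bf d}_{i,j,r,s,t}}$ and match each resulting product of generators with an entry in the statement. All the diagrammatic relations needed (cup--cap duality \eqref{eq:C-1}, loop values \eqref{eq:loop} and \eqref{cycle2}, the sliding rules of Figure \ref{fig:slide}, and the spectral data $\check R \check C = q^{-12}\check C$ in \eqref{eq:C-3}) are already in place, so no new analytical input is required.

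The derivation of \eqref{grapht=3} will proceed by post-composing each term in \eqref{checkD} with a fixed $(2,5)$-diagram and then evaluating at $1\in\CC(q)$. Concretely, I would take \eqref{checkD} as an identity among five morphisms $V\otimes V\to V\otimes V$, cap off the lower-left strand of the resulting picture so as to create the three-legged $\Psi$-piece on the right, and simultaneously attach a $\splits$ and a $\cups$ on the lower-right to manufacture the $\Phi$-cup supporting the trivalent vertex. The two crossing terms on the left of \eqref{checkD} yield the two five-legged diagrams on the right of \eqref{grapht=3} with coefficients $q^{-1}$ and $-q$, while the $\ids\ \ids$ term of \eqref{checkD}, after the sliding manipulations and \eqref{up-p}, becomes $(q-q^{-1})\Theta$. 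The cap-cup term $P[0]$ and the projector term $P[\lambda_1]$ each produce a disjoint union of a $\Phi$-cup with a $\Psi$-trivalent graph; their coefficients come from those in \eqref{checkD}, with the internal loop of $P[0]$ contributing a factor $\dim_q V$ cancelled by the normalisation from \eqref{eq:loop}, and the bigon inside $P[\lambda_1]$ absorbed by the $[7]_q-1$ of \eqref{cycle2}.

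Once \eqref{grapht=3} is established, applying $\hat\tau_{{\bf d}_{i,j,r,s,t}}$ reduces everything to a dictionary translation: a crossing between a strand in band $i$ and one in band $r$ produces $X_{ia}X_{rb}$ in braided order, which assembles with a cap into $\Phi^{(i,r)}$ and, depending on whether the $\Psi$-piece sits to its right or to its left, gives $\Psi^{(j,s,t)}\Phi^{(i,r)}$ or $\Phi^{(i,r)}\Psi^{(j,s,t)}$. The two disjoint-union diagrams on the right of \eqref{grapht=3} translate directly into $\Phi^{(i,j)}\Psi^{(r,s,t)}$ and $\Phi^{(j,r)}\Psi^{(i,s,t)}$, while the LHS becomes $(q-q^{-1})\Theta^{(i,j,r,s,t)}$ by definition. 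Moving $(q-q^{-1})\Theta^{(i,j,r,s,t)}$ to the right-hand side and collecting terms reproduces the stated identity.

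The main obstacle is the first step: setting up the correct $(2,5)$-diagram to post-compose with \eqref{checkD} so that all four ``non-$\Theta$'' diagrams come out precisely with the coefficients $q^{-1}$, $-q$, $-(q-q^{-1})$, $+(q-q^{-1})(q^2+q^{-2})$ and no unwanted residual terms. This requires careful accounting of the two scalar factors ($\dim_qV$ and $[7]_q-1$) that arise when closed loops and bigons form during the composition, using \eqref{eq:loop}, \eqref{cycle2}, \eqref{eq:C-2}, and the normalisation $p\circ\upgamma = -([7]_q - 1)\id_V$ from \eqref{eq:p-gam}. After this diagrammatic bookkeeping is verified, the algebraic translation via $\hat\tau_{{\bf d}_{i,j,r,s,t}}$ is purely mechanical, following exactly the pattern already demonstrated in the proofs of Lemma \ref{ppp}, Lemma \ref{xPsi}, and Lemma \ref{lem:phipsi}.
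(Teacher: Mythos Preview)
Your approach is exactly the paper's: derive the five-strand identity \eqref{grapht=3} from \eqref{checkD}, then apply $\hat\tau_{{\bf d}_{i,j,r,s,t}}$ and read off the invariants. However, your term-by-term correspondence in the derivation of \eqref{grapht=3} is scrambled. The correct matching is obtained by rewriting \eqref{checkD} as
\[
(q-q^{-1})\,\begin{tikzpicture}[baseline=25pt,scale=0.6,color=\clr]
\draw[-,line width=1pt] (1,1.3)--(1,1.7);
\draw[-,line width=1pt] (1,1.7)--(0.5,2);\draw[-,line width=1pt] (1,1.7)--(1.5,2);
\draw[-,line width=1pt] (1,1.3)--(0.5,1);\draw[-,line width=1pt] (1,1.3)--(1.5,1);
\end{tikzpicture}
= q^{-1}\cross - q\,\Icross - (q-q^{-1})\Bigl[\ \ids\ \ids\ -(q^2+q^{-2})\,\begin{tikzpicture}[baseline=25pt,scale=0.6,color=\clr]
\draw[-,line width=1pt](0.5,2)parabola bend (1,1.6) (1.5,2);
\draw[-,line width=1pt](0.5,1)parabola bend (1,1.4) (1.5,1);
\end{tikzpicture}\ \Bigr],
\]
then pulling the two bottom endpoints up to the top-left via cups and attaching a single $\splits$ on the far right. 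Under this operation the I-shape (the $P[\lambda_1]$ term) becomes $\Theta$, the two crossings become the two crossing diagrams in \eqref{grapht=3}, $\ids\ \ids$ becomes the disjoint $\cups\ \Psi$, and the cap--cup becomes the nested diagram. No loops or bigons form, so there are no $\dim_qV$ or $[7]_q-1$ factors to cancel; the coefficients in \eqref{grapht=3} are literally those of \eqref{checkD}. Once you fix this, the rest of your translation via $\hat\tau_{{\bf d}_{i,j,r,s,t}}$ is correct and the proof goes through.
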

Note that  that $q$-commutators of $\Psi^{(j,s,t)}$ and $\Phi^{(i,r)}$ produce the
invariants $\Theta^{(i,j,r,s,t)}$.

\subsubsection{Comments}\label{comments}

We have also worked out all commutation relations among the elements $\Psi^{(r,s,t)}$,  by using a similar diagrammatic method as that used in the proofs of the previous lemmas. The final results are rather messy,  thus we choose not to include them here. However, the work reveals the important fact  that $q$-commutators of $\Psi^{(r,s,t)}$'s give rise to some 
invariants in  $\cup_{|{\bf d}|=6} \widehat{\Acycl}_{\bf d}$. 
The same invariants also arise from $q$-commutators of 
$\Phi^{(a,b)}$ and $\Upsilon^{(i,k,j,l)}$. 
We have checked that  for some indices, $\Psi^{(r,s,t)}$ and $\Upsilon^{(i,k,j,l)}$ do not commute or $q$-commute, thus they can potentially generate new invariants of degree $7$. 

Let us call an ordered set of homogeneous invariants a PBW set if ordered monomials 
of the elements span $\cA_m^\UG$, and no proper subset has this property. 
Then a PBW set necessarily contains the elements $\Phi^{(a,b)}$, $\Psi^{(r,s,t)}$ and $\Upsilon^{(i,k,j,l)}$, as one can show by considering the $m=4$ case. 
It appears that for bigger $m$, a PBW set should also include $\Theta^{(i,j,r,s,t)}$, and possibly 
the degree $6$ invariants mentioned above if $m\ge 6$. 

It is intriguing that $q$-commutators of some lower degree invariants can 
produce higher degree ones (e.g., $\Upsilon^{(i,k,j,l)}$), which are indispensable elements of PBW sets. 
This is a new phenomenon not observed before in the context of invaraint theory, even for quantum groups. 
On the other hand, this is not that surprising as the subalgebra $\cA_m^\UG$ of invariants is non-commutative. 
%%%%
\iffalse
The situation is not dissimilar to the case of the universal enveloping algebra of a simple Lie algebra or affine Lie algebra, where the Chevalley generators generate the entire algebra, but the elements associated with all roots are needed for constructing PBW bases. 
\fi
%%%%

However, we feel that the degrees of the elements of PBW sets should be bounded by some finite integer, which does not depend on $m$ for large $m$.   We hope to further investigate this in a future work.

\bigskip
\noindent{\bf Declarations}. 
The authors have no conflict of interest to declare

\iffalse
\noindent{\bf Conflict of interest}.  
The authors have no conflict of interest to declare that are relevant to 
the content of this article.

\medskip
\noindent{\bf  Data Availability}.
Data sharing is not applicable to this article as no datasets were generated or analyzed during
the current study.

\fi
	
\bigskip
	
%\pdfbookmark[1]{References}{ref}
%\bibliography{ref}

%\alpha

\end{document}